\numberwithin{equation}{section}
\newtheorem{theorem}{\bf{Theorem}}[section]
\newtheorem{proposition}[theorem]{\bf{Proposition}}
\newtheorem{definition}[theorem]{\bf{Definition}}
\newtheorem{corollary}[theorem]{\bf{Corollary}}
\newtheorem{lemma}[theorem]{\bf{Lemma}}
\newtheorem{remark}[theorem]{\bf{Remark}}
\newtheorem{conjecture}[theorem]{\bf{Conjecture}}
\newcommand{\abs}[1]{\left|#1\right|}
\newcommand{\ceil}[1]{\lceil#1\rceil}
\newcommand{\floor}[1]{\lfloor#1\rfloor}
\newcommand{\car}[1]{\left|#1\right|}
\newcommand{\pairangone}[1]{\langle#1\rangle}
\newcommand{\pairang}[2]{\langle#1,#2\rangle}
\newcommand{\rest}{\lvert}
\newcommand{\Waff}[1]{W_{#1}^{\mathrm{aff}}}
\newcommand{\Wafftwist}[2]{\widetilde{W}_{#1,#2}^{\mathrm{aff}}}
\newcommand{\mrgl}{\operatorname{GL}}
\newcommand{\mrsl}{\operatorname{SL}}
\newcommand{\mrsp}{\operatorname{Sp}}
\newcommand{\mrm}{\operatorname{M}}
\newcommand{\mrn}{\operatorname{N}}
\newcommand{\mrtr}{\operatorname{tr}}
\newcommand{\mrdet}{\operatorname{det}}
\newcommand{\mrdim}{\operatorname{dim}}
\newcommand{\mrhom}{\operatorname{Hom}}
\newcommand{\mrdiag}{\operatorname{diag}}
\newcommand{\mrgcd}{\operatorname{gcd}}
\newcommand{\mrirr}{\operatorname{Irr}}
\newcommand{\mrrep}{\operatorname{Rep}}
\newcommand{\mrend}{\operatorname{End}}
\newcommand{\mraut}{\operatorname{Aut}}
\newcommand{\mrmod}{\operatorname{Mod}}
\newcommand{\mrInd}{\operatorname{Ind}}
\newcommand{\mrind}{\operatorname{ind}}
\newcommand{\mrres}{\operatorname{Res}}
\newcommand{\mrinf}{\operatorname{Inf}}
\newcommand{\mrker}{\operatorname{Ker}}
\newcommand{\id}{\operatorname{id}}
\newcommand{\mrsupp}{\operatorname{Supp}}
\newcommand{\mrst}{\mathrm{St}}
\newcommand{\mbz}{\mathbb{Z}}
\newcommand{\mbc}{\mathbb{C}}
\newcommand{\mbr}{\mathbb{R}}
\newcommand{\mbq}{\mathbb{Q}}
\newcommand{\mbg}{\mathbb{G}}
\newcommand{\mfp}{\mathfrak{p}}
\newcommand{\mfm}{\mathfrak{m}}
\newcommand{\mfn}{\mathfrak{n}}
\newcommand{\mfo}{\mathfrak{o}}
\newcommand{\mfa}{\mathfrak{a}}
\newcommand{\mfb}{\mathfrak{b}}
\newcommand{\mfc}{\mathfrak{c}}
\newcommand{\mfh}{\mathfrak{h}}
\newcommand{\mfs}{\mathfrak{s}}
\newcommand{\mfA}{\mathfrak{A}}
\newcommand{\mfJ}{\mathfrak{J}}
\newcommand{\mfH}{\mathfrak{H}}
\newcommand{\mfS}{\mathfrak{S}}
\newcommand{\mco}{\mathcal{O}}
\newcommand{\mcc}{\mathcal{C}}
\newcommand{\mca}{\mathcal{A}}
\newcommand{\mcf}{\mathcal{F}}
\newcommand{\mcg}{\mathcal{G}}
\newcommand{\mch}{\mathcal{H}}
\newcommand{\mct}{\mathcal{T}}
\newcommand{\mcm}{\mathcal{M}}
\newcommand{\mcn}{\mathcal{N}}
\newcommand{\mcp}{\mathcal{P}}
\newcommand{\mcu}{\mathcal{U}}
\newcommand{\msl}{\mathscr{L}}
\newcommand{\Lambdamin}{\Lambda_{\text{min}}}
\newcommand{\Lambdamax}{\Lambda_{\text{max}}}
\newcommand{\amin}{\mfa_{\text{min}}}
\newcommand{\amax}{\mfa_{\text{max}}}
\newcommand{\bmin}{\mfb_{\text{min}}}
\newcommand{\bmax}{\mfb_{\text{max}}}
\newcommand{\Jmin}{J_{\text{min}}}
\newcommand{\Jmax}{J_{\text{max}}}
\newcommand{\umin}{u_{\mathrm{min}}}
\newcommand{\umax}{u_{\mathrm{max}}}
\newcommand{\Jonemax}{J_{\text{max}}^{1}}
\newcommand{\Jonemin}{J_{\text{min}}^{1}}
\newcommand{\Honemax}{H_{\text{max}}^{1}}
\newcommand{\Honemin}{H_{\text{min}}^{1}}
\newcommand{\kappamax}{\kappa_{\mathrm{max}}}
\newcommand{\tildekappamax}{\wt{\kappa}_{\mathrm{max}}}
\newcommand{\bs}[1]{\boldsymbol{#1}}
\newcommand{\wt}[1]{\widetilde{#1}}
\begin{document}

	\title[Simple type theory for metaplectic covers of $\mrgl(r)$]{Simple type theory for metaplectic covers of $\mrgl(r)$ over a non-archimedean local field}
	
	\author{Jiandi Zou}
	\address{Institute for Advanced Study in Mathematics of Harbin Institute of Technology, Harbin, China}
	\email{idealzjd@gmail.com}
	\keywords{Simple types, metaplectic covers, p-adic groups, smooth representations, Hecke algebras}
	\subjclass[2020]{Primary 11F70, 22E50; Secondary 19C09}
	
	\begin{abstract}
		
		Let $F$ be a non-archimedean locally compact field of residual characteristic $p$, let $G=\mrgl_{r}(F)$ and let $\wt{G}$ be an $n$-fold metaplectic cover of $G$ with $\mrgcd(n,p)=1$. We study the category $\mrrep_{\mfs}(\wt{G})$ of complex smooth representations of $\wt{G}$  having inertial equivalence class $\mfs=(\wt{M},\mco)$, which is a block of the category $\mrrep(\wt{G})$, following the ``type theoretical" strategy of Bushnell-Kutzko.
		
		Precisely, first we construct a ``maximal simple type" $(\wt{J_{M}},\wt{\lambda}_{M})$ of $\wt{M}$ as an $\mfs_{M}$-type, where $\mfs_{M}=(\wt{M},\mco)$ is the related cuspidal inertial equivalence class of $\wt{M}$. Along the way, we prove the folklore conjecture that every cuspidal representation of $\wt{M}$ could be constructed explicitly by a compact induction. Secondly, we construct ``simple types" $(\wt{J},\wt{\lambda})$ of $\wt{G}$, and prove that each of them is an $\mfs$-type of a certain block $\mrrep_{\mfs}(\wt{G})$. When $\wt{G}$ is either a Kazhdan-Patterson cover or Savin's cover, the corresponding blocks turn out to be those containing discrete series representations of $\wt{G}$. Finally, for a simple type $(\wt{J},\wt{\lambda})$ of $\wt{G}$ we describe the related Hecke algebra $\mch(\wt{G},\wt{\lambda})$, which turns out to be not far from an affine Hecke algebra of type A, and is exactly so if $\wt{G}$ is one of the two special covers mentioned above. 
		
		We leave the construction of a ``semi-simple type" related to a general block $\mrrep_{\mfs}(\wt{G})$ to a future phase of the work.
		
	\end{abstract}
	
	\maketitle
	\tableofcontents

	\section{Introduction}
	
	\subsection{Background}
	
	A basic but important question in the representation theory of $p$-adic groups is to study the category of complex smooth representations $\mrrep(G)$, where $G$ is a reductive group over a non-archimedean locally compact field $F$ of residual characteristic $p$. In \cite{bernstein1984centre}, Bernstein developed a block decomposition 
	$$\mrrep(G)=\prod_{\mfs}\mrrep_{\mfs}(G),$$ 
	where each $\mfs$ in the product consists of (the $G$-conjugacy class of) a Levi subgroup $M$ of $G$ and an orbit $\mco$ of a cuspidal representation $\pi_{0}$ of $M$ twisted by unramified characters of $M$, and $\mrrep_{\mfs}(G)$ is the subcategory of $\mrrep(G)$ consisting of smooth representations having the inertial cuspidal support $\mfs$. 
	
	The so-called ``type theory", justifying the title of this paper, dates back to the seminal work of Bushnell-Kutzko \cite{bushnell1998smooth}, which depicts a general strategy to describe $\mrrep_{\mfs}(G)$ for an inertial equivalence class $\mfs=(M,\mco)$. A type of $\mrrep_{\mfs}(G)$, in their sense, consists of an open compact subgroup $J$ of $G$ and an irreducible representation $\lambda$ of $J$, such that the restriction of any representation $\pi\in\mrrep_{\mfs}(G)$ to $J$ contains $\lambda$ (\emph{cf.} \S \ref{subsectionblockstypes} for a precise definition). Once a type $(J,\lambda)$ of $\mrrep_{\mfs}(G)$ exists, an equivalence of categories could be defined
	$$\bs{\mathrm{M}}_{\lambda}:\mrrep_{\mfs}(G)\rightarrow\mrmod(\mch(G,\lambda)),\quad\pi\mapsto\mrhom_{G}(\mrind_{J}^{G}\lambda,\pi),$$
	where $\mrind_{J}^{G}\lambda$ denotes the compact induction of $\lambda$ as a representation of $G$, and $\mrmod(\mch(G,\lambda))$ denotes the category of non-degenerate modules of the Hecke algebra $\mch(G,\lambda)$. More precisely, they proposed a general strategy, which is divided into two following questions:
	\begin{enumerate}
		\item Construct a type $(J_{M},\lambda_{M})$ of $\mrrep_{\mfs_{M}}(M)$, where $\mfs_{M}$ denotes the corresponding inertial equivalence class of $M$ consisting of the same data of $\mfs$.
		\item Construct a type $(J,\lambda)$ of $\mrrep_{\mfs}(G)$ based on $(J_{M},\lambda_{M})$. Usually,   $(J,\lambda)$ could be chosen as a so-called ``covering pair" of $(J_{M},\lambda_{M})$.
	\end{enumerate}
	We remark that the question (1), albeit innocent-looking, relates to the extremely difficult folklore conjecture that every cuspidal representation of $M$ could be constructed from the compact induction of an irreducible representation of an open compact modulo center subgroup. Once being constructed in a certain way, the uniqueness of $(J_{M},\lambda_{M})$ up to $M$-conjugacy is also quite intriguing and worth pursuing (for instance see  \cite{bushnell129admissible}*{Theorem 6.2.4}). Finally, the subsequent question is obviously
	\begin{enumerate}\setcounter{enumi}{2}
		\item Describe $\mch(G,\lambda)$ and $\mrmod(\mch(G,\lambda))$. 
	\end{enumerate}
	The rough expectation is that each $\mch(G,\lambda)$ should not be far from a certain affine Hecke algebra. We leave \S \ref{sectionHeckecoveringpair} for the missing details in this paragraph.
	
	We are trying to give a historical summary, which, due to the ignorance of the author, could be quite partial and by no means complete. Beside some sporadic results for low rank groups $G$, perhaps the first foundational result is due to Howe \cite{howe1977tamely}, where he gave the full answer to the question (1) in the case $G=\mrgl_{r}(F)$ and $p$ greater that $r$. Under the same condition, questions (2) and (3) were also subsequently answered, see \cite{howe1990hecke}, \cite{waldspurger1986algebres}. After that, two main streamlines occurred: the first one dealt with special groups $G$ and all the blocks $\mrrep_{\mfs}(G)$, while the second one dealt with rather general groups and blocks $\mrrep_{\mfs}(G)$ with certain restrictions.
	
	We describe the first streamline. A milestone is the work of Bushnell-Kutzko \cite{bushnell129admissible}, \cite{bushnell1998smooth}, \cite{bushnell1999semisimple}, where they completely answered the three questions above for $G=\mrgl_{r}(F)$. They gave an explicit construction for a cuspidal representation of any Levi subgroup of $G$, influenced by and generalizing the work of Howe \cite{howe1977tamely} and Carayol \cite{carayol1984representations}. 
	In particular, the corresponding Hecke algebra $\mch(G,\lambda)$ is indeed an affine Hecke algebra of type A, which provides another aspect of the classification result of Bernstein and Zelevinsky  \cite{bernstein1977induced}, \cite{zelevinsky1980induced} for irreducible representations of $\mrgl_{r}(F)$. After that, various results were built up for other groups, including
	\begin{itemize}
		\item When $G$ is an inner form of $\mrgl_{r}(F)$, the three questions above were resolved in a sequence of articles \cite{secherre2004representations}, \cite{secherre2005representations}, \cite{secherre2005types}, \cite{secherre2008representations}, \cite{secherre2012smooth} of S\'echerre and Stevens, influenced by the earlier work of Broussous, Grabitz, Silberger and Zink \cite{broussous1999minimal}, \cite{broussous2000pure}, \cite{grabitz1999continuation}, \cite{grabitz2001level}, etc.
		
		\item For $G=\mrsl_{r}(F)$, the three questions were resolved by Bushnell-Kutzko \cite{bushnell1993admissible}, \cite{bushnell1994admissible} and Goldberg-Roche \cite{goldberg2002types}, \cite{goldberg2005hecke}. For an inner form $G$ of $\mrsl_{r}(F)$, although questions (1) and (2) remain to be resolved, Aubert-Baum-Plyman-Solleveld \cite{aubert2017hecke} somehow answered question (3) for the Hecke algebra related to a product of several blocks $\mrrep_{\mfs}(G)$.
		
		\item Assume $p\neq 2$. For a classical group $G$, or more precisely, the  fixed-point subgroup $G$ of $\mrgl(V)$ defined by an involution related to a sesquilinear form of the $r$-dimensional vector space $V$ over $F$, the first question was resolved by Stevens  \cite{stevens2001intertwining}, \cite{stevens2002semisimple}, \cite{stevens2005semisimple}, \cite{stevens2008supercuspidal}. Notably, the uniqueness of $(J_{M},\lambda_{M})$ up to $M$-conjugacy was shown in \cite{kurinczuk2021endo} recently. The second question was resolved by Miyauchi-Stevens \cite{miyauchi2014semisimple}. They also partially explored the third question when $M$ is a maximal Levi subgroup of $G$, but the general case is still pending.
		
	\end{itemize}
	All these works essentially rely on the original work of Bushnell-Kutzko for $G=\mrgl_{r}(F)$.
	
	We describe the second streamline. Let $G$ be a general reductive group over $F$. First of all, we sum up the following results for certain rather easily described blocks:
	\begin{itemize}
		\item Borel \cite{borel1976admissible} and Casselman \cite{casselman1980unramified} studied the smooth representations having Iwahori fixed vectors, and related them to the modules of the Iwahori Hecke algebra. Indeed, the study of this case dates back to the earlier work of Iwahori-Matsumoto \cite{iwahori1965some}. Then at least when $G$ is split, in which case the above representations are exactly in the unramified principal block, the above three questions were fully answered.
		
		\item If $G$ is split, Roche \cite{roche1998types} resolved the three questions for general principal blocks under some mild condition on the residual characteristic $p$. 
		
		\item If the corresponding block $\mrrep_{\mfs}(G)$ is of depth 0, saying that the corresponding cuspidal representation $\pi_{0}$ of $M$ is of depth $0$, Moy-Prasad \cite{moy1994unrefined}, \cite{moy1996jacquet} as well as Morris \cite{morris1999level} resolved the first two questions. The third question was resolved by Morris \cite{morris1993tamely} based on the previous work of Howlett-Lehrer \cite{howlett1980induced}.
		
	\end{itemize}
	Now we assume that $G$ is split over a tamely ramified extension of $F$. Motivated and influenced by the work of Howe \cite{howe1977tamely}, Moy-Prasad \cite{morris1999level}, Adler \cite{adler1998refined} and others, Yu \cite{yu2001construction} gave an explicit construction of the 
	so-called ``tame supercuspidal representations"\footnote{We use interchangably the terminologies ``cuspidal" and ``supercuspidal", since for complex representations they are the same.} via compact induction, which in particular resolved the first question for blocks with respect to a tame supercuspidal representation $\pi_{0}$.
	Then Kim-Yu \cite{kim2017construction} resolved the second question for such blocks based on Yu's construction of tame supercuspidal representations, and Hakim-Murnaghan \cite{hakim2008distinguished} fully explored the ``uniqueness" of Yu's construction.
	
	Unlike the special cases mentioned above, Yu's construction is general enough to recover ``almost all" the supercuspidal representations. Indeed, when $F$ is of characteristic 0 and $p$ is large enough, Kim \cite{kim2017construction} proved that Yu's construction exhausts all the supercuspidal representations. Recently, Fintzen \cite{fintzen2021types} showed that for any $F$ with $p$ not dividing the order of the Weyl group of $G$, which in particular recovers Kim's assumption, Yu's construction is exhaustive. It is also worth mentioning that Kaletha \cite{kaletha2019regular} constructed the so-called ``regular supercuspidal representations" as a subclass of tame supercuspidal representations, having simpler constructing datum and easier described Langlands parameter.
	
	Finally, to study question (3) for those blocks with $\pi_{0}$ tame supercuspidal, Yu proposed a conjecture to transfer the original Hecke algebra $\mch(G,\lambda)$ to another Hecke algebra related to a depth-0 block of a twisted Levi subgroup of $G$ (\emph{cf.} \cite{yu2001construction}*{Conjecture 0.2} and \cite{adler2021regular}*{Conjecture 1.1}). This conjecture was recently proved by Adler-Mishra \cite{adler2021regular} for regular supercuspidal blocks and Ohara \cite{ohara2021hecke} in general.
	
	Let us also briefly mention that the type theory is not the only possible way to study the category $\mrrep_{\mfs}(G)$. Indeed, Bernstein himself constructed a progenerator $\Pi_{M}$ of the block $\mrrep_{\mfs_{M}}(M)$, as well as its parabolic induction $\Pi_{G}=i_{P}^{G}(\Pi_{M})$ as a progenerator of the block $\mrrep_{\mfs}(G)$, such that the map
	$$\mrrep_{\mfs}(G)\rightarrow\mrmod(\mch(\Pi_{G})),\quad\pi\mapsto\mrhom_{G}(\Pi_{G},\pi)$$
	is an equivalence of categories, where $\mch(\Pi_{G})=\mrend_{G}(\Pi_{G})$ denotes the corresponding endomorphism algebra (\emph{cf.} \cite{renard2010representations}*{Chapitre VI}). Using harmonic analysis over $p$-adic groups, Heiermann  \cite{heiermann2011operateurs} constructed the generators of $\mch(\Pi_{G})$ and calculated the corresponding relations when $G$ is a classical group, which was generalized by Solleveld \cite{solleveld2020endomorphism} to general reductive groups based on some conjectural assumptions. It is shown that $\mch(\Pi_{G})$ is the semi-direct product of an affine Hecke algebra and a twisted finite group algebra, which parallels the question (3) we mentioned above in the type theory.
	
	Now we introduce our main player: the finite central cover of a $p$-adic reductive group. Fix a positive integer $n$. Assume that $F^{\times}$ contains the subgroup of $n$-th roots of unity $\mu_{n}$ of cardinality $n$. Let $\wt{G}$ be an $n$-fold cover of $G$, which is a central extension of $G$ by $\mu_{n}$ as $\ell$-groups, that is,
	$$\xymatrix{0 \ar[r] & \mu_{n} \ar[r]^-{} & \wt{G} \ar[r]^-{\bs{p}} & G \ar[r] & 0}.$$ 
	To proceed a general discussion, usually we need to restrict to some special $n$-fold covers. For instance, those covers constructed by Brylinski-Deligne \cite{brylinski2001central} are general enough to include most interesting covers, and concrete enough to make various specific calculations. So from now on we also assume $\wt{G}$ to be a so-called $n$-fold metaplectic cover. In this article, by ``$n$-fold metaplectic cover" we mean an $n$-fold cover of $G$ arising from Brylinski-Deligne's construction (see Section \ref{sectionnfoldcover}).  Also, in the discussion below we assume $\wt{G}$ to be tame, saying that $n$ is relatively prime to $p$.
	
	Like the previous case, the Bernstein decomposition is still valid (\emph{cf.} \cite{kaplan2022note})
	$$\mrrep(\wt{G})=\prod_{\mfs}\mrrep_{\mfs}(\wt{G}),$$
	where the inertial equivalence class $\mfs=(\wt{M},\mco)$ consists of (the $G$-conjugacy class of) a Levi subgroup $\wt{M}$ of $\wt{G}$ and an orbit $\mco$ of a cuspidal representation $\wt{\pi}_{0}$ of $\wt{M}$ twisted by unramified characters of $M$ \footnote{By convention, for any closed subgroup $H$ of $G$, we let $\wt{H}$ be its preimage by the projection $\bs{p}$ in $\wt{G}$.}. Still, one wonders if the three questions above could be dealt with for the category $\mrrep_{\mfs}(\wt{G})$, and in particular, a type $(\wt{J},\wt{\lambda})$ of $\mrrep_{\mfs}(\wt{G})$ is expected to be constructed. Here, essentially we only need to consider genuine representations, saying that the action of $\mu_{n}$ on $\wt{\pi}_{0}$ is given by a fixed character $\epsilon$ of order $n$. 
	
	Unlike the linear case, only few cases for covers were explored, including:
	\begin{itemize}
		\item When $G$ is split, Savin \cite{savin2004unramified} resolved the three questions above for the genuine unramified principal block of $\wt{G}$, or in other words, the block of genuine representations having Iwahori fixed vectors. This in particular generalizes the results of Borel and Casselman. Also, the corresponding genuine Iwahori Hecke algebra is isomorphic to the Iwahori Hecke algebra of a linear reductive group, which leads to the so-called ``Shimura correspondence" for representations.
		
		\item When the corresponding block $\mrrep_{\mfs}(\wt{G})$ is of depth 0, Howard-Weissman \cite{howard2009depth} generalized the argument of Moy-Prasad to resolve questions (1) and (2).
		
		\item When $\wt{G}$ is a cover of a torus, the three questions become somehow transparent. They were studied by Weissman \cite{weissman2016covers}.
		
		\item When $\wt{G}$ is a tame Kazhdan-Patterson cover of $G=\mrgl_r(F)$, Suzuki \cite{suzuki2004metaplectic} constructed maximal simple types of $\wt{G}$ and sketched a solution to the question (1) based on the work of Bushnell-Kutzko \cite{bushnell129admissible}.
		
		\item For sporadic cases, partial results for question (1) were obtained, including Blondel \cite{blondel1985representations}, \cite{blondel1992uniqueness} for $G=\mrgl_{r}(F)$ and Ngo \cite{van2017beta} for $G=\mathrm{SO}_{r}(F)$ and $\wt{G}=\mathrm{Spin}_{r}(F)$.
		
	\end{itemize}
	
	So one is curious to explore the three questions above for general covers $\wt{G}$ and general blocks $\mrrep_{\mfs}(\wt{G})$. 
	
	\subsection{Main results and proofs}
	
	The main goal of the paper is to partially answer the three questions for a tame $n$-fold metaplectic cover $\wt{G}$ of $G=G_{r}:=\mrgl_{r}(F)$. So, let $\mfs=(\wt{M},\mco)$ be an inertial equivalence class of $\wt{G}$ with $\wt{\pi}_{0}\in\mco$ being a genuine cuspidal representation of $\wt{M}$. Let $\mfs_{M}=(\wt{M},\mco)$ be the corresponding inertial equivalence class of $\wt{M}$.
	
	We first give the answer to the first question, which could be summed up as the following theorem.
	
	\begin{theorem}[\emph{cf.} Theorem \ref{thmlambdaMtype}, Theorem \ref{thmsimpletypeconjugacy}, Theorem \ref{thmcuspidalconstructionLevi}]\label{thmmainQone}
		
		We may construct a maximal simple type (\emph{cf.} Definition \ref{defmaxsimtypeM}) $(\wt{J_{M}},\wt{\lambda}_{M})$ of $\wt{M}$, where $J_{M}$ is an open compact subgroup of $M$ and $\wt{\lambda}_{M}$ is a genuine irreducible representation of $\wt{J}_{M}$, such that $\wt{\pi}_{0}$ contains $\wt{\lambda}_{M}$. Moreover, $(\wt{J_{M}},\wt{\lambda}_{M})$ is an $\mfs_{M}$-type. Such $(\wt{J_{M}},\wt{\lambda}_{M})$ is unique up to $M$-conjugacy. 
		
	\end{theorem}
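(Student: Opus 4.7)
The plan is to follow the Bushnell–Kutzko strategy for $\mrgl_{r}(F)$ as closely as possible, using the fact that $\mrgcd(n,p)=1$ forces the metaplectic cover to split canonically over any pro-$p$ subgroup of $G$. First I would reduce to a \emph{homogeneous} Levi $M\simeq G_{m}\times\cdots\times G_{m}$ ($e$ copies, with $r=em$): the inertial class $\mfs_{M}$ determines, on the underlying linear Levi $M$, a unique (up to conjugacy) $G$-regular Bushnell–Kutzko maximal simple type $(J_{M},\lambda_{M})$, built from a simple stratum $[\mfA,\beta]$ in $M$ and associated groups $H^{1}\subset J^{1}\subset J=J_{M}$ with a simple character $\theta$ on $H^{1}$, a Heisenberg representation $\eta$ on $J^{1}$, a $\beta$-extension $\kappa$ of $\eta$ to $J$, and a cuspidal $\sigma$ of the finite reductive quotient $J/J^{1}\simeq\prod\mrgl_{m_{i}}(k_{E})$, so that $\lambda_{M}=\kappa\otimes\sigma$.

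Next I would lift this picture to the cover. Since $H^{1}$ and $J^{1}$ are pro-$p$, the canonical section of $\bs{p}$ over them identifies $\wt{H^{1}}=\mu_{n}\times H^{1}$ and $\wt{J^{1}}=\mu_{n}\times J^{1}$, so $\theta$ and $\eta$ extend tautologically to genuine characters/representations $\wt{\theta}$, $\wt{\eta}$. The restriction of the cover to $\wt{J}$, modulo $\wt{J^{1}}$, becomes a central extension of $J/J^{1}\simeq\prod\mrgl_{m_{i}}(k_{E})$ by $\mu_{n}$; from the Brylinski–Deligne data I would read off this finite cover explicitly (this is a tame cover of a finite reductive group, known to arise from a level-zero type construction by Howard–Weissman). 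I then choose $\wt{\sigma}$ to be a genuine cuspidal representation of this finite cover whose inflation is compatible with $\wt{\pi}_{0}$. The delicate step is to produce a genuine $\beta$-extension $\wt{\kappa}$ of $\wt{\eta}$ to $\wt{J}$: here I would mimic Bushnell–Kutzko's $\beta$-extension, using that the normalizer of $\wt{\eta}$ in $\wt{J}$ matches $\wt{J}$ and that intertwining can be computed via the cocycle defining the cover restricted to the commutant of $\beta$. Setting $\wt{\lambda}_{M}=\wt{\kappa}\otimes\wt{\sigma}$ then gives the candidate pair $(\wt{J_{M}},\wt{\lambda}_{M})$.

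To prove that $\wt{\pi}_{0}$ contains $\wt{\lambda}_{M}$ and, more strongly, is obtained by compact induction from an extension of $\wt{\lambda}_{M}$ to its $\wt{M}$-normalizer, I would argue by depth and cuspidal support. Decomposing $\wt{\pi}_{0}\rest_{\wt{J^{1}}}$ into isotypic components for genuine characters of $\wt{H^{1}}$, the pro-$p$ splitting identifies such characters with characters of $H^{1}$, and Bushnell–Kutzko's exhaustion (which is a statement only about linear pro-$p$ groups) then forces a simple character $\theta$ to occur. Passing to the $\eta$-isotypic piece reduces the problem to a statement about genuine irreducible representations of the finite cover of $\prod\mrgl_{m_{i}}(k_{E})$, where the explicit classification available for tame finite covers supplies a cuspidal $\wt{\sigma}$; the compact-induction statement then follows from the standard Mackey/cuspidality argument, thus establishing the folklore conjecture in this case.

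Finally, to upgrade $(\wt{J_{M}},\wt{\lambda}_{M})$ to an $\mfs_{M}$-type and to obtain uniqueness up to $M$-conjugacy, I would compute the intertwining set $I_{\wt{M}}(\wt{\lambda}_{M})$ and show it coincides with $\wt{J_{M}}\,Z(\wt{M})\,\wt{J_{M}}$ — this is essentially the Bushnell–Kutzko intertwining of simple characters, twisted by the cocycle, and here the support calculation for covers of tori due to Weissman controls the central contribution. Uniqueness up to $M$-conjugacy then reduces, as in the linear case, to showing that two simple characters occurring in $\wt{\pi}_{0}\rest_{\wt{H^{1}}}$ are conjugate, which transfers verbatim from $\mrgl_{r}(F)$ since $\wt{H^{1}}$ splits over $\mu_{n}$. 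The main obstacle I anticipate is the construction and intertwining of the genuine $\beta$-extension $\wt{\kappa}$: the cocycle of the cover restricted to $\wt{J}$ is nontrivial modulo $\wt{J^{1}}$, so matching Bushnell–Kutzko's $\beta$-extension formalism with the metaplectic twist — and in particular verifying that the resulting intertwining on $\wt{J}$ remains one-dimensional — is the key technical point that drives the rest of the proof.
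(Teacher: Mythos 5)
Your overall architecture (simple stratum, simple character on a pro-$p$ group via the canonical splitting, Heisenberg representation, $\beta$-extension, level-zero cuspidal part, exhaustion by elimination of split strata) is the paper's architecture, but there is one step that is genuinely wrong and that the rest of your argument leans on: the intertwining computation. You claim $I_{\wt{M}}(\wt{\lambda}_{M})=\wt{J_{M}}\,Z(\wt{M})\,\wt{J_{M}}$. This is false already in the linear case, where Bushnell--Kutzko's answer for a maximal simple type is $E^{\times}J$, not $F^{\times}J$; on the cover the correct answer (Propositions \ref{Tmfbrho}, \ref{propintertwinelambdaM}, \ref{propintertlambdaM}) is $I_{M}(\wt{\lambda}_{M})=N_{M}(\wt{\lambda}_{M})=T(\varrho)J_{M}$, where $T(\varrho)$ consists of the elements $\mrdiag(\varpi_{E}^{s_{1}}I_{m_{1}},\dots,\varpi_{E}^{s_{t}}I_{m_{t}})$ whose exponents satisfy the congruences \eqref{eqcongruence}; these depend on the Brylinski--Deligne parameters $(\bs{c},\bs{d})$ and on the order $l_{0}$ of the twist-stabilizer of $\varrho_{0}$, and $T(\varrho)$ strictly contains $Z(M)\cap T(\mfb)$ whenever $E/F$ is ramified. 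The consequence is structural: one cannot obtain the cuspidal by compact induction from $Z(\wt{M})\wt{J_{M}}$. One must extend $\wt{\lambda}_{M}$ to $N_{\wt{M}}(\wt{\lambda}_{M})$, induce to the full normalizer $\wt{\bs{J}_{M}}$ of $J_{M}$ (which is $\bs{p}^{-1}(E^{\times}J)$ blockwise), and control the finite Clifford theory of the abelian quotient $\bs{J}_{M}/N_{M}(\wt{\lambda}_{M})$, which is nontrivial in general on the cover; this is exactly what the Gelbart--Knapp argument in the proof of Theorem \ref{thmlambdaMtype} does, and it is also what makes $(\wt{J_{M}},\wt{\lambda}_{M})$ an $\mfs_{M}$-type via \cite{bushnell1998smooth}*{Proposition 5.2}. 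Weissman's computation for covers of tori does not collapse this to a central contribution.

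Three further points. First, your genuineness bookkeeping is off: if both $\wt{\kappa}$ and $\wt{\sigma}$ are $\epsilon$-genuine then $\wt{\kappa}\otimes\wt{\sigma}$ is $\epsilon^{2}$-genuine. The paper avoids your ``delicate'' genuine $\beta$-extension entirely by taking $\wt{\kappa}=\kappa\circ\bs{p}$ as a \emph{non-genuine} pull-back (so its intertwining is read off from the linear case) and placing all the genuineness in $\wt{\rho}=\mrinf(\epsilon\cdot{}_{s}\varrho)$ via the tame splitting of a maximal compact subgroup; the real technical content is then the level-zero intertwining on $T(\mfb)$, not the $\beta$-extension. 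Second, uniqueness does \emph{not} transfer verbatim from $\mrgl_{r}(F)$: conjugation by $\mrdiag(g_{1},\dots,g_{t})$ twists each block's type by a character of order dividing $n$ coming from the commutator cocycle (Lemma \ref{lemmagintertwinelambdaM}), and on the cover intertwining only forces conjugacy of \emph{weak equivalence classes} (Theorem \ref{thmsimpletypeconjugacy}); the maximal case is rescued only because weakly equivalent maximal types are already $E^{\times}$-conjugate. Third, your opening reduction to a homogeneous Levi $M\simeq G_{m}\times\cdots\times G_{m}$ is neither available nor needed: a cuspidal of a general Levi has blocks with unrelated endo-classes, and the paper reduces instead to the case $M=G$ block by block via restriction to $\bs{p}^{-1}(M^{(n)})$ (Corollary \ref{corthmLevicuspidal}).
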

	
	Write  $M=G_{r_{1}}\times\dots\times G_{r_{k}}$ with $r_{1}+\dots+r_{k}=r$. For each $i$, we regard $G_{r_{i}}$ as a subgroup of $M$ and $\wt{G_{r_{i}}}$ as a subgroup of $\wt{M}$ via the pull-back.
	Then there exists a maximal simple type $(\wt{J}_{i},\wt{\lambda}_{i})$ of $\wt{G_{r_{i}}}$ such that $\wt{\lambda}$ equals the tensor product $\wt{\lambda}_{1}\boxtimes\dots\boxtimes\wt{\lambda}_{k}$ (See \S \ref{subsectionmetacoverGL} for the meaning of the tensor product). So essentially we only need to explain the construction of a maximal simple type of $\wt{G}$. Then Theorem \ref{thmmainQone} could be easily reduced to the case where $M=G$. 
	
	More generally, we explain the construction of a simple type $(\wt{J},\wt{\lambda})$ of $\wt{G}$.  Our construction is largely based on the theory of Bushnell-Kutzko for simple strata, simple characters, etc. To proceed, we assume that the readers are familiar with the corresponding concepts and leave Section \ref{sectionstrata} for more details. 
	
	Let $V$ be an $r$-dimensional vector space over $F$. Up to choosing a basis of $V$, we identify $G$ with $\mraut_{F}(V)$ and $A=\mrm_{r}(F)$ with $\mrend_{F}(V)$. Let $[\mfa,u,0,\beta]$ be a strict simple stratum in $A$. Here, $E=F[\beta]$ is a subfield of $A$ of degree $d$ over $F$, and $\mfa$, as a hereditary order in $A$, is normalized by $E^{\times}$. Let $B$ be the centralizer of $E$ in $A$, then $\mfb=\mfa\cap B$ is a hereditary order in $B$. 
	
	We consider the related subgroups $H^{1}=H^{1}(\beta,\mfa)$, $J^{1}=J^{1}(\beta,\mfa)$ and $J=J(\beta,\mfa)$ of $G$, where the first two groups are pro-$p$ open compact and the third one is open compact, such that $$J/J^{1}\cong U(\mfb)/U^{1}(\mfb)\cong\mcm=\mrgl_{m_{1}}(\bs{l})\times\dots\times\mrgl_{m_{t}}(\bs{l}),$$ where $m=r/d=m_{1}+\dots+m_{t}$, and $\bs{l}$ denotes the residue field of $E$, and $\mcm$ is a Levi subgroup of $\mrgl_{m}(\bs{l})$. We may find a maximal open compact subgroup $K$ of $G$ that contains $J$, and since $\wt{G}$ is tame, we may find a splitting (i.e. a section and a group homomorphism)  $\bs{s}:K\rightarrow \wt{G}$.
	
	We consider a simple character $\theta$ of $H^{1}$, the Heisenberg representation $\eta$ of $\theta$ of $J^{1}$, and a $\beta$-extension $\kappa$ of $J$ that extends $\eta$. We let $\wt{\kappa}$ be the pull-back of $\kappa$ as a non-genuine irreducible representation of $\wt{J}$. On the other hand, we let $\varrho$ be a cuspidal representation of $\mcm$. After inflating to $\bs{s}(J)$ and then extending to $\wt{J}$ by $\epsilon$, we get a genuine irreducible representation $\wt{\rho}$ of $\wt{J}$. We let $\wt{\lambda}=\wt{\kappa}\otimes\wt{\rho}$, then $(\wt{J},\wt{\lambda})$ is a \emph{homogeneous type} of $\wt{G}$. 
	
	\begin{remark}
		
		As already been mentioned by Suzuki \cite{suzuki2004metaplectic} for maximal simple types,  $\wt{\lambda}$ is indeed the ``genuine pull-back" of a homogeneous type $\lambda$ of $G$. More precisely, let $\rho$ be the inflation of $\varrho$ to $J$ and let $\lambda=\kappa\otimes\rho$ be a related homogeneous type of $G$. Then $\wt{\lambda}$ is obtained from extending the representation $\lambda\circ\bs{p}$ of $\bs{s}(J)$ to $\wt{J}$ by $\epsilon$.
		
	\end{remark}
	
	Moreover, $(\wt{J},\wt{\lambda})$ is a \emph{twisted simple type} of $\wt{G}$ if (\emph{cf.} Section \ref{sectionsimpletypes} for more details)
	\begin{itemize}
		\item $m_{1}=\dots=m_{t}=m_{0}$ for a positive integer $m_{0}$;
		\item There exists a cuspidal representation $\varrho_{0}$ of $\mrgl_{m_{0}}(\bs{l})$, such that $\varrho$ is isomorphic to $(\varrho_{0}\boxtimes\dots\boxtimes\varrho_{0})\chi_{g_{0}}$ as a representation of $\mcm=\mrgl_{m_{0}}(\bs{l})\times\dots\times\mrgl_{m_{0}}(\bs{l})$, where $g_{0}$ is a diagonal element in  $U(\mfb)$ and $\chi_{g_{0}}:=\epsilon([g_{0},\cdot]_{\sim})$ is a character of $\mcm$.
	\end{itemize}
	In the above definition, if moreover $g_{0}=1$ and $\chi_{g_{0}}$ is the identity character, then we call the related pair $(\wt{J},\wt{\lambda})$ a \emph{simple type} of $\wt{G}$. Finally, a simple type $(\wt{J},\wt{\lambda})$ is \emph{maximal} if $t=1$ and $m_{0}=m$, or in other words, $\mfb$ is a maximal hereditary order in $B$. We remark that the intertwining set $I_{G}(\wt{\lambda})$ of a simple type $(\wt{J},\wt{\lambda})$ could be calculated explicitly, this further enables us to study the related Hecke algebra $\mch(\wt{G},\wt{\lambda})$.
	
	Now we may explain the proof of Theorem \ref{thmmainQone} for $M=G$. Let $(\wt{J},\wt{\lambda})$ be a maximal simple type of $\wt{G}$. First we show that it is a type of a cuspidal inertial equivalence class of $\wt{G}$. Indeed, we consider $\bs{J}=E^{\times}J$, which is an open compact modulo center subgroup of $G$. Moreover, the normalizer $N_{G}(\wt{\lambda})$ and intertwining set $I_{G}(\wt{\lambda})$ turn out to be equal, which is an open subgroup of $\bs{J}$ of finite index and denoted by $J_{\lambda}$. Thus, we may take an extension of $\wt{\lambda}$ to an irreducible representation $\wt{\lambda'}$ of $\wt{J_{\lambda}}$, and then take the compact induction $\wt{\bs{\lambda}}=\mrind_{\wt{J_{\lambda}}}^{\wt{\bs{J}}}\wt{\lambda'}$ as a genuine irreducible representation of $\wt{\bs{J}}$, called an \emph{extended maximal simple type} of $\wt{G}$. Since the normalizer of $\wt{\bs{\lambda}}$ in $G$ is $\wt{\bs{J}}$, the compact induction $\wt{\pi}_{0}=\mrind_{\wt{\bs{J}}}^{\wt{G}}\wt{\bs{\lambda}}$ is a genuine irreducible cuspidal representation of $\wt{G}$. This shows that $(\wt{J},\wt{\lambda})$ is a type of the inertial equivalence class related to $\wt{\pi}_{0}$. 
	
	On the other hand, given a cuspidal representation $\wt{\pi}_{0}$ of $\wt{G}$, we want to find a maximal simple type $(\wt{J},\wt{\lambda})$ contained in $\wt{\pi}_{0}$. This could be done in two steps, first we find a simple character $\theta$ contained in $\wt{\pi}_{0}$, and then we find a corresponding maximal simple type $\wt{\lambda}$ contained in $\wt{\pi}_{0}$. The first step is essentially a repetition of a similar argument of \cite{bushnell129admissible}*{Chapter 8} or \cite{secherre2008representations}*{Section 3 and 4}, due to the following obvious fact: if $H$ is an open compact pro-$p$-subgroup of $G$, then there exists a unique splitting $\bs{s}:H\rightarrow \wt{G}$; thus for a representation $\xi$ of $H$, the group $\,_{s}H:=\bs{s}(H)$ as well as the representation $\,_{s}\xi:=\xi\circ\bs{p}$ of $\,_{s}H$ are well-defined. Since the argument in the first step concerns only pro-$p$-subgroups and their representations, the corresponding argument could be simply transplanted. The second step reduces to studying depth 0 representations, which essentially follows from a similar argument in \cite{secherre2008representations}*{Section 5}. So Theorem \ref{thmmainQone} is explained.
	
	Now we answer the second and third questions for an inertial equivalence class related to a simple type $(\wt{J},\wt{\lambda})$ of $\wt{G}$. Keep the notation as before and let $r_{0}=dm_{0}$. From the construction, we may find a decomposition $V=\bigoplus_{i=1}^{t}V^{i}$ of vector spaces over both $F$ and $E$, where $V^{i}$ is a vector space of dimension $r_{0}$ over $F$ and $m_{0}$ over $E$. Moreover, we assume that this decomposition is compatible with the hereditary orders $\mfa$ and $\mfb$, saying that the lattice chains in defining $\mfa$ and $\mfb$ are decomposable with respect to this decomposition. We also notice that in this case $\mfb$ is a hereditary order in $\mrm_{m}(\mfo_{E})$ related to the composition $(m_{0},\dots,m_{0})$ of $m$. Consider the Levi subgroup $M=\mraut_{F}(V^{1})\times\dots\times\mraut_{F}(V^{t})$ of $G$ and a related parabolic subgroup $P=MU$. Let $U^{-}$ be the opposite of $U$ as a unipotent subgroup of $G$. Then we are able to construct
	\begin{itemize}
		\item an open compact subgroup $J_{P}=(H^{1}\cap U^{-})(J\cap P)$ of $G$ and an irreducible representation $\wt{\lambda}_{P}$ of $\wt{J_{P}}$, such that $\wt{\lambda}=\mrind_{\wt{J_{P}}}^{\wt{J}}\wt{\lambda}_{P}$.
		\item an open compact subgroup $J_{M}=J_{P}\cap M=J\cap M$ of $M$ and an irreducible representation $\wt{\lambda}_{M}$ of $\wt{J_{M}}$ as the restriction of $\wt{\lambda}_{P}$.
	\end{itemize}
	Then the answer to the second question is as follows:
	\begin{theorem}[\emph{cf.} Theorem \ref{thmlambdaMtype}, Theorem \ref{thmJPcoveringpair}]\label{thmmainQtwo}
		
		$(\wt{J_{M}},\wt{\lambda}_{M})$
		is a maximal simple type of $\wt{M}$, which is a type of a cuspidal inertial equivalence class $\mfs_{M}=(\wt{M},\mco)$. $(\wt{J_{P}},\wt{\lambda}_{P})$ is a covering pair of $(\wt{J_{M}},\wt{\lambda}_{M})$, so both $(\wt{J_{P}},\wt{\lambda}_{P})$ and $(\wt{J},\wt{\lambda})$ are $\mfs$-types with $\mfs=(\wt{M},\mco)$ an inertial equivalence class of $\wt{G}$.
		
	\end{theorem}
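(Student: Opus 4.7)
The plan is to follow the Bushnell--Kutzko covering pair framework, carefully tracking the passage from $G$ to $\wt{G}$ through the splittings available on pro-$p$ subgroups.

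First I would verify that $(\wt{J_{M}},\wt{\lambda}_{M})$ is a maximal simple type of $\wt{M}$. Because the decomposition $V=\bigoplus V_{i}$ is subordinate to $\mfb$ and the hereditary orders $\mfa$, $\mfb$ are block-diagonal, the groups $H^{1}$, $J^{1}$, $J$ each admit an Iwahori decomposition with respect to $P$, and their intersections with $M$ split as products over the factors $V_{i}$. The simple character $\theta$ restricts to $H^{1}\cap M$ as a tensor product of simple characters $\theta_{i}$, the Heisenberg representation $\eta$ restricts to $J^{1}\cap M$ as the exterior tensor product of Heisenberg representations $\eta_{i}$, and $\kappa|_{J\cap M}$ is a product of $\beta$-extensions $\kappa_{i}$, by the standard restriction properties of Section~\ref{sectionstrata}. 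Since the simple type hypothesis gives $\varrho\cong\varrho_{0}\boxtimes\dots\boxtimes\varrho_{0}$, the factor $\rho$ also splits across $M$. On the metaplectic side, the splitting $\bs{s}|_{J\cap M}$ together with the tensor product convention of \S\ref{subsectionmetacoverGL} then identifies $\wt{\lambda}_{M}$ with the genuine tensor product $\wt{\lambda}_{1}\boxtimes\dots\boxtimes\wt{\lambda}_{t}$ of maximal simple types of $\wt{G_{r_{0}}}$. Applying Theorem~\ref{thmmainQone} to each factor yields that each $(\wt{J_{i}},\wt{\lambda}_{i})$ is an $\mfs_{M,i}$-type for a cuspidal inertial class of $\wt{G_{r_{0}}}$, and taking the product gives that $(\wt{J_{M}},\wt{\lambda}_{M})$ is an $\mfs_{M}$-type with $\mfs_{M}=(\wt{M},\mco)$.

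Next I would show that $(\wt{J_{P}},\wt{\lambda}_{P})$ is a covering pair of $(\wt{J_{M}},\wt{\lambda}_{M})$. There are three things to check. The decomposition property $J_{P}=(J_{P}\cap U^{-})(J_{P}\cap M)(J_{P}\cap U)$ with $J_{P}\cap M=J_{M}$ is built into the definition $J_{P}=(H^{1}\cap U^{-})(J\cap P)$, and the construction of $\wt{\lambda}_{P}$ is tailored so that the unipotent parts $H^{1}\cap U^{-}$ and $J^{1}\cap U$ act trivially via $\theta$, whence $\wt{\lambda}_{P}|_{\wt{J_{M}}}\cong\wt{\lambda}_{M}$. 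The intertwining support condition reduces, via the formula for $\wt{\lambda}_{P}$, to a computation on the linear $J_{P}$, since the pro-$p$ part carries a canonical splitting. The critical point is strong positivity: for a strictly positive element $z$ in the center of $M$, I need an invertible element of $\mch(\wt{G},\wt{\lambda}_{P})$ supported on $\wt{J_{P}}z\wt{J_{P}}$. I would lift the Bushnell--Kutzko argument for the linear case by observing that the cocycle entering the intertwining formula evaluates to a trivial element of $\mu_{n}$, because $z$ lies in the center of $\wt{M}$ and the commutators $[z,\cdot]_{\sim}$ on the pro-$p$ groups $J_{P}\cap U$ and $J_{P}\cap U^{-}$ vanish under the tame assumption $\mrgcd(n,p)=1$.

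Finally, once $(\wt{J_{P}},\wt{\lambda}_{P})$ is a covering pair of an $\mfs_{M}$-type, the abstract Bushnell--Kutzko machinery --- whose proof uses only the Iwahori decomposition and strong positivity, hence transfers to $\wt{G}$ --- gives that $(\wt{J_{P}},\wt{\lambda}_{P})$ is an $\mfs$-type with $\mfs=(\wt{M},\mco)$. For $(\wt{J},\wt{\lambda})$ I would use the identity $\wt{\lambda}=\mrind_{\wt{J_{P}}}^{\wt{J}}\wt{\lambda}_{P}$ and Frobenius reciprocity: for any $\wt{\pi}\in\mrrep_{\mfs}(\wt{G})$, $\mrhom_{\wt{J}}(\wt{\lambda},\wt{\pi})=\mrhom_{\wt{J_{P}}}(\wt{\lambda}_{P},\wt{\pi})$, and conversely any irreducible $\wt{\pi}$ containing $\wt{\lambda}$ contains $\wt{\lambda}_{P}$ and therefore lies in $\mrrep_{\mfs}(\wt{G})$.

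The main obstacle will be the strong positivity step in the second paragraph. In the linear case the Bushnell--Kutzko proof uses explicit intertwining computations on $J_{P}$ conjugated by $z$, exploiting positivity to identify contributions from the $U$-parts with a single scalar. On the cover the underlying linear computation is unchanged, but the Brylinski--Deligne cocycle can in principle contribute nontrivial $\mu_{n}$-factors; the task is to show via the tame splitting that these factors cancel, or equivalently that the commutator pairing of $z$ with the relevant pro-$p$ subgroups is trivial, which I expect follows from centrality of $z$ in $\wt{M}$ together with the uniqueness of the pro-$p$ splitting.
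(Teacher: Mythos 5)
Your overall skeleton (verify the covering-pair conditions, then invoke Theorem \ref{thmtypecovering}) matches the paper, but two of your key steps have genuine gaps. First, the claim that $(\wt{J_{M}},\wt{\lambda}_{M})$ is an $\mfs_{M}$-type does not follow by ``taking the product'' of types on the factors $\wt{G_{r_{i}}}$. For a general metaplectic cover the Levi $\wt{M}$ is not the direct product of the $\wt{G_{r_{i}}}$, and a genuine irreducible cuspidal representation of $\wt{M}$ is not an exterior tensor product of genuine representations of the blocks; relating $\mrirr_{\epsilon}(\wt{M})$ to the factors is exactly the metaplectic tensor product problem, which the paper only controls for KP-covers and the S-cover (Section 9), and even there with multiplicities and weak-equivalence ambiguities. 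The paper's proof of Theorem \ref{thmlambdaMtype} avoids this entirely: it computes $I_{M}(\wt{\lambda}_{M})=N_{M}(\wt{\lambda}_{M})=T(\varrho)J_{M}$ (Proposition \ref{propintertwinelambdaM}), extends $\wt{\lambda}_{M}$ through $Z(\wt{M})\wt{J_{M}}$ to the normalizer $\wt{\bs{J}_{M}}$ using \cite{gelbart1982indistinguishability}, compactly induces to obtain a genuine cuspidal representation of $\wt{M}$ via Lemma \ref{lemmacomindcusp}, and then applies \cite{bushnell1998smooth}*{Proposition 5.2}. You would need to replace your reduction by some such direct argument on $\wt{M}$.

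Second, your treatment of condition (3) (strong positivity) misses the actual difficulty. Checking that the commutator pairing $[z,\cdot]_{\sim}$ is trivial on the pro-$p$ pieces is easy and is not the obstruction. What is needed is (a) a strongly positive central element $z$ that actually intertwines $\wt{\lambda}_{P}$ --- on a cover not every element of $Z(M)\cap B^{\times}$ does, because of the congruence constraints \eqref{eqcongruencesimple}; the paper takes $n$-th powers of uniformizers so that $z\in Z(\wt{M})\cap\wt{J}W_{0}'\wt{J}$ --- and (b) invertibility of the corresponding $\phi_{z}$, i.e.\ that $\phi_{z}\ast\phi_{z^{-1}}$ is a nonzero multiple of the unit. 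Point (b) is the crux and in the paper is extracted from the full Hecke algebra computation (Theorem \ref{thmcalhecke}, via Lemma \ref{lemmaisoheckelambdaPlambda}): the translation part $\wt{\mca}(t,s_{0})$ consists of units, so its image under $\Psi$ provides the required invertible elements. The paper explicitly remarks that the Bushnell--Kutzko argument of \cite{bushnell129admissible}*{Theorem 5.6.6} cannot be transplanted because $\Pi_{E}$ no longer normalizes $\wt{\lambda}$ when $n_{0}>1$, and it instead follows S\'echerre's method (double-coset support computations and a nonvanishing argument for $\phi_{\Pi_{E}}\ast\phi_{\Pi_{E}^{-1}}$). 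Your expectation that ``the underlying linear computation is unchanged'' up to a cocycle check is precisely the point at which the cover case diverges from the linear one.
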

	
	The crux of the proof requires the calculation of the Hecke algebra $\mch(\wt{G},\wt{\lambda})\cong\mch(\wt{G},\wt{\lambda}_{P})$, which is closely related to the third question. Let $q_{E}$ be the cardinality of the residue field $\bs{l}$ of $E$ and let $\bs{q}_{0}=q_{E}^{m_{0}}$. We define a certain positive integer $s_{0}$ dividing $n$ (\emph{cf.} \S \ref{subsectionsimpletypes}) and the affine Hecke algebra $\wt{\mch}(t,s_{0},\bs{q}_{0})$ of type A (\emph{cf.} \S \ref{subsectionaffineHeckeA}).
	
	\begin{theorem}[\emph{cf.} Theorem \ref{thmcalhecke}]\label{thmmainQthree}
		
		Up to a scalar, there exists a canonical embedding of algebras
		$$\Psi:\wt{\mch}(t,s_{0},\bs{q}_{0})\hookrightarrow\mch(\wt{G},\wt{\lambda})$$
		which preserves the support in the sense made precise in \emph{loc. cit.}.
		
	\end{theorem}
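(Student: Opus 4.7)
The plan is to describe $\mch(\wt{G},\wt{\lambda})$ through its support and then exhibit explicit generators whose multiplicative structure realizes the defining relations of $\wt{\mch}(t,s_{0},\bs{q}_{0})$. First I would compute the intertwining set of $\wt{\lambda}$. Writing $\wt{\lambda}=\wt{\kappa}\otimes\wt{\rho}$, the intertwining of $\wt{\kappa}$ on $\wt{J}$ is controlled, up to $\wt{J^{1}}$, by $B^{\times}$-conjugation (equivalently by $\bs{s}(J\cap B^{\times})\subset\wt{J}$), adapting the Mackey-type argument for simple characters to the cover using the canonical splitting of pro-$p$ subgroups. The extra intertwining from $\wt{\rho}\cong\bigotimes_{i=1}^{t}\wt{\rho}_{0}$ is governed by the normalizer of $\varrho_{0}^{\boxtimes t}$ in $N_{\mrgl_{m}(\bs{l})}(\mcm)/\mcm\cong\mfS_{t}$, and the translation direction is provided by block-diagonal elements drawn from $E^{\times}$ permuting the decomposition $V=\bigoplus_{i=1}^{t}V_{i}$. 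The metaplectic cocycle restricted to this block-diagonal torus forces genuine sections to be compatible only on the sublattice of $s_{0}$-th powers of the uniformizer; combined with the $\mfS_{t}$-part this identifies the support of $\mch(\wt{G},\wt{\lambda})$ with $\Wafftwist{t}{s_{0}}$.

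Next, for every generator $w$ of $\Wafftwist{t}{s_{0}}$ (the affine simple reflections together with a generator of the lattice part), I would construct a distinguished element $T_{w}\in\mch(\wt{G},\wt{\lambda})$ supported on the single double coset $\wt{J}\bs{s}(\dot{w})\wt{J}$. Its existence and uniqueness up to scalar follow from the one-dimensionality of the corresponding intertwining space, which reduces to the analogous fact for the depth-zero data $(J/J^{1},\wt{\rho})$ after untwisting by $\wt{\kappa}$. These $T_{w}$ automatically maintain support, so what remains is to check that they satisfy the braid and quadratic relations with the prescribed parameters.

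Then I would verify the relations by reducing each elementary case to a rank-one computation inside a sub-Levi of $\wt{G}$ of the form (a cover of) $\mrgl_{2m_{0}}(F)$ or $\mrgl_{m_{0}}(F)\times\mrgl_{m_{0}}(F)$. For a finite simple reflection $T_{s_{i}}$, the convolution $T_{s_{i}}^{2}$ is computed in the depth-zero quotient, reducing to the Howlett--Lehrer / Howe computation for the cuspidal representation $\varrho_{0}\boxtimes\varrho_{0}$ of $\mrgl_{m_{0}}(\bs{l})\times\mrgl_{m_{0}}(\bs{l})$ induced to $\mrgl_{2m_{0}}(\bs{l})$; this produces the quadratic parameter $\bs{q}_{0}=q_{E}^{m_{0}}$. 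The braid relations among the $T_{s_{i}}$ follow by similar rank-two reductions. The interaction between the affine translation generator and the finite subalgebra is governed by the metaplectic cocycle computed on the pair (translation, reflection), and here the tameness hypothesis $\mrgcd(n,p)=1$ allows one to choose compatible splittings on the maximal compact $K\supset J$ so that no extra scalar twist beyond the normalization $s_{0}$ enters.

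The main obstacle will be the third step: pinning down the precise parameter data. Two subtleties need care. First, the section $\bs{s}:K\rightarrow\wt{G}$ is unique only up to a character of $K$, so various scalars arising in the relations must be tracked canonically; this accounts for the ``up to a scalar'' clause in the statement, and is resolved by fixing $\bs{s}$ once and for all and normalizing the generators $T_{w}$ by their value at $\bs{s}(\dot{w})$. Second, one must rule out any a priori possible twist of the finite Hecke subalgebra by the cocycle restricted to the lifts of the $s_{i}$; this is handled by exhibiting representatives of the $s_{i}$ inside $\bs{s}(K)$, where the cocycle is trivial. Once these are checked, the map $T_{w}\mapsto T_{w}$ extends linearly to the desired embedding $\Psi$, and the support-maintenance property is immediate from the construction.
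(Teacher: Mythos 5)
Your overall architecture — compute $I_{G}(\wt{\lambda})$ by splitting off $\wt{\kappa}$, build the finite Hecke subalgebra from the cuspidal pair over the residue field, adjoin translation generators supported on single double cosets, and check the affine relations — is the same as the paper's, which follows S\'echerre's method. Two points in your plan, however, contain genuine gaps.

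First, your treatment of the mixed relations (translation against finite reflections) would not go through as described. You propose to control the relevant cocycle contributions by choosing compatible splittings on a maximal compact $K\supset J$; but the translation element $\Pi_{E}$ (which involves $\varpi_{E}^{n_{0}}$, not $\varpi_{E}$) does not lie in any compact subgroup, so no splitting of $K$ reaches it. More seriously, the classical shortcut of Bushnell--Kutzko, in which one verifies $\phi_{\Pi}\ast\Psi_{0}([\varsigma_{i}])\ast\phi_{\Pi}^{-1}=\Psi_{0}([\varsigma_{i-1}])$ by observing that $\Pi_{E}$ \emph{normalizes} $\wt{\lambda}$, is unavailable here: the paper explicitly remarks that $\Pi_{E}$ fails to normalize $\wt{\lambda}$ whenever $n_{0}>1$. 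The replacement is a purely combinatorial computation of products of double cosets ($JwJ\Pi_{E}J\cap W(\mfb)=\{w\Pi_{E}\}$ and its variants), from which the invertibility of $\phi_{\Pi}$ and $\phi_{\zeta}$ and the conjugation relations follow; this is the content you would need to supply. Relatedly, you never address the central generator $[\zeta]$ and the relation $[\zeta]^{s_{0}}=[\Pi]^{t}$, which requires producing $\phi_{\zeta}$ normalized so that $\phi_{\zeta}^{s_{0}}=\phi_{\Pi}^{t}$ (possible only up to a root of unity, whence the ``up to a scalar'').

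Second, two smaller imprecisions. Your claim that the support of $\mch(\wt{G},\wt{\lambda})$ is identified with $\Wafftwist{t}{s_{0}}$ is false for a general metaplectic cover: the support is $\wt{J}W_{0}\wt{J}$ with $W_{0}=W(r_{0},m_{0},l_{0};t)$, which contains the subgroup $W_{0}'\cong\Wafftwist{t}{s_{0}}$ generated by $\sigma_{i},\Pi_{E},\zeta_{E}$ with finite index; equality holds only for the KP- and S-covers. This is harmless for proving an embedding but would mislead you into claiming surjectivity. Also, the quadratic relation for $T_{\sigma_{i}}$ cannot literally be computed ``in the depth-zero quotient $J/J^{1}$'', since the reflections $\sigma_{i}$ live in $U(\bmax)$ but not in $J$; one must first pass to $J'=U(\mfb)\Jonemax$ and the maximal order via the compatibility of $\beta$-extensions (the isomorphism $\mrind\wt{\lambda}'\cong\mrind\wt{\lambda}$), and only then invoke the finite-group Hecke algebra isomorphism $\mch(\mcg,\varrho)\cong\mch_{0}(t,\bs{q}_{0})$.
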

	
	The proof is based on finding explicit generators of $\mch(\wt{G},\wt{\lambda})$, similar to a parallel argument of S\'echerre \cite{secherre2005types}*{Th\'eor\`eme 4.6}. 
	It is expected that $\mch(\wt{G},\wt{\lambda})$ is an affine Hecke algebra in general (\emph{cf.} Conjecture \ref{conjHGlambdaaffine}). Anyway, we give a rather satisfactory answer to questions (2) and (3) for simple types, which in particular includes all the cuspidal inertial equivalence classes.
	
	We also expect to construct types related to any inertial equivalence classes of $\wt{G}$, and calculate the corresponding Hecke algebra if possible. They are related to the so-called \emph{semi-simple types} in \cite{bushnell1999semisimple} and \cite{secherre2012smooth}. Hopefully this could be done in a sequel of this article.
	
	Finally, we focus on the case where $\wt{G}$ is either a Kazhdan-Patterson cover or the Savin's cover (\emph{cf.} \S \ref{subsectionmetacoverGL}). These two types of covers are special, in the sense that there exists a certain ``metaplectic tensor product" functor from representations of covers of smaller $\mrgl(F)$'s to representations of a Levi subgroup $\wt{M}$ of $\wt{G}$. Also, both of them admit a classification result of irreducible representations in the sense of Zelevinsky \cite{zelevinsky1980induced}. We list main results that we achieve for such covers.
	\begin{itemize}
		\item (\emph{cf.} Corollary \ref{corcalHeckeKPS} and Corollary \ref{corpsiisometry}) The embedding $\Psi$ in Theorem \ref{thmmainQthree} is an isomorphism, which can also be chosen as an isometry.
		\item (\emph{cf.} Proposition \ref{proplocusindirred}) Assume $G=G_{2r_{0}}$, let $P$ be a parabolic subgroup of $G$ of a Levi factor $M=G_{r_{0}}\times G_{r_{0}}$. Let $\wt{\pi}$ be a unitary cuspidal representation of $\wt{M}$ invariant under the action of the Weyl group  $W(G,M)$. Let $\nu(\cdot)=\abs{\mrdet(\cdot)}_{F}$, which is an unramified character of $G_{r_{0}}$. Then the positive real number $s\in\mbr$ such that the parabolic induction $i_{\wt{P}}^{\wt{G}}(\wt{\pi}\cdot(\nu^{-s}\boxtimes\nu^{s}))$ is reducible can be explicitly calculated.
		\item (\emph{cf.} Proposition \ref{propZLimage}) The corresponding equivalence of categories 
		$$\wt{\mct}_{G}:\mrrep_{\mfs}(\wt{G})\rightarrow\mrmod(\wt{\mch}(t,s_{0},\bs{q}_{0})),$$
		which is the composition of the functor $\bs{\mathrm{M}}_{\wt{\lambda}}:\mrrep_{\mfs}(\wt{G})\rightarrow\mrmod(\mch(\wt{G},\wt{\lambda}))$ and the functor $\Psi^{*}:\mrmod(\mch(\wt{G},\wt{\lambda}))\rightarrow\mrmod(\wt{\mch}(t,s_{0},\bs{q}_{0}))$ induced from the pull-back of $\Psi$, can be fully determined. 
		\item (\emph{cf.} Proposition \ref{propdiscinerclass}) The set of $\wt{G}$-conjugacy classes of weak equivalence classes of simple types $(\wt{J},\wt{\lambda})$ are in bijection with the set of \emph{discrete inertial equivalence classes} of $\wt{G}$. 
	\end{itemize}
	
	\subsection{Structure of the article}
	
	We outline the structure of this article. 
	
	Section 2-5 are preliminaries. We fix general notation in Section 2, sketch general results of type theory in Section 3, introduce metaplectic covers in Section 4 with an emphasis on the case $G=\mrgl_{r}(F)$, and recall the theory of strata and simple characters of Bushnell-Kutzko in Section 5. 
	
	In Section 6, we introduce the key definition of homogeneous types and (twisted) simple types. We also resolve the crucial problem of calculating the intertwining set and normalizer of a simple type $(\wt{J},\wt{\lambda})$ in $G$. Theorem \ref{thmmainQone} and Theorem \ref{thmmainQtwo}, except the exhaustion of $(\wt{J_{M}},\wt{\lambda}_{M})$, are stated, while some of them are proved in \S \ref{subsectionmainhomsimtype}. 
	
	In Section 7, the main focus is to calculate the Hecke algebra $\mch(\wt{G},\wt{\lambda})$ of a simple type $(\wt{J},\wt{\lambda})$. As we explained before, we are able to find an embedding $\Psi$ from an affine Hecke algebra $\wt{\mch}(t,s_{0},\bs{q}_{0})$ of type A to $\mch(\wt{G},\wt{\lambda})$, which in particular is an isomorphism if $\wt{G}$ is either a Kazhdan-Patterson cover or Savin's cover. As a result, Theorem \ref{thmmainQthree} is stated and proved, so is the rest of the claims in \S \ref{subsectionmainhomsimtype} for at least $M=G$.
	
	In Section 8, we state the exhaustion of $(\wt{J_{M}},\wt{\lambda}_{M})$ in Theorem \ref{thmmainQone}, whose proof is accomplished in Section 10.
	
	Finally in Section 9, our main focus is the two classes of special covers of $G=\mrgl_{r}(F)$ we mentioned above. We prove all the statements we claimed. 
	
	\subsection{Acknowledgement}
	
	I would like to thank Corinne Blondel, Fan Gao, Max Gurevich, Vincent S\'echerre and Chuijia Wang for useful discussions or correspondences. This research was supported by the Israel Science Foundation (grant No. 737/20). Moreover, I would like to thank an anonymous referee for his/her detailed report and pertinent advice.
	
	I would like to express my gratitude to Colin J. Bushnell for his generous encouragement, without which this work would never be able to appear. I hope that it is proper to dedicate it to his memory.
	
	\section{Preliminaries}
	
	In this article, we fix a non-archimedean locally compact field $F$, whose residual field $\bs{k}$ is of cardinality $q$. We write $\mfo_{F}$ for the ring of integers of $F$ and $\mfp_{F}$ for the maximal ideal of $\mfo_{F}$. We write $v_{F}:F^{\times}\rightarrow \mbz$ for the canonical discrete valuation with respect to $F$. 
	
	We fix a positive integer $n$ that divides $q-1$. It implies that the subgroup of $n$-th roots of unity of $F^{\times}$, denoted by $\mu_{n}(F)$ and usually abbreviated by $\mu_{n}$, consists of $n$ elements. Let $\abs{\cdot}_{F}$ denote the discrete valuation of $F$.
	
	We denote by $(\cdot,\cdot)_{n}:F^{\times}\times F^{\times}\rightarrow\mu_{n}$ the $n$-th Hilbert symbol, which is a bimultiplicative, anti-symmetric pairing, that descends to a non-degenerate bimultiplicative pairing $F^{\times}/F^{\times n}\times F^{\times}/F^{\times n}\rightarrow\mu_{n}$, where $F^{\times n}=\{x^{n}\mid x\in F^{\times}\}$. Since $\mrgcd(q,n)=1$, such a pairing is trivial on $\mfo_{F}^{\times}\times\mfo_{F}^{\times}$. We refer to \cite{weil1974basic}*{XIII.\S 5} for all the required properties in this article. In general, for a finite extension $E/F$, we denote by $(\cdot,\cdot)_{n,E}:E^{\times}\times E^{\times}\rightarrow\mu_{n}$ the corresponding $n$-th Hilbert symbol. It is known that $(x,y)_{n,E}=(x,\mrn_{E/F}(y))_{n,F}$, where $x\in F^{\times}$, $y\in E^{\times}$ and $\mrn_{E/F}:E^{\times}\rightarrow F^{\times}$ denotes the norm map.
	
	By $\ell$-groups in this article, we mean locally compact totally disconnected topological groups as in \cite{bernstein1976representations}. By representations of an $\ell$-group in this article, we mean complex smooth representations. In particular, a character is a one-dimensional smooth representation. 
	
	We fix an additive character $\psi_{F}:F\rightarrow\mbc^{\times}$ of conductor $\mfp_{F}$, i.e. it is trivial on $\mfp_{F}$ and is not trivial on $\mfo_{F}$. 
	
	Let $G$ be an $\ell$-group. We write $Z(G)$ for the center of $G$ and $[\cdot,\cdot]:G\times G\rightarrow G$ for the commutator map given by $[g_{1},g_{2}]=g_{1}g_{2}g_{1}^{-1}g_{2}^{-1},\ g_{1},g_{2}\in G$.
	
	We denote by $\mrdet:\mrm_{r}(F)\rightarrow F$ and $\mrtr:\mrm_{r}(F)\rightarrow F$ the determinant map and the trace map respectively. In general, for a finite extension $E/F$, we write $\mrdet_{E}:\mrm_{r}(E)\rightarrow E$ and $\mrtr_{E}:\mrm_{r}(E)\rightarrow E$ correspondingly. 
	
	Let $\nu=\abs{\cdot}_{F}$ which is an unramified character of $F^{\times}$. By composing with the determinant map $\mrdet_{F}$, we identify $\nu$ with a character of $\mrgl_{r}(F)$ for any $r$.
	
	Let $H\subset H'$ be two closed subgroups of $G$. We write $\mrInd_{H}^{H'}:\mrrep(H)\rightarrow\mrrep(H')$, $\mrind_{H}^{H'}:\mrrep(H)\rightarrow\mrrep(H')$ and $\rest_{H}:\mrrep(H')\rightarrow\mrrep(H)$ for the induction, compact induction and restriction functors respectively, where $\mrrep(\cdot)$ denotes the category of smooth representations. For a representation $\pi$ of $H'$, we often write $\pi$ instead of $\pi|_{H}$ for short, if the domain of definition of $\pi$ (i.e. $H$) is clear from the context. By convention, we say that a representation $\pi$ of $H'$ contains an irreducible representation $\pi'$ of $H$ if $\mrhom_{H}(\pi',\pi\rest_{H})\neq 0$. Assume $H$ to be an open normal subgroup of $H'$, let $\overline{H}=H'/H$ and let $\overline{\rho}$ be a representation of $\overline{H}$. Then we denote by $\mrinf_{\overline{H}}^{H'}\overline{\rho}$ the \emph{inflation} of $\overline{\rho}$ as a representation of $H'$ that extends trivially on $H$.
	
	Let $(\rho,W)$ be a representation of $H$. We write $(\rho^{\vee},W^{\vee})$ for the contragredient of $(\rho,W)$. For $g,x\in G$, we denote by $x^{g}=g^{-1}xg$, $H^{g}=g^{-1}Hg$, and $\rho^{g}(\cdot)=\rho(g\cdot g^{-1})$ as a representation of $H^{g}$. 
	We say that $g\in G$ \emph{intertwines} $\rho$, if the intertwining space
	$$\mrhom_{H^{g}\cap H}(\rho^{g},\rho)$$
	is non-zero, and we denote by $I_{G}(\rho)$ the intertwining set of $\rho$ consisting of $g\in G$ satisfying the above condition. In general, for two closed subgroups $H_{1}$, $H_{2}$ of $G$ and $\rho_{1}$, $\rho_{2}$ their representations respectively, we say that $g\in G$ intertwines $\rho_{1}$ and $\rho_{2}$, if the intertwining space
	$$\mrhom_{H_{1}^{g}\cap H_{2}}(\rho_{1}^{g},\rho_{2})$$
	is non-zero. We say that $g$ \emph{normalizes} $H$ (resp. $\rho$) if $H^{g}=H$ (resp. $\rho^{g}\cong\rho$), and we write $N_{G}(H)$ (resp. $N_{G}(\rho)$) for the corresponding normalizer. 
	
	For $x\in\mbr$, we denote by $\floor{x}$ (resp. $\ceil{x}$) the largest integer smaller than or equal to (resp. the smallest integer greater than or equal to) $x$.
	
	Let $\mfS_{k}$ be the group of permutations of $k$ elements. 
	
	Let $\mrgcd(k_{1},\dots,k_{m})$ denote the greatest common divisor of $k_{1},\dots,k_{m}$. 
	
	\section{Hecke algebra, cuspidal representations, types and covering pairs}\label{sectionHeckecoveringpair}
	
	In this section, we follow \cite{bushnell129admissible} and \cite{bushnell1998smooth} to recall some known results for Hecke algebra, cuspidal representations, types and covering pairs.
	
	\subsection{Hecke algebra}
	
	Our reference here is \cite{bushnell129admissible}*{\S 4} and \cite{bushnell1998smooth}*{\S 2}. Let $G$ be an $\ell$-group, let $J$ be an open compact subgroup of $G$
	, and let $(\rho,W)$ be an irreducible representation of $J$, which is necessarily finite dimensional. We fix a Haar measure $dx$ of $G$. 
	
	Let $\mch(G)$ denote the Hecke algebra of $G$ consisting of complex smooth compactly supported functions on $G$, equipped with the convolution for the product structure. 
	It is well-known that we have an equivalence of categories $$\mrrep(G)\rightarrow\mrmod(\mch(G)),$$
	where $\mrmod(\mch(G))$ denotes the category of non-degenerate $\mch(G)$-modules, and for $(\pi,V)\in\mrrep(G)$ the corresponding $\mch(G)$-module structure is given by $$v\mapsto \pi(f)v:=\int_{G}f(x)\pi(x)vdx$$ for $f\in\mch(G)$ and $v\in V$.
	
	We denote by $\mch(G,\rho)$ the space of compactly supported functions $\phi:G\rightarrow\mrend_{\mbc}(W^{\vee})$, such that $$\phi(g_{1}gg_{2})=\rho^{\vee}(g_{1})\phi(g)\rho^{\vee}(g_{2}),\quad g_{1},g_{2}\in J,\ g\in G.$$ 
	The convolution operation
	$$\phi_{1}\ast \phi_{2}(g)=\int_{G}\phi_{1}(x)\phi_{2}(x^{-1}g)dx,\quad \phi_{1},\phi_{2}\in\mch(G,\rho)$$
	gives $\mch(G,\rho)$ the structure of an associative $\mbc$-algebra with unit. We will use the abbreviation
	$$\phi^{k}=\underbrace{\phi\ast\dots\ast\phi}_{k\text{-copies}}.$$
	By \cite{bushnell129admissible}*{Proposition 4.1.1}, for $g\in G$, there exists a function $\phi\in \mch(G,\rho)$ whose support is $JgJ$ (which is unique up to scalar) if and only if $g$ intertwines $\rho$.
	
	We fix a complex structure on the finite dimensional complex vector space $W^{\vee}$. So $(\rho^{\vee},W^{\vee})$ is realized as a unitary representation. We define 
	$$h_{G}(\phi_{1},\phi_{2})=\int_{x\in G}\mrtr_{W^{\vee}}(\phi_{1}(x)\overline{\phi_{2}(x)})dx=\mrtr_{W^{\vee}}(\phi_{1}\ast\overline{\phi_{2}}(1)),\quad \phi_{1},\phi_{2}\in\mch(G,\rho),$$
	as a positive definite hermitian form on $\mch(G,\rho)$. Here, the ``bar" for $\overline{\phi_{2}(x)}$ denotes the complex conjugate on $\mrend_{\mbc}(W^{\vee})$ induced by that on $W^{\vee}$ and the ``bar" for $\overline{\phi_{2}}$ is given by $\overline{\phi_{2}}(x)=\overline{\phi_{2}(x^{-1})}$ for $x\in G$, and $\mrtr_{W^{\vee}}:\mrend(W^{\vee})\rightarrow\mbc$ denotes the trace map. Of course, $h_{G}$ depends on the choice of the Haar measure $dx$ on $G$. 
	
	For $a\in \mrend_{\mbc}(W)$, we write $a^{\vee}\in\mrend_{\mbc}(W^{\vee})$ for the transpose of $a$ with respect to the canonical pairing between $W$ and $W^{\vee}$. Then we get an isomorphism of algebras
	$$\mch(G,\rho)\rightarrow\mch(G,\rho^{\vee})^{\emph{op}},\quad \phi\mapsto \hat{\phi}:=[g\mapsto \phi(g^{-1})^{\vee}].$$
	The map
	$$(\phi,f)\mapsto\phi\ast f:=[g\mapsto\int_{G}\phi(x)f(x^{-1}g)],\quad \phi\in\mch(G,\rho^{\vee}),\ f\in\mrind_{J}^{G}\rho$$
	induces an isomorphism of algebras
	$\mch(G,\rho^{\vee})\cong\mrend_{G}(\mrind_{J}^{G}\rho).$
	Also, it provides $\mrind_{J}^{G}\rho$ with a left $\mch(G,\rho^{\vee})$-module structure and thus a right $\mch(G,\rho)$-module structure. 
	
	Let $(\pi,V)\in\mrrep(G)$. Let $$V_{\rho}:=\mrhom_{J}(W,V)=\mrhom_{G}(\mrind_{J}^{G}W,V)$$ be the space of $\rho$-invariants of $(\pi,V)$. It is endowed with a left $\mch(G,\rho)$-module structure via the right $\mch(G,\rho)$-action on $\mrind_{J}^{G}\rho$ explained as above. More precisely, the action is given by
	\begin{equation}\label{eqheckepirho}
		\phi\cdot f=[w\mapsto\int_{G}\pi(g)f(\hat{\phi}(g^{-1})w)dg],\quad \phi\in\mch(G,\rho),\ f\in V_{\rho},\ w\in W.
	\end{equation}
	This provides us with a functor
	\begin{equation}\label{eqinvariantfunctor}
		\bs{\mathrm{M}}_{\rho}:\mrrep(G)\rightarrow\mrmod(\mch(G,\rho)),\quad (\pi,V)\mapsto V_{\rho}.
	\end{equation}
	Here, $\mrmod(\mch(G,\rho))$ denotes the equivalence classes of non-degenerate $\mch(G,\rho)$-modules.
	
	\subsection{Cuspidal representations}
	
	In the rest of this section, let $G$ be a $p$-adic reductive group, or more generally, an $n$-fold cover of a $p$-adic reductive group (\emph{cf.} \cite{renard2010representations}, \cite{kaplan2022note}). 
	
	Let $P=MN$ be a parabolic subgroup of $G$, with $M$ being a Levi factor and $N$ being the unipotent radical of $P$. 
	
	We define, as in \cite{bernstein1977induced}*{\S 1.8}, the normalized parabolic induction functor and normalized Jacquet functor
	$$i_{P}^{G}:\mrrep(M)\rightarrow\mrrep(G),\quad r_{N}:\mrrep(G)\rightarrow\mrrep(M).$$
	
	We recall the following equivalent definitions for a cuspidal representation of $G$.
	
	\begin{definition}[\cite{bernstein1977induced}, \cite{kaplan2022note}]\label{defcuspidal}
		
		An irreducible representation $\pi$ of $G$ is called cuspidal if the following equivalent statements are satisfied:
		\begin{enumerate}
			\item $\pi$ does not occur as a subrepresentation of a parabolic induction $i_{P}^{G}(\rho)$ for any proper parabolic subgroup $P=MN$ of $G$ and any irreducible representation $\rho$ of $M$.
			
			\item The Jacquet module $r_{N}(\pi)$ is zero for any proper parabolic subgroup $P$ of $G$ with the unipotent radical $N$.
			
			\item Every matrix coefficient of $\pi$ is compact modulo the center.
			
		\end{enumerate}
		
	\end{definition}
	
	One typical method of obtaining cuspidal representations is the usage of compact induction, which is summarized as the following lemma:
	
	\begin{lemma}[\cite{bushnell2006local}*{Theorem 11.4}]\label{lemmacomindcusp}
		
		Let $\bs{J}$ be an open subgroup of $G$, containing and compact modulo center of $G$. Let $\bs{\lambda}$ be an irreducible representation of $\bs{J}$. Assume that the intertwining set $I_{G}(\bs{\lambda})$ equals $\bs{J}$. Then the compact induction $\mrind_{\bs{J}}^{G}\bs{\lambda}$ is an irreducible cuspidal representation of $G$, and moreover, any irreducible representation $\pi$ of $G$ containing $\bs{\lambda}$ is cuspidal.
		
	\end{lemma}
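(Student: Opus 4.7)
The plan is to proceed in three steps following the standard strategy of Bushnell--Henniart.

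First, I would compute $\mch(G, \bs{\lambda})$ using the intertwining hypothesis. Any $\phi \in \mch(G, \bs{\lambda})$ has support in $\bigcup_{g \in I_{G}(\bs{\lambda})} \bs{J} g \bs{J}$, which by hypothesis equals $\bs{J}$. Such a $\phi$ is then determined by $\phi(1) \in \mrend_{\mbc}(V_{\bs{\lambda}}^{\vee})$, which must commute with $\bs{\lambda}^{\vee}$; by Schur's lemma this forces $\phi(1)$ to be a scalar, so $\mch(G, \bs{\lambda}) \cong \mbc$. Via the isomorphism $\mch(G, \bs{\lambda}^{\vee}) \cong \mrend_{G}(\mrind_{\bs{J}}^{G} \bs{\lambda})$ recalled in the preceding subsection, I obtain $\mrend_{G}(\pi) = \mbc$, where $\pi := \mrind_{\bs{J}}^{G} \bs{\lambda}$.

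Second, I would prove that $\pi$ is irreducible by a $\bs{\lambda}$-isotypic argument. Let $V$ denote the canonical embedded copy of $\bs{\lambda}$ in $\pi$, namely the subspace of functions supported on $\bs{J}$; by Mackey's formula combined with the intertwining hypothesis, $V$ is the entire $\bs{\lambda}$-isotypic subspace $V_{\pi}^{(\bs{\lambda})}$. Because $Z(G) \subseteq \bs{J}$ and $\pi$ inherits the central character of $\bs{\lambda}$, the restriction $\pi|_{\bs{J}}$ descends (after twisting by the central character) to a smooth representation of the compact group $\bs{J}/Z(G)$ and is therefore semisimple. The $\bs{J}$-equivariant projector $p_{V}: \pi \to V$ is given explicitly by $f \mapsto f \cdot \mathbf{1}_{\bs{J}}$, so $p_{V}(f)(1) = f(1)$ for every $f \in \pi$. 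Let $\pi' \subseteq \pi$ be a nonzero $G$-subrepresentation. Then $(\pi')^{(\bs{\lambda})} \subseteq V_{\pi}^{(\bs{\lambda})} = V$ is a $\bs{J}$-subrepresentation of $V \cong \bs{\lambda}$, hence is either $0$ or all of $V$. In the second case, $V \subseteq \pi'$, and since $V$ generates $\pi$ as a $G$-module by the definition of compact induction, we conclude $\pi' = \pi$. In the first case, the semisimplicity of $\pi'|_{\bs{J}}$ together with $(\pi')^{(\bs{\lambda})} = 0$ forces $p_{V}(\pi') = 0$; but then for any $f \in \pi'$ and $g \in G$, $\pi(g) f \in \pi'$ gives $f(g) = (\pi(g) f)(1) = p_{V}(\pi(g) f)(1) = 0$, so $f = 0$, contradicting $\pi' \neq 0$.

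Third, I would deduce cuspidality via matrix coefficients. For $v \in V$ and $v^{\vee}$ in the canonical copy of $\bs{\lambda}^{\vee}$ inside $\pi^{\vee}$, the matrix coefficient $g \mapsto \langle \pi(g) v, v^{\vee} \rangle$ is left $\bs{\lambda}$-equivariant and right $\bs{\lambda}^{\vee}$-equivariant, so its support is contained in $I_{G}(\bs{\lambda}) = \bs{J}$, which is compact modulo $Z(G)$. By the irreducibility of $\pi$ just established (and of $\pi^{\vee}$, which follows by the symmetric argument applied to $\bs{\lambda}^{\vee}$, whose intertwining set in $G$ is also $\bs{J}$), every matrix coefficient of $\pi$ is obtained from this canonical one by $\mch(G)$-convolutions on both sides and is likewise compactly supported modulo $Z(G)$. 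Hence $\pi$ is cuspidal by Definition \ref{defcuspidal}(3). Finally, any irreducible representation $\pi''$ of $G$ containing $\bs{\lambda}$ upon restriction receives a nonzero $G$-morphism from $\pi$ by Frobenius reciprocity for compact induction, which is surjective by irreducibility of $\pi''$ and injective by irreducibility of $\pi$, so $\pi'' \cong \pi$ is cuspidal.

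The main obstacle is the irreducibility step in the second paragraph, where I must deploy the semisimplicity of $\pi|_{\bs{J}}$ (derived from the compactness of $\bs{J}/Z(G)$ together with the existence of a central character) to convert the vanishing of $(\pi')^{(\bs{\lambda})}$ into the vanishing of $p_{V}(\pi')$; everything else is a careful bookkeeping of Mackey, Frobenius and the support argument.
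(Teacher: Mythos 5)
The paper offers no proof of this lemma: it is quoted directly from Bushnell--Henniart, Theorem 11.4, so there is no internal argument to compare against. Your reconstruction follows the standard proof of that result and is essentially sound: the support computation giving $\mch(G,\bs{\lambda})\cong\mbc$, the identification (via Mackey theory and the intertwining hypothesis) of the functions supported on $\bs{J}$ with the full $\bs{\lambda}$-isotypic component of $\pi:=\mrind_{\bs{J}}^{G}\bs{\lambda}$, the semisimplicity of $\pi\rest_{\bs{J}}$ coming from compactness of $\bs{J}/Z(G)$ together with the central character, the isotypic-projector argument for irreducibility, and the Frobenius reciprocity argument for the final assertion are all correct.

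The one genuine soft spot is in your third step. The ``symmetric argument applied to $\bs{\lambda}^{\vee}$'' shows that $\mrind_{\bs{J}}^{G}\bs{\lambda}^{\vee}$ is irreducible, but the smooth contragredient of $\pi$ is the \emph{full} induction $\mrInd_{\bs{J}}^{G}\bs{\lambda}^{\vee}$, which contains $\mrind_{\bs{J}}^{G}\bs{\lambda}^{\vee}$ without being a priori equal to it. Your translation argument therefore only controls the coefficients $\langle\pi(g)w,w^{\vee}\rangle$ with $w^{\vee}$ in the $G$-span of the canonical copy of $\bs{\lambda}^{\vee}$, not all matrix coefficients as required by Definition \ref{defcuspidal}(3). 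This is repaired either by invoking the admissibility of irreducible smooth representations (so that $\pi^{\vee}$ is irreducible and equals that $G$-span), or, as in the cited reference, by appealing to the standard fact that an irreducible smooth representation admitting a single nonzero coefficient compactly supported modulo the center is already cuspidal. Either patch is routine, so this is a presentational rather than a structural defect.
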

	
	\subsection{Bernstein blocks and types}\label{subsectionblockstypes}
	
	A \emph{cuspidal pair} $(M,\rho)$ of $G$ consists of a Levi subgroup $M$ of $G$ and a cuspidal representation $\rho$ of $M$. It is known that for an irreducible representation $\pi$ of $G$, there exists a cuspidal pair $(M,\rho)$, unique up to $G$-conjugacy, and a parabolic subgroup $P$ of $G$ having a Levi factor $M$, such that $\pi$ is a subrepresentation of the parabolic induction $i_{P}^{G}(\rho)$, where $P$ is a parabolic subgroup of $M$. Such a pair is called the \emph{cuspidal support} of $\pi$. 
	
	Moreover, let $\mco$ be the orbit of $\rho$ twisted by unramified characters of $M$. Then $\mfs=(M,\mco)$, up to $G$-conjugacy, is called an \emph{inertial equivalence class} of $G$. The \emph{inertial support} of $\pi$ is the inertial equivalence class of its cuspidal support.
	
	Write $\mrrep_{\mfs}(G)$ for the subcategory of $\mrrep(G)$ consisting of representations whose irreducible subquotients have inertial support $\mfs$. Then, each $\mrrep_{\mfs}(G)$ is a block of $\mrrep(G)$, and we have the block decomposition (\emph{cf.} \cite{renard2010representations}*{Chapitre IV, \S 7.2}, \cite{kaplan2022note})
	$$\mrrep(G)\cong\prod_{\mfs}\mrrep_{\mfs}(G),$$
	where $\mfs$ ranges over all the inertial equivalence classes of $G$.
	
	Let $J$ be an open compact subgroup of $G$ and let $\lambda$ be an irreducible representation of $J$. As in \cite{bushnell1998smooth}*{\S 3}, we may define a special idempotent $e_{\lambda}$ in $\mch(G)$ and the subcategory $\mrrep_{\lambda}(G)=\{V\mid \mch(G)e_{\lambda} V=V\}$ of $\mrrep(G)$. Then, we call $(J,\lambda)$ an $\mfs$-\emph{type}, if for any irreducible representation $\pi$ of $G$, we have that $\pi|_{J}$ contains $\lambda$ if and only if $\pi$ has inertial equivalence class $\mfs$. 
	
	In this case, we have $\mrrep_{\lambda}(G)=\mrrep_{\mfs}(G)$ and an equivalence of categories 
	$$\bs{\mathrm{M}}_{\lambda}:\mrrep_{\mfs}(G)\rightarrow\mrmod(\mch(G,\lambda)),\quad (\pi,V)\mapsto V_{\lambda}.$$
	Such $\bs{\mathrm{M}}_{\lambda}$ is equivariant up to the multiplicative action by unramified characters of $G$ on both $\mrrep_{\mfs}(G)$ and $\mrmod(\mch(G,\lambda))$. 
	
	\subsection{Covering pairs}\label{subsectioncoverpair}
	
	Let $P=MN$ be a parabolic subgroup of $G$, with $M$ being a Levi factor and $N$ being the unipotent radical of $P$. Let $P^{-}$ and $N^{-}$ be the opposite of $P$ and $N$ respectively with respect to $M$. A pair $(J,\lambda)$, consisting of an open subgroup $J$ of $G$ and an irreducible $\lambda$ of $J$, is called \emph{decomposed with respect to $(M,P)$} if
	\begin{itemize}
		\item $J=(J\cap N^{-})\cdot(J\cap M)\cdot(J\cap N)$.
		\item The groups $J\cap N^{-}$ and $J\cap N$ are contained in the kernel of $\lambda$.
	\end{itemize}
	An element $\zeta\in M$ is called \emph{strongly $(P,J)$-positive} if
	\begin{itemize}
		\item $\zeta(J\cap N)\zeta^{-1}\subset (J\cap N)$, $\zeta^{-1}(J\cap N^{-})\zeta\subset (J\cap N^{-}).$
		\item For any compact open subgroups $H_{1}$, $H_{2}$ of $N$, there exists an integer $i\geq 0$ such that $\zeta^{i}H_{1}\zeta^{-i}\subset H_{2}$.
		\item For any compact open subgroups $H_{1}$, $H_{2}$ of $N^{-}$, there exists an integer $i\geq 0$ such that $\zeta^{-i}H_{1}\zeta^{i}\subset H_{2}$.
	\end{itemize}
	
	Consider another pair $(J_{M},\lambda_{M})$, where $J_{M}$ is an open compact subgroup of $M$ and $\lambda_{M}$ is an irreducible representation of $J_{M}$. The pair $(J,\lambda)$ is a ($G$-)\emph{covering pair} of $(J_{M},\lambda_{M})$ if
	\begin{enumerate}
		\item The pair $(J,\lambda)$ is decomposed with respect to $(M,P)$ for every parabolic subgroup $P$ of $G$ with a Levi factor $M$.
		\item $J\cap M=J_{M}$ and $\lambda|_{J_{M}}=\lambda_{M}$.
		\item For every parabolic subgroup $P$ of $G$ with a Levi factor $M$, there exist a strongly $(P,J)$-positive element $z\in Z(M)$ and an invertible element $\phi_{z}$ in $\mch(G,\lambda)$ supported on $JzJ$.
	\end{enumerate}
	Sometimes, the following condition is more convenient to verify, which implies the condition (3) above (\emph{cf.} \cite{bushnell1998smooth}*{Condition 8.2}):
	\begin{enumerate}
		\setcounter{enumi}{3}
		\item Every $\phi\in\mch(G,\lambda)$ is supported on $JMJ$.
	\end{enumerate}

	We remark that in \cite{bushnell1998smooth}, Bushnell and Kutzko use the word ``cover" for such a pair $(J,\lambda)$, which unfortunately conflicts with ``metaplectic covers", the central objects of this article. So we use ``covering pair" instead, translated from the corresponding French words (\emph{cf.} \cite{blondel1999methode}). 
	
	The following theorem illustrates the connection between types and covering pairs.
	
	\begin{theorem}[\cite{bushnell1998smooth}*{Theorem 8.3}]\label{thmtypecovering}
		
		Let $L$ be a Levi subgroup of $M$ and let $\rho$ be a cuspidal representation of $L$. Let $\mfs_{G}$ (resp. $\mfs_{M}$) be the inertial equivalence class of $G$ (resp. $M$) that contains $(L,\rho)$. If $(J_{M},\lambda_{M})$ is a $\mfs_{M}$-type and $(J,\lambda)$ is an covering pair of $(J_{M},\lambda_{M})$, then $(J,\lambda)$ is an $\mfs_{G}$-type. 
		
	\end{theorem}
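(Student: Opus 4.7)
The plan is to verify the two defining conditions of an $\mfs_{G}$-type for $(J,\lambda)$: first, that $\mrrep_\lambda(G)$ is closed under subquotients; second, that an irreducible $\pi\in\mrrep(G)$ contains $\lambda$ if and only if its inertial support equals $\mfs_{G}$. The technical heart is a Jacquet-module compatibility relating $\lambda$-isotypic parts in $G$ to $\lambda_M$-isotypic parts in $M$, forced by the decomposition axiom of a covering pair and strengthened by the strongly positive element.

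First I would construct, for every parabolic subgroup $P=MN$ and every $(\pi,V)\in\mrrep(G)$, a natural linear map
$$j_P:V_\lambda\longrightarrow r_N(V)_{\lambda_M},$$
obtained by composing the canonical projection $V\twoheadrightarrow r_N(V)$ with the induced restriction $\mrhom_J(W,V)\to\mrhom_{J_M}(W,r_N(V))$, where $W$ denotes the space of $\lambda$. The decomposition $J=(J\cap N^-)(J\cap M)(J\cap N)$ together with the triviality of $\lambda$ on $J\cap N^{\pm}$ makes $j_P$ well-defined. The key claim is that $j_P$ is injective: a $J$-equivariant lift $W\to V$ landing in the kernel of $V\twoheadrightarrow r_N(V)$ can, after conjugating by a sufficiently high power of a strongly $(P,J)$-positive $z\in Z(M)$ and applying the Hecke operator $\phi_z$, be shown to vanish.

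For the forward direction of the second condition, suppose $\pi|_J$ contains $\lambda$ and fix a parabolic $P=MN$. Injectivity of $j_P$ yields $r_N(\pi)_{\lambda_M}\neq 0$, so $r_N(\pi)$ admits an irreducible subquotient $\sigma$ containing $\lambda_M$. Since $(J_M,\lambda_M)$ is an $\mfs_M$-type, $\sigma$ lies in $\mrrep_{\mfs_M}(M)$, whose cuspidal support sits in the $M$-inertial orbit of $(L,\rho)$; transitivity of the cuspidal support then places $\pi$ in $\mrrep_{\mfs_{G}}(G)$. For the converse, an irreducible $\pi\in\mrrep_{\mfs_{G}}(G)$ embeds into some $i_P^G(\sigma)$ with $\sigma$ irreducible in $\mrrep_{\mfs_M}(M)$, so that $\sigma|_{J_M}$ contains $\lambda_M$; exploiting the covering-pair axioms together with Frobenius reciprocity one deduces that $i_P^G(\sigma)|_J$ contains $\lambda$, and hence so does $\pi$.

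The main obstacle will be promoting $j_P$ to an isomorphism after localizing at the Hecke element $\phi_z$, which is precisely the content of condition (3) of a covering pair: once $\phi_z$ is invertible in $\mch(G,\lambda)$, the transport by the positive element $z$ can be undone, and information propagates faithfully between the functor $\bs{\mathrm{M}}_\lambda$ on the $G$-side and $\bs{\mathrm{M}}_{\lambda_M}$ on the $M$-side. This localization step also delivers the closure of $\mrrep_\lambda(G)$ under subquotients, since membership in $\mrrep_{\mfs_{G}}(G)$ is subquotient-closed and, by the equivalence just established, coincides with membership in $\mrrep_\lambda(G)$ on the class of irreducible representations, combined with the exactness of the Jacquet functor and of $\bs{\mathrm{M}}_\lambda$.
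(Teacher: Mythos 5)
The paper offers no proof of this statement: it is imported wholesale from Bushnell--Kutzko (\cite{bushnell1998smooth}*{Theorem 8.3}), so your sketch has to be measured against their argument. Your construction of $j_{P}$ and your forward implication ($\pi$ contains $\lambda$ $\Rightarrow$ $\pi$ has inertial support $\mfs_{G}$) follow their route faithfully. The genuine gap is the last step of your converse: from $\pi\hookrightarrow i_{P}^{G}(\sigma)$ and the fact that $i_{P}^{G}(\sigma)\rest_{J}$ contains $\lambda$ you conclude that $\pi\rest_{J}$ contains $\lambda$. This inference fails. Since $J$ is compact and $\lambda$ is irreducible, the isotypic functor $V\mapsto V_{\lambda}$ is exact, so the embedding only yields an injection $\pi_{\lambda}\hookrightarrow(i_{P}^{G}\sigma)_{\lambda}$; non-vanishing of the target says nothing about the source, because every copy of $\lambda$ may sit inside the quotient $i_{P}^{G}(\sigma)/\pi$. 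Containment of $\lambda$ passes neither to subobjects nor to quotients, so this descent cannot be carried out as stated.

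The repair is to use the full strength of the comparison you only half-establish. Condition (3) of a covering pair forces $j_{P}:V_{\lambda}\rightarrow r_{N}(V)_{\lambda_{M}}$ to be an \emph{isomorphism}, not merely an injection; surjectivity is the harder half and is exactly Casselman's stabilization argument run with the invertible element $\phi_{z}$ (this is \cite{bushnell1998smooth}*{Theorem 7.9}, recalled in the paper as Theorem \ref{thmJacquetcomm}). With the isomorphism in hand, the converse must be run through $\pi$ itself rather than through $i_{P}^{G}(\sigma)$: Frobenius reciprocity applied to an embedding $\pi\hookrightarrow i_{Q}^{G}(\rho')$ with $Q\subset P$ produces a non-zero map $r_{N}(\pi)\rightarrow\tau$ with $\tau\in\mrrep_{\mfs_{M}}(M)$; since $(J_{M},\lambda_{M})$ is an $\mfs_{M}$-type, any non-zero object of $\mrrep_{\mfs_{M}}(M)$ has non-zero $\lambda_{M}$-isotypic part, so $r_{N}(\pi)_{\lambda_{M}}\neq 0$ and hence $\pi_{\lambda}\neq 0$. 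A secondary, smaller point: closure of $\mrrep_{\lambda}(G)$ under subquotients does not follow merely from exactness of $r_{N}$ and of $\bs{\mathrm{M}}_{\lambda}$; one needs the criterion of \cite{bushnell1998smooth}*{\S 4}, which converts the equality of the two sets of irreducibles into the categorical statement.
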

	
	Fix a parabolic subgroup $P=MN$ of $G$. Let $(J,\lambda)$ be a covering pair of $(J_{M},\lambda_{M})$. In \cite{bushnell1998smooth}*{\S 7}, an embedding of algebras $t_{P}:\mch(M,\lambda_{M})\rightarrow\mch(G,\lambda)$ is defined and uniquely characterized by the following properties:
	\begin{itemize}
		\item For any strongly $(P,J)$-positive element $\zeta\in M$, define $\varphi_{\zeta}\in\mch(M,\lambda_{M})$ (resp. $\phi_{\zeta}\in\mch(G,\lambda)$) such that $\mrsupp(\varphi_{\zeta})=J_{M}\zeta J_{M}$ and $\varphi_{\zeta}(\zeta)=\mathrm{id}_{\lambda_{M}^{\vee}}$ (resp. $\mrsupp(\phi_{\zeta})= J\zeta J$ and $\phi_{\zeta}(\zeta)=\mathrm{id}_{\lambda^{\vee}}).$ 
		Then $t_{P}(\varphi_{\zeta})=\delta_{N}^{1/2}(\zeta)\cdot\phi_{\zeta}$, where $\delta_{N}:M\rightarrow\mbc^{\times}$ denotes the modulus character with respect to $N$. 
		
		\item Let $\mu_{G}$ (resp. $\mu_{M}$) be the Haar measure on $G$ (resp. $M$), such that $\mu_{G}(J)=\mu_{M}(J_{M})=1$. Using these two Haar measures and fixing a complex structure on the representation space of $\lambda^{\vee}$ and $\lambda_{M}^{\vee}$, we define the corresponding hermitian forms $h_{G}$ on $\mch(G,\lambda)$ and $h_{M}$ on $\mch(M,\lambda_{M})$. Then we have \begin{equation}\label{eqheckeherminv}
			h_{M}(\varphi,\varphi)=h_{G}(t_{P}(\varphi),t_{P}(\varphi)),\quad\text{for any}\ \varphi\in\mch(M,\lambda_{M}).
		\end{equation}
		In other words, $t_{P}$ is an isometry between $\mch(M,\lambda_{M})$ and $t_{P}(\mch(G,\lambda))$.
		
		\item If $\mch(G,\lambda)_M:=\{\phi\in \mch(G,\lambda)\mid \mrsupp(\phi)\subset JMJ \}$ is a subalgebra of $\mch(G,\lambda)$, then we have $t_P(\mch(M,\lambda_M))=\mch(G,\lambda)_M$ and the map $t_P$ preserves support of functions (\emph{cf.} \cite{bushnell1998smooth}*{Theorem 7.2.(ii)}), in the sense that 
		$$\mrsupp(t_{P}(\varphi))=J\cdot\mrsupp(\varphi)\cdot J,\quad \varphi\in\mch(M,\lambda_{M}).$$
		
	\end{itemize}
	We remark that the second property follows from \cite{roche1998types}*{\S 5}. In \emph{loc. cit.} the condition (4) for a covering pair is imposed, which simply guarantees that $t_{P}$ is an isomorphism and is not essential for our statement here. Moreover, although in \emph{loc. cit.} the representation $\lambda$ is assumed to be one-dimensional, as already pointed out by the author, the proof is general enough to be adapted to an irreducible finite dimensional representation $\lambda$ of $J$. 
	
	The above homomorphism $t_{P}$ induces a map 
	$$t_{P}^{*}:\mrmod(\mch(G,\lambda))\rightarrow\mrmod(\mch(M,\lambda_{M}))$$
	as the pull-back map of $t_{P}$. 
	Such $t_{P}^{*}$ should be regarded as the normalized Jacquet functor on the Hecke algebra side. Also, the above $t_{P}^{*}$ has a right adjoint
	$$(t_{P})_{*}:\mrmod(\mch(M,\lambda_{M}))\rightarrow \mrmod(\mch(G,\lambda)).$$
	Indeed, for $V\in\mrmod(\mch(M,\lambda_{M}))$, we have $$(t_{P})_{*}(V)=\mrhom_{\mch(M,\lambda_{M})}(\mch(G,\lambda),V).$$
	Correspondingly, such $(t_{P})_{*}$ should be regarded as the normalized parabolic induction functor on the Hecke algebra side. More precisely, we have the following theorem:
	
	\begin{theorem}[\cite{bushnell1998smooth}*{Theorem 7.9, Corollary 8.4}]\label{thmJacquetcomm}
		
		Under the setting of Theorem \ref{thmtypecovering}, we have commutative diagrams:
		$$\xymatrix{
			\mrrep_{\mfs_G}(G) \ar[d]_-{r_{N}} \ar[r]^-{\bs{\mathrm{M}}_{\lambda}} & \mrmod(\mch(G,\lambda)) \ar[d]^-{t_{P}^{*}}  &	\mrrep_{\mfs_{G}}(G) \ar[r]^-{\bs{\mathrm{M}}_{\lambda}} & \mrmod(\mch(G,\lambda))  \\ 
			\mrrep_{\mfs_M}(M)\ar[r]_-{\bs{\mathrm{M}}_{\lambda_{M}}}  & \mrmod(\mch(M,\lambda_{M}))
			&	\mrrep_{\mfs_{M}}(M) \ar[r]_-{\bs{\mathrm{M}}_{\lambda_{M}}} \ar[u]^-{i_{P}^G} & \mrmod(\mch(M,\lambda_{M})) \ar[u]_-{(t_{P})_{*}}
		}$$
		
	\end{theorem}
	
	\begin{remark}
		
		The character $\delta_{N}^{1/2}$ introduced above guarantees that $t_{P}^{*}$ (resp. $(t_{P})_{*}$) is compatible with the \textbf{normalized} Jacquet module functor $r_{N}$ (resp. parabolic induction functor $i_{P}^{G}$). Note that there is a $\delta_{N}^{1/2}$-shift between our definition of $t_{P}$ and that of \cite{bushnell1998smooth}, since in \emph{ibid.} they consider \textbf{non-normalized} parabolic induction and Jacquet module functor.
		
	\end{remark} 
	
	\section{Metaplectic covers of $\mrgl_{r}(F)$}\label{sectionnfoldcover}
	
	\subsection{General theory for a finite central cover}
	
	We refer to \cite{gan2018groups} for the basic facts that we are going to state below, which also serves as an excellent historical survey.
	
	Let $G$ be an $\ell$-group. By an $n$-fold cover of $G$, we mean a central extension of $G$ by $\mu_{n}$ as $\ell$-groups:
	\begin{equation}\label{eqcentralext}
		\xymatrix{0 \ar[r] & \mu_{n} \ar[r]^-{} & \wt{G} \ar[r]^-{\bs{p}} & G \ar[r] & 0}.
	\end{equation}
	We note that the set of equivalence classes of such central extensions is in bijection with the set of (locally constant) 2-cohomology classes $H^{2}(G,\mu_{n})$. More precisely, given an $n$-fold cover $\wt{G}$ of $G$, we fix a continuous map $\bs{s}:G\rightarrow \wt{G}$ such that $\bs{p}\circ\bs{s}=\id$. Then the corresponding cohomology class is related to the 2-cocycle $\sigma:G\times G\rightarrow \mu_{n}$ satisfying
	\begin{equation}\label{eq2cocycle1}
		\bs{s}(g_{1})\bs{s}(g_{2})\sigma(g_{1},g_{2})=\bs{s}(g_{1}g_{2}),\quad g_{1},g_{2}\in G.
	\end{equation}
	For two covers $\wt{G}_{1}$ and $\wt{G}_{2}$ realized by 2-cocycles $\sigma_{1}$ and $\sigma_{2}$ respectively, the Baer sum of $\wt{G}_{1}$ and $\wt{G}_{2}$ is realized by the 2-cocycle $\sigma_{1}\cdot\sigma_{2}$.
	
	By convention, for a closed subgroup $H$ of $G$ we write $\wt{H}$ for the preimage $\bs{p}^{-1}(H)$. We identify $\mu_{n}$ with a central subgroup of $\wt{G}$. 
	
	A \emph{splitting} of $H$ is a continuous group homomorphism $\bs{s}_{H}:H\rightarrow \wt{G}$ satisfying $\bs{p}\circ\bs{s}_{H}=\id$. In general, such a splitting may not exist or may not be unique. If $H$ is a pro-$p$-group and $\mrgcd(n,p)=1$, then the cohomology group $H^{2}(H,\mu_{n})$ is trivial, implying that there exists a unique splitting $\bs{s}_{H}$ of $H$. Thus we identify $H$ with a subgroup $\bs{s}_{H}(H)$ of $\wt{H}$, which we denote by $\,_{s}H$. For a representation $\rho$ of $H$, we similarly write $\,_{s}\rho$ for the corresponding representation of $\,_{s}H$. In general, even if the splittings of $H$ are not unique, we may still define $\,_{s}H$ and $\,_{s}\rho$ as above once we fix a splitting $\bs{s}_{H}$. We refer to \cite{van2017beta} for a similar setting.
	
	The commutator $[\cdot,\cdot]:\wt{G}\times\wt{G}\rightarrow\wt{G}$ factors through $G\times G$, and we denote by $[\cdot,\cdot]_{\sim}:G\times G\rightarrow\wt{G}$ the resulting map. In particular, if $g_{1},g_{2}\in G$ commute, then  
	\begin{equation}\label{eqformcommutator}
		[g_{1},g_{2}]_{\sim}=\sigma(g_{1},g_{2})\sigma(g_{2},g_{1})^{-1}\in\mu_{n}.
	\end{equation} 
	Similarly, the $\wt{G}$-conjugation on $\wt{G}$ factors through $G$, so we may consider the $G$-conjugation on $\wt{G}$. It is clear that $[g_{1}^{g},g_{2}^{g}]_{\sim}=[g_{1},g_{2}]_{\sim}$ for any $g,g_{1},g_{2}\in G$. Also, for $H,H_{1},H_{2}$ closed subgroups of $G$, we define the coset $g\wt{H}:=\bs{s}(g)\wt{H}$ and the double coset $\wt{H_{1}}g\wt{H_{2}}:=\wt{H_{1}}\bs{s}(g)\wt{H_{2}}$, which does not depend on the choice of the section $\bs{s}$.
	
	For $\wt{\rho}\in\mrrep(\wt{H})$, we say that $g\in G$ intertwines $\wt{\rho}$ if $\bs{s}(g)$ intertwines $\wt{\rho}$, which does not depend on the choice of $\bs{s}$. We denote by $I_{G}(\wt{\rho})$ the intertwining set of $\wt{\rho}$ as a subset of $G$. Similarly, it makes sense to define the normalizer $N_{G}(\wt{\rho})$.
	
	Fix a character $\epsilon:\mu_{n}\rightarrow\mbc^{\times}$. A representation $\wt{\pi}$ of $\wt{G}$ is called ($\epsilon$-)\emph{genuine} if $\mu_{n}$ acts by $\epsilon$ on $\wt{\pi}$. We remark that essentially we only need to consider the case where $\epsilon$ is primitive, otherwise any $\epsilon$-genuine representation $\wt{\pi}$ of $\wt{G}$ corresponds to an $\epsilon'$-genuine representation of an $n'$-fold cover of $G$ with $n'$ dividing $n$, where $\epsilon'$ is a primitive character of $\mu_{n'}$. So from now on, we also assume the character $\epsilon$ to be primitive. 
	
	For a representation $\rho$ and a splitting $\bs{s}$ of $H$, we write  $\epsilon\cdot\,_{s}\rho$ for the extension of $\,_{s}\rho$ as an $\epsilon$-genuine representation of $\wt{H}$.
	
	For a representation $\chi$ of $H$ and a genuine representation $\wt{\rho}$ of $\wt{H}$, the tensor product $\wt{\rho}\otimes\chi:=\wt{\rho}\otimes(\chi\circ\bs{p})$ is well-defined as another genuine representation of $\wt{H}$. When $\chi$ is a character, we write $\wt{\rho}\chi$ or $\wt{\rho}\cdot\chi$ instead.
	
	\subsection{Metaplectic covers of $\mrgl_{r}(F)$}\label{subsectionmetacoverGL}
	
	We consider metaplectic covers of $\mrgl_{r}(F)$, which, in the context of this article, are $n$-fold covers arising from Brylinski-Deligne covers (\emph{cf.} \cite{brylinski2001central}). Roughly speaking, Brylinski and Deligne constructed a family of central extensions of a reductive group $G$ over $F$ by the second $K$-group:
	\begin{equation}\label{eqK2extension}
		\xymatrix{0 \ar[r] & K_{2}(F) \ar[r]^-{} & \hat{G} \ar[r]^-{\bs{p}} & G \ar[r] & 0}.
	\end{equation}
	Recall that the abelian group $K_{2}(F)$ could be given by generators of the form $\{x,y\}$ with $x,y\in F^{\times}$, subject only to the following relations (\emph{cf.} \cite{matsumoto1969sous}, \cite{milnor1971introduction}*{Theorem 11.1}): 
	\begin{itemize}
		\item $\{x_{1}x_{2},y\}=\{x_{1},y\}\{x_{2},y\},\ \{y,x_{1}x_{2}\}=\{y,x_{1}\}\{y,x_{2}\}, \quad\text{for any}\ x_{1},x_{2},y\in F^{\times}.$
		\item $\{x,1-x\}=1,\quad \text{for any}\ x\neq 0,1.$
	\end{itemize} 
	Pushing out the map (\emph{cf.} \cite{gan2018groups}*{p3}) $$K_{2}(F)\rightarrow \mu_{n},\quad \{x,y\}\mapsto (x,y)_{n},$$  they got a family of $n$-fold covers $\wt{G}$ of $G$.

	\begin{remark}
		
		As far as I understand, before the work of Brylinski-Deligne, the terminology ``metaplectic covers" means finite central covers of simple simply connected groups studied by Moore, Steinberg, Matsumoto, etc., and their push-outs, pull-backs, Baer sums as finite central covers of general reductive groups. See \cite{gan2018groups} for more details.
		
		We refer to Proposition \ref{propBDcoverproperty} for the properties that are special for a metapletic cover instead of a general cover of $\mrgl_{r}(F)$.
		
	\end{remark}
	
	We follow \cite{gao2019whittaker} to characterize all the metaplectic covers of $G=\mrgl_{r}(F)$. Let $T$ be the diagonal torus of $G$, let $Y=\mrhom(\mbg_{m},T)$ be the cocharacter lattice, and let $W=W(G,T)$ be the corresponding Weyl group. Then the set of $K_{2}(F)$-extensions $\hat{G}$ of $G$ are in bijection with the set of $W$-invariant quadratic forms 
	$$Q:Y\rightarrow \mbz.$$
	This bijection is additive, saying that for two Brylinski-Deligne covers $\hat{G}_{1}$ and $\hat{G}_{2}$ of $G$ with related quadratic forms being $Q_{1}$ and $Q_{2}$ respectively, the quadratic form related to the Baer sum of $\hat{G}_{1}$ and $\hat{G}_{2}$ is $Q_{1}+Q_{2}$.
	
	One way to pin down the above bijection is as follows: Let $B_{Q}$ be the associated bilinear form given by $$B_{Q}(y_{1},y_{2})=Q(y_{1}+y_{2})-Q(y_{1})-Q(y_{2}),\quad y_{1},y_{2}\in Y.$$
	Then for $y_{1},y_{2}\in Y$ and $a,b\in F^{\times}$, we have (\emph{cf.} \cite{brylinski2001central}*{Corollary 3.14})
	$$[y_{1}(a),y_{2}(b)]_{\sim}=\{a,b\}^{B_{Q}(y_{1},y_{2})},$$
	where $[\cdot,\cdot]_{\sim}:T\times T\rightarrow K_{2}(F)$ is the resulting commutator with respect to $\hat{G}$.

	Moreover, we let $e_{1},\dots,e_{r}$ be the canonical basis of $Y$. Then such quadratic forms are in bijection with pairs of integers $(\bs{a},\bs{b})$ given by $\bs{a}=Q(e_{1})$ and $\bs{b}=B_{Q}(e_{1},e_{2})$.
	Likewise, this bijection is additive. 
	
	We consider two special Brylinski-Deligne covers of $G$. 
	First we consider the $K_{2}(F)$-extension $\hat{G}_{1}$ related to the 2-cocycle
	$$G\times G\rightarrow K_{2}(F),\quad (g_{1},g_{2})\mapsto\{\mrdet(g_{1}),\mrdet(g_{2})\}.$$
	In this case, the corresponding pair of integers is $(\bs{a},\bs{b})=(1,2)$. We let $\wt{G}_{1}$ be the corresponding $n$-fold metaplectic cover of $G$, then it is related to the 2-cocycle
	\begin{equation}\label{eq2cocycledet}
		\sigma_{\mrdet}:G\times G\rightarrow\mu_{n},\quad (g_{1},g_{2})\mapsto(\mrdet(g_{1}),\mrdet(g_{2}))_{n}.
	\end{equation}
	
	We further consider the canonical $K_{2}(F)$-extension of $\mrsl_{r+1}(F)$ considered by Matsumoto with respect to the Steinberg symbol $\{\cdot,\cdot\}^{-1}:F^{\times}\times F^{\times}\rightarrow K_{2}(F)$ (\emph{cf.} \cite{matsumoto1969sous}, \cite{milnor1971introduction}*{\S 12}). Via the pull-back of the embedding $$\mrgl_{r}(F)\rightarrow\mrsl_{r+1}(F),\quad g\mapsto(\mrdet(g)^{-1},g),$$ we get a $K_{2}(F)$-extension of $G$, which we denote by $\hat{G}_{2}$. In this case, the corresponding pair of integers is $(\bs{a},\bs{b})=(1,1)$. We let $\wt{G}_{2}$ be the corresponding $n$-fold metaplectic cover of $G=\mrgl_{r}(F)$. In \cite{kazhdan1984metaplectic}*{\S 0.I}, \cite{banks1999block}, a special 2-cocycle $\sigma_{\mathrm{KP}}:G\times G\rightarrow\mu_{n}$ related to the cover $\wt{G}_{2}$ is constructed for each $r$. It is trivial when $r=1$ and satisfies the following block-compatibility:
	\begin{equation}\label{eqBLSblock}
		\begin{aligned}
			\sigma_{\mathrm{KP}}(\mrdiag(g_1,\dots,g_k),\mrdiag(g_1',\dots,g_k'))=
			\bigg[\prod_{i=1}^k\sigma_{\mathrm{KP}}(g_i,g_i')\bigg]\cdot\bigg[\prod_{1\leq i<j\leq k}(\det(g_i),\det(g_j'))_n\bigg],   \end{aligned}
	\end{equation} 
	where $g_{i},g_{i}'\in \mrgl_{r_{i}}(F)$ for each $i=1,\dots,k$ and $r=r_{1}+\cdots+r_{k}$.
	
	Since $(1,2)$ and $(1,1)$ generate $\mbz\times\mbz$, the integral combination of $\hat{G}_{1}$ and $\hat{G}_{2}$ with respect to the Baer sum ranges over all the equivalence classes of Brylinski-Deligne covers of $G$, and the integral combination of $\wt{G}_{1}$ and $\wt{G}_{2}$ ranges over all the $n$-fold metaplectic covers of $G$. In other words, when $(\bs{c},\bs{d})$ ranges over $\mbz/n\mbz\times\mbz/n\mbz$, the corresponding 2-cocycle $\sigma_{\mrdet}^{\bs{c}}\cdot\sigma_{\mathrm{KP}}^{\bs{d}}$ represents all the $n$-fold metaplectic covers of $G$. 
	
	We fix a pair of integers $(\bs{c},\bs{d})$, and we let $\sigma=\sigma_{\mrdet}^{\bs{c}}\cdot\sigma_{\mathrm{KP}}^{\bs{d}}$ be the related 2-cocycle for $r\geq 1$. We let $\wt{G}$ be the related $n$-fold metaplectic cover of $G$. We list various properties of $\sigma$.
	
	\begin{proposition}\label{propBDcoverproperty}
		
		\begin{enumerate}
			\item $\sigma(x,y)=(x,y)_{n}^{\bs{c}}$ for $r=1$ and $x,y\in F^{\times}$.
			\item For $r=r_{1}+\dots+r_{k}$ and $g_{i},g_{i}'\in\mrgl_{r_{i}}(F)$, we have
			\begin{equation}\label{eqBDblock}
				\begin{aligned}
					&\sigma(\mrdiag(g_1,\dots,g_k),\mrdiag(g_1',\dots,g_k'))\\=&
					\bigg[\prod_{i=1}^k\sigma(g_i,g_i')\bigg]\cdot\bigg[\prod_{1\leq i<j\leq k}(\det(g_i),\det(g_j'))_n\bigg]^{\bs{c}+\bs{d}}\cdot\bigg[\prod_{1\leq j<i\leq k}(\det(g_i),\det(g_j'))_n\bigg]^{\bs{c}}.
				\end{aligned}
			\end{equation} 
			\item Keep the notation of (2) and assume $g=\mrdiag(g_{1},\dots,g_{k})$ and $g'=\mrdiag(g_{1}',\dots,g_{k}')$ commute, then
			\begin{equation}\label{eqBDcommutator}
				[g,g']_{\sim}=\prod_{i=1}^{k}[g_{i},g_{i}']_{\sim}\cdot\prod_{i\neq j}(\mrdet(g_{i}),\mrdet(g_{j}))_{n}^{2\bs{c}+\bs{d}}.
			\end{equation}
			\item For $g\in G$ and $z=\lambda I_{r}\in Z(G)$, we have
			\begin{equation}\label{eqcentercommutator}
				[z,g]_{\sim}=(\lambda,\mrdet(g))_{n}^{(2\bs{c}+\bs{d})r-\bs{d}}.
			\end{equation}
			\item Let $r=r_{1}+\dots+r_{k}$ and let $E_{i}/F$ be a field extension of degree $d_{i}$ such that $r_{i}=d_{i}r_{i}'^{2}$ for each $i$. Fix an $F$-algebra embedding $\bigoplus_{i=1}^{k}E_{i}\hookrightarrow\mrm_{r}(F)$. 
			\begin{enumerate}
				\item If $d_{i}=r_{i}$ for each $i$, then for $u=(u_{1},\dots,u_{k}), v=(v_{1},\dots,v_{k})\in \bigoplus_{i=1}^{k}E_{i}^{\times}$, we have
				\begin{equation}\label{eqsumEicommutator}
					[u,v]_{\sim}=\bigg[\prod_{i=1}^{k}(u_{i},v_{i})_{n,E_{i}}^{-\bs{d}}\bigg]\cdot(\mrdet_{F}(u),\mrdet_{F}(v))_{n,F}^{2\bs{c}+\bs{d}}.
				\end{equation}
				\item In general, consider the centralizer of $\bigoplus_{i=1}^{k}E_{i}$ in $\mrm_{r}(F)$, which induces an $F$-algebra embedding $\mrm_{r_{1}'}(E_{1})\times\dots\times\mrm_{r_{k}'}(E_{k})\hookrightarrow\mrm_{r}(F)$. Then for $u=(u_{1},\dots,u_{k})\in \bigoplus_{i=1}^{k}E_{i}^{\times}$ and $v=(v_{1},\dots,v_{k})\in \mrgl_{r_{1}'}(E_{1})\times\dots\times\mrgl_{r_{k}'}(E_{k})$, we have
				\begin{equation}\label{eqsumEiMEicommutator}
					\begin{aligned}
						[u,v]_{\sim}&=\bigg[\prod_{i=1}^{k}(u_{i},\mrdet_{E_{i}}(v_{i}))_{n,E_{i}}^{-\bs{d}}\bigg]\cdot(\mrdet_{F}(u),\mrdet_{F}(v))_{n,F}^{2\bs{c}+\bs{d}}\\
						&=\prod_{i=1}^{k}(\mrdet_{F}(u)^{2\bs{c}+\bs{d}}u_{i}^{-\bs{d}},\mrdet_{E_{i}}(v_{i}))_{n,E_{i}}.
					\end{aligned}
				\end{equation}
			\end{enumerate}
			\item The 2-cocycle $\sigma$ is trivial on $\mrgl_{r}(\mfo_{F})\times\mrgl_{r}(\mfo_{F})$. Thus there exists a splitting $\mrgl_{r}(\mfo_{F})\rightarrow\wt{G}$, and any two splittings differ by a character of $\mfo_{F}^{\times}$ composing with the determinant map. In general, for any open compact subgroup $K$ of $\mrgl_{r}(F)$, there exists a splitting $K\rightarrow\wt{G}$.
			
		\end{enumerate}
		
	\end{proposition}
	
	\begin{proof}
		
		Statement (1) is direct. 
		
		Statement (2)(3)(4) follow from \cite{kaplan2022classification}*{Lemma 4.1} and statement (5a) follows from \cite{kazhdan1984metaplectic}*{Proposition 0.1.5} in the case $\bs{d}=1$. The general cases follow easily from the fact that $\sigma=\sigma_{\mathrm{KP}}^{\bs{d}}\cdot\sigma_{\mrdet}^{\bs{c}}$ and a direct calculation (\emph{cf.} \eqref{eqformcommutator}). 
		
		For statement (5b), we notice that $[u,\cdot]_{\sim}$ is a character of $\mrgl_{r_{1}'}(E_{1})\times\dots\times\mrgl_{r_{k}'}(E_{k})$, which is trivial on the derived subgroup $\mrsl_{r_{1}'}(E_{1})\times\dots\times\mrsl_{r_{k}'}(E_{k})$. So we only need to verify the formula \eqref{eqsumEiMEicommutator} for those $v_{i}$ diagonal in $\mrgl_{r_{i}'}(E_{i})$ for each $i$. Then we may use \eqref{eqsumEicommutator} with respect to an $F$-algebra embedding $$\underbrace{E_{1}\times\dots\times E_{1}}_{r_{1}'\text{-copies}}\times\dots\times \underbrace{E_{k}\times\dots\times E_{k}}_{r_{k}'\text{-copies}}\hookrightarrow \mrm_{r}(F).$$
		
		The first part of statement (6) follows from \cite{kazhdan1984metaplectic}*{Proposition 0.1.2} and the fact that $(\cdot,\cdot)_{n,F}$ is trivial on $\mfo_{F}^{\times}\times\mfo_{F}^{\times}$ when $\mrgcd(q,n)=1$. The second part is trivial. In the final part, since $K$ and $\mrgl_{r}(\mfo_{F})$ are conjugate by some $g\in G$, we simply take the conjugation of the corresponding splitting of $\mrgl_{r}(\mfo_{F})$ as a splitting of $K$.
		
	\end{proof}
	
	From now on, we fix $r\geq 1$, $(\bs{c},\bs{d})\in\mbz\times\mbz$ and $\sigma$ as above. We let $\wt{G}$ be the $n$-fold metaplectic cover of $G=\mrgl_{r}(F)$ corresponding to $\sigma$. For a closed subgroup $H$ of $G$, we define $$H^{(n)}=\{h\in H\mid\mrdet(h)\in F^{\times n}\}.$$
	
	Let $P=MN$ be a parabolic subgroup of $G$ with a Levi factor $M$ and the unipotent radical $N$, where we assume that $M$ is isomorphic to $\mrgl_{r_{1}}(F)\times\dots\times\mrgl_{r_{k}}(F)$ with $r=r_{1}+\dots+r_{k}$. Since $N$ is a pro-$p$-group, there exists a unique splitting of $N$ into $\wt{G}$. Thus we may realize $N$ as a subgroup of $\wt{G}$, which we still denote by $N$. So $\wt{P}=\wt{M}N$
	is a parabolic subgroup of $\wt{G}$ with a Levi factor $\wt{M}$ and the unipotent radical $N$. 
	
	
	Let $H=H_{1}\times\dots\times H_{k}$ be a subgroup of $M$, such that each $H_{i}$ is a closed subgroup of $\mrgl_{r_{i}}(F)$. We call $H$ \emph{block compatible} if $[H_{i},H_{j}]_{\sim}=\{1\}$ for any  $1\leq i<j\leq k$. We notice that $H$ is block compatible in the following two cases (\emph{cf.} \eqref{eqBDcommutator}):
	\begin{itemize}
		\item For any $i$, we have $H_{i}=H_{i}^{(n)}$;
		\item For any $i$, the determinant of every element in $H_{i}$ is in $\mfo_{F}^{\times}$.
	\end{itemize}
	This concept becomes important when considering (exterior) tensor product of genuine representations. Let $\wt{\rho}_{i}$ be genuine representations of $\wt{H_{i}}$ for each $i$. We take the tensor product $\wt{\rho}_{1}\boxtimes\dots\boxtimes\wt{\rho}_{k}$ as a representation of $\wt{H_{1}}\times\dots\times\wt{H_{k}}$, which is trivial on $$\Xi=\{(\zeta_{1},\dots,\zeta_{k})\in\mu_{n}\times\dots\times\mu_{n}\mid \zeta_{1}\dots\zeta_{k}=1\}.$$
	If $H$ is block compatible, then it is clear that $$\wt{H}\cong\wt{H_{1}}\times\dots\times\wt{H_{k}}/\Xi,$$
	so we realize $\wt{\rho}_{1}\boxtimes\dots\boxtimes\wt{\rho}_{k}$  as a genuine representation of $\wt{H}$. 
	
	In particular, we emphasize the following three special classes of covers:
	
	\begin{itemize}
		
		\item (Determinantal covers) When $\bs{d}=0$, we get determinantal covers. We remark that such a cover is not far from the corresponding linear group. Since for any closed subgroup $H$ of $G$, the group $\wt{H^{(n)}}$ is isomorphic to $H^{(n)}\times \mu_{n}$. Moreover $H/H^{(n)}$ is of finite index. So the representation theory of $\wt{G}$ is easily deduced from that of $G$.
		
		\item (The Kazhdan-Patterson covers) When $\bs{d}=1$, we get Kazhdan-Patterson covers (KP-covers for short) (\emph{cf.} \cite{kazhdan1984metaplectic}) Such covers are natural, in the sense that they could be regarded as the Baer sum of the pull-back of the canonical cover of $\mrsl_{r+1}(F)$ and a determinantal cover. Moreover, there exists a functorial lift from representations of a KP-cover to that of $G$, see for instance \cite{flicker1986metaplectic},  \cite{zou2022metaplectic}, etc.
		
		\item (Savin's cover) When $\bs{c}=-1$ and $\bs{d}=2$, we get a special cover constructed by Gordan Savin (S-cover for short), see for instance \cite{gao2019whittaker}*{\S 4.1}. It can be realized as follows: first we construct the canonical cover of the symplectic group $\mrsp_{2r}(F)$ with respect to the Steinberg symbol $(\cdot,\cdot)_{n}^{-1}$, and then after identifying $G$ with the Siegel Levi subgroup of $\mrsp_{2r}(F)$ we get the S-cover of $G$. 
		Every Levi subgroup of $G$ is block compatible, which can be seen from formula \eqref{eqBDcommutator} since $2\bs{c}+\bs{d}=0$.
		
	\end{itemize} 
	
	\section{Strata and simple characters}\label{sectionstrata}
	
	In this section, we recall the theory of strata and simple characters related to $G=\mrgl_{r}(F)$, developed by Bushnell and Kutzko. Our main reference is \cite{bushnell129admissible}, \cite{bushnell1999semisimple} and \cite{secherre2008representations}. We also recommend \cite{bushnell2019arithmetic} as an excellent survey. 
	
	\subsection{Lattice sequences and lattice chains}
	
	Fix an $r$-dimensional vector space $V$ over $F$, and a basis $\{v_{1},\dots,v_{r}\}$ of $V$ under which we identify $G$ with $\mraut_{F}(V)$. We write $A=\mrend_{F}(V)\cong\mrm_{r}(F)$. For $\beta\in A$, we define $\psi_{\beta}(x)=\psi_{F}(\mrtr(\beta(x-1)))$ for $x\in A$.
	
	An \emph{$\mfo_{F}$-lattice sequence} $\Lambda=(\Lambda_{k})_{k\in\mbz}$ of $V$ is a sequence of $\mfo_{F}$-lattices of $V$, such that
	\begin{itemize}
		\item $\Lambda_{k+1}\subset\Lambda_{k}$ for any $k\in\mbz$.
		\item There exists a positive integer $e=e(\Lambda|\mfo_{F})$, called the period of $\Lambda$ over $\mfo_{F}$, such that $\Lambda_{k+e}=\mfp_{F}\Lambda_{k}$ for any $k\in\mbz$.
	\end{itemize} 
	When $\Lambda_{k+1}\subsetneq\Lambda_{k}$ for each $k\in\mbz$, we call $\Lambda$ a \emph{strict} lattice sequence, or a \emph{lattice chain} of $V$. We denote by $\msl(V,\mfo_{F})$ (resp. $\msl^{+}(V,\mfo_{F})$) the set of $\mfo_{F}$-lattice sequences (resp. $\mfo_{F}$-lattice chains) of $V$.
	
	We may realize a lattice sequence $\Lambda$ as a function defined on $\mbr$ by imposing $\Lambda_{x}=\Lambda_{\ceil{x}}$ for $x\in\mbr$.
	
	For $\Lambda\in\msl(V,\mfo_{F})$, we define a lattice sequence $\mfA(\Lambda)\in\msl(A,\mfo_{F})$ by
	$$\mfA_{k}(\Lambda)=\{a\in A\mid a\Lambda_{l}\subset\Lambda_{l+k},\ l\in\mbz \},\quad k\in\mbz.$$
	In particular, $\mfa=\mfA_{0}(\Lambda)$ is a hereditary order in $A$, and $\mfp_{\mfa}=\mfA_{1}(\Lambda)$ is the Jacobson radical of $\mfa$. 
	We define the valuation map $$v_{\Lambda}:A\rightarrow \mbz,\ x\mapsto\mathrm{max}\{k\in\mbz\mid x\in\mfA_{k}(\Lambda)\}$$ with the convention $v_{\Lambda}(0)=\infty$. We write $U(\Lambda)=\mfA_{0}^{\times}$ and $U_{k}(\Lambda)=1+\mfA_{k}(\Lambda)$ for $k\geq 1$. 
	
	Up to the choice of an $F$-basis of $V$, we may realize $\mfa$ as a standard hereditary order in $\mrm_{r}(F)$, meaning that there exists a composition $r_{1}+\dots+r_{t}=r$ such that
	$$\mfa=\{(a_{ij})_{1\leq i,j\leq t}\mid a_{ij}\in\mrm_{r_{i}\times r_{j}}(\mfo_{F})\ \text{for}\ 1\leq i\leq j\leq t\ \text{and}\ a_{ij}\in\mrm_{r_{i}\times r_{j}}(\mfp_{F})\ \text{for}\ 1\leq j< i\leq t\}.$$
	
	If $\Lambda$ is strict, then $\mfA_{k}(\Lambda)=\mfp_{\mfa}^{k}$ for any $k\in\mbz$. Indeed, it is not hard to show that $\msl^{+}(V,\mfo_{F})\rightarrow \msl^{+}(A,\mfo_{F}),\ \Lambda\rightarrow \mfA(\Lambda)$ is a bijection, so we may somehow recover the corresponding lattice chain from a given hereditary order $\mfa$ in $A$.
	
	Let $E$ be a subfield of $A$ over $F$. Then we may realize $V$ as a vector space over $E$, which we denote by $V_{E}$. Let $B=\mrend_{E}(V)$, which is  an $F$-subalgebra of $A$. Then $B^{\times}$ is the centralizer of $E^{\times}$ in $G$. Let $\psi_{E}$ be a character of $E$ of conductor $\mfp_{E}$.  Then there exists a bi-$B$-module homomorphism $s:A\rightarrow B$ such that 
	$$\psi_{E}(\mrtr_{E}(s(a)b))=\psi_{F}(\mrtr_{F}(ab)),\quad a\in A,\ b\in B,$$
	called a \emph{tame corestriction} on $A$.
	
	A sequence $\Lambda\in\msl(V,\mfo_{F})$ is called \emph{$E$-pure} if it is normalized by $E^{\times}$. In this case, each $\Lambda_{k}$ is an $\mfo_{E}$-lattice. Moreover, $\Lambda$ could be realized as an element in $\msl(V_{E},\mfo_{E})$, which we denote by $\Lambda_{E}$. By definition,  $\mfA_{k}(\Lambda_{E})=\mfA_{k}(\Lambda)\cap B$ for each $k\in\mbz$. Let $\mfb=\mfA_{0}(\Lambda_{E})$, which is a hereditary order in $B$. Let $\mfp_{\mfb}=\mfA_{1}(\Lambda_{E})$.
	
	We consider a decomposition $V=\bigoplus_{i=1}^{t} V^{i}$ of $F$-vector spaces. It is called an \emph{$E$-decomposition} if each $V^{i}$ is $E$-stable. In this case,  
	we have $V_{E}=\oplus_{i=1}^{t}V_{E}^{i}$. 
	
	Let $M=\prod_{i=1}^{t}\mraut_{F}(V^{i})\subset G$, let $P=MN$ be a parabolic subgroup of $G$ having a Levi factor $M$ and the unipotent radical $N$. For an $E$-decomposition $V=\bigoplus_{i=1}^{t} V^{i}$, we get a parabolic subgroup $P_{E}=P\cap B^{\times}$ of $B^{\times}$, having a Levi factor $M_{E}=M\cap B^{\times}$ and the unipotent radical $N_{E}=N\cap B^{\times}$.
	
	Let $\Lambda^{i}\in\msl(V^{i},\mfo_{F})$ for $i=1,\dots t$. Then the direct sum $\Lambda=\bigoplus_{i=1}^{t}\Lambda^{i}$ is well-defined as an element in $\msl(V,\mfo_{F})$. On the other hand, we say that a decomposition $V=\bigoplus_{i=1}^{t} V^{i}$ \emph{conforms with} $\Lambda\in\msl(V,\mfo_{F})$, if there exists $\Lambda^{i}\in\msl(V^{i},\mfo_{F})$  for each $i$ such that $\Lambda=\bigoplus_{i=1}^{t}\Lambda^{i}$. In this case, we necessarily have $\Lambda^{i}=\Lambda\cap V^{i}$. Furthermore, if $V=\bigoplus_{i=1}^{t} V^{i}$ is an $E$-decomposition and each $\Lambda^{i}$ is $E$-pure, we get the decomposition $\Lambda_{E}=\bigoplus_{i=1}^{t}\Lambda_{E}^{i}$. 
	
	\subsection{Strata}
	
	A \emph{stratum} in $A$ is a $4$-tuple $[\Lambda,u,l,\beta]$, where $\Lambda\in\msl(V,\mfo_{F})$, and $u,l$ are integers such that $0\leq l\leq u-1$, and $\beta\in\mfA_{-u}(\Lambda)$.
	
	If the stratum we consider is strict, saying that $\Lambda$ is strict, then conventionally we often write $[\mfa,u,l,\beta]$ for the same stratum to emphasize this assumption, where $\mfa=\mfA_{0}(\Lambda)$. Conventionally, if an object $\Gamma(\Lambda,\beta,\dots)$ is defined for a stratum with $\Lambda$ strict, we often write $\Gamma(\mfa,\beta,\dots)$ instead.
	
	Two strata $[\Lambda,u,l,\beta_{1}]$ and $[\Lambda,u,l,\beta_{2}]$ are called \emph{equivalent} if $\beta_{1}-\beta_{2}\in\mfA_{-l}(\Lambda)$. 
	
	For a stratum $[\Lambda,u,l,\beta]$ in $A$ such that $0\leq l<u\leq 2l+1$, we notice that $\psi_{\beta}$ becomes a character of $U^{l+1}(\Lambda)/U^{u+1}(\Lambda)$, which depends only on the equivalence class of $[\Lambda,u,l,\beta]$.
	
	Assume that $[\Lambda,u,l,\beta]$ is \emph{pure}, which means that $E:=F[\beta]$ is a field and $\Lambda$ is $E$-pure. In this case, we let $B$ be the centralizer of $E$ in $A$. Once a related tame corestriction $s:A\rightarrow B$ is fixed, a \emph{derived stratum} of $[\Lambda,u,l,\beta]$ is a stratum $[\Lambda_{E},l,l-1,s(c)]$ in $B$ with $c\in\mfA_{-l}(\Lambda)$.
	
	For $k\in\mbz$, we define
	$$\mfn_{k}(\beta,\Lambda)=\{x\in\mfa\mid \beta x-x\beta\in \mfA_{k}(\Lambda)\}.$$ 
	If $E\neq F$, there exists a maximal integer $k$ such that $\mfn_{k}(\beta,\Lambda)$ is not contained in $\mfb+\mfp_{\mfa}$, which we denote by $k_{0}(\beta,\Lambda)$. Then $k_{0}(\beta,\Lambda)\geq -u$ since $\mfn_{-u}(\beta,\Lambda)=\mfa$. If $E=F$, we let $k_{0}(\beta,\Lambda)=-\infty$. 
	Then we call a pure stratum $[\Lambda,u,l,\beta]$ \emph{simple} if $l<-k_{0}(\beta,\Lambda)$. 
	
	Assume that we have an $E$-decomposition $V=\bigoplus_{i=1}^{t}V^{i}$. Fix a simple stratum $[\Lambda,u,l,\beta]$ in $A$ and assume that $\Lambda$ conforms with this decomposition. Then each $[\Lambda^{i},u,l,\beta]$ is a simple stratum in $A^{i}=\mrend_{F}(V^{i})$ for $i=1,\dots,t$ (\emph{cf.} \cite{secherre2008representations}*{Proposition 1.20}). 
	
	In particular, we consider a stratum of the form $[\Lambda,u,u-1,b]$. Let $e=e(\Lambda|\mfo_{F})$. The element $\varpi_{F}^{u/\mrgcd(u,e)}b^{e/\mrgcd(u,e)}$ is in $\mfa^{\times}$. So its characteristic polynomial has coefficients in $\mfo_{F}$, whose reduction module $\mfp_{F}$ in $\bs{k}[X]$ is called the characteristic polynomial of $[\Lambda,u,u-1,b]$ and denoted by $\varphi_{b}$. 
	
	Then a stratum $[\Lambda,u,u-1,b]$ is called
	\begin{itemize}
		\item \emph{fundamental} if $\varphi_{b}$ is not a power of $X$.
		\item \emph{split} if $\varphi_{b}$ has at least two different irreducible factors.
		\item \emph{minimal} if it is simple, or in other words $k_{0}(b,\Lambda)=-u$. 
	\end{itemize}
	
	Finally, we also allow the occurrence of a ``null" stratum $[\Lambda,0,0,\beta]$, where $\Lambda$ is a lattice chain such that the corresponding hereditary order $\mfa$ is maximal in $A$, and $\beta\in \mfo_{F}$. In this case we have $E=F$ and $A=B$.
	
	\subsection{Simple characters}\label{subsectionsimplecharacters}
	
	Let $[\Lambda,u,0,\beta]$ be a simple stratum in $A=\mrend_{F}(V)$ and let $E=F[\beta]$. We first assume $\Lambda$, as well as the occurring lattice sequences, to be strict. As in \cite{bushnell129admissible}*{\S 3} and \cite{secherre2008representations}*{\S  2}, 
	\begin{enumerate}
		\item We define two sub-$\mfo_{F}$-orders $\mfH(\beta,\Lambda)\subset\mfJ(\beta,\Lambda)$ of $\mfa$, which depend only on the equivalence class of $[\Lambda,u,0,\beta]$. Each of them is filtered by a sequence of bilateral ideals correspondingly:
		$$\mfH^{k}(\beta,\Lambda)=\mfH(\beta,\Lambda)\cap\mfA_{k}(\Lambda),\quad \mfJ^{k}(\beta,\Lambda)=\mfJ(\beta,\Lambda)\cap\mfA_{k}(\Lambda),\quad k\geq 1.$$
		We denote by $H(\beta,\Lambda)$ (resp. $J(\beta,\Lambda)$) the subgroup of invertible elements in $\mfH(\beta,\Lambda)$ (resp. $\mfJ(\beta,\Lambda)$), then similarly each of them is filtered by a sequence of open compact subgroups
		$$H^{k}(\beta,\Lambda)=H(\beta,\Lambda)\cap U^{k}(\Lambda),\quad J^{k}(\beta,\Lambda)=J(\beta,\Lambda)\cap U^{k}(\Lambda),\quad k\geq 1.$$
		We also have $J(\beta,\Lambda)=U(\Lambda_{E})J^{1}(\beta,\Lambda)$.
		
		\item For each $0\leq l\leq k_{0}(\beta,\Lambda)-1$, we define a finite set $\mcc(\Lambda,l,\beta)$ of characters of $H^{l+1}(\beta,\Lambda)$, called \emph{simple characters} of level $l$ attached to $[\Lambda,u,0,\beta]$. We write $\mcc(\Lambda,\beta)=\mcc(\Lambda,0,\beta)$ for short. We remark that $\mcc(\Lambda,l,\beta)$ depends on the choice of $\psi_{F}$. 
		
		\item Let $v=-k_{0}(\beta,\Lambda)>0$. For $k\geq 1$, we define
		\begin{equation}\label{eqomegadef}
			\mfm_{k}(\beta,\Lambda)=\mfA_{k}(\Lambda)\cap\mfn_{k-v}(\beta,\Lambda)+\mfJ^{\ceil{v/2}}(\beta,\Lambda)\quad\text{and}\quad\Omega_{k}(\beta,\Lambda)=1+\mfm_{k}(\beta,\Lambda).
		\end{equation}
		Then for $\theta\in\mcc(\Lambda,l,\beta)$ we have
		\begin{equation}\label{eqsimcharintertwine}
			I_{G}(\theta)=\Omega_{v-l}(\beta,\Lambda)B^{\times}\Omega_{v-l}(\beta,\Lambda).
		\end{equation}
		In particular, if $l=0$ we have
		\begin{equation}\label{eqsimcharintertwinelevel0}
			I_{G}(\theta)=J(\beta,\Lambda)B^{\times}J(\beta,\Lambda).
		\end{equation}
		
		\item Let $V$, $V'$ be two finite dimensional $F$-vector spaces, let $A=\mrend_{F}(V)$ and $A'=\mrend_{F}(V')$, let $E=F[\beta]$ and $E'=F[\beta']$ be subfields of $A$ and $A'$ respectively with an isomorphism $E\rightarrow E'$ mapping $\beta$ to $\beta'$. Let  $[\Lambda,u,0,\beta]$ (resp. $[\Lambda',u',0,\beta']$) be a strict simple stratum in $A$ (resp. $A'$). Then we have a \emph{transfer} map as a bijection
		$$t_{\mfa,\mfa',\beta,\beta'}:\mcc(\Lambda,l,\beta)\rightarrow\mcc(\Lambda',l',\beta'),$$
		where we assume $0\leq l\leq u-1$, $0\leq l\leq u'-1$ and $\floor{l/e(\Lambda|\mfo_{E})}=\floor{l'/e(\Lambda'|\mfo_{E})}$. 
		
		In particular, in the following cases, this transfer map could be further characterized.
		\begin{enumerate}
			\item Assume $V=V'$, $l=l'$, and there exists $g\in\mraut_{F}(V)$ such that $\beta'=\beta^{g}$ and $\mfa'=\mfa^{g}$. Then the transfer map is given by the $g$-conjugation.
			
			\item Assume $V=V'$ and $\beta=\beta'$. Then for $\theta\in\mcc(\Lambda,l,\beta)$, the transfer of $\theta$ to $\mcc(\Lambda',l',\beta)$ coincides with $\theta$ on $H^{l+1}(\beta,\Lambda)\cap H^{l'+1}(\beta,\Lambda')$.
			
			\item Assume that $V=V'\oplus V''$ is an $E$-decomposition, and there exists an $E$-pure lattice chain $\Lambda''$ in $V''$ such that $\Lambda=\Lambda'\oplus\Lambda''$. Then for any $l\leq u-1$ we have $H^{l}(\beta,\Lambda)\cap \mraut_{F}(V')=H^{l}(\beta',\Lambda')$.  Moreover, for $l'=l$ the transfer map is given by the restriction to $H^{l}(\beta',\Lambda')$.
			
		\end{enumerate}
		
		\item Let $V=\bigoplus_{i=1}^{t}V^{i}$ be an $E$-decomposition with $E=F[\beta]$. Let $\Lambda^{i}$ be an $E$-pure lattice chain\footnote{Be aware that in general $\Lambda^{i}=\Lambda\cap V^{i}$ is not necessarily strict for $i=1,\dots,t$, although right now we assume that it is so to define $H^{l+1}(\beta,\Lambda^{i})$ and $\theta_{i}\in\mcc(\Lambda^{i},l,\beta)$. We will explain in the next paragraph how to deal with the general case.} in $V^{i}$ for each $i$, such that $\Lambda=\bigoplus_{i=1}^{t}\Lambda^{i}$. Then each $[\Lambda^{i},u,0,\beta]$ is a simple stratum in $A^{i}=\mrend_{F}(V^{i})$. Let $M=\prod_{i=1}^{t}\mraut_{F}(V^{i})$ be the corresponding Levi subgroup of $G$, and let $P=MN$ be a parabolic subgroup of $G$ having a Levi factor $M$. Then for $\theta\in\mcc(\Lambda,l,\beta)$,
		\begin{enumerate}
			\item  The pair $(H^{l+1}(\beta,\Lambda),\theta)$ is decomposed with respect to $(M,P')$ for any parabolic subgroup $P'$ of $G$ with a Levi factor $M$ (\emph{cf.} \S \ref{subsectioncoverpair}). 
			
			\item Moreover, we have
			\begin{equation}\label{eqthetadecom}
				H^{l+1}(\beta,\Lambda)\cap M=\prod_{i=1}^{t}H^{l+1}(\beta,\Lambda^{i})\quad\text{and}\quad \theta_{M}:=\theta\rest_{H^{l+1}(\beta,\Lambda)\cap M}=\theta_{1}\otimes\dots\otimes\theta_{t},
			\end{equation}
			with each $\theta_{i}$ being the transfer of $\theta$ to $\mcc(\Lambda^{i},l,\beta)$.
		\end{enumerate}

	\end{enumerate}
	
	Now we consider a general lattice sequence $\Lambda$, and we explain how to generalize the above theory in this case. We refer to \cite{secherre2008representations}*{\S 2} for the missing details.
	
	We construct an $E$-decomposition of $F$-vector spaces $V''=V\oplus V'$. Write $A''=\mrend_{F}(V'')$, which contains $A=\mrend_{F}(V)$ as a sub-$F$-algebra. For $e=e(\Lambda|\mfo_{F})$, we choose a strict $E$-pure lattice sequence $\Lambda'$ in $V'$ of period $e$. Then
	the $E$-pure lattice sequence $\Lambda''=\Lambda\oplus\Lambda'$ in $V''$ is also strict and of period $e$. Moreover, $[\Lambda'',u,0,\beta]$ (after identifying $\beta$ with an element in $A''$) is a simple stratum in $A''$.
	
	We define $$\mfH(\beta,\Lambda)=\mfH(\beta,\Lambda'')\cap A\quad\text{and}\quad\mfJ(\beta,\Lambda)=\mfJ(\beta,\Lambda'')\cap A$$ 
	as $\mfo_{F}$-orders in $A$. Then, we define $\mfH^{k}(\beta,\Lambda)$, $\mfJ^{k}(\beta,\Lambda)$, $H(\beta,\Lambda)$, $J(\beta,\Lambda)$, $H^{k}(\beta,\Lambda)$, $J^{k}(\beta,\Lambda)$, $\mfm_{k}(\beta,\Lambda)$, $\Omega_{k}(\beta,\Lambda)$ as above.
	
	We have $H^{l+1}(\beta,\Lambda'')\cap A=H^{l+1}(\beta,\Lambda)$ for any $l\geq 0$. Moreover, for $0\leq l<-k_{0}(\beta,\Lambda'')$, we define simple characters of level $l$ attached to $[\Lambda,u,0,\beta]$ to be the restriction of characters in $\mcc(\Lambda'',l,\beta)$ to $H^{l+1}(\beta,\Lambda)$. We denote by $\mcc(\Lambda,l,\beta)$ the set of such characters. 
	
	We remark that the above notation is well-defined, i.e. independent of the choice of $V'$, $V''$ and $\Lambda'$, and it is compatible with the original theory when $\Lambda$ is strict. Moreover, all the results for simple characters listed above remain valid.
	
	For any simple stratum $[\Lambda,u,0,\beta]$ in $A$, we may define a related strict simple stratum $[\mfa,u',0,\beta]$ in $A$, where $\mfa=\mfA_{0}(\Lambda)$ and $u'=-v_{\mfa}(\beta)$. 
	
	We also briefly recall the concept ``endo-class" introduced by Bushnell and Henniart \cite{bushnell1996local}, although it will not be essentially used in this article. Let $V_{1}$, $V_{2}$ be two finite dimensional vector spaces over $F$. Let $[\mfa_{1},u_{1},0,\beta_{1}]$ (resp. $[\mfa_{2},u_{2},0,\beta_{2}]$) be a strict simple stratum in $A_{1}=\mrend_{F}(V_{1})$ (resp. $A_{2}=\mrend_{F}(V_{2})$). We call  $\theta_{1}\in\mcc(\mfa_{1},0,\beta_{1})$ and $\theta_{2}\in\mcc(\mfa_{2},0,\beta_{2})$ \emph{endo-equivalent} if 
	there exist an $F$-vector space $V$ containing $V_{1}$ and $V_{2}$, and two simple strata $[\mfa_{1}',u_{1},0,\beta_{1}']$ and $[\mfa_{2}',u_{2},0,\beta_{2}']$ in $\mrend_{F}(V)$, with $F[\beta_{1}]\cong F[\beta']$ and $F[\beta_{2}]\cong F[\beta_{2}']$ mapping $\beta_{1}$ to $\beta_{1}'$ and $\beta_{2}$ to $\beta_{2}'$, such that the transfers $t_{\mfa_{1},\mfa_{1}',\beta_{1},\beta_{1}'}(\theta_{1})$ and $t_{\mfa_{2},\mfa_{2}',\beta_{2},\beta_{2}'}(\theta_{2})$ are conjugate by $G=\mraut_{F}(V)$.
	
	We notice that such an equivalence relation is independent of various choices above. Thus we introduced an equivalence relation on 
	$$\bigcup_{[\mfa,u,0,\beta]}\mcc(\mfa,0,\beta)$$
	where the union ranges over all the strata in $\mrm_{r}(F)$ with $r$ ranging over all positive integers. Such an equivalence class is called an \emph{endo-class}.
	
	Finally, for a ``null" stratum $[\Lambda,0,0,\beta]$ in $A$ and $\beta\in\mfo_{F}$, by convention we have $\mfH(\beta,\mfa)=\mfJ(\beta,\mfa)=\mfa$, $\mfH^{i}(\beta,\mfa)=\mfJ^{i}(\beta,\mfa)=\mfp_{\mfa}^{i}$, $H(\beta,\mfa)=J(\beta,\mfa)=U(\mfa)$, $H^{i}(\beta,\mfa)=J^{i}(\beta,\mfa)=U^{i}(\mfa)$ and $\mcc(\mfa,\beta)=\{1\}$. In this case, the corresponding endo-class is trivial.
	
	\subsection{Heisenberg representations and $\beta$-extensions}\label{subsectionHeisenbergbeta}
	
	The reference here is \cite{bushnell129admissible}*{\S 5.1, 5.2, 7.1, 7.2} and \cite{secherre2005representations}. Let $[\mfa,u,0,\beta]$ be a strict simple stratum in $A=\mrend_{F}(V)$, where we denote by $\Lambda$ the lattice chain related to $\mfa$. Still we write $E=F[\beta]$ and $B=\mrend_{E}(V_{E})$. Let $\theta\in\mcc(\mfa,\beta)$. 
	
	There exists a unique irreducible representation $\eta$ of $J^{1}(\beta,\mfa)$, called the \emph{Heisenberg} representation of $\theta$, whose restriction to $H^{1}(\beta,\mfa)$ contains $\theta$. The restriction of $\eta$ to $H^{1}(\beta,\mfa)$ is isomorphic to $[J^{1}(\beta,\mfa):H^{1}(\beta,\mfa)]^{1/2}\cdot\theta$. Also we have
	\begin{equation}\label{eqintertwineeta}
		I_{G}(\eta)=I_{G}(\theta)=J(\beta,\mfa)B^{\times}J(\beta,\mfa).
	\end{equation} 
	Moreover, each related intertwining space of $\eta$ is of dimension $1$.
	
	A representation $\kappa$ of $J(\beta,\mfa)=U(\mfb)J^{1}(\beta,\mfa)$ is called a \emph{$\beta$-extension} of $\eta$ if its restriction to $J^{1}(\beta,\mfa)$ is $\eta$, and moreover $I_{G}(\kappa)=I_{G}(\eta)$. We notice that such $\kappa$ exists, and all the other $\beta$-extensions of $\eta$ are of the form $\kappa\chi$, where $\chi$ ranges over characters of $J(\beta,\mfa)/J^{1}(\beta,\mfa)\cong U(\mfb)/U^{1}(\mfb)$ that factor through $\mrdet_{E}:B^{\times}\rightarrow E^{\times}$.
	
	We define $\bs{J}(\beta,\mfa)=N_{B^{\times}}(U(\mfb))J(\beta,\mfa)$ as a compact modulo center subgroup of $G$. In particular, if $\mfb$ is a maximal hereditary order, then $\bs{J}(\beta,\mfa)=E^{\times}J(\beta,\mfa)$. Notice that $\bs{J}(\beta,\mfa)$ is indeed the normalizer of the simple character $\theta$, thus it does not depend on the choice of the simple stratum $[\mfa,u,0,\beta]$ but only on $\theta$.
	
	Now we consider $E$-pure lattice chains $\Lambda'$ and $\Lambda''$ in $A$, such that the corresponding hereditary orders $\mfa'$ and $\mfa''$ satisfy $\mfa'\subset\mfa\subset\mfa''$. Let $u'=-v_{\mfa'}(\beta)$ and $u''=-v_{\mfa''}(\beta)$ be two positive integers. Then $[\mfa',u',0,\beta]$ and $[\mfa'',u'',0,\beta]$ are simple strata in $A$. Let $\theta'\in\mcc(\mfa',\beta)$ and $\theta''\in\mcc(\mfa'',\beta)$ be the corresponding transfers of $\theta$, and let $\eta'$ and $\eta''$ be the Heisenberg representations of $\theta'$ and $\theta''$ respectively. Then, there exist unique $\beta$-extensions $\kappa'$ of $\eta'$ and  $\kappa''$ of $\eta''$, such that 
	\begin{equation}\label{eqrelatedbetaext}
		\begin{aligned}
			&\mrind_{U(\mfb')J^{1}(\beta,\mfa)}^{U(\mfb')U^{1}(\mfa')}(\kappa\rest_{U(\mfb')J^{1}(\beta,\mfa)})\cong\mrind_{J(\beta,\mfa')}^{U(\mfb')U^{1}(\mfa')}(\kappa')\\
			\text{and}\quad&\mrind_{U(\mfb)J^{1}(\beta,\mfa'')}^{U(\mfb)U^{1}(\mfa)}(\kappa''\rest_{U(\mfb)J^{1}(\beta,\mfa'')})\cong\mrind_{J(\beta,\mfa)}^{U(\mfb)U^{1}(\mfa)}(\kappa)
		\end{aligned}
	\end{equation}
	as irreducible representations (\emph{cf.} \cite{bushnell129admissible}*{\S 5.2.14}). We call $\kappa'$ (resp. $\kappa''$) the $\beta$-extension related to $\kappa$. 
	
	Finally we consider an $E$-decomposition $V=\bigoplus_{i=1}^{t}V^{i}$ which conforms with $\Lambda$. Let $M=\prod_{i=1}^{t}\mraut_{F}(V^{i})$ be the corresponding Levi subgroup of $G$. We fix a parabolic subgroup $P=MN$ of $G$ having a Levi factor $M$. 
	
	Write $\Lambda^{i}=\Lambda\cap V^{i}$, $\mfa^{i}=\mfA_{0}(\Lambda^{i})$ and $\mfb^{i}=\mfA_{0}(\Lambda_E^{i})$ for $i=1,\dots,t$.
	
	Let $\theta_{i}\in\mcc(\Lambda^{i},\beta)$ be defined as in \eqref{eqthetadecom}. Let $\eta_{i}$ be the Heisenberg representation of $\theta_{i}$ as a representation of $J^{1}(\beta,\Lambda^{i})$, and let $\eta_{M}=\eta_{1}\boxtimes\dots\boxtimes\eta_{t}$ be an irreducible representation of $$J^{1}_{M}(\beta,\mfa):=J^{1}(\beta,\mfa)\cap M=J^{1}(\beta,\Lambda^{1})\times\dots\times J^{1}(\beta,\Lambda^{t}).$$ 
	We define 
	\begin{equation}\label{eqJ1P}
		J^{1}_{P}(\beta,\mfa)=H^{1}(\beta,\mfa)(J^{1}(\beta,\mfa)\cap P)=(H^{1}(\beta,\mfa)\cap N^{-})\cdot(J^{1}(\beta,\mfa)\cap M)\cdot(J^{1}(\beta,\mfa)\cap N)
	\end{equation}
	as a pro-$p$-subgroup of $J^{1}(\beta,\mfa)$, then there exists a unique irreducible representation $\eta_{P}$ of $J^{1}_{P}(\beta,\mfa)$, such that
	\begin{itemize}
		\item $(J^{1}_{P}(\beta,\mfa),\eta_{P})$ is decomposed with respect to $(M,P')$ for any parabolic  subgroup $P'$ of $G$ with a Levi factor $M$.
		
		\item $\eta_{P}\rest_{J^{1}_{M}(\beta,\mfa)}=\eta_{M}$.
		
		\item $\mrind_{J^{1}_{P}(\beta,\mfa)}^{J^{1}(\beta,\mfa)}\eta_{P}=\eta$.
	\end{itemize}
	
	To proceed, we further assume that the $E$-decomposition $V=\bigoplus_{i=1}^{t}V^{i}$ is \emph{properly subordinate} to $\Lambda$, meaning that (\emph{cf.} \cite{stevens2008supercuspidal}*{Definition 5.1})
	\begin{itemize}
		\item it conforms with $\Lambda$;
		\item for $i=1,\dots,t$ and $j\in\mbz$, we have
		$$\Lambda_{tj}^i=\Lambda_{tj+1}^i=\dots=\Lambda_{tj+i-1}^i\supsetneq \Lambda_{tj+i}^i=\dots=\Lambda_{t(j+1)}^i;$$
		\item for each $i=1,\dots,t$, the hereditary order $\mfb^{i}$ in $\mrend_E(V_E^i)$ is maximal.
	\end{itemize}
	In this case, the hereditary order $\mfb$ is $B^{\times}$-conjugate to the standard hereditary order in $B=\mrend_{E}(V_{E})$ with respect to the composition $(\mrdim_{E}(V_{E}^{1}),\dots,\mrdim_{E}(V_{E}^{t}))$ of $\mrdim_{E}(V_{E})$. Also, for each $i$ we necessarily have (\emph{cf.} \cite{bushnell129admissible}*{Proposition 7.1.12, Theorem 7.1.14})
	$$H^{1}(\beta,\Lambda^i)=H^{1}(\beta,\mfa^i),\ J^{1}(\beta,\Lambda^i)=J^{1}(\beta,\mfa^i),\ J(\beta,\Lambda^i)=J(\beta,\mfa^i)\quad \text{and}\quad \mcc(\Lambda^i,\beta)=\mcc(\mfa^i,\beta),$$
	and $$J_{M}(\beta,\mfa):=J(\beta,\mfa)\cap M=J(\beta,\mfa^{1})\times\dots\times J(\beta,\mfa^{t})$$ 
	as a subgroup of $M$, and
	\begin{equation}\label{eqJ0P}
		J_{P}(\beta,\mfa):=H^{1}(\beta,\mfa)(J(\beta,\mfa)\cap P)=(H^{1}(\beta,\mfa)\cap N^{-})\cdot(J(\beta,\mfa)\cap M)\cdot(J^{1}(\beta,\mfa)\cap N)
	\end{equation}
	as a subgroup of $J(\beta,\mfa)$. 
	
	Let $\kappa_{P}$ be the representation of $J_{P}(\beta,\mfa)$ defined on the space of $J^{1}(\beta,\mfa)\cap N^{-}$-fixed vectors in $\eta$. Then 
	\begin{itemize}
		\item $(J_{P}(\beta,\mfa),\kappa_{P})$ is decomposed with respect to $(M,P')$ for any parabolic subgroup $P'$ of $G$ with a Levi factor $M$.
		
		\item $\kappa_{P}\rest_{J_{M}(\beta,\mfa)}=\kappa_{M}:=\kappa_{1}\boxtimes\dots\boxtimes\kappa_{t}$, where $\kappa_{i}$ is a $\beta$-extension of $\eta_{i}$ for each $i$. In particular, $\kappa_{M}$ does not depend on the choice of $P$.
		
		\item $\mrind_{J_{P}(\beta,\mfa)}^{J(\beta,\mfa)}\kappa_{P}=\kappa$.
	\end{itemize}
	In the case $\mrdim_{F}(V^{1})=\dots=\mrdim_{F}(V^{t})$, we necessarily have $\theta_{1}=\dots=\theta_{t}$, $\eta_{1}\cong\dots\cong\eta_{t}$ and $\kappa_{1}\cong\dots\cong\kappa_{t}$.
	
	Finally, we notice that when considering a null stratum $[\mfa,0,0,\beta]$, our simple character $\theta$ and Heisenberg representation $\eta$ are identity characters, and the corresponding $\beta$-extensions we are considering are characters of $U(\mfa)/U^{1}(\mfa)$. We choose the identity character to be our initial $\beta$-extension $\kappa$. We say that we are in the ``level 0" case once we consider a null stratum as so.
	
	\section{Simple types}\label{sectionsimpletypes}
	
	In this section, we introduce simple types. These are types related to certain inertial equivalence classes of $\wt{G}=\wt{\mrgl_{r}(F)}$ in the sense of \S \ref{subsectionblockstypes}, in particular, those inertial equivalence classes containing a discrete series representation for KP-covers and the S-cover.
	
	\subsection{Homogeneous types}\label{subsectionhomotype}
	
	Let $[\mfa,u,0,\beta]$ be a strict simple stratum in $A=\mrend_{F}(V)$, with $V$ being an $r$-dimensional vector space over $F$ and $\Lambda$ being the corresponding lattice chain of $\mfa$. As before, let $E=F[\beta]$, $B=\mrend_{E}(V_{E})$ and $\mfb=\mfa\cap A$. Let $d=[E:F]$ and $m=r/d$. Let $d=ef$, where $e$ denotes the ramification index and $f$ denotes the unramified degree of $E/F$. Let $\bs{l}$ be the residue field of $E$.
	
	Consider a containment of $E$-pure hereditary orders $\amin\subset\mfa\subset\amax$, such that $\bmin=B\cap\amin$ is a minimal hereditary order, and $\bmax=B\cap\amax$ is a maximal hereditary order in $B$. Let $\umin=-v_{\amin}(\beta)$ and  $\umax=-v_{\amax}(\beta)$. Then $[\amin,\umin,0,\beta]$ and $[\amax,\umax,0,\beta]$ are also simple strata in $A$. Let $\Lambdamin$ and $\Lambdamax$ be the corresponding lattice chains with respect to $\amin$ and $\amax$ respectively.
	
	Consider an $E$-decomposition $V=\bigoplus_{i=1}^{t}V^{i}$ that conforms with $\Lambda$, $\Lambdamax$, $\Lambdamin$, and in particular is properly subordinate to $\Lambda$. Let $P=MN$ be a corresponding parabolic subgroup of $V$ with a Levi factor $M$ being $\prod_{i=1}^{t}\mraut_{F}(V^{i})$. We write $r_{i}=\mrdim_{F}(V^{i})$ and $m_{i}=\mrdim_{E}(V^{i}_{E})=\mrdim_{F}(V^{i})/d$ for each $i$. Then $m_{1}+\dots+m_{t}=m$ and $r_{1}+\dots+r_{t}=r$. 
	
	For $i=1,\dots,t$, let $A^{i}=\mrend_{F}(V^{i})$ (resp. $B^{i}=\mrend_{E}(V_{E}^{i})$) which is identified with a subalgebra of $A$ (resp. $B$) via the $i$-th block diagonal embedding. Let $\mfa^{i}=A^{i}\cap \mfa$ and $\mfb^{i}=B^{i}\cap \mfb$. Let $\mcg^{i}=\mrgl_{m_{i}}(\bs{l})\cong U(\mfb^{i})/U^{1}(\mfb^{i})$.
	
	Consider $$H^{1}=H^{1}(\beta,\mfa)\subset J^{1}=J^{1}(\beta,\mfa)\subset J=J(\beta,\mfa)$$
	as subgroups of $U(\mfa)$, 
	$$\Honemin=H^{1}(\beta,\amin)\subset \Jonemin=J^{1}(\beta,\amin)\subset\Jmin=J(\beta,\amin)$$
	as subgroups of $U(\amin)$, and
	$$\Honemax=H^{1}(\beta,\amax)\subset\Jonemax=J^{1}(\beta,\amax)\subset\Jmax=J(\beta,\amax)$$ as subgroups of $U(\amax)$. 
	Then, 
	$$\mcg=\mrgl_{m}(\bs{l})\cong U(\bmax)/U^{1}(\bmax)\cong\Jmax/\Jonemax.$$
	
	Moreover $\mcp\cong U(\mfb)/U^{1}(\bmax)$
	is a parabolic subgroup of $\mcg$ having a Levi factor is
	$$\mcm=\mcg^{1}\times\dots\times\mcg^{t}\cong U(\mfb)/U^{1}(\mfb)\cong J/J^{1}.$$
	Moreover, 
	$$\wt{\mcg}=\wt{U(\bmax)}/\,_{s}U^{1}(\bmax)=\wt{\Jmax}/\,_{s}\Jonemax$$ is an $n$-fold central extension of $\mcg$. We define $\wt{\mcp}$ and 
	$$\wt{\mcm}=\wt{U(\mfb)}/\,_{s}U^{1}(\mfb)\cong\wt{J}/\,_{s}J^{1}$$ 
	as the preimage in $\wt{\mcg}$ of $\mcp$ and $\mcm$ respectively.

	Choose a maximal open compact subgroup $K$ of $G$ that contains $U(\mfa)$. Fix a splitting $\bs{s}:K\rightarrow\wt{G}$ of $K$, then the restriction of $\bs{s}$ to $U(\mfa)$ is also a splitting. In general, we may also assume that $K$ contains $U(\mfa_{\max})$, then such $\bs{s}$ also induces splittings of $U(\amin)$ and $U(\amax)$. Such an $\bs{s}$ also induces a splitting of $\wt{\mcg}$.
	Then, we have an identification
	$\wt{\mcg}=\mu_{n}\times\,_{s}\mcg$ depending on the choice of $K$ and $\bs{s}$.
	
	Let $\theta\in\mcc(\mfa,\beta)$, let $\eta$ be the Heisenberg representation of $\theta$ and let $\kappa$ be a $\beta$-extension of $\eta$. Let $\wt{\kappa}$ be the pull-back of $\kappa$ as a (non-genuine) representation of $\wt{J}$. 
	
	Let $\varrho=\varrho_{1}\boxtimes\dots\boxtimes\varrho_{t}$ be a cuspidal representation of $\mcm$ that also extends trivially to a representation of $\mcp$, where each $\varrho_{i}$ is a cuspidal representation of $\mcg^{i}$. Let $\,_{s}\varrho$ be the corresponding representation of $\,_{s}\mcm$, and let $\epsilon\cdot\,_{s}\varrho$ be the extension of $\,_{s}\varrho$ to $\wt{\mcm}=\mu_{n}\times \,_{s}\mcm$ with $\mu_{n}$ acting by $\epsilon$. Then $\wt{\rho}:=\mrinf_{\wt{\mcm}}^{\wt{J}}(\epsilon\cdot\,_{s}\varrho)$ is a genuine irreducible representation of $\wt{J}$. 
	
	Let $\wt{\lambda}=\wt{\kappa}\otimes\wt{\rho}$, which is a genuine irreducible representation of $\wt{J}$.
	
	\begin{definition}
		
		A \emph{homogeneous type}\footnote{Here, the adjective ``homogeneous" follows from \cite{bushnell1999semisimple}*{\S 7}. Indeed, it is expected that these homogeneous types are indeed types in the sense of \S \ref{subsectionblockstypes}. Moreover, they are related to those inertial equivalence classes with the related cuspidal representations  having the same endo-class. However, this will not be discussed in this article and will be written down elsewhere.} of $\wt{G}$ is the pair $(\wt{J},\wt{\lambda})$ defined as above.
		
	\end{definition}
	
	We verify that this definition is independent of the choice of $K$ and the splitting $\bs{s}$. Indeed, let $\bs{s}'$ be a splitting of another maximal compact subgroup $K'$ of $G$ that contains $U(\mfa)$. Let $(\wt{J},\wt{\lambda}')$ be the pair  constructed as above, but with the splitting $\bs{s}'$ replaced by $\bs{s}$. More precisely, we have $\wt{\lambda}=\wt{\kappa}\otimes\wt{\rho}'$ with $\wt{\rho}'=\mrinf_{\wt{\mcm}}^{\wt{J}}(\epsilon\cdot\,_{s'}\varrho)$. We notice that $$\chi_{\bs{s}|\bs{s}'}:x\mapsto\bs{s}'(x)/\bs{s}(x)$$ defines a character of $\mcm\cong J/J^{1}$. Moreover, by definition we have $$\wt{\rho}'=\mrinf_{\wt{\mcm}}^{\wt{J}}(\epsilon\cdot\,_{s}(\varrho\cdot\chi_{\bs{s}|\bs{s}'}))\quad \text{and}\quad \wt{\lambda}'\cong\wt{\lambda}\cdot\chi_{\bs{s}|\bs{s}'}.$$ It means that $(\wt{J},\wt{\lambda}')$ is also a homogeneous type with respect to the pair $(K,\bs{s})$. From now on in considering a homogeneous type, we may fix any $K$ and $\bs{s}$ as above.
	
	Finally, we construct two pairs of representations related to $(\wt{J},\wt{\lambda})$. Let $$J_{M}=J\cap M,\ J_{M}^{1}=J^{1}\cap M,\ J_{P}^{1}=(H^{1}\cap N^{-})(J^{1}\cap P),\ J_{P}=(H^{1}\cap N^{-})(J\cap P)$$ and let $\kappa_{P}$, $\kappa_{M}$ be defined as in \S \ref{subsectionHeisenbergbeta}. Let $\wt{\kappa}_{P}$ (resp. $\wt{\kappa}_{M}$) be the pull-back of $\kappa_{P}$ (resp. $\kappa_{M}$) to $\wt{J_{P}}$ (resp. $\wt{J_{M}}$) as a non-genuine irreducible representation. Since $$J_{P}/J_{P}^{1}\cong J_{M}/J_{M}^{1}\cong\mcm,$$ 
	we may realize $\wt{\rho}$ as genuine irreducible representations of $\wt{J_{P}}$ and $\wt{J_{M}}$ by taking the restriction, denoted by $\wt{\rho}_{P}$ and $\wt{\rho}_{M}$ respectively. In other words, we have $\wt{\rho}_{P}=\mrinf_{\wt{\mcm}}^{\wt{J_{P}}}(\epsilon\cdot\,_{s}\varrho)$ and $\wt{\rho}_{M}=\mrinf_{\wt{\mcm}}^{\wt{J_{M}}}(\epsilon\cdot\,_{s}\varrho)$. Let $\wt{\lambda}_{P}=\wt{\kappa}_{P}\otimes\wt{\rho}_{P}$ (resp. $\wt{\lambda}_{M}=\wt{\kappa}_{M}\otimes\wt{\rho}_{M}$), which is a genuine irreducible representation of $\wt{J_{P}}$ (resp. $\wt{J_{M}}$). 
	
	Using the statements in \S \ref{subsectionHeisenbergbeta}, it is direct to verify that
	\begin{itemize}
		\item $(\wt{J_{P}},\wt{\lambda}_{P})$ is decomposed with respect to $(M,P')$ for every parabolic subgroup $P'$ of $G$ with a Levi factor $M$.
		\item $\wt{J_{P}}\cap \wt{M}=\wt{J_{M}}$ and $\wt{\lambda}_{P}\rest_{\wt{J_{M}}}=\wt{\lambda}_{M}$.
		\item $\mrind_{\wt{J_{P}}}^{\wt{J}}\wt{\lambda}_{P}\cong\wt{\lambda}$.
	\end{itemize} 
	
	
	As a result, the following lemma is valid (\emph{cf.} \cite{bushnell129admissible}*{Proposition 4.1.3}).
	
	\begin{lemma}\label{lemmaisoheckelambdaPlambda}
		
		We have a canonical isomorphism of Hecke algebras:
		$$\mch(\wt{G},\wt{\lambda}_{P})\cong\mch(\wt{G},\wt{\lambda}).$$ 
		It preserves the support, in the sense that, for $\phi_P\in \mch(\wt{G},\wt{\lambda}_{P})$ supported on $\wt{J_P}g\wt{J_P}$, the related function $\phi\in\mch(\wt{G},\wt{\lambda})$ is supported on $\wt{J}g\wt{J}$.
		
	\end{lemma}
	
	We may further explain $\wt{\lambda}_{M}$. As before,  $\Lambda^{i}=\Lambda\cap V^{i}$ is a lattice sequence of $V^{i}$, and $[\Lambda^{i},u,0,\beta]$ is a simple stratum in $A^{i}$ for each $i=1,\dots,t$, and we consider the related strict simple stratum $[\mfa^{i},u_{i},0,\beta]$ in $A^{i}$.
	Let $\kappa_{i}$ be defined as in \S \ref{subsectionHeisenbergbeta}, let $\wt{\kappa}_{i}$ be its pull-back as a non-genuine representation of $\wt{J(\beta,\mfa^{i})}$, and let $\wt{\rho}_{i}=\mrinf^{\wt{J(\beta,\mfa^{i})}}_{\wt{\mcg_{i}}}(\epsilon\cdot\,_{s}\varrho_{i})$. Let $\wt{\lambda}_{i}=\wt{\kappa}_{i}\otimes\wt{\rho}_{i}$ be a genuine irreducible representation of $\wt{J(\beta,\mfa^{i})}$. Then by definition we have 
	$$\wt{\lambda}_{M}\cong\wt{\lambda}_{1}\boxtimes\dots\boxtimes\wt{\lambda}_{t}$$
	as a representation of
	$$\wt{J_{M}}=\bs{p}^{-1}(J(\beta,\mfa^{1})\times\dots\times J(\beta,\mfa^{t})).$$
	Here, the related tensor product makes sense since $J_{M}$ is block compatible. Indeed, each pair $(\wt{J(\beta,\mfa^{i})},\wt{\lambda}_{i})$ is a so-called maximal simple type of $\wt{(A^{i})^{\times}}$, a concept to be introduced later (\emph{cf.} Definition \ref{defsimpletype}). By \eqref{eqthetadecom}, for $i=1,\dots,t$  the simple characters $\theta_{i}$ contained in $\kappa_{i}$  are transfers of each other. Thus they lie in the same endo-class.
	
	

	\subsection{Intertwining set of a homogeneous type}
	
	Our next goal is to study the intertwining set of $\wt{\lambda}$.
	
	\begin{proposition}\label{propIglambda}
		
		We have $I_{G}(\wt{\lambda})=JI_{B^{\times}}(\wt{\rho})J.$
		
	\end{proposition}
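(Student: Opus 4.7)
The plan is to adapt the argument of \cite{bushnell129admissible}*{Proposition 5.3.2} (see also \cite{secherre2005representations}*{Proposition 5.4}) to the metaplectic setting, in two complementary steps.

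First, I would obtain the rough upper bound $I_{G}(\wt{\lambda}) \subseteq J B^{\times} J$ by restricting $\wt{\lambda}$ to the pro-$p$-subgroup $\,_{s}J^{1}$. Since $\wt{\rho}$ is inflated from $\wt{\mcm} = \wt{J}/\,_{s}J^{1}$, it is trivial on $\,_{s}J^{1}$, while $\wt{\kappa}|_{\,_{s}J^{1}}$ is the pull-back of $\eta$. Hence $\wt{\lambda}|_{\,_{s}J^{1}}$ is a multiple of the splitting of the Heisenberg representation $\eta$, and any $g \in I_{G}(\wt{\lambda})$ must intertwine $\eta$. The inclusion then follows from the identity $I_{G}(\eta) = J B^{\times} J$ recorded in \eqref{eqintertwineeta}. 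Since trivially $J \subseteq I_{G}(\wt{\lambda})$, it remains to determine precisely which $b \in B^{\times}$ lie in $I_{G}(\wt{\lambda})$.

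Second, fix $b \in B^{\times}$ and analyze
\[ \mrhom_{\wt{J} \cap \wt{J}^{b}}(\wt{\lambda}^{b}, \wt{\lambda}) = \mrhom_{\wt{J} \cap \wt{J}^{b}}(\wt{\kappa}^{b} \otimes \wt{\rho}^{b},\, \wt{\kappa} \otimes \wt{\rho}). \]
The key input is that $\kappa$ is a $\beta$-extension, so by \S \ref{subsectionHeisenbergbeta} the intertwining space of $\eta$, and hence of $\kappa$, at $b$ is 1-dimensional and canonical, extending the Heisenberg intertwiner; via the pull-back this passes verbatim to $\wt{\kappa}$. A Clifford-theoretic argument then produces a canonical generator $T_{b}$ of the $\wt{\kappa}$-intertwining at $b$, and every intertwiner of $\wt{\lambda}^{b}$ with $\wt{\lambda}$ takes the form $T_{b} \otimes S_{b}$ for a unique $S_{b}$ intertwining $\wt{\rho}^{b}$ with $\wt{\rho}$. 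Consequently $b \in I_{G}(\wt{\lambda})$ if and only if $b \in I_{B^{\times}}(\wt{\rho})$, and combined with the first step this yields the proposition.

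I expect the main obstacle to lie in this Clifford-theoretic splitting: one must verify that the tensor-product intertwining space of $\wt{\lambda} = \wt{\kappa} \otimes \wt{\rho}$ factors canonically through the 1-dimensional $\wt{\kappa}$-intertwining, and that the passage to the metaplectic cover introduces no new obstructions. Fortunately, the pro-$p$-subgroups $J^{1}$ and $(J^{1})^{b}$, together with their intersection, split uniquely into $\wt{G}$ because $\mrgcd(n,p) = 1$, so the linear-case Mackey-type analysis transfers without cohomological complication.
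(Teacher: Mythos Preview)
Your proposal is correct and follows essentially the same approach as the paper's proof, which simply cites \cite{bushnell129admissible}*{Proposition 5.3.2} after noting the two key inputs $I_{G}(\wt{\kappa})=JB^{\times}J$ and $\dim_{\mbc}\mrhom_{\wt{J}^{g}\cap\wt{J}}(\wt{\kappa}^{g},\wt{\kappa})=1$. Your two steps---bounding $I_{G}(\wt{\lambda})$ via restriction to $\,_{s}J^{1}$ and then using the one-dimensionality of the $\wt{\kappa}$-intertwining to split off the $\wt{\rho}$-factor---are precisely the content of that argument, just spelled out in greater detail.
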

	
	\begin{proof}
		
		Since $I_{G}(\wt{\kappa})=JB^{\times}J$ and $\mrdim_{\mbc}\mrhom_{\wt{J}^{g}\cap\wt{J}}(\wt{\kappa}^{g},\wt{\kappa})=1$ for $g\in I_G(\wt{\kappa})$, the result follows from the same argument of \cite{bushnell129admissible}*{Proposition 5.3.2}.
		
	\end{proof}
	
	Then we need to study $I_{B^{\times}}(\wt{\rho})$. We introduce more notation. 
	
	Fix a certain $E$-basis of $V_{E}$ to identify $B$ with $\mrm_{m}(E)$, such that $\bmax$ is identified with $\mrm_{m}(\mfo_{E})$,  and $\mfb$ is identified with the standard hereditary order in $B$ with respect to the composition $m_{1}+\dots+m_{t}=m$, and $\bmin$ is identified with the standard minimal hereditary order in $B$. 
	
	Under this basis, let 
	$$T(B)=\{\mrdiag(\varpi_{E}^{s_{1}},\dots,\varpi_{E}^{s_{m}})\in B^{\times}\mid s_{i}\in \mbz,\ i=1,\dots,m\}$$
	and
	$$T(\mfb)=\{\mrdiag(\varpi_{E}^{s_{1}}I_{m_{1}},\dots,\varpi_{E}^{s_{t}}I_{m_{t}})\in B^{\times}\mid s_{i}\in \mbz,\ i=1,\dots,t\},$$
	where we fix a uniformizer $\varpi_{E}$ of $E$. Let 
	$M^{0}(\mfb)=M\cap U(\mfb)$ and $M^{1}(\mfb)=M\cap U^{1}(\mfb)$, then $\mcm\cong M^{0}(\mfb)/M^{1}(\mfb)$. Also, $T(\mfb)$ commutes with both $M^{0}(\mfb)$ and $M^{1}(\mfb)$.
	
	The restriction of $\wt{\rho}$ to $\wt{M^{0}(\mfb)}$ is the inflation $\mrinf_{\wt{\mcm}}^{\wt{M^{0}(\mfb)}}(\epsilon\cdot\,_{s}\varrho)$.
	
	Let $W_{0}(B)$ be the subgroup of permutation matrices in $B^{\times}\cong\mrgl_{m}(E)$ which is embedded in $\bmax^{\times}$, and let $W_{0}(\mfb)$ be the stabilizer of $T(\mfb)$ in $W_{0}(B)$. Let $W(B)=W_{0}(B)\ltimes T(B)$ be the semi-direct product as a subgroup of $B^{\times}$. Let $W(\mfb)=W_{0}(\mfb)\ltimes T(\mfb)$, which is a subgroup of $W(B)$. By direct verification, the normalizer of $M^{0}(\mfb)$ in $W(B)$ is $W(\mfb)$.
	
	We have the Bruhat decomposition
	$$B^{\times}=U(\bmin)W(B)U(\bmin).$$
	Since $U(\bmin)\subset U(\mfb)$ normalizes $\wt{\rho}$, to study $I_{B^{\times}}(\wt{\rho})$ we only need to calculate $I_{W(B)}(\wt{\rho})$.
	
	\begin{proposition}\label{proprhoWBintertwine}
		
		An element $w\in W(B)$ intertwines $\wt{\rho}$ if and only if it normalizes $M^{0}(\mfb)$ and  $\wt{\rho}\rest_{\wt{M^{0}(\mfb)}}$.
		Moreover, for $g\in I_{B^{\times}}(\wt{\rho})$ we have
		$$\mrdim_{\mbc}\mrhom_{\wt{J}^{g}\cap \wt{J}}(\wt{\rho}^{g},\wt{\rho})=1.$$
		
	\end{proposition}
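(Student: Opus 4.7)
The plan is to mimic the strategy of Bushnell--Kutzko in the linear case, adapted to the covering setting; the crucial input is that $\wt{\rho}$ is inflated from the genuine cuspidal representation $\epsilon\cdot\,_{s}\varrho$ of the finite quotient $\wt{\mcm}\cong\wt{J}/\,_{s}J^{1}$, so intertwining questions on $\wt{J}$ should descend to intertwining questions on $\wt{\mcm}$, where cuspidality of $\varrho$ imposes strong rigidity. Since $\,_{s}J^{1}$ is a pro-$p$ group with a unique lift, the cover plays essentially no role in the reduction step, and the whole proof reduces to a classical finite-group argument.

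First, I would use the Iwahori decomposition $B^{\times}=U(\bmin)W(B)U(\bmin)$ together with the chain of inclusions $U(\bmin)\subset U(\mfb)\subset J$. Because $K\supset U(\mfa)\supset U(\bmin)$, the fixed splitting $\bs{s}$ lifts $U(\bmin)$ into $\wt{J}$, so every $u\in U(\bmin)$ lies in $\wt{J}$ and trivially intertwines $\wt{\rho}$ with $\mrhom_{\wt{J}}(\wt{\rho}^{\bs{s}(u)},\wt{\rho})=\mrend_{\wt{J}}(\wt{\rho})\cong\mbc$ by Schur's lemma. Thus both parts of the proposition reduce to computing the space $\mrhom_{\wt{J}^{w}\cap\wt{J}}(\wt{\rho}^{w},\wt{\rho})$ for $w\in W(B)$.

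Next, for $w\in W(B)$, I would analyze $\wt{J}^{w}\cap\wt{J}$ modulo $\,_{s}J^{1}$. The key combinatorial fact, which is a consequence of the subordination of the decomposition $V=\bigoplus_{i}V^{i}$ to $\mfb$ and the Bruhat-type structure of $W(B)$ acting on $\mcg=\mrgl_{m}(\bs{l})$, is that the image of $(J^{w}\cap J)\,J^{1}/J^{1}$ in $\mcm$ coincides with $\mcm\cap\mcq^{w}$ for a parabolic subgroup $\mcq^{w}\subset\mcg$ normalized by the image of $w$, with Levi factor $\mcl^{w}\subseteq\mcm$; moreover $\mcl^{w}=\mcm$ if and only if $w$ normalizes $M^{0}(\mfb)$. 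Since $\wt{\rho}$ factors through $\wt{\mcm}$, the intertwining space is then isomorphic (via an $\eta$-free version of the standard Mackey computation) to a Hom-space between parabolic restrictions of $(\epsilon\cdot\,_{s}\varrho)^{w}$ and $\epsilon\cdot\,_{s}\varrho$ to $\wt{\mcl^{w}}$. If $w$ does not normalize $M^{0}(\mfb)$ then $\mcl^{w}\subsetneq\mcm$, so the restriction vanishes by cuspidality of $\varrho$ (Definition \ref{defcuspidal}(2)); this gives the ``only if'' implication.

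For the converse, when $w$ normalizes $M^{0}(\mfb)$ and $\wt{\rho}\rest_{\wt{M^{0}(\mfb)}}$, the isomorphism $\wt{\rho}^{w}\cong\wt{\rho}$ on $\wt{M^{0}(\mfb)}$ is by hypothesis nonzero and lies in the desired Hom-space, proving intertwining. For dimension one, Schur's lemma applied to the irreducible representation $\wt{\rho}\rest_{\wt{M^{0}(\mfb)}}$ (irreducible because it is the inflation of the irreducible $\epsilon\cdot\,_{s}\varrho$) shows the space has dimension at most one, giving simultaneously the ``if'' direction and the moreover statement. The main technical obstacle I anticipate is the clean description of the image of $J^{w}\cap J$ in $\mcm$ for arbitrary $w\in W(B)$; this is standard in the Bushnell--Kutzko theory, and the additional care needed here is only to track that passing to $\wt{G}$ via $\bs{s}$ does not introduce a $\mu_{n}$-obstruction, which is guaranteed by the uniqueness of the splitting on the pro-$p$ group $J^{1}$.
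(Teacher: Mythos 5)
Your overall strategy coincides with the paper's: reduce to $w\in W(B)$ via the Iwahori decomposition $B^{\times}=U(\bmin)W(B)U(\bmin)$, use cuspidality of $\varrho$ to exclude those $w$ not normalizing $M^{0}(\mfb)$, and descend the intertwining space to the finite quotient $\wt{\mcm}$ where Schur's lemma gives dimension one. The differences lie in how the two key reductions are executed, and one of them is left as an unproved assertion at exactly the point where the paper does its real work.

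For the ``only if'' direction the paper does not describe the full image of $J^{w}\cap J$ in $\mcm$ as a parabolic intersection; it merely exhibits, for $w$ not normalizing $M^{0}(\mfb)$, a single subgroup $U^{1}(\mfb')$ with $U^{1}(\mfb^{i})\subsetneq U^{1}(\mfb')\subset U(\mfb^{i})$ and $U^{1}(\mfb')\subset M^{1}(\mfb)^{w}$, so that restricting the intertwining relation to it forces $\mrhom_{\mcn'}(\varrho_{i},1)\neq 0$ for a proper unipotent radical $\mcn'$ of $\mcg^{i}$, contradicting cuspidality. Your version works too, provided ``parabolic restriction'' is read as Harish--Chandra restriction (coinvariants under the unipotent radical); mere restriction of a cuspidal representation to a proper Levi subgroup of course does not vanish. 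The genuine gap is in the converse: the claim that a nonzero element of $\mrhom_{\wt{M^{0}(\mfb)}}(\wt{\rho}^{w},\wt{\rho})$ ``lies in the desired Hom-space'' $\mrhom_{\wt{J}^{w}\cap\wt{J}}(\wt{\rho}^{w},\wt{\rho})$ is precisely what has to be proved, since $\wt{J}^{w}\cap\wt{J}$ is strictly larger than $\wt{M^{0}(\mfb)}$ and an $\wt{M^{0}(\mfb)}$-intertwiner need not commute with the action of the extra elements. This is the paper's equality \eqref{eqhomrhorhow}, established by writing each $g\in J\cap J^{w}$ via the Iwahori decompositions of $J$ relative to $P$ and to $P^{w}$, matching the two decompositions of $g^{w}$ by uniqueness, and using that $\wt{\rho}$ is trivial on $\,_{s}J^{1}$ while $\wt{\rho}^{w}$ is trivial on $\,_{s}J^{1w}$. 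Your appeal to ``an $\eta$-free version of the standard Mackey computation'' would, if carried out, supply exactly this; but as written the converse direction is asserted rather than proved, and this descent is the technical heart of the proposition.
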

	
	\begin{proof}
		
		For $w\in I_{W(B)}(\wt{\rho})$, we necessarily have that 
		\begin{equation}\label{eqrhowintertwine}
			\mrhom_{\wt{M^{0}(\mfb)}\cap\wt{M^{0}(\mfb)^{w}}}(\wt{\rho},\wt{\rho}^{w})\neq 0.
		\end{equation}

		The argument of the ``only if" part follows from the proof of \cite{bushnell129admissible}*{Proposition 5.5.5}, which we explain as follows.  If $w$ does not normalize $M^{0}(\mfb)$, then as in \emph{loc. cit.} there exist a certain $i=1,\dots,t$, a standard hereditary order $\mfb'$ in $B^{i}=\mrend_{E}(V_{E}^{i})$ with respect to the decomposition $V_{E}=\bigoplus_{i=1}^{t}V_{E}^{i}$, such that $$U^{1}(\mfb^{i})\subsetneq U^{1}(\mfb')\subset U(\mfb^{i})\quad \text{and}\quad U^{1}(\mfb')\subset M^{1}(\mfb)^{w}.$$ 
		We consider $\mcn'= U^{1}(\mfb')/U^{1}(\mfb^{i})\cong\,_{s}U^{1}(\mfb')/\,_{s}U^{1}(\mfb^{i})$, which is a proper unipotent subgroup of $\mcg^{i}$. Then, restricting \eqref{eqrhowintertwine} to $\,_{s}U^{1}(\mfb')$ and modulo $\,_{s}U^{1}(\mfb^{i})$, we have
		$$\mrhom_{\mcn'}(\varrho_{i},1)\neq 0,$$
		contradicting the fact that $\varrho_{i}$ is cuspidal. So $w$ normalizes $M^{0}(\mfb)$.
		
		On the other hand, assume $w\in W(B)$ normalizes $M^{0}(\mfb)$ and $\wt{\rho}\rest_{\wt{M^{0}(\mfb)}}$. We claim that 
		\begin{equation}\label{eqhomrhorhow}
			\mrhom_{\wt{J}\cap\wt{J}^{w}}(\wt{\rho},\wt{\rho}^{w})
			=\mrhom_{\wt{M^{0}(\mfb)}}(\wt{\rho},\wt{\rho}^{w})=\mbc.
		\end{equation}
		It suffices to show the first equality. Remark that we have decompositions
		$$J=M^{0}(\mfb)J^1\quad\text{and}\quad J^w=M^{0}(\mfb)^wJ^{1w}=M^{0}(\mfb)J^{1w},$$
		thus we have
		$$J\cap J^w=M^{0}(\mfb)(J^{1}\cap M^{0}(\mfb)J^{1w}).$$
		We claim that $$J^{1}\cap M^{0}(\mfb)J^{1w}=J^{1}\cap J^{1w}$$ which suffices to show the first equality of \eqref{eqhomrhorhow}, since both $\wt{\rho}$ and $\wt{\rho}^w$ are trivial on $\wt{J^1}\cap \wt{J^{1w}}$. Assume that there exist $m_0\in M^{0}(\mfb)$, $g_1\in J^{1}\subset U^1(\mfa)$, $g_2\in J^{1w}\subset U^{1}(\mfa)^w$, such that $m_0=g_1g_2$. It suffices to show that $m_0\in M^{1}(\mfb)$, or a fortiori, it suffices to show that
		$$M^0(\mfb)\cap U^{1}(\mfa)w^{-1}U^{1}(\mfa)w=M^1(\mfb).$$
		Using \cite{bushnell129admissible}*{Theorem 1.6.1}, we have
		$$M^0(\mfb)\cap U^{1}(\mfa)w^{-1}U^{1}(\mfa)w=M^0(\mfb)\cap U^{1}(\mfb)w^{-1}U^{1}(\mfb)w.$$
		Thus, we only need to show that\footnote{We remark that such a claim has already been implicitly used in \cite{grabitz2001level}*{Lemma 1.5}.}
		$$U^{0}(\mfb)^{w}\cap U^{1}(\mfb)=M^0(\mfb)U^{1}(\mfb)^w\cap U^{1}(\mfb)\subset U^{1}(\mfb)^w.$$
		To show the last claim, suppose we have $g\in U^{0}(\mfb)$ such that $g^{w}\in U^{0}(\mfb)^{w}\cap U^{1}(\mfb)$. Consider the Iwahori decomposition $$g=n_1mn_2\quad \text{in}\quad U^{0}(\mfb)=(U^{0}(\mfb)\cap N^{-})(U^{0}(\mfb)\cap M)(U^{0}(\mfb)\cap N),$$ thus $$g^{w}=n_1^{w}m^{w}n_2^{w}\quad \text{in}\quad U^{0}(\mfb)=(U^{0}(\mfb)\cap N^{-w})(U^{0}(\mfb)\cap M)(U^{0}(\mfb)\cap N^{w})$$
		is the related Iwahori decomposition of $g^{w}$. Since $g^{w}\in U^{1}(\mfb)$, both $m^{w}$ and $m$ are in $U^{1}(\mfb)\cap M$. Thus $g\in U^{1}(\mfb)$ and $g^{w}\in U^{1}(\mfb)^{w}$.

		Finally, the second statement of the proposition follows from \eqref{eqhomrhorhow}.
		
	\end{proof}
	
	As a direct corollary of Proposition \ref{propIglambda} and Proposition \ref{proprhoWBintertwine}, 
	
	\begin{corollary}\label{corintertwingspace}
		
		We have 
		$$\mrdim_{\mbc}\mrhom_{\wt{J}^{g}\cap \wt{J}}(\wt{\lambda}^{g},\wt{\lambda})=1$$
		for any $g\in I_{G}(\wt{\lambda})$. Thus there exists a unique $\phi\in\mch(\wt{G},\wt{\lambda})$ up to a scalar that is supported on $\wt{J}g\wt{J}$.
		
	\end{corollary}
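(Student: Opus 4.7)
The plan is to combine the two preceding propositions with the tensor factorization $\wt{\lambda}=\wt{\kappa}\otimes\wt{\rho}$. By Proposition \ref{propIglambda}, any $g\in I_{G}(\wt{\lambda})$ admits a decomposition $g=j_{1}bj_{2}$ with $j_{1},j_{2}\in J$ and $b\in I_{B^{\times}}(\wt{\rho})$. Because $j_{1},j_{2}$ both normalize $\wt{J}$ and preserve (up to inner automorphism) the representations $\wt{\kappa}$ and $\wt{\rho}$, the canonical ``change of variable" map identifies $\mrhom_{\wt{J}^{g}\cap\wt{J}}(\wt{\lambda}^{g},\wt{\lambda})$ with $\mrhom_{\wt{J}^{b}\cap\wt{J}}(\wt{\lambda}^{b},\wt{\lambda})$. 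Hence it is enough to treat the case $g=b\in I_{B^{\times}}(\wt{\rho})$.

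First I would note the obvious lower bound: fixing nonzero elements $\Phi_{\kappa}\in\mrhom_{\wt{J}^{b}\cap\wt{J}}(\wt{\kappa}^{b},\wt{\kappa})$ and $\Phi_{\rho}\in\mrhom_{\wt{J}^{b}\cap\wt{J}}(\wt{\rho}^{b},\wt{\rho})$ (which exist, and are unique up to scalars, by \eqref{eqintertwineeta} together with the 1-dimensionality of Heisenberg intertwining spaces, and by Proposition \ref{proprhoWBintertwine}, respectively), their tensor product is a nonzero intertwiner of $\wt{\lambda}^{b}$ with $\wt{\lambda}$. The real content is the upper bound. Here I would follow the template of \cite{bushnell129admissible}*{Proposition 5.3.2}: restrict any $\Phi\in\mrhom_{\wt{J}^{b}\cap\wt{J}}(\wt{\lambda}^{b},\wt{\lambda})$ to the pro-$p$ subgroup $\,_{s}J^{1}\cap\,_{s}J^{1b}$. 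On this subgroup $\wt{\rho}$ and $\wt{\rho}^{b}$ act trivially, so $\Phi$ is an element of $\mrhom_{\,_{s}J^{1}\cap\,_{s}J^{1b}}(\wt{\kappa}^{b},\wt{\kappa})\otimes\mrend_{\mbc}(W_{\rho})$, where $W_{\rho}$ is the space of $\wt{\rho}$; the first tensor factor is one-dimensional by the standard intertwining property of the Heisenberg representation $\eta$ (whose pull-back is $\wt{\kappa}\rest_{\,_{s}J^{1}}$). Thus $\Phi$ is forced to have the form $\Phi_{\kappa}\otimes \Psi$ with $\Psi$ depending only on the class of an element modulo $\,_{s}J^{1}\cap\,_{s}J^{1b}$. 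Imposing the intertwining condition over the full group $\wt{J}^{b}\cap\wt{J}$ then forces $\Psi$ to lie in $\mrhom_{\wt{J}^{b}\cap\wt{J}/\,_{s}J^{1}\cap\,_{s}J^{1b}}(\wt{\rho}^{b},\wt{\rho})$, whose dimension is at most $1$ by Proposition \ref{proprhoWBintertwine}. The main (and only) obstacle is verifying this factorization of $\Phi$ cleanly; the cover $\mu_{n}\to\wt{G}\to G$ adds nothing new here because both $\wt{\kappa}$ and $\wt{\rho}$ are defined via the splitting $\bs{s}$ on the pro-$p$ part and the argument is identical to the linear case.

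Finally, the second sentence of the corollary is formal: by definition $\mch(\wt{G},\wt{\lambda})$ consists of $\wt{J}$-bi-equivariant functions valued in $\mrend_{\mbc}(W_{\wt{\lambda}}^{\vee})$, and those supported on a fixed double coset $\wt{J}g\wt{J}$ are in linear bijection with $\mrhom_{\wt{J}^{g}\cap\wt{J}}(\wt{\lambda}^{g},\wt{\lambda})$ via evaluation at $\bs{s}(g)$; hence this space is one-dimensional, giving uniqueness of $\phi$ up to scalar.
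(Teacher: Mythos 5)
Your argument is correct and is essentially the paper's own route: the corollary is stated there as a direct consequence of Proposition \ref{propIglambda} and Proposition \ref{proprhoWBintertwine}, and the factorization through the one-dimensional Heisenberg intertwining space that you spell out is exactly the mechanism of \cite{bushnell129admissible}*{Proposition 5.3.2} invoked in the proof of Proposition \ref{propIglambda}. No further comment is needed.
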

	
	We first study $I_{W_{0}(B)}(\wt{\rho})$. We may regard $W_{0}(B)$ as a subgroup of $\mcg$ modulo $U^{1}(\bmax)$.
	
	\begin{proposition}\label{propW0Brho}
		
		We have that $w\in I_{W_{0}(B)}(\wt{\rho})$ if and only if $w$ normalizes $\varrho=\varrho_{1}\boxtimes\dots\boxtimes\varrho_{t}$. In particular, $I_{W_{0}(B)}(\wt{\rho})$ is contained in $W_{0}(\mfb)$.
		
	\end{proposition}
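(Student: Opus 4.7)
The plan is to reduce everything to Proposition \ref{proprhoWBintertwine}, which already says that an element of $W(B)$ intertwines $\wt{\rho}$ if and only if it both normalizes $M^{0}(\mfb)$ and normalizes $\wt{\rho}\rest_{\wt{M^{0}(\mfb)}}$. So for $w\in W_{0}(B)$, I would first identify what it means for $w$ to normalize $M^{0}(\mfb)$, and then translate the condition ``normalizes $\wt{\rho}\rest_{\wt{M^{0}(\mfb)}}$'' into a statement purely about $\varrho$ on the finite group $\mcm$.

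First I would observe that the normalizer of $M^{0}(\mfb)$ inside $W_{0}(B)$ equals $W_{0}(\mfb)$. This is pure linear algebra: $W_{0}(B)$ consists of permutation matrices in $\mrgl_{m}(E)$, and conjugating the standard block-diagonal Levi $\mcm=\mcg^{1}\times\dots\times\mcg^{t}$ by a permutation matrix $w$ preserves the block structure precisely when $w$ permutes blocks of equal size among themselves, which is exactly the condition that $w$ stabilises $T(\mfb)$. This already gives the ``in particular'' statement $I_{W_{0}(B)}(\wt{\rho})\subset W_{0}(\mfb)$.

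Second, for $w\in W_{0}(\mfb)$ I would show that $w$ normalizes $\wt{\rho}\rest_{\wt{M^{0}(\mfb)}}$ iff $w$ normalizes $\varrho$. Since $w\in W_{0}(\mfb)\subset U(\bmax)\subset K$ and the splitting $\bs{s}$ chosen in \S\ref{subsectionhomotype} is a group homomorphism on $K$, for any $m\in M^{0}(\mfb)\subset K$ we have $\bs{s}(w)\bs{s}(m)\bs{s}(w)^{-1}=\bs{s}(wmw^{-1})$, and of course $\bs{s}(w)$ commutes with the central $\mu_{n}$. Under the identification $\wt{\mcm}=\mu_{n}\times{}_{s}\mcm$ induced by $\bs{s}$, the conjugation action of $\bs{s}(w)$ on $\wt{\mcm}$ is therefore trivial on $\mu_{n}$ and equals the usual conjugation of $w$ on $\mcm$. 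Since $\wt{\rho}\rest_{\wt{M^{0}(\mfb)}}=\mrinf_{\wt{\mcm}}^{\wt{M^{0}(\mfb)}}(\epsilon\cdot{}_{s}\varrho)$, the character $\epsilon$ is fixed automatically, and normalization reduces exactly to $\varrho^{w}\cong\varrho$.

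Combining these two steps with Proposition \ref{proprhoWBintertwine} yields the equivalence. The only genuinely delicate point I anticipate is checking carefully that the splitting-dependent formulation of $\wt{\rho}$ does not introduce a spurious twist under $w$-conjugation; but because $w$ lies in $U(\bmax)$ where $\bs{s}$ is a bona fide homomorphism, this is immediate, so the proof should be essentially routine given Proposition \ref{proprhoWBintertwine}.
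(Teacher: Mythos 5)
Your proof is correct and follows essentially the same route as the paper's: both reduce to Proposition \ref{proprhoWBintertwine}, identify the normalizer of $M^{0}(\mfb)$ in $W_{0}(B)$ as $W_{0}(\mfb)$, and use the compatibility of the splitting $\bs{s}$ on $K\supset U(\amax)$ with conjugation to translate normalization of $\wt{\rho}\rest_{\wt{M^{0}(\mfb)}}$ into normalization of $\varrho$ on $\mcm$. I see no gaps.
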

	
	\begin{proof}
		
		If $w$ normalizes $\varrho$, then it also normalizes $\wt{\rho}\rest_{\wt{M^{0}(\mfb)}}$. It is because both $M^{0}(\mfb)$ and $w$ are in $K$, thus we may simply take the splitting $\bs{s}$ of $K$. Thus by Proposition \ref{proprhoWBintertwine} $w$ intertwines $\wt{\rho}$. 
		
		Conversely, if $w\in W_{0}(B)$ intertwines $\wt{\rho}$, by Proposition \ref{proprhoWBintertwine} we have that $w$ normalizes $M^{0}(\mfb)$ and is in $W_{0}(\mfb)$. 
		So
		$$0\neq\mrhom_{\wt{J}^{w}\cap\wt{J}}(\wt{\rho}^{w},\wt{\rho})\cong\mrhom_{\wt{M^{0}(\mfb)}}(\wt{\rho}^{w},\wt{\rho})\cong \mrhom_{\wt{\mcm}}((\epsilon\cdot\varrho)^{w},\epsilon\cdot\varrho)\cong\mrhom_{\mcm}(\varrho^{w},\varrho).$$
		Thus $w$ normalizes $\varrho$.
		
	\end{proof}
	
	We also study  $I_{T(B)}(\wt{\rho})$. The normalizer of $M^{0}(\mfb)$ in $T(B)$ is $T(\mfb)$, so we only need to investigate the intertwining set $I_{T(\mfb)}(\wt{\rho})$. 
	
	Before that, we need to say something for $\varrho=\varrho_{1}\boxtimes\dots\boxtimes\varrho_{t}$. 
	
	Let $l_{i}$ be the maximal positive integer dividing $n$, such that $\varrho_{i}$ is isomorphic to its twist by a character of $\mcg^{i}$ of order $l_{i}$. 
	
	We may further use the theory of Green \cite{green1955characters} or the Deligne-Lusztig theory \cite{deligne1976representations} to describe $l_{i}$. For each $i$, let $\bs{l}_{m_{i}}$ be the finite extension of degree $m_{i}$ over $\bs{l}$, and let $\xi_{i}$ be a regular character of $\bs{l}_{m_{i}}^{\times}$ that constructs $\varrho_{i}$ in the sense of the above two theories. Here, by regular we mean that the orbit of $\xi_{i}$ under the action of the Galois group $\mathrm{Gal}(\bs{l}_{m_{i}}/\bs{l})$ is of cardinality $m_i$.
	
	Fix a Frobenius element in $\mathrm{Gal}(\bs{l}_{m_{i}}/\bs{l})$ as the $q_{\bs{l}}$'s power map, where $q_{\bs{l}}$ denotes the cardinality of $\bs{l}$. Then there exists a smallest positive integer $o_{i}$, such that $\xi_{i}^{q_{\bs{l}}^{o_{i}}}/\xi_{i}$ is a character of $\bs{l}_{m_{i}}^{\times}$ of order dividing $n$. An easy exercise implies that $o_{i}l_{i}=m_{i}$, and in particular $l_{i}$ divides $m_{i}$.
	
	\begin{proposition}\label{Tmfbrho}
		
		Consider the following linear congruence equations:
		\begin{equation}\label{eqcongruence}
			l_{i}[(s_{1}r_{1}+\dots+s_{t}r_{t})(2\bs{c}+\bs{d})-s_{i}\bs{d}]\equiv 0\quad(\mathrm{mod}\ n),\quad i=1,\dots t.
		\end{equation}
		Then the intertwining set $I_{T(\mfb)}(\wt{\rho})=I_{T(\mfb)}(\wt{\rho}\rest_{\wt{M^{0}(\mfb)}})$ equals
		\begin{equation}\label{eqTli}
			T(\varrho):=\{\mrdiag(\varpi_{E}^{s_{1}}I_{m_{1}},\dots,\varpi_{E}^{s_{t}}I_{m_{t}})\mid s_{1},\dots,s_{t}\in \mbz\ \text{satisfy}\ \eqref{eqcongruence}\}.
		\end{equation}
		
	\end{proposition}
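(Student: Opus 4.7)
The plan is to reduce the intertwining to a twist condition on each cuspidal block factor, then translate that twist condition into the stated congruence via the explicit commutator formula.

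First I would observe that any $t = \mrdiag(\varpi_E^{s_1} I_{m_1}, \dots, \varpi_E^{s_t} I_{m_t}) \in T(\mfb)$ centralizes $M^0(\mfb) = U(\mfb^1) \times \dots \times U(\mfb^t)$ and in particular normalizes it. So by Proposition \ref{proprhoWBintertwine}, $t \in I_{T(\mfb)}(\wt{\rho})$ if and only if $t$ normalizes $\wt{\rho}|_{\wt{M^0(\mfb)}}$. Since the $G$-conjugation on $\wt{G}$ is well-defined and $t$ centralizes $M^0(\mfb)$, the conjugate $\wt{\rho}^t$ differs from $\wt{\rho}$ by the character $\chi_t \colon g \mapsto \epsilon([t,g]_\sim^{\pm 1})$ of $M^0(\mfb)$, so the condition becomes $\wt{\rho} \cong \wt{\rho} \otimes \chi_t$ on $\wt{M^0(\mfb)}$. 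Decomposing $g = (g_1, \dots, g_t)$, this factors as: for each $i$, $\varrho_i \cong \varrho_i \otimes \chi_{t,i}$, where $\chi_{t,i}$ is the $i$-th component of $\chi_t$.

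Next I would compute $\chi_{t,i}$ explicitly. Viewing $t$ as an element of $\bigoplus_i E \hookrightarrow B$ via block scalar embedding with $u_i = \varpi_E^{s_i}$, and applying the commutator formula \eqref{eqsumEiMEicommutator}, one gets
\[
[t, g]_\sim = \prod_{i=1}^{t} \bigl(\mrdet_F(t)^{2\bs{c}+\bs{d}} \, \varpi_E^{-s_i \bs{d}},\ \mrdet_E(g_i)\bigr)_{n,E}.
\]
Hence each $\chi_{t,i}$ is (the pullback of) the character of $\mfo_E^\times$ given by $b \mapsto \epsilon((a_i, b)_{n,E})$ with $a_i = \mrdet_F(t)^{2\bs{c}+\bs{d}} \varpi_E^{-s_i \bs{d}}$, precomposed with $\mrdet_E \colon U(\mfb^i) \to \mfo_E^\times$. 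Because $\mrgcd(q_E, n) = 1$, the Hilbert symbol is trivial on the second factor $1 + \mfp_E$, so $\chi_{t,i}$ descends to a character $\bar\chi_{t,i}$ of $\mcg^i$ which factors through $\mrdet \colon \mcg^i \to \bs{l}^\times$.

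Now the stabilizer of $\varrho_i$ inside the (cyclic) group of characters of $\mcg^i$ of order dividing $n$ is cyclic of order exactly $l_i$ by the definition of $l_i$; consequently, $\varrho_i \otimes \chi_{t,i} \cong \varrho_i$ is equivalent to $\chi_{t,i}^{l_i} = 1$. Since the pairing $(\varpi_E, \cdot)_{n,E}$ restricted to $\bs{l}^\times$ is surjective onto $\mu_n$, the order of $\bar\chi_{t,i}$ equals $n / \gcd(n, v_E(a_i))$. Using $v_E(\mrn_{E/F}(\varpi_E)) = d$ and $\mrdet_F(t) = \mrn_{E/F}(\varpi_E^{\sum_j s_j m_j})$, one computes
\[
v_E(a_i) = (2\bs{c}+\bs{d}) \, d \sum_{j=1}^{t} s_j m_j - s_i \bs{d} = (2\bs{c}+\bs{d}) \sum_{j=1}^{t} s_j r_j - s_i \bs{d},
\]
using $r_j = d m_j$. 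The condition $\chi_{t,i}^{l_i} = 1$, i.e.\ $l_i v_E(a_i) \equiv 0 \pmod n$, then becomes exactly the congruence \eqref{eqcongruence}. Taking the intersection over all $i$ yields $I_{T(\mfb)}(\wt{\rho}) = T(\varrho)$.

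The main subtlety will be to correctly invoke \eqref{eqsumEiMEicommutator} (so as to interpret $t$ simultaneously as an element of the ``diagonal'' $E$-algebra $\bigoplus_i E$ and as a scalar matrix on each block $V^i_E$) and to confirm that the stabilizer of $\varrho_i$ inside the relevant character group is precisely the cyclic subgroup of order $l_i$, which uses that every character of $\mcg^i$ factors through $\mrdet$ together with the Green/Deligne--Lusztig parametrization summarized in the preceding paragraph.
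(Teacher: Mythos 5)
Your proposal is correct and follows essentially the same route as the paper's proof: reduce to normalizing $\wt{\rho}\rest_{\wt{M^{0}(\mfb)}}$, compute the twisting character via \eqref{eqsumEiMEicommutator}, descend it to $\bs{l}^{\times}$ using tameness, and translate the stabilizer condition (the order-$l_{i}$ cyclic stabilizer of $\varrho_{i}$) into the congruence \eqref{eqcongruence}. The only cosmetic difference is that you phrase the last step as $l_{i}v_{E}(a_{i})\equiv 0\ (\mathrm{mod}\ n)$ via the order of $\chi_{t,i}$, whereas the paper writes the equivalent condition $N\equiv 0\ (\mathrm{mod}\ n/l_{i})$ for the exponent $N$ of $\chi_{\varpi_{E}}\circ\mrdet_{\bs{l}}$.
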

	
	\begin{proof}
		
		Let $h=\mrdiag(\varpi_{E}^{s_{1}}I_{m_{1}},\dots,\varpi_{E}^{s_{t}}I_{m_{t}})$ that intertwines $\wt{\rho}$. By definition, $h$ commutes with $M^{0}(\mfb)$. So we have that $\wt{\rho}^{h}=\wt{\rho}\cdot \chi_{h}$, where $\chi_{h}:=\epsilon([h,\cdot]_{\sim})$ is a character of $M^{0}(\mfb)$ of order dividing $n$. For $g=\mrdiag(g_{1},\dots,g_{t})\in M^{0}(\mfb)$, using \eqref{eqsumEiMEicommutator} we have
		\begin{equation}
			\begin{aligned}
				[h,g]_{\sim}&=[\mrdiag(\varpi_{E}^{s_{1}}I_{m_{1}},\dots,\varpi_{E}^{s_{t}}I_{m_{t}}),\mrdiag(g_{1},\dots,g_{t})]_{\sim}\\
				&=\prod_{i=1}^{t}(\mrdet_{F}(h)^{2\bs{c}+\bs{d}}\varpi_{E}^{-s_{i}\bs{d}},\mrdet_{E}(g_{i}))_{n,E}\\
				&=\prod_{i=1}^{t}(\varpi_{E}^{(s_{1}r_{1}+\dots+s_{t}r_{t})(2\bs{c}+\bs{d})-s_{i}\bs{d}},\mrdet_{E}(g_{i}))_{n,E}.
			\end{aligned}
		\end{equation}
		Let $\chi_{\varpi_{E}}=\epsilon((\varpi_{E},\cdot)_{n,E})$ be a character of $\mfo_{E}^{\times}$ of order $n$. Then
		we have 
		\begin{equation}\label{eqchih}
			\chi_{h}(g)=\prod_{i=1}^{t}\chi_{\varpi_{E}}(\mrdet_{E}(g_{i}))^{(s_{1}r_{1}+\dots+s_{t}r_{t})(2\bs{c}+\bs{d})-s_{i}\bs{d}}.
		\end{equation}
		Noting that $(1+\mfp_{E})^{n}=1+\mfp_{E}$ when $\mrgcd(q,n)=1$, so $\chi_{\varpi_{E}}$ is trivial on $1+\mfp_{E}$. Thus it can be regarded as a character of $\bs{l}^{\times}\cong\mfo_{E}^{\times}/1+\mfp_{E}$. Similarly, $\chi_{h}$ can be regarded as a character of $\mcm\cong M^{0}(\mfb)/M^{1}(\mfb)$. 
		
		Then $h$ intertwines $\wt{\rho}$ 
		if and only if $\wt{\rho}\rest_{\wt{M^{0}}(\mfb)}\cdot\chi_{h}\cong\wt{\rho}\rest_{\wt{M^{0}}(\mfb)}$, if and only if $\varrho\cdot\chi_{h}\cong\varrho$, if and only if
		$$\varrho_{i}\cdot(\chi_{\varpi_{E}}\circ\mrdet_{\bs{l}})^{(s_{1}r_{1}+\dots+s_{t}r_{t})(2\bs{c}+\bs{d})-s_{i}\bs{d}}\cong\varrho_{i},\quad i=1,\dots,t.$$
		Since $\chi_{\varpi_{E}}\circ\mrdet_{\bs{l}}$ is a character of order $n$, the above relation is equivalent to the following linear congruence equations:
		$$(s_{1}r_{1}+\dots+s_{t}r_{t})(2\bs{c}+\bs{d})-s_{i}\bs{d}\equiv 0\quad(\mathrm{mod}\ n/l_{i}),\quad i=1,\dots ,t.$$
		So the result follows.
		
	\end{proof}
	
	In general, the solution of \eqref{eqcongruence} could be quite messy. But in the following two cases, it is rather neat.
	
	\begin{corollary}\label{corsolcongeq}
		
		The solution of equations \eqref{eqcongruence} is
		
		\begin{itemize}
			\item if $\wt{G}$ is a KP-cover, then	$$s_{i}=k_{i}n/l_{i}+k(2\bs{c}+1)n/\mrgcd(n,2r\bs{c}+r-1),\quad i=1,\dots,t,$$
			where $k\in\mbz$ and $k_{i}\in\mbz$ for $i=1,\dots,t$.
			
			\item if $\wt{G}$ is the S-cover, then $$s_{i}=k_{i}n/\mrgcd(n,2l_{i}),\quad i=1,\dots,t,$$ 
			where $k_{i}\in\mbz$ for $i=1,\dots,t$.
		\end{itemize}
		
	\end{corollary}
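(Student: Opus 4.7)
The proof should be a direct case analysis starting from the congruences
$$l_{i}[(s_{1}r_{1}+\dots+s_{t}r_{t})(2\bs{c}+\bs{d})-s_{i}\bs{d}]\equiv 0\pmod{n},\quad i=1,\dots,t,$$
derived in Proposition \ref{Tmfbrho}. The S-cover case is essentially trivial, the KP-cover case requires a bit of bookkeeping, and I would proceed in that order.

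For the S-cover one has $\bs{c}=-1$, $\bs{d}=2$, so $2\bs{c}+\bs{d}=0$ and the whole linear term involving $S:=s_{1}r_{1}+\dots+s_{t}r_{t}$ drops out. The system decouples completely into $2l_{i}s_{i}\equiv 0\pmod{n}$ for each $i$, which is equivalent to $s_{i}\in (n/\mrgcd(n,2l_{i}))\mbz$. This gives the stated parametrization $s_{i}=k_{i}n/\mrgcd(n,2l_{i})$ directly, with no interaction between different indices.

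For the KP-cover one has $\bs{d}=1$ and $2\bs{c}+\bs{d}=2\bs{c}+1$; write $\alpha=2\bs{c}+1$, so the system reads $l_{i}(\alpha S-s_{i})\equiv 0\pmod{n}$, equivalently $s_{i}\equiv \alpha S\pmod{n/l_{i}}$. The first step is to check that the claimed $s_{i}=k_{i}n/l_{i}+k\alpha n/\mrgcd(n,r\alpha-1)$ satisfies the congruences: computing $S$ under this ansatz gives $S=\sum_{j}r_{j}k_{j}n/l_{j}+k\alpha rn/\mrgcd(n,r\alpha-1)$, and then
$$l_{i}(\alpha S-s_{i})=\alpha l_{i}\sum_{j}r_{j}k_{j}n/l_{j}-k_{i}n+k\alpha l_{i}(r\alpha-1)n/\mrgcd(n,r\alpha-1),$$
the last term being divisible by $n$ because $\mrgcd(n,r\alpha-1)\mid r\alpha-1$ makes $(r\alpha-1)/\mrgcd(n,r\alpha-1)$ an integer, and the other two terms are manifestly in $n\mbz$.

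The main obstacle is exhaustion, namely showing that every solution $(s_{1},\dots,s_{t})$ is of the stated form. My plan is to reduce to a common residue: since $s_{i}\equiv \alpha S\pmod{n/l_{i}}$ for every $i$, the $s_{i}$'s are pinned modulo the respective $n/l_{i}$, so the coset representative $a:=\alpha S\pmod{n/l_{i}}$ plays the role of the ``$k$-part.'' Writing $s_{i}=a+k_{i}n/l_{i}$ and re-inserting into $S=\sum r_{i}s_{i}$ produces the consistency relation $(1-r\alpha)a\equiv 0\pmod{n/l_{i}}$ for each $i$, which forces $a$ to lie in the subgroup generated by $\alpha n/\mrgcd(n,r\alpha-1)$ — here one uses the elementary identity $(n/l)/\mrgcd(n/l,r\alpha-1)=n/\mrgcd(n,l(r\alpha-1))$, together with the identification of $\alpha\cdot n/\mrgcd(n,r\alpha-1)$ as a generator of the relevant cyclic subgroup. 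Combining both pieces yields the parametrization. The delicate point will be tracking how $\mrgcd(n,r\alpha-1)$, the various $l_{i}$, and $\alpha$ interact, which should be handled by a careful $p$-adic-style valuation comparison prime by prime dividing $n$.
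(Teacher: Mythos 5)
Your overall route is the paper's own: the S-cover case is handled identically (with $2\bs{c}+\bs{d}=0$ the system decouples into $2l_is_i\equiv0\pmod n$), and for the KP-cover both you and the paper multiply the $i$-th congruence by $r_i/l_i$ (an integer since $l_i\mid m_i\mid r_i$) and sum to constrain the ``diagonal'' part, then peel off the coordinatewise $(n/l_i)$-parts. Your sufficiency computation for the KP-cover is correct as displayed.

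The exhaustion step, however, has two slips as written. First, the consistency relation you extract is $(1-r\alpha)a\equiv0\pmod{n/l_i}$, but your own computation yields the stronger statement $(1-r\alpha)a\equiv0\pmod n$, because $\alpha\sum_jr_jk_jn/l_j=\alpha n\sum_j(r_j/l_j)k_j\in n\mbz$; with only the weaker moduli you would conclude $a\in\bigl(n/\mrgcd(n,g(r\alpha-1))\bigr)\mbz$ with $g=\mrgcd(l_1,\dots,l_t)$, which is too coarse to pin down the solution set when every $l_i>1$, and the argument would not close. Second, even with the correct modulus, $(1-r\alpha)a\equiv0\pmod n$ only gives $a\in N\mbz$ with $N=n/\mrgcd(n,r\alpha-1)$, not the asserted $a\in\alpha N\mbz$: these are genuinely different subgroups of $\mbz$ in general. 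The clean fix --- and what the paper does --- is to constrain $\Gamma=S$ itself rather than $a=\alpha S$: the weighted sum gives $(r\alpha-1)\Gamma\equiv0\pmod n$, hence $\Gamma=kN$, and then $l_i(s_i-\alpha\Gamma)\equiv0\pmod n$ delivers $s_i=k_in/l_i+k\alpha N$ exactly in the stated form. (Alternatively, your version can be rescued by observing $\mrgcd(\alpha,r\alpha-1)=1$, so $\alpha N$ and $N$ generate the same subgroup of $\mbz/n\mbz$, and multiples of $n$ are absorbed into the $k_in/l_i$ terms since $l_i\mid n$; no prime-by-prime valuation analysis is needed.)
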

	
	\begin{proof}
		
		If $\wt{G}$ is a KP-cover, then $\bs{d}=1$. Write $\Gamma=s_{1}r_{1}+\cdots+s_{t}r_{t}$. We multiply the $i$-th equation in \eqref{eqcongruence} by $r_{i}/l_{i}$ and sum them together, which induces
		$$(2\bs{c}r+\bs{c}-1)\Gamma\equiv 0\quad(\mathrm{mod}\ n).$$
		Also, let $s_{i}'=s_{i}-(2\bs{c}+1)\Gamma$, then \eqref{eqcongruence} becomes
		$$l_{i}s_{i}'\equiv 0 \quad(\mathrm{mod}\ n).$$
		So solving these equations for $s_{i}'$ and $\Gamma$, we get the desired result.
		
		If $\wt{G}$ is the S-cover, then $\bs{d}=2$ and $2\bs{c}+\bs{d}=0$. So \eqref{eqcongruence} becomes
		$$2l_{i}s_{i}\equiv 0 \quad(\mathrm{mod}\ n),\quad i=1,\dots,t,$$
		which can also be easily solved.
		
	\end{proof}
	
	We also calculate the intertwining set $I_{M}(\wt{\lambda}_{M})$ and the normalizer $N_{M}(\wt{\lambda}_{M})$.
	
	\begin{proposition}\label{propintertwinelambdaM}
		
		We have $I_{M}(\wt{\lambda}_{M})=N_{M}(\wt{\lambda}_{M})=T(\varrho)J_{M}$, where $T(\varrho)$ is given by Proposition \ref{Tmfbrho}. 
		
	\end{proposition}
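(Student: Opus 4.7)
The plan is to transport the proofs of Propositions \ref{propIglambda}, \ref{proprhoWBintertwine}, \ref{propW0Brho} and \ref{Tmfbrho} to the Levi subgroup $M$ and the representation $\wt{\lambda}_M = \wt{\kappa}_M \otimes \wt{\rho}_M$. First I would establish the Levi analogue of Proposition \ref{propIglambda}. Since $\wt{\kappa}_M = \wt{\kappa}_1 \boxtimes \cdots \boxtimes \wt{\kappa}_t$ with each factor a pull-back of a $\beta$-extension (so $I_{(A^i)^\times}(\wt{\kappa}_i) = J(\beta,\mfa^i)(B^i)^\times J(\beta,\mfa^i)$ with one-dimensional intertwining spaces, by \S \ref{subsectionHeisenbergbeta}), running the tensor-product argument of \cite{bushnell129admissible}*{Proposition 5.3.2} factor-by-factor yields
$$I_M(\wt{\lambda}_M) = J_M\,I_{M_E}(\wt{\rho}_M)\,J_M, \qquad M_E := M \cap B^\times = \prod_{i=1}^t (B^i)^\times.$$

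Next I would compute $I_{M_E}(\wt{\rho}_M)$ via the Iwahori-type decomposition $M_E = M^0(\mfb) W(M_E) M^0(\mfb)$ with $W(M_E) = W_0(M_E) \ltimes T(B)$ and $W_0(M_E) = \prod_i W_0(B^i) \subset M^0(\mfb)$. The arguments of Propositions \ref{proprhoWBintertwine} and \ref{propW0Brho} apply verbatim in this setting---they use only cuspidality of each $\varrho_i$, the Iwahori decomposition of $\wt{J_M}$, and uniqueness of Iwahori factorisations---and reduce the task to studying $I_{T(B)}(\wt{\rho}_M)$. The normaliser of $M^0(\mfb)$ inside $T(B)$ is $T(\mfb)$, and for $h \in T(\mfb)$ the intertwining condition $\wt{\rho}_M^h \cong \wt{\rho}_M \cdot \chi_h$ with $\chi_h = \epsilon([h,\cdot]_\sim)$ leads, exactly as in the proof of Proposition \ref{Tmfbrho} via \eqref{eqsumEiMEicommutator}--\eqref{eqchih}, to the linear congruence system \eqref{eqcongruence}. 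These formulas depend only on the ambient cover $\wt{G}$, not on whether intertwining is tested inside $B^\times$ or inside $M_E$, so $I_{T(\mfb)}(\wt{\rho}_M) = T(\varrho)$. Hence
$$I_{M_E}(\wt{\rho}_M) = M^0(\mfb)\,T(\varrho)\,M^0(\mfb) = T(\varrho)\,M^0(\mfb),$$
since $T(\varrho) \subset T(\mfb)$ normalises $M^0(\mfb)$, and substituting gives $I_M(\wt{\lambda}_M) = T(\varrho) J_M$.

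It remains to upgrade intertwining to normalisation. Every $t \in T(\varrho) \subset T(\mfb)$ normalises $J_M = M^0(\mfb) J^1_M$, as both factors are $T(\mfb)$-stable. Repeating the argument of Corollary \ref{corintertwingspace} in the Levi setting shows that $\mrdim_{\mbc}\mrhom_{\wt{J_M}}(\wt{\lambda}_M^t, \wt{\lambda}_M) = 1$, and irreducibility of $\wt{\lambda}_M$ then forces $\wt{\lambda}_M^t \cong \wt{\lambda}_M$. Therefore $T(\varrho) \subset N_M(\wt{\lambda}_M)$, and the chain
$$T(\varrho) J_M \subset N_M(\wt{\lambda}_M) \subset I_M(\wt{\lambda}_M) = T(\varrho) J_M$$
closes the argument.

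The main obstacle is to check that the cross-block coupling in the congruence system \eqref{eqcongruence} is preserved when one passes from $B^\times$ to $M_E$. Although elements of $T(\mfb)$ are block-diagonal, the commutator $[h,g]_\sim$ computed via \eqref{eqBDcommutator} contains genuine cross-block Hilbert symbols, and these are precisely what couples the indices $s_1,\ldots,s_t$; the answer is therefore not the set-theoretic product of the $t=1$ results for each factor, but the single coupled system \eqref{eqcongruence} originally derived inside the full group $G$. Making this identification explicit---so that the calculation done for $B^\times$ in Proposition \ref{Tmfbrho} may be re-used without modification---is the only delicate point in the proof.
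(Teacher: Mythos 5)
Your proposal is correct and follows essentially the same route as the paper: reduce to $I_{M\cap B^{\times}}(\wt{\rho}\rest_{M^{0}(\mfb)})$ via the one-dimensional intertwining of $\wt{\kappa}_{M}$, collapse to $T(B)$ by a Cartan-type decomposition (your $W_{0}(M_{E})$ sits inside $M^{0}(\mfb)\subset J_{M}$, so your decomposition is the paper's Cartan decomposition), and invoke the computation of Proposition \ref{Tmfbrho}, which is already stated as $I_{T(\mfb)}(\wt{\rho})=I_{T(\mfb)}(\wt{\rho}\rest_{\wt{M^{0}(\mfb)}})$, so your "delicate point" about the coupled congruences is a non-issue. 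The normalizer step is also the same in substance, the paper phrasing it via $I_{M}(\wt{\lambda}_{M})\subset N_{M}(J_{M})=T(\mfb)J_{M}$ rather than checking $T(\varrho)$ directly.
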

	
	\begin{proof}
		
		First we notice that $I_{M}(\wt{\kappa}_{M})=J_{M}(M\cap B^{\times})J_{M}$ and $\mrhom_{\wt{J_{M}^{g}}\cap\wt{J_{M}}}(\wt{\kappa}_{M}^{g},\wt{\kappa}_{M})\cong\mbc$ for $g\in I_{M}(\wt{\kappa}_{M})$. This simply follows from the fact that $J_{M}=J(\beta,\mfa^{1})\times\dots\times J(\beta,\mfa^{t})$ and $\kappa_{M}=\kappa_{1}\boxtimes\dots\boxtimes\kappa_{t}$, where $\kappa_{i}$ is a $\beta$-extension of $J(\beta,\mfa^{i})$ for $i=1,\dots,t$. Then, as in Proposition \ref{propIglambda} we have $$I_{M}(\wt{\lambda}_{M})=J_{M}I_{M\cap B^{\times}}(\wt{\rho}\rest_{M^{0}(\mfb)})J_{M}.$$ 
		Using the Cartan decomposition, we have $$M\cap B^{\times}=(U(\mfb)\cap M)T(B)(U(\mfb)\cap M).$$ 
		Since $U(\mfb)\cap M$ is contained in $J_{M}$, we have 
		$$I_{M}(\wt{\lambda}_{M})=J_{M}I_{T(B)}(\wt{\rho}\rest_{M^{0}(\mfb)})J_{M}.$$ 
		Finally, since the normalizer of $M^{0}(\mfb)$ in $T(B)$ is $T(\mfb)$, by Proposition \ref{proprhoWBintertwine} we have $$I_{M}(\wt{\lambda}_{M})=J_{M}I_{T(\mfb)}(\wt{\rho}\rest_{M^{0}(\mfb)})J_{M}=T(\varrho)J_{M}.$$ 
		
		Also, since the group $\bs{J}_{M}=T(\mfb)J_{M}$ contains $I_{M}(\wt{\lambda}_{M})=T(\varrho)J_{M}$, we also have $N_{M}(\wt{\lambda}_{M})=T(\varrho)J_{M}$.
		
	\end{proof}
	
	\subsection{Simple types}\label{subsectionsimpletypes}
	
	We keep the notation of previous subsections. We study a special class of homogeneous types, the so-called simple types.
	
	\begin{definition}\label{defsimpletype}
		
		A \emph{twisted simple type} of $\wt{G}$ is a homogeneous type $(\wt{J},\wt{\lambda})$ as before, satisfying the following properties:
		
		\begin{itemize}
			
			\item $\wt{\lambda}=\wt{\kappa}\otimes\wt{\rho}$ with $\wt{\rho}=\mrinf_{\wt{\mcm}}^{\wt{J}}(\epsilon\cdot\,_{s}\varrho)$.
			
			\item $m_{1}=\dots=m_{t}$. Then we write $m_{0}=m/t$ and $r_{0}=r/t$.
			
			\item There exist an irreducible cuspidal representation $\varrho_{0}$ of $\mrgl_{m_{0}}(\bs{l})$ and $g_{0}\in T(\mfb)$, such that $\varrho$ is isomorphic to $(\varrho_{0}\boxtimes\dots\boxtimes\varrho_{0})\chi_{g_{0}}$ as a representation of $\mcm=\mcg^{1}\times\dots\times\mcg^{t}$, where $\mcg^{i}=\mrgl_{m_{0}}(\bs{l})$ for $i=1,\dots,t$, and $\chi_{g_{0}}:=\epsilon([g_{0},\cdot]_{\sim})$ is a character of $\mcm\cong M^{0}(\mfb)/M^{1}(\mfb)$.
			
		\end{itemize}
		
		If $g_{0}=1$, we call the corresponding $(\wt{J},\wt{\lambda})$ a \emph{simple type} of $\wt{G}$.
		
		If the corresponding hereditary order $\mfb$ is maximal, or equivalently $t=1$, we call the corresponding $(\wt{J},\wt{\lambda})$ a \emph{maximal simple type} of $\wt{G}$.
		
		If we fix $\wt{\kappa}$ and $\varrho_{0}$ and let $g_{0}$ range over $T(\mfb)$, then the corresponding $\wt{\lambda}$ forms a finite set of representations of $\wt{J}$, which we denote by $[\wt{\lambda}]$ and call the \emph{weak equivalence class} of $\wt{\lambda}$.
		
	\end{definition}
	
	Still, we verify that the definition of twisted simple types is independent of the choice of $K$ and $\bs{s}$. Let $K'$ be another maximal open compact subgroup of $G$ that contains $U(\mfa)$, and let $\bs{s}'$ be a splitting of $K'$. 
	
	We first prove the following claim.
	
	\begin{lemma}
		
		There exists $g\in W(\mfb)$ such that $U(\mfb)^{g}=U(\mfb)$ and $K'=K^{g}$.
		
	\end{lemma}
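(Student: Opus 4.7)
The plan is to exhibit an explicit ``shift'' element of $W(\mfb)$ that permutes cyclically the maximal open compact subgroups of $G$ containing $U(\mfb)$. Writing $m=m_{0}t$ and using block form adapted to the composition $(m_{0},\dots,m_{0})$ of $m$, I set
\[
\Pi_{\mfb} = \begin{pmatrix} 0 & I_{m_{0}} & & \\ & 0 & \ddots & \\ & & \ddots & I_{m_{0}} \\ \varpi_{E}I_{m_{0}} & & & 0 \end{pmatrix}\in B^{\times},
\]
which factors as the product of the block diagonal element $\mrdiag(I_{m_{0}},\dots,I_{m_{0}},\varpi_{E}I_{m_{0}})\in T(\mfb)$ with the cyclic block permutation lying in $W_{0}(\mfb)$; in particular $\Pi_{\mfb}\in W(\mfb)$. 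A direct block-by-block computation shows that conjugation by $\Pi_{\mfb}$ preserves the hereditary order $\mfb$ (the $\varpi_{E}$-factor in the corner precisely compensates the $\mfp_{E}$-shift between upper and lower triangular blocks under the cyclic permutation), so $\Pi_{\mfb}$ normalizes $U(\mfb)$; moreover $\Pi_{\mfb}$ shifts the $\mfo_{E}$-lattice chain attached to $\mfb$ by one step.

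Next I classify the maximal open compact subgroups $K'\subset G$ containing $U(\mfb)$. Any such $K'$ equals $\mraut_{\mfo_{F}}(L)$ for an $\mfo_{F}$-lattice $L$ in $V$, well-defined up to $F^{\times}$-homothety. Because $\mfo_{E}^{\times}\subset U(\mfb)$ stabilizes $L$ and the $\mfo_{F}$-span of $\mfo_{E}^{\times}$ is $\mfo_{E}$, the lattice $L$ is automatically an $\mfo_{E}$-module, hence an $\mfo_{E}$-lattice in $V_{E}$. The ring stabilizer of $L$ in $B$ is then a hereditary order containing $\mfb$, and classical theory identifies such orders with coarsenings of the lattice chain of $\mfb$; in particular $L=\Lambda_{E,k}$ for some $k\in\mbz$. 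Since $\varpi_{F}\Lambda_{E,k}=\Lambda_{E,k+te}$ up to a unit (where $e=e(E/F)$), the $F^{\times}$-homothety classes of such lattices are indexed by $\mbz/te\mbz$, so there are exactly $te$ distinct maximal open compact subgroups of $G$ containing $U(\mfb)$.

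Combining these two steps, conjugation by $\Pi_{\mfb}$ permutes these $te$ subgroups cyclically, so for any $K'$ as above there exists $j\in\mbz$ with $K'=K^{\Pi_{\mfb}^{j}}$. I take $g=\Pi_{\mfb}^{j}\in W(\mfb)$, which normalizes $U(\mfb)$ by the first paragraph. The most delicate step I anticipate is the careful bookkeeping between $\mfo_{E}$-homothety (period $t$) and $\mfo_{F}$-homothety (period $te$) of the chain when $E/F$ is ramified, and verifying that the powers of $\Pi_{\mfb}$ really do generate the full cyclic orbit of size $te$ on maximal compact subgroups, without collapse.
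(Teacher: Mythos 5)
Your proof is correct and follows essentially the same route as the paper: your $\Pi_{\mfb}$ is exactly the chain-shift element $h\in W(\mfb)\cap N_{B^{\times}}(U(\mfb))$ used there, and your classification of the maximal open compacts containing $U(\mfb)$ as the stabilizers of the lattices $\Lambda_{E,k}$ is equivalent to the paper's identification of these subgroups with the $N_{G}(U(\mfa))$-conjugates of $K$ combined with $N_{G}(U(\mfa))=\langle h\rangle U(\mfa)$. The ``collapse'' you worry about at the end is harmless: all that is needed is transitivity of $\langle\Pi_{\mfb}\rangle$ on the $F^{\times}$-homothety classes, which is immediate because $\Pi_{\mfb}$ shifts the chain by exactly one step.
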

	
	\begin{proof}
		
		Let $\mfa$ be the corresponding hereditary order in $A$ in defining our twisted simple type. 
		
		We first notice that when $g$ ranges over the normalizer $N_{G}(U(\mfa))$, the corresponding $K^{g}$ ranges over all the maximal open compact subgroup of $G$ that contains $U(\mfa)$. To see this, we may without loss of generality assume that $\mfa$ is a standard hereditary order. In this case, the normalizer $N_{G}(U(\mfa))$ is generated by $U(\mfa)$ and the element $$\Pi_\mfa=\begin{pmatrix}
			& I_{m_{0}f(et-1)} \\
			\varpi_{F}I_{m_{0}f} &
		\end{pmatrix}\in\mrgl_{r}(F)$$	
		Thus in this case $K$, $K^{\Pi_\mfa},\dots,K^{\Pi_\mfa^{et-1}}$ are all the maximal compact subgroups containing $U(\mfa)$.
		
		We also claim that there exists $h\in W(\mfb)\cap N_{B^{\times}}(U(\mfb))$ such that $N_{G}(U(\mfa))=\pairangone{h}U(\mfa)$. Indeed, let $\Lambda$ be the corresponding lattice chain of $\mfa$. Then $N_{G}(U(\mfa))$ is generated by $U(\mfa)$ and an element $h$ in $G$ that maps $\Lambda^{i}$ to $\Lambda^{i+1}$ for each $i\in\mbz$ (In the above standard case, it could be the element $\Pi_{\mfa}$ as above). Or, since $\Lambda$ is $E$-pure, such $h$ can be chosen to be an element in $B^{\times}$ that maps $\Lambda_{E}^{i}$ to $\Lambda_{E}^{i+1}$ for each $i\in\mbz$. Such $h$, by definition, can be chosen as an element in $W(\mfb)\cap N_{B^{\times}}(U(\mfb))$. 
		
		Thus we may choose $g\in N_{G}(U(\mfa))$, such that $K'=K^{g}$. Since $U(\mfa)\subset K$, we may also assume that $g\in\pairangone{h}$. Thus we also have $U(\mfb)^{g}=U(\mfb)$, which finishes the proof.
		
	\end{proof}
	
	Let $(\wt{J},\wt{\lambda}')$ be the corresponding twisted simple type constructed using the pair $(K',\bs{s}')$ instead of $(K,\bs{s})$. More precisely, we have $\wt{\lambda}'=\wt{\kappa}\otimes\wt{\rho}'$, where $\wt{\rho}'=\mrinf_{\wt{\mcm}}^{\wt{J}}(\epsilon\cdot\,_{s'}\varrho)$. We would like to show that $(\wt{J},\wt{\lambda}')$ is a twisted simple type with respect to the pair $(K,\bs{s})$. We write $\varrho=(\varrho_{0}\boxtimes\dots\boxtimes\varrho_{0})\chi_{g_{0}}$ with $g_{0}\in T(\mfb)$.
	
	Choose $g$ as in the lemma, and write $g=w_{0}h$ with $w_{0}\in W_{0}(\mfb)$ and $h\in T(\mfb)$. We write $h'=g_{0}^{-1}w_{0}^{-1}g_{0}w_{0}h\in T(\mfb)$, since $g_{0}, w_{0}^{-1}g_{0}w_{0}, h\in T(\mfb)$. By definition, we have $$\wt{\rho}'=\mrinf_{\wt{\mcm}}^{\wt{J}}((\epsilon\cdot\,_{s}\varrho)^{g}\chi_{\bs{s}^{g}|\bs{s}'})=\mrinf_{\wt{\mcm}}^{\wt{J}}((\epsilon\cdot\,_{s}\varrho)^{h'}\chi_{\bs{s}^{g}|\bs{s}'}).$$
	The character $\chi_{\bs{s}^{g}|\bs{s}'}(x)=\bs{s}'(x)/\bs{s}^{g}(x)$ of $K'$ can be written as a character $\chi$ of $\mfo_{F}^{\times}/1+\mfp_{F}\cong\bs{k}^{\times}$ composing with the determinant $\mrdet_{F}$. Let $$\varrho'=\varrho_{0}(\chi\circ\mrn_{\bs{l}/\bs{k}}\circ\mrdet_{\bs{l}})\boxtimes\cdots\boxtimes \varrho_{0}(\chi\circ\mrn_{\bs{l}/\bs{k}}\circ\mrdet_{\bs{l}}),$$ then by direct verification we have $\varrho\chi_{\bs{s}^{g}|\bs{s}'}=\varrho'\chi_{g_0}$ and $\wt{\rho}'=\mrinf_{\wt{\mcm}}^{\wt{J}}(\epsilon\cdot\,_{s}(\varrho'\chi_{g_{0}h'}))$. Thus $(\wt{J},\wt{\lambda}')$ is also a twisted simple type with respect to the pair $(K,\bs{s})$. Thus in considering a twisted simple type, we may fix any $K$ and $\bs{s}$ as above. 
	
	\begin{remark}
		
		However, the definition of a simple type indeed depends on the choice of the maximal compact subgroup $K$ containing $J$ and the splitting $\bs{s}$ of $K$.
		
	\end{remark}
	
	In the rest of this subsection, let $(\wt{J},\wt{\lambda})$ be a twisted simple type of $\wt{G}$. We further study the intertwining set of $\wt{\lambda}$. 
	
	Let $l_{0}$ be the maximal positive integer dividing $n$, such that $\varrho_{0}$ is isomorphic to its twist by a certain character of $\mrgl_{m_{0}}(\bs{l})$ of order $l_{0}$. As before, $l_{0}$ divides $m_{0}$.
	
	Let $T(r_{0},m_{0},l_{0};t)$ be the group $T(\varrho)$ in \eqref{eqTli} with $l_{1}=\dots=l_{t}=l_{0}$, $m_{1}=\dots=m_{t}=m_{0}$ and $r_{1}=\dots=r_{t}=r_{0}$. 
	Then $T(r_{0},m_{0},l_{0};t)$ is normalized by $W(\mfb)$. 
	
	Let $$W(r_{0},m_{0},l_{0};t)=W_{0}(\mfb)\ltimes T(r_{0},m_{0},l_{0};t),$$ 
	which is a subgroup of $W(\mfb)$.

	\begin{proposition}\label{propintertwinelambdasimple}
		
		We have $I_{G}(\wt{\lambda})=JW(r_{0},m_{0},l_{0};t)^{g_{0}}J$.
		
	\end{proposition}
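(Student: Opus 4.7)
My plan is to assemble the three intertwining results Propositions~\ref{propIglambda}, \ref{propW0Brho} and \ref{Tmfbrho}, and to reduce the twisted case to the untwisted one (i.e.\ $g_{0}=1$) by conjugating by $g_{0}$. The starting observation is that Proposition~\ref{propIglambda} gives $I_{G}(\wt{\lambda})=J\cdot I_{B^{\times}}(\wt{\rho})\cdot J$, so the entire calculation takes place on the $\wt{\rho}$-side. Using the Cartan decomposition $B^{\times}=U(\bmin)W(B)U(\bmin)$ together with $U(\bmin)\subset U(\mfb)\subset J$ to absorb the $U(\bmin)$-pieces into $J$, and then Proposition~\ref{proprhoWBintertwine} to replace $W(B)$ by its normalising subgroup $W(\mfb)=W_{0}(\mfb)\ltimes T(\mfb)$, I reduce the problem to computing $I_{W(\mfb)}(\wt{\rho}|_{\wt{M^{0}(\mfb)}})$.

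For the reduction to $g_{0}=1$, I introduce $\wt{\rho}_{0}=\mrinf_{\wt{\mcm}}^{\wt{J}}(\epsilon\cdot\,_{s}(\varrho_{0}^{\boxtimes t}))$, so that by the very definition of a twisted simple type one has $\wt{\rho}|_{\wt{M^{0}(\mfb)}}=\wt{\rho}_{0}\cdot(\chi_{g_{0}}\circ\bs{p})$. A direct computation using the cocycle identity \eqref{eqformcommutator}, together with the fact that $g_{0}\in T(\mfb)$ commutes with $M^{0}(\mfb)$, yields $\bs{s}(g_{0})\bs{s}(x)\bs{s}(g_{0})^{-1}=[g_{0},x]_{\sim}\bs{s}(x)$ for every $x\in M^{0}(\mfb)$, and hence $\wt{\rho}_{0}^{g_{0}}=\wt{\rho}_{0}\cdot\chi_{g_{0}}=\wt{\rho}$ on $\wt{M^{0}(\mfb)}$. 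Since $g_{0}$ normalizes $M^{0}(\mfb)$, $W(\mfb)$ and $J$, the standard identity $I(\rho^{g})=I(\rho)^{g}$ then gives $I_{W(\mfb)}(\wt{\rho})=I_{W(\mfb)}(\wt{\rho}_{0})^{g_{0}}$, reducing the problem to the untwisted case.

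In the untwisted case I treat the two factors of $W(\mfb)$ separately: since $\varrho_{0}^{\boxtimes t}$ is invariant under the permutation action of $W_{0}(\mfb)$ (the same cuspidal $\varrho_{0}$ appearing in each block), Proposition~\ref{propW0Brho} gives $I_{W_{0}(\mfb)}(\wt{\rho}_{0})=W_{0}(\mfb)$, while specialising Proposition~\ref{Tmfbrho} with $m_{i}=m_{0}$, $r_{i}=r_{0}$, $l_{i}=l_{0}$ for every $i$ gives $I_{T(\mfb)}(\wt{\rho}_{0})=T(r_{0},m_{0},l_{0};t)$. For a general element $w_{0}h\in W_{0}(\mfb)\ltimes T(\mfb)$ the intertwining condition decouples, because conjugation by $w_{0}$ already preserves $\wt{\rho}_{0}$, so the constraint reduces to $h\in T(r_{0},m_{0},l_{0};t)$. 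Using that $W_{0}(\mfb)$ normalises $T(r_{0},m_{0},l_{0};t)$ (as noted in the paragraph preceding the statement), we obtain $I_{W(\mfb)}(\wt{\rho}_{0})=W_{0}(\mfb)\ltimes T(r_{0},m_{0},l_{0};t)=W(r_{0},m_{0},l_{0};t)$, and the desired formula $I_{G}(\wt{\lambda})=J\cdot W(r_{0},m_{0},l_{0};t)^{g_{0}}\cdot J$ follows upon noting that $g_{0}\in T(\mfb)$ normalizes $J$.

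The main point requiring care will be the conjugation-by-$g_{0}$ step: one has to track the cocycle signs carefully in order to confirm that conjugation by $\bs{s}(g_{0})$ on $\wt{M^{0}(\mfb)}$ twists $\wt{\rho}_{0}$ by \emph{exactly} the character $\chi_{g_{0}}=\epsilon([g_{0},\cdot]_{\sim})$ appearing in the definition of a twisted simple type, with the correct exponent so that $\wt{\rho}_{0}^{g_{0}}$ coincides with $\wt{\rho}$ rather than with $\wt{\rho}\cdot\chi_{g_{0}}^{2}$ or similar. Once that identification is fixed, everything else is essentially bookkeeping.
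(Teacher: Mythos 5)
Your proposal is correct and follows essentially the same route as the paper: reduce via Propositions \ref{propIglambda} and \ref{proprhoWBintertwine} to computing the normalizer of $\wt{\rho}\rest_{\wt{M^{0}(\mfb)}}$ in $W(\mfb)$, pass to $g_{0}=1$, and then combine Propositions \ref{propW0Brho} and \ref{Tmfbrho} with the $W_{0}(\mfb)$-invariance of $\varrho_{0}^{\boxtimes t}$. The only difference is that you spell out the "without loss of generality $g_{0}=1$" step via the identity $\wt{\rho}_{0}^{g_{0}}=\wt{\rho}_{0}\cdot\chi_{g_{0}}=\wt{\rho}$ on $\wt{M^{0}(\mfb)}$, which the paper leaves implicit; your cocycle computation there is accurate.
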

	
	\begin{proof}
		
		Using Proposition \ref{propIglambda} and Proposition \ref{proprhoWBintertwine}, we only need to show that the normalizer $N_{W(\mfb)}(\wt{\varrho}\rest_{M(\mfb)})$ equals $W(r_{0},m_{0},l_{0};t)^{g_{0}}$. Since $\wt{\varrho}=(\epsilon\cdot\,_{s}(\varrho_{0}\boxtimes\dots\boxtimes\varrho_{0}))^{g_0}$, by extracting the $g_0$-conjugation we may assume $\varrho=\varrho_{0}\boxtimes\dots\boxtimes\varrho_{0}$ and $g_{0}=1$ without loss of generality. Then the proposition follows from Proposition \ref{propW0Brho}, Proposition \ref{Tmfbrho}, and the fact that $W_{0}(\mfb)$ normalizes $\varrho$.
		
	\end{proof}
	
	We give a more concrete description of $W(r_{0},m_{0},l_{0};t)$. As before, we choose an $E$-basis of $V_{E}=V_{E}^{1}\oplus\cdots\oplus V_{E}^{t}$, such that $\bmax$, $\mfb$ and $\bmin$ are standard hereditary orders of $B\cong\mrm_{m}(E)$. Under such basis, $W_{0}(\mfb)$ is identified with the group of transposition matrices of the form $$(\delta_{\varsigma(i)j}I_{m_{0}})_{1\leq i,j\leq t},$$ 
	where $\varsigma\in \mfS_{t}$, and $\delta_{ij}=1$ if $i=j$ and $0$ otherwise. In particular, \begin{equation}\label{eqStW0bident}
		\mcf_{0}:\mfS_{t}\rightarrow W_{0}(\mfb),\quad \varsigma\mapsto (\delta_{\varsigma(i)j}I_{m_{0}})_{1\leq i,j\leq t}
	\end{equation}
	is an isomorphism. Let $\varsigma_{i}$ denote the transposition of $i$ and $i+1$ and let $\sigma_{i}:=\mcf_{0}(\varsigma_{i})$ for $i=1,\dots,t-1$. Then $\{\sigma_{1},\dots,\sigma_{t-1}\}$ is a set of generators of $W_{0}(\mfb)$. 
	
	Consider the following linear congruence equations:
	\begin{equation}\label{eqcongruencesimple}
		l_{0}[(s_{1}+\dots+s_{t})r_{0}(2\bs{c}+\bs{d})-s_{i}\bs{d}]\equiv 0\quad(\mathrm{mod}\ n),\quad i=1,\dots,t.
	\end{equation}
	Using \eqref{eqTli}, we have
	\begin{equation}\label{eqTr0m0l0t}
		T(r_{0},m_{0},l_{0};t)=\{\mrdiag(\varpi_{E}^{s_{1}}I_{m_{0}},\dots,\varpi_{E}^{s_{t}}I_{m_{0}})\mid s_{1},\dots,s_{t}\in \mbz\ \text{satisfies}\ \eqref{eqcongruencesimple}\}.
	\end{equation}
	Consider the solutions of \eqref{eqcongruencesimple} such that $s_{1}=\dots=s_{t-1}=0$. Then it is required that $$l_{0}[(2\bs{c}+\bs{d})r_{0}-\bs{d}]s_{t}\equiv 0\ (\mathrm{mod}\ n)\quad \text{and}\quad l_{0}\bs{d}s_{t}\equiv 0\ (\mathrm{mod}\ n),$$ so $s_{t}=kn_{r_{0},l_{0}}$ for $k\in\mbz$, where 
	\begin{equation}\label{eqnr0l0}
		n_{r_{0},l_{0}}:=n/\mrgcd(n,(2\bs{c}+\bs{d})r_{0}l_{0},\bs{d}l_{0}).
	\end{equation} 
	In particular, \begin{equation}\label{eqnr0l0KPS}n_{r_{0},l_{0}}=\begin{cases}
			n/l_{0}&\quad \text{if}\ \wt{G}\ \text{is a KP-cover},\\
			n/\mrgcd(n,2l_{0}) &\quad \text{if}\ \wt{G}\ \text{is the S-cover}.
		\end{cases}
	\end{equation}
	We also consider the solutions of \eqref{eqcongruencesimple} such that $s:=s_{1}=\dots=s_{t}$. Then it is required that
	$$l_{0}[(2\bs{c}+\bs{d})r-\bs{d}]s\equiv0\ (\text{mod}\ n).$$
	So $s_{1}=\dots=s_{t}=kd_{r,l_{0}}$ for $k\in\mbz$, where 
	\begin{equation}\label{eqdrl0}
		d_{r,l_{0}}=n/\mrgcd(n,l_{0}(2\bs{c}r+\bs{d}r-\bs{d})).
	\end{equation} 
	In particular,
	\begin{equation}\label{eqdrl0KPS}
		d_{r,l_{0}}=\begin{cases}n/(l_{0}\mrgcd(n,2\bs{c}r+r-1))&\quad \text{if}\ \wt{G}\ \text{is a KP-cover},\\
			n/\mrgcd(n,2l_{0}) &\quad \text{if}\ \wt{G}\ \text{is the S-cover}.
		\end{cases}
	\end{equation}
	Note that for the KP-cover, we use the fact that $l_{0}$ divides $r$ and $\mrgcd(l_{0},2\bs{c}r+r-1)=1$. 
	
	By definition, $d_{r,l_{0}}$ divides $n_{r_{0},l_{0}}$. We write \begin{equation}\label{eqsr-l0}
		s_{r_{0},l_{0}}=n_{r_{0},l_{0}}/d_{r,l_{0}}.
	\end{equation} 
	For most cases, we write $n_{0}=n_{r_{0},l_{0}}$, $d_{0}=d_{r,l_{0}}$ and $s_{0}=s_{r_{0},l_{0}}$ for short.
	
	Let $\varsigma'$ be the permutation $1\mapsto 2\mapsto\dots \mapsto t-1\mapsto t\mapsto 1$. Let \begin{equation}\label{eqPizeta}
		\Pi_{E}=\mrdiag(\underbrace{I_{m_{0}},\dots,I_{m_{0}},\varpi_{E}^{n_{0}}I_{m_{0}}}_{t\text{-terms}})\mcf_{0}(\varsigma')\quad\text{and}\quad
		\zeta_{E}
		=\mrdiag(\underbrace{\varpi_{E}^{d_{0}}I_{m_{0}},\dots,\varpi_{E}^{d_{0}}I_{m_{0}}}_{t\text{-terms}}).
	\end{equation}
	be elements in $W(r_{0},m_{0},l_{0};t)$. By definition, we have $\Pi_{E}^{t}=\zeta_{E}^{s_{0}}$. 
	
	The elements $\sigma_{1},\dots,\sigma_{k-1}$, $\Pi_{E}$ and $\zeta_{E}$ generate a normal subgroup of $W(r_{0},m_{0},l_{0};t)$ of finite index, which we denote by $W'(r_{0},m_{0},l_{0};t)$. 
	
	We often write $T_{0}=T(r_{0},m_{0},l_{0};t)$, $W_{0}=W(r_{0},m_{0},l_{0};t)$ and $W_{0}'=W'(r_{0},m_{0},l_{0};t)$ for short.
	
	\begin{proposition}\label{propKPSW=W'}
		
		If $\wt{G}$ is either a KP-cover or the S-cover, then $W_{0}=W_{0}'$.
		
	\end{proposition}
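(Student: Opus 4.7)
The plan is to reduce to showing $T_0 \subset W_0'$, since $W_0 = W_0(\mfb) \ltimes T_0$ and the inclusion $W_0(\mfb) = \langle \sigma_1, \ldots, \sigma_{t-1}\rangle \subset W_0'$ is automatic from the generators of $W_0'$.

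The first step is to extract the pure diagonal factor of $\Pi_E$. Writing the cycle $\varsigma' = (1,2,\ldots,t)$ as the product of adjacent transpositions $\varsigma' = \varsigma_1 \varsigma_2 \cdots \varsigma_{t-1}$, one sees $\mcf_0(\varsigma') = \sigma_1 \sigma_2 \cdots \sigma_{t-1} \in W_0(\mfb) \subset W_0'$, hence
$$D \;:=\; \Pi_E \cdot \mcf_0(\varsigma')^{-1} \;=\; \mrdiag(I_{m_0}, \ldots, I_{m_0}, \varpi_E^{n_0} I_{m_0}) \;\in\; W_0'.$$
For each $i = 1, \ldots, t$, the $W_0(\mfb)$-conjugate $D_i$ of $D$ (which places $\varpi_E^{n_0} I_{m_0}$ in the $i$-th diagonal block and $I_{m_0}$ in the others) also lies in $W_0'$, since $W_0(\mfb)$ acts on $T(\mfb)$ by permutation of blocks. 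Together with $\zeta_E \in W_0'$, and because $T(\mfb)$ is abelian, $W_0'$ contains every element $\prod_i D_i^{b_i} \cdot \zeta_E^c$, that is, every diagonal matrix $\mrdiag(\varpi_E^{s_1}I_{m_0}, \ldots, \varpi_E^{s_t}I_{m_0})$ with $s_i = b_i n_0 + c d_0$ for some $(b_i, c) \in \mbz^{t+1}$.

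It now suffices, in each of the two cases, to check that every element of $T_0$ is of this form, and this is precisely what Corollary~\ref{corsolcongeq} provides. For the S-cover, $d_0 = n_0 = n/\mrgcd(n, 2l_0)$ and every solution of \eqref{eqcongruencesimple} is $s_i = k_i n_0$; take $c = 0$ and $b_i = k_i$. For the KP-cover, $n_0 = n/l_0$ and $l_0 d_0 = n/\mrgcd(n, 2r\bs{c}+r-1)$ (using $\mrgcd(l_0, 2r\bs{c}+r-1) = 1$); the corollary writes every solution as $s_i = k_i n_0 + k(2\bs{c}+1) l_0 d_0$, which is of the form $b_i n_0 + c d_0$ with $b_i = k_i$ and $c = k(2\bs{c}+1)l_0 \in \mbz$. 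In both cases $T_0 \subset W_0'$, completing the argument.

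I expect the only real obstacle to be the bookkeeping in matching the parametric form from Corollary~\ref{corsolcongeq} against the explicit generators $D_1, \ldots, D_t, \zeta_E$ in the KP-case; the identity $n_0 = s_0 d_0$ with $s_0 = \mrgcd(n, 2r\bs{c}+r-1)$, which is exactly the relation encoded by $\Pi_E^t = \zeta_E^{s_0}$ noted in the excerpt, governs the internal redundancy among these generators and makes the comparison go through.
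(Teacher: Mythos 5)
Your proof is correct and is exactly the argument the paper intends: the paper dismisses this proposition with the single line that it ``simply follows from Corollary \ref{corsolcongeq}'', and your reduction to $T_0\subset W_0'$ via the generators $D_1,\dots,D_t,\zeta_E$ and the comparison with the parametrized solutions of \eqref{eqcongruencesimple} is precisely the omitted verification. The bookkeeping in the KP-case (using $n_0=n/l_0$, $l_0d_0=n/\mrgcd(n,2r\bs{c}+r-1)$, and $\mrgcd(l_0,2r\bs{c}+r-1)=1$) checks out.
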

	
	\begin{proof}
		
		Using Proposition \ref{Tmfbrho}, Corollary \ref{corsolcongeq}, \eqref{eqnr0l0KPS},
		\eqref{eqdrl0KPS}, we may verify directly that $\sigma_{1},\dots,\sigma_{k-1}$, $\Pi_{E}$ and $\zeta_{E}$ generate $W_0$. 
		
	\end{proof}
	
	In particular, for the S-cover we have $s_{0}=1$ and $\Pi_{E}^{t}=\zeta_{E}$, hence $\sigma_{1}\dots,\sigma_{t-1}$ and $\Pi_{E}$ generate $W_{0}$.
	
	\subsection{Main results for homogeneous and simple types}\label{subsectionmainhomsimtype}
	
	We list our main results for homogeneous and (twisted) simple types.
	
	First we let $(\wt{J},\wt{\lambda})$ be a homogeneous type of $\wt{G}$, and we construct the related pairs $(\wt{J_{P}},\wt{\lambda}_{P})$ and $(\wt{J_{M}},\wt{\lambda}_{M})$ as before.
	
	An inertial equivalence class $\mfs_{M}=(\wt{M},\mco_{M})$ of $\wt{M}$ is called \emph{cuspidal} if $\mco_{M}$ consists of (genuine) cuspidal representations of $\wt{M}$.
	
	\begin{theorem}\label{thmlambdaMtype}
		
		The pair $(\wt{J_{M}},\wt{\lambda}_{M})$ is a type related to a cuspidal inertial equivalence class of $\wt{M}$.
		
	\end{theorem}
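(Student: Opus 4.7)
The plan is to first reduce to the case $M=G$, i.e., to showing that a maximal simple type $(\wt{J},\wt{\lambda})$ of $\wt{G}$ is a type for some cuspidal inertial class. Since the decomposition $V=\bigoplus_{i}V^{i}$ is subordinate to $\mfb$, the Levi $M=G_{r_1}\times\cdots\times G_{r_t}$ is block-compatible in the sense of \S \ref{subsectionmetacoverGL} (via a direct check using \eqref{eqBDcommutator} applied to $J_{M}=J(\beta,\mfa^{1})\times\cdots\times J(\beta,\mfa^{t})$), so $\wt{M}\cong(\wt{G_{r_1}}\times\cdots\times\wt{G_{r_t}})/\Xi$ and $\wt{\lambda}_{M}\cong\wt{\lambda}_{1}\boxtimes\cdots\boxtimes\wt{\lambda}_{t}$. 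Genuine irreducible representations of $\wt{M}$ then factor as $\wt{\pi}=\wt{\pi}_1\boxtimes\cdots\boxtimes\wt{\pi}_t$, are cuspidal iff each factor is, and contain $\wt{\lambda}_M$ iff each $\wt{\pi}_i$ contains $\wt{\lambda}_i$; thus the theorem for $\wt{M}$ reduces componentwise to the theorem for each $\wt{G_{r_i}}$, in which each $(\wt{J}_{i},\wt{\lambda}_{i})$ is a maximal simple type.

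Now assume $(\wt{J},\wt{\lambda})$ is a maximal simple type of $\wt{G}$ (so $t=1$, $\mfb$ maximal). Specializing Proposition \ref{propintertwinelambdasimple} to $t=1$ (whence $W_{0}(\mfb)=\{1\}$), I would obtain $I_{G}(\wt{\lambda})=\pairangone{\zeta_E}J=:J_{\lambda}$ with $\zeta_E=\varpi_E^{d_{r,l_0}}I_{m_0}\in Z(B^{\times})$. Because $\zeta_E$ is central in $B^{\times}$, it commutes with $J$ and normalizes $\wt{\lambda}$, giving $N_G(\wt{\lambda})=I_G(\wt{\lambda})=J_{\lambda}$; the full normalizer of $\wt{J}$ in $G$ is $\bs{J}=E^{\times}J$, which is open and compact modulo $Z(G)=F^{\times}$, with $J_{\lambda}$ of finite index in $\bs{J}$. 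Next, extend $\wt{\lambda}$ to an irreducible $\wt{\lambda}'$ on $\wt{J_{\lambda}}$ via Clifford theory (possible since $\wt{J_{\lambda}}/\wt{J}$ is cyclic and $\wt{\lambda}$ is stable under $J_{\lambda}$) and form $\wt{\bs{\lambda}}:=\mrind_{\wt{J_{\lambda}}}^{\wt{\bs{J}}}\wt{\lambda}'$, the extended maximal simple type. Any $g\in I_G(\wt{\bs{\lambda}})$ must intertwine $\wt{\lambda}$ and hence lie in $J_{\lambda}\subset\bs{J}$, so $I_G(\wt{\bs{\lambda}})=\bs{J}$; Lemma \ref{lemmacomindcusp} then yields an irreducible cuspidal $\wt{\pi}_0:=\mrind_{\wt{\bs{J}}}^{\wt{G}}\wt{\bs{\lambda}}$ containing $\wt{\lambda}$, and every irreducible representation of $\wt{G}$ containing $\wt{\bs{\lambda}}$ is cuspidal.

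Finally I would verify the type condition for $\mfs=[\wt{G},\wt{\pi}_0]$. For any irreducible $\pi$ of $\wt{G}$ with $\pi|_{\wt{J}}$ containing $\wt{\lambda}$, Clifford theory applied to the normal inclusion $\wt{J}\subset\wt{J_{\lambda}}$ (combined with $\wt{\lambda}^{\zeta_E}\cong\wt{\lambda}$ and the one-dimensionality of the relevant intertwining spaces from Corollary \ref{corintertwingspace}) yields that $\pi|_{\wt{J_{\lambda}}}$ contains some extension $\wt{\lambda}''$ of $\wt{\lambda}$; Frobenius reciprocity and transitivity of induction then identify $\pi$ with $\mrind_{\wt{\bs{J}}}^{\wt{G}}(\mrind_{\wt{J_{\lambda}}}^{\wt{\bs{J}}}\wt{\lambda}'')$, a cuspidal in the same inertial orbit as $\wt{\pi}_0$. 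The reverse direction is immediate, and closure of $\mrrep_{\lambda}(\wt{G})$ under subquotients is automatic since every object of this subcategory is a finite direct sum of unramified twists of $\wt{\pi}_0$. The main technical step I anticipate is the matching of the torsor of extensions $\{\wt{\lambda}''\}$ (parameterized by the character group of $\wt{J_{\lambda}}/\wt{J}$) with the group of unramified characters of $\wt{G}$; this requires care in the metaplectic setting, where the unramified character group is described via the Brylinski-Deligne data of \S \ref{subsectionmetacoverGL} rather than by the straightforward $\nu^s$-formula available in the linear case.
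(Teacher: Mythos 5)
Your reduction to the case $M=G$ is where the argument breaks. You infer from the subordination of $V=\bigoplus_i V^i$ to $\mfb$ that $M=G_{r_1}\times\cdots\times G_{r_t}$ is block compatible, hence that $\wt{M}\cong(\wt{G_{r_1}}\times\cdots\times\wt{G_{r_t}})/\Xi$ and that every genuine irreducible representation of $\wt{M}$ factors as $\wt{\pi}_1\boxtimes\cdots\boxtimes\wt{\pi}_t$. This conflates block compatibility of $J_M$ with block compatibility of $M$. The group $J_M=J(\beta,\mfa^1)\times\cdots\times J(\beta,\mfa^t)$ is block compatible because the determinants of its elements lie in $\mfo_F^\times$ and the Hilbert symbol is trivial on units, so the tensor product $\wt{\lambda}_1\boxtimes\cdots\boxtimes\wt{\lambda}_t$ makes sense. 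But for the full Levi, formula \eqref{eqBDcommutator} gives $[g_i,g_j]_\sim=(\mrdet(g_i),\mrdet(g_j))_n^{2\bs{c}+\bs{d}}$, which is nontrivial unless $2\bs{c}+\bs{d}\equiv 0\ (\mathrm{mod}\ n)$; the paper explicitly remarks that $M$ is not block compatible even for KP-covers, and Section 9 builds the metaplectic tensor product precisely to work around the failure of your factorization (and even then the decomposition of $\mrirr_\epsilon(\wt{M})$ involves a choice of central character $\wt{\omega}$, not a plain exterior tensor product). So the componentwise reduction is not available for a general metaplectic cover, and the theorem is stated for general covers.

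The paper's proof instead works directly in $M$: it uses Proposition \ref{propintertwinelambdaM} to get $I_M(\wt{\lambda}_M)=N_M(\wt{\lambda}_M)=T(\varrho)J_M\subset\bs{J}_M$, extends $\wt{\lambda}_M$ to $Z(\wt{M})\wt{J_M}$ by a choice of central character, and then applies the Clifford-theoretic machinery of \cite{gelbart1982indistinguishability}*{Lemma 2.1} over the finite \emph{abelian} (but not necessarily cyclic) quotient $\wt{\bs{J}_M}/Z(\wt{M})\wt{J_M}$ — note that here one cannot assume a genuine extension of $\wt{\lambda}_M'$ to $\wt{\bs{J}_M}$ exists, only an irreducible $\wt{\bs{\lambda}}_M$ containing it with some multiplicity $k$. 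Cuspidality of $\mrind_{\wt{\bs{J}_M}}^{\wt{M}}\wt{\bs{\lambda}}_M$ then follows from Lemma \ref{lemmacomindcusp}, and the type property is obtained from \cite{bushnell1998smooth}*{Proposition 5.2} after extending the finitely many characters $\chi_i$ of $\bs{J}_M/J_M$ to unramified characters of $M$ via the embedding $\bs{J}_M/J_M\hookrightarrow M/M^0$ — this is exactly the ``matching of the torsor of extensions with unramified characters'' that you flag at the end as an anticipated difficulty but do not carry out. Your argument in the genuinely maximal case $M=G$ (where $\bs{J}/J_\lambda$ is cyclic and the Gelbart--Knapp multiplicities disappear) is essentially sound and matches Proposition \ref{propEMSTG}, but as written the proof of the theorem for a proper Levi does not go through.
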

	
	\begin{proof}
		
		We define $\bs{J}_{M}=T(\mfb)J_{M}$ as a subgroup in $M$. By Proposition \ref{propintertwinelambdaM}, 
		$\bs{J}_{M}$ contains the normalizer and the intertwining set $I_{M}(\wt{\lambda}_{M})=N_{M}(\wt{\lambda}_{M})$. Let $Z(\wt{M})$ be the center of $\wt{M}$. Then both $\wt{\bs{J}_{M}}$ and $N_{\wt{M}}(\wt{\lambda}_{M})$ contain $Z(\wt{M})$. Moreover, the quotient $\wt{\bs{J}_{M}}/Z(\wt{M})\wt{J_{M}}$ 
		is a finite abelian group. 
		
		We first extend $\wt{\lambda}_{M}$ to an irreducible representation $\wt{\lambda}_{M}'$ of $Z(\wt{M})\wt{J_{M}}$, which could be done by choosing a compatible central character. Using \cite{gelbart1982indistinguishability}*{Lemma 2.1}, Frobenius reciprocity and the Mackey formula, we may easily deduce that:
		\begin{itemize}
			\item There exists an irreducible representation $\wt{\bs{\lambda}}_{M}$ of $\wt{\bs{J}_{M}}$, whose restriction to $Z(\wt{M})\wt{J_{M}}$ contains $\wt{\lambda}_{M}'$.
			
			\item There exist positive integers $k$ and $d$, such that  $\car{N_{M}(\wt{\lambda})/\bs{p}(Z(\wt{M}))J_{M}}=dk^{2}$, and 
			\begin{equation}\label{eqrestLambdaM}
				\wt{\bs{\lambda}}_{M}\rest_{Z(\wt{M})\wt{J_{M}}}\cong\bigoplus_{g\in\bs{J}_{M}/N_{M}(\wt{\lambda}_{M})}k\cdot\wt{\lambda}_{M}'^{g}.
			\end{equation}
			\item There exists a finite set $\{\chi_{1},\dots,\chi_{d}\}$ of characters of $J_{M}$ that are trivial on $\bs{p}(Z(\wt{M}))$, such that $\wt{\bs{\lambda}}_{M}\cdot\chi_{i}$ are pairwise inequivalent for $i=1,\dots,d$. Moreover
			\begin{equation}\label{eqlambdaM'ind}
				\mrind_{Z(\wt{M})\wt{J_{M}}}^{\wt{\bs{J}_{M}}}\wt{\lambda}_{M}'\cong\bigoplus_{i=1}^{d}k\cdot(\wt{\bs{\lambda}}_{M}\cdot\chi_{i}).
			\end{equation}
			
		\end{itemize}
		
		For $g\in I_{M}(\wt{\bs{\lambda}}_{M})$, restricting $\wt{\bs{\lambda}}_{M}^{g}$ and $\wt{\bs{\lambda}}_{M}$ to $\wt{J_{M}^{g}}\cap\wt{J_{M}}$ and using \eqref{eqrestLambdaM}, there exists $g'\in\bs{J}_{M}$ such that $g$ intertwines $\wt{\lambda}_{M}^{g'}$ with $\wt{\lambda}_{M}$. Thus $g'g\in I_{M}(\wt{\lambda}_{M})$, implying that $g\in\bs{J}_{M}$. So the intertwining set $I_{M}(\wt{\bs{\lambda}}_{M})$ equals $\bs{J}_{M}$. Using Lemma \ref{lemmacomindcusp}, the compact induction $\wt{\pi}=\mrind_{\wt{\bs{J}_{M}}}^{\wt{M}}\wt{\bs{\lambda}}_{M}$ is a genuine irreducible cuspidal representation of $\wt{M}$. 
		
		Let $\mfs_{M}$ be the corresponding inertial equivalence class of $\wt{M}$ that contains $\wt{\pi}$. Let $M^{0}$ denote the subgroup of $M$ as the intersection of the kernel of all the unramified characters of $M$. Then by definition $Z(\wt{M})\cap\wt{M^{0}}$ is contained in $\wt{J_{M}}$. Moreover, we have $J_{M}=\bs{J}_{M}\cap M^{0}$, which induces an embedding $\bs{J}_{M}/J_{M}\hookrightarrow M/M^{0}$. For each $i=1,\dots,d$, the character $\chi_{i}$, regarded as character of $\bs{J}_{M}/J_{M}$, can be extended to a character of $M/M^{0}$, which is still denoted by $\chi_{i}$. Then, taking the compact induction functor $\mrind_{\wt{\bs{J}_{M}}}^{\wt{M}}$ for \eqref{eqlambdaM'ind}, we get 
		$$\mrind_{Z(\wt{M})\wt{J_{M}}}^{\wt{M}}\wt{\lambda}_{M}'\cong\bigoplus_{i=1}^{d}k\cdot(\wt{\pi}\chi_{i}).$$
		Thus by \cite{bushnell1998smooth}*{Proposition 5.2}, $(\wt{J}_{M},\wt{\lambda}_{M})$ is an $\mfs_{M}$-type, which finishes the proof.

	\end{proof}
	
	We further assume $(\wt{J},\wt{\lambda})$ to be a simple type of $\wt{G}$.
	
	\begin{theorem}\label{thmJPcoveringpair}
		
		Let $(\wt{J},\wt{\lambda})$ be a simple type of $\wt{G}$. Then $(\wt{J}_{P},\wt{\lambda}_{P})$ is a covering pair of $(\wt{J_{M}},\wt{\lambda}_{M})$, and both $(\wt{J},\wt{\lambda})$ and $(\wt{J}_{P},\wt{\lambda}_{P})$ are types of $\wt{G}$.
		
	\end{theorem}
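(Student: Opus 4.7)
The first two conditions defining a covering pair are immediate from the construction of $(\wt{J_P},\wt{\lambda}_P)$ summarized at the end of \S \ref{subsectionhomotype}: $(\wt{J_P},\wt{\lambda}_P)$ is decomposed with respect to $(M,P')$ for every parabolic subgroup $P'$ of $G$ with Levi $M$, and $\wt{J_P}\cap\wt{M}=\wt{J_M}$ with $\wt{\lambda}_P\rest_{\wt{J_M}}=\wt{\lambda}_M$. So the heart of the proof is to verify condition (3): for each parabolic $P$ with Levi $M$, exhibit a strongly $(P,\wt{J_P})$-positive $z\in Z(M)$ together with an invertible $\phi_z\in\mch(\wt{G},\wt{\lambda}_P)$ supported on $\wt{J_P}z\wt{J_P}$.

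To construct $z$, observe that $n\mbz^t$ satisfies the congruences \eqref{eqcongruencesimple} trivially, so $T(r_0,m_0,l_0;t)\subset\mbz^t$ is a sublattice of rank $t$; we may therefore choose $(s_1,\dots,s_t)$ with strictly monotone coordinates (in the direction dictated by $P$) lying in this sublattice, and set $z=\mrdiag(\varpi_E^{s_1}I_{m_0},\dots,\varpi_E^{s_t}I_{m_0})$. This $z$ belongs to $Z(M)\cap T(r_0,m_0,l_0;t)$, and strong positivity with respect to $(P,\wt{J_P})$ follows from the dilating action of $z$ on the pro-$p$ unipotent subgroups $J^1\cap N$ and $H^1\cap N^-$, governed by the valuations $s_i-s_j$. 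Since $z\in T(r_0,m_0,l_0;t)\subset I_G(\wt{\lambda})$ by Proposition \ref{propintertwinelambdasimple}, the analogue of Corollary \ref{corintertwingspace} for $\wt{\lambda}_P$ (transported via Lemma \ref{lemmaisoheckelambdaPlambda}) yields a unique (up to scalar) $\phi_z\in\mch(\wt{G},\wt{\lambda}_P)$ supported on $\wt{J_P}z\wt{J_P}$, normalized so that $\phi_z(z)=\id_{\wt{\lambda}_P^\vee}$.

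The key difficulty is showing that $\phi_z$ is invertible. Exploiting the $(M,P)$-decomposition of $(\wt{J_P},\wt{\lambda}_P)$ together with the strong positivity of $z$, an iterated convolution computation in the spirit of \cite{bushnell1998smooth}*{\S 7} yields $\phi_{z^i}\ast\phi_{z^j}=c_{ij}\,\phi_{z^{i+j}}$ with nonzero constants $c_{ij}$ for all $i,j\geq 0$. To promote this to full invertibility inside $\mch(\wt{G},\wt{\lambda}_P)$, I would invoke the explicit description of $\mch(\wt{G},\wt{\lambda})$ to be established in the following section via the embedding $\Psi$ of Theorem \ref{thmmainQthree}: the image of $\phi_z$ under the isomorphism $\mch(\wt{G},\wt{\lambda}_P)\cong\mch(\wt{G},\wt{\lambda})$ of Lemma \ref{lemmaisoheckelambdaPlambda} lands in the lattice part of the affine Hecke algebra $\wt{\mch}(t,s_0,\bs{q}_0)$, where it is manifestly invertible. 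Note that only one suitably chosen $z$ is needed per $P$, not every element of $T(r_0,m_0,l_0;t)$, which keeps the verification tractable.

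Once condition (3) is verified, $(\wt{J_P},\wt{\lambda}_P)$ is a covering pair of $(\wt{J_M},\wt{\lambda}_M)$. By Theorem \ref{thmlambdaMtype} the pair $(\wt{J_M},\wt{\lambda}_M)$ is an $\mfs_M$-type for a cuspidal inertial equivalence class $\mfs_M=(\wt{M},\mco)$; applying Theorem \ref{thmtypecovering} then shows that $(\wt{J_P},\wt{\lambda}_P)$ is an $\mfs$-type, where $\mfs=(\wt{M},\mco)$ is regarded as an inertial equivalence class of $\wt{G}$. Finally, Lemma \ref{lemmaisoheckelambdaPlambda} combined with $\wt{\lambda}=\mrind_{\wt{J_P}}^{\wt{J}}\wt{\lambda}_P$ transports this to make $(\wt{J},\wt{\lambda})$ an $\mfs$-type as well.
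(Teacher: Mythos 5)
Your proposal is correct and follows essentially the same route as the paper: conditions (1) and (2) are immediate from the construction in \S\ref{subsectionhomotype}, and condition (3) is verified by producing a strongly positive central element whose double coset carries an invertible Hecke function, the invertibility being read off from the support-preserving embedding $\Psi$ of Theorem \ref{thmcalhecke} (whose proof does not use the covering-pair property, so there is no circularity); the conclusion then follows from Theorems \ref{thmtypecovering} and \ref{thmlambdaMtype} together with $\mrind_{\wt{J_P}}^{\wt{J}}\wt{\lambda}_P\cong\wt{\lambda}$, exactly as in the paper. One small correction: your element $z=\mrdiag(\varpi_E^{s_1}I_{m_0},\dots,\varpi_E^{s_t}I_{m_0})$ lies in $Z(M\cap B^{\times})$ but not in $Z(M)$ when $E/F$ is ramified, whereas the definition of a covering pair requires $z\in Z(M)$; the paper instead takes $z=\bs{s}(\mrdiag(\varpi_F^{ns_1}I_{r_0},\dots,\varpi_F^{ns_t}I_{r_0}))$ with $s_1>\dots>s_t$, which is genuinely central in $M$ and still lies in $\wt{J}W_0'\wt{J}$ since $n\mbz^t$ solves the congruences \eqref{eqcongruencesimple}. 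The intermediate convolution identity $\phi_{z^i}\ast\phi_{z^j}=c_{ij}\phi_{z^{i+j}}$ is then superfluous once you appeal to Theorem \ref{thmcalhecke}.
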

	
	Theorem \ref{thmJPcoveringpair} will be proved in the next section.
	
	\begin{remark}
		
		Indeed, Theorem \ref{thmJPcoveringpair} is expected to be true for homogeneous types, which, however, will not be discussed in this article and will be written down elsewhere.
		
	\end{remark}
	
	In particular, if $(\wt{J},\wt{\lambda})$ is a maximal simple type of $\wt{G}$, then it is a type related to a cuspidal inertial equivalence class of $\wt{G}$.
	
	Finally, we have the following theorem, stating that intertwining of twisted simple types implies conjugacy of corresponding weak equivalence classes:
	
	\begin{theorem}\label{thmsimpletypeconjugacy}
		
		Let $(\wt{J},\wt{\lambda})$, $(\wt{J}',\wt{\lambda}')$ be twisted simple types of $\wt{G}$, attached to hereditary orders $\mfa$, $\mfa'$ in $A$. Suppose that $\mfa$ and $\mfa'$ are conjugate by $G$, and $\wt{\lambda}$ intertwines with $\wt{\lambda}'$  in $G$. Then, there exists $g\in G$ such that $J'=J^{g}$ and $[\wt{\lambda}']\cong[\wt{\lambda}^{g}]$. 
		
		\begin{proof}
			
			The argument follows from that of \cite{bushnell129admissible}*{Theorem 5.7.1} (also see \cite{secherre2012smooth}*{Theorem 6.1}). As in \emph{loc. cit.}, up to $G$-conjugacy we may assume that there exist an $r$-dimensional vector space $V$ over $F$, and two strict simple stratum $[\mfa,u,0,\beta]$ and $[\mfa,u,0,\beta']$ in $A=\mrend_{F}(V)$ such that
			\begin{enumerate}
				
				\item $E=F[\beta]$ and $E'=F[\beta']$ are fields of degree $d$ over $F$, having the same residue field denoted by $\bs{l}$. Write $m=r/d$. Let $B=\mrend_{E}(V_{E})$, $\mfb=B\cap \mfa$. We may also assume that $H^{1}=H^{1}(\beta,\mfa)=H^{1}(\beta',\mfa)$, $J^{1}=J^{1}(\beta,\mfa)=J^{1}(\beta',\mfa)$ and $J=J(\beta,\mfa)=J(\beta',\mfa)$.
				
				\item There exist a simple character $\theta\in\mcc(\mfa,\beta)\cap\mcc(\mfa,\beta')$, the Heisenberg representation $\eta$ of $\theta$, a $\beta$-extension $\kappa$ of $\eta$, two cuspidal representations $\varrho$ and $\varrho'$ of $\mcm\cong U(\mfb)/U^{1}(\mfb)\cong J/J^{1}$ and the corresponding inflations $\wt{\rho}=\mrinf_{\wt{\mcm}}^{\wt{J}}(\epsilon\cdot\,_{s}\varrho)$ and $\wt{\rho}'=\mrinf_{\wt{\mcm}}^{\wt{J}}(\epsilon\cdot\,_{s}\varrho')$, such that $\wt{\lambda}\cong\wt{\kappa}\otimes\wt{\rho}$ and $\wt{\lambda}'\cong\wt{\kappa}\otimes\wt{\rho}'$.
				
				\item Write $\mcm=\mcg^{1}\times\dots\times\mcg^{t}$, where $\mcg^{1}\cong\dots\cong\mcg^{t}\cong\mrgl_{m_{0}}(\bs{l})$ with $m_{0}=m/t$. Then, there exist a cuspidal representation $\varrho_{0}$ of $\mrgl_{m_{0}}(\bs{l})$ and $g_{0}\in T(\mfb)$, such that $\varrho\cong(\varrho_{0}\boxtimes\dots\boxtimes\varrho_{0})\chi_{g_{0}}$. 
				
			\end{enumerate} 
			
			Let $x\in G$ that intertwines $\wt{\lambda}$ and $\wt{\lambda}'$. Restricting to $\,_{s}H^{1}\cap \,_{s}H^{1x}$, we deduce that $x$ intertwines $\,_{s}\theta$, then $x\in JB^{\times}J$. Using the Bruhat decomposition, we have $JB^{\times}J=JU(\mfb_{m})W(B)U(\mfb_{m})J=JW(B)J$. So we may assume $x\in W(B)$ without loss of generality. 
			Since $$\mrhom_{\wt{J}^{x}\cap\wt{J}}(\wt{\lambda}^{x},\wt{\lambda}')=\mrhom_{\wt{J}^{x}\cap\wt{J}}(\wt{\rho}^{x},\wt{\rho}')\neq 0,$$ 
			Proposition \ref{proprhoWBintertwine}, or more precisely its proof, shows that $x$ normalizes $M^{0}(\mfb)$, $M^{1}(\mfb)$ and $\mcm$. Up to changing the $U(\mfb)$-$U(\mfb)$ double coset of $x$, we may further assume that $x\in W(\mfb)$. Thus we have
			\begin{equation}\label{eqintertwinerhorho'}
				0\neq\mrhom_{\wt{J}^{x}\cap\wt{J}}(\wt{\lambda}^{x},\wt{\lambda}')=\mrhom_{\wt{J}^{x}\cap\wt{J}}(\wt{\rho}^{x},\wt{\rho}')= \mrhom_{\wt{\mcm}}((\epsilon\cdot\,_{s}\varrho)^{x},\epsilon\cdot\,_{s}\varrho').
			\end{equation}
			
			Write $x=wx'$ with $w\in W_{0}(\mfb)$ and $x'\in T(\mfb)$. Then $h=w^{-1}g_{0}wx'$ is also in $T(\mfb)$. Moreover, we have $$(\epsilon\cdot\,_{s}\varrho)^{x}\cong(\epsilon\cdot\,_{s}(\varrho_{0}\boxtimes\dots\boxtimes\varrho_{0}))^{g_{0}x}=(\epsilon\cdot\,_{s}(\varrho_{0}\boxtimes\dots\boxtimes\varrho_{0}))^{wh}=(\epsilon\cdot\,_{s}(\varrho_{0}\boxtimes\dots\boxtimes\varrho_{0}))\chi_{h}.$$
			Thus \eqref{eqintertwinerhorho'} implies that $\varrho'\cong(\varrho_{0}\boxtimes\dots\boxtimes\varrho_{0})\chi_{h}$, meaning that $[\wt{\lambda}']= [\wt{\lambda}]$. So the proof is finished.
			
		\end{proof}
		
	\end{theorem}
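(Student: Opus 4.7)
The plan is to reduce to the case where both twisted simple types are attached to the same hereditary order in $A$, and then exploit the explicit description of intertwining for homogeneous types obtained in this section, most notably Proposition \ref{proprhoWBintertwine}. First, since $\mfa$ and $\mfa'$ are $G$-conjugate, I would choose $g_{1}\in G$ with $\mfa'=\mfa^{g_{1}}$ and replace $(\wt{J}',\wt{\lambda}')$ by its $g_{1}^{-1}$-conjugate. After this initial reduction, both types are built from strict simple strata $[\mfa,u,0,\beta]$ and $[\mfa,u',0,\beta']$ in $A=\mrend_{F}(V)$; the residue structure of $\mfb=B\cap\mfa$ being forced to agree on both sides (by the shape of $\mcm$) gives $[E:F]=[E':F]=d$ and a common residue field $\bs{l}$.

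Next I would analyse the underlying simple characters. Each of $\wt{\lambda}$, $\wt{\lambda}'$ contains a simple character $\theta\in\mcc(\mfa,\beta)$, $\theta'\in\mcc(\mfa,\beta')$ when restricted to the canonical lift $\,_{s}H^{1}$. Any intertwiner $x\in G$ for $\wt{\lambda}$ and $\wt{\lambda}'$ automatically intertwines $\theta$ and $\theta'$, and by the Bushnell--Kutzko theory of intertwining of simple characters on $\mrgl_{r}(F)$, this forces $\theta$ and $\theta'$ to be transferable to each other. A second $G$-conjugation (which preserves $\mfa$) then lets me assume $\theta=\theta'$, whence the groups $H^{1},J^{1},J$ agree, the Heisenberg representations $\eta=\eta'$ agree, and the two $\beta$-extensions $\kappa$, $\kappa'$ of $\eta$ differ by a character of $J/J^{1}\cong\mcm$ factoring through $\mrdet_{E}$. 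Absorbing this character into the cuspidal factor on the other side, I may further arrange $\wt{\kappa}=\wt{\kappa}'$.

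The problem is now: two cuspidal inflations $\wt{\rho}=\mrinf_{\wt{\mcm}}^{\wt{J}}(\epsilon\cdot\,_{s}\varrho)$ and $\wt{\rho}'=\mrinf_{\wt{\mcm}}^{\wt{J}}(\epsilon\cdot\,_{s}\varrho')$ are intertwined in $G$, with $\varrho\cong(\varrho_{0}\boxtimes\dots\boxtimes\varrho_{0})\chi_{g_{0}}$. Using the Iwahori decomposition $JB^{\times}J=JW(B)J$ inside $I_{G}(\wt{\kappa})$ (Proposition \ref{propIglambda}), I may take the intertwiner $x\in W(B)$. By Proposition \ref{proprhoWBintertwine}, $x$ must normalize $M^{0}(\mfb)$, i.e.\ $x\in W(\mfb)$. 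Writing $x=wx'$ with $w\in W_{0}(\mfb)$ and $x'\in T(\mfb)$, and using that $W_{0}(\mfb)$ permutes the identical factors $\varrho_{0}$ up to a commutator-type character, I would compute $\wt{\rho}^{x}\cong(\epsilon\cdot\,_{s}(\varrho_{0}\boxtimes\dots\boxtimes\varrho_{0}))\chi_{h}$ for a certain $h\in T(\mfb)$ built from $g_{0}$, $w$, $x'$. Combined with $\wt{\rho}^{x}\cong\wt{\rho}'$, this yields $\varrho'\cong(\varrho_{0}\boxtimes\dots\boxtimes\varrho_{0})\chi_{h}$, which is precisely the statement $[\wt{\lambda}']=[\wt{\lambda}]$.

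The main obstacle lies in the second paragraph: passing from an intertwining of the full $\wt{\lambda}$'s to an exact alignment of the underlying simple characters (and of the $\beta$-extensions up to a $\mrdet_{E}$-character). This step is not formal --- it requires invoking the delicate $\mrgl_{r}$-theory of intertwining and transfer of simple characters, as recalled in Section \ref{sectionstrata}. Fortunately, because simple characters, Heisenberg representations and $\beta$-extensions are all built on pro-$p$ subgroups (where the cover splits canonically) or on $J$ (where the $\kappa$-factor descends to the linear side), the classical arguments transplant without essential modification, and the only genuinely cover-theoretic input is the commutator computation \eqref{eqsumEiMEicommutator} used in the final paragraph.
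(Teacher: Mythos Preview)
Your proposal is correct and follows essentially the same route as the paper's proof. The paper compresses your first two paragraphs into a single citation of \cite{bushnell129admissible}*{Theorem 5.7.1} (``up to $G$-conjugacy we may assume\dots''), after which both arguments proceed identically: take the intertwiner $x\in W(B)$ via the Iwahori decomposition of $JB^{\times}J$, use Proposition~\ref{proprhoWBintertwine} to force $x\in W(\mfb)$, write $x=wx'$ with $w\in W_{0}(\mfb)$ and $x'\in T(\mfb)$, and compute $(\epsilon\cdot{}_{s}\varrho)^{x}$ as a $\chi_{h}$-twist of $\varrho_{0}\boxtimes\dots\boxtimes\varrho_{0}$ for some $h\in T(\mfb)$.
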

	
	We extract the following corollary from the argument above.
	
	\begin{corollary}\label{corintertwinesimpletype}
		
		Let $(\wt{J},\wt{\lambda})$ and $(\wt{J},\wt{\lambda'})$ be twisted simple types of $\wt{G}$ attached to the same hereditary order. If $x\in G$ intertwines $\wt{\lambda}$ and $\wt{\lambda'}$, then $x\in JW(\mfb)J$.
		
	\end{corollary}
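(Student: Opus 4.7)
The plan is to extract the argument directly from the proof of Theorem \ref{thmsimpletypeconjugacy}, since that proof already establishes essentially this intermediate conclusion before passing to the finer statement about weak equivalence classes.

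First I would restrict the intertwining operator between $\wt{\lambda}$ and $\wt{\lambda'}$ to $\,_{s}H^{1}\cap\,_{s}H^{1x}$. Since $\wt{\lambda}|_{\,_{s}H^{1}}$ and $\wt{\lambda'}|_{\,_{s}H^{1}}$ are multiples of $\,_{s}\theta$ and $\,_{s}\theta'$ respectively, this forces $x$ to intertwine these simple characters. As in the theorem's proof, after normalizing $\mfa=\mfa'$ and using the transfer map for simple characters, we may arrange that a common simple character underlies both simple types, so \eqref{eqsimcharintertwinelevel0} gives $x\in JB^{\times}J$.

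Next I would invoke the Iwahori decomposition $B^{\times}=U(\bmin)W(B)U(\bmin)$, together with the containment $U(\bmin)\subset U(\mfb)\subset J$, to deduce $JB^{\times}J=JW(B)J$. Thus, up to modifying $x$ by elements of $J$ on either side (which does not affect membership in $JW(\mfb)J$), we may assume $x\in W(B)$.

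Finally, since $x\in W(B)$ still intertwines $\wt{\lambda}$ with $\wt{\lambda'}$, restricting the intertwining via the Heisenberg/$\beta$-extension uniqueness transfers the intertwining property to $\wt{\rho}$ and $\wt{\rho'}$. I would then reuse the argument of Proposition \ref{proprhoWBintertwine}, noting it adapts verbatim to intertwining between two distinct representations: if $x$ fails to normalize $M^{0}(\mfb)$, one produces a proper unipotent radical $\mcn'$ of some $\mcg^{i}$ contained in $M^{1}(\mfb)^{x}$, and restriction of the intertwining to $\,_{s}\mcn'$ then yields $\mrhom_{\mcn'}(\varrho_{i},\varrho_{i}'|_{\mcn'})\neq 0$. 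Since $\varrho_{i}'$ restricts trivially to $\mcn'$ and $\varrho_{i}$ is cuspidal, this is a contradiction. Hence $x$ normalizes $M^{0}(\mfb)$, which places $x$ in $W(\mfb)$, and therefore $x\in JW(\mfb)J$.

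The main obstacle is the first step: ensuring that intertwining of $\wt{\lambda}$ and $\wt{\lambda'}$ really forces a common simple-character structure, given that \emph{a priori} the two twisted simple types could arise from distinct simple strata $[\mfa,u,0,\beta]$ and $[\mfa,u',0,\beta']$. This is precisely what the opening reduction in the proof of Theorem \ref{thmsimpletypeconjugacy} handles, via intersection of simple character sets together with the transfer map, and that reduction can be imported here without change.
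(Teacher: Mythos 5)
Your proposal is correct and is essentially the paper's own argument: the corollary is extracted directly from the proof of Theorem \ref{thmsimpletypeconjugacy} — reduce to a common simple character (and common $\beta$-extension), get $x\in JB^{\times}J=JW(B)J$ from \eqref{eqsimcharintertwinelevel0} and the Iwahori decomposition, then apply the two-representation version of Proposition \ref{proprhoWBintertwine} to conclude that $x$ normalizes $M^{0}(\mfb)$ and hence lies in $W(\mfb)$, so $x\in JW(\mfb)J$. Only a slip of wording in the last step: it is the $x$-conjugated inflation, being trivial on $\,_{s}U^{1}(\mfb')\subset\,_{s}M^{1}(\mfb)^{x}$, that acts trivially through $\mcn'$ (not $\varrho_{i}'$ itself), so the nonvanishing space reads $\mrhom_{\mcn'}(1,\varrho_{i}'\rest_{\mcn'})\neq 0$ (or symmetrically $\mrhom_{\mcn'}(\varrho_{i}\rest_{\mcn'},1)\neq 0$), which contradicts cuspidality exactly as in the paper.
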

	
	\begin{remark}
		
		The above theorem \emph{cannot} be relaxed to ``intertwining implies conjugation", as in \cite{bushnell129admissible}*{Theorem 5.7.1} when $n=1$. Indeed, two twisted simple types in the same weak equivalence class are intertwined by an element in $T(\mfb)$. However it is easy to cook up an example that two weakly equivalent twisted simple types are not necessarily conjugate by $G$. 
		
	\end{remark}
	
	\section{Calculation of Hecke algebra}
	
	In this section, we keep the notation of Section \ref{sectionsimpletypes}. Our main goal is to prove Theorem \ref{thmJPcoveringpair} by giving a precise description of the Hecke algebra $\mch(\wt{G},\wt{\lambda})$, where $(\wt{J},\wt{\lambda})$ is a simple type. Along the way, we show that $\mch(\wt{G},\wt{\lambda})$ is indeed an affine Hecke algebra of type A if $\wt{G}$ is a KP-cover or the S-cover.
	
	\subsection{Finite and affine Hecke algebras of type A}\label{subsectionaffineHeckeA}
	
	In this part, we recall known results on finite and affine Hecke algebras of type A (\emph{cf.} \cite{bushnell129admissible}*{\S 5.4} and \cite{solleveld2021affine}*{\S 1}). We fix a real number $z>1$ and two positive integers $t$, $s$.
	
	First we define the finite Hecke algebra
	$$\mch_{0}(t,z)=\mbc[1,[\varsigma_{i}]\mid i=1,\dots,t-1],$$
	where the generators $1$, $[\varsigma_{i}]$ are subject to the following relations
	\begin{enumerate}
		\setcounter{enumi}{-1}
		\item 1 is the unit element.
		\item $[\varsigma_{i}][\varsigma_{j}]=[\varsigma_{j}][\varsigma_{i}]$, $1\leq i,j\leq t-1$, $\abs{i-j}\geq 2$.
		\item $[\varsigma_{i}][\varsigma_{i+1}][\varsigma_{i}]=[\varsigma_{i+1}][\varsigma_{i}][\varsigma_{i+1}]$, $i=1,\dots,t-2$.
		\item $([\varsigma_{i}]-z)([\varsigma_{i}]+1)=0$, $i=1,\dots,t-1$.
		
	\end{enumerate}	
	
	We identify $\varsigma_{i}$ with the transposition of $i$ and $i+1$ in $\mfS_{t}$. For any $\varsigma\in\mfS_{t}$, we consider a reduced decomposition $$\varsigma=\prod_{i=1}^{l}\varsigma_{i}',$$ where  $\varsigma'_{i}\in\{\varsigma_{1},\dots\varsigma_{t-1}\}$ such that $l$ is minimal. Then, the element
	$$[\varsigma]:=\prod_{i=1}^{l}[\varsigma_{i}']\in\mch_{0}(t,z)$$
	is independent of the choice of the above decomposition. Here, by convention we write $[1]=1$. The underlying vector space of the finite Hecke algebra is exactly 
	$$\mbc\pairangone{[\varsigma]\mid\varsigma\in\mfS_{t}}$$
	which is $t!$-dimensional. 
	
	Then we define the linear affine Hecke algebra
	$$\mch(t,z)=\mbc[1,[\varsigma_{i}],[\Pi]\mid i=1,\dots,t-1],$$
	where the generators $1,[\varsigma_{i}],[\Pi]$ are subject to the relations (0)-(3) and 
	\begin{enumerate}
		\setcounter{enumi}{3}
		\item $[\Pi]$ is invertible with respect to the multiplication.
		\item $[\Pi][\varsigma_{i}]=[\varsigma_{i-1}][\Pi]$, $i=2,\dots,t-1$.
		\item $[\Pi]^{2}[\varsigma_{1}]=[\varsigma_{t-1}][\Pi]^{2}$.
	\end{enumerate}
	In particular, we define $[\varsigma_{0}]=[\Pi][\varsigma_{1}][\Pi]^{-1}=[\Pi]^{-1}[\varsigma_{t-1}][\Pi]$.
	
	We consider the (extended) affine Weyl group $\Waff{t}=\mbz^{t}\rtimes\mfS_{t}$, where the semi-direct product is given by $(k_{\varsigma(1)},\dots,k_{\varsigma(t)})\cdot\varsigma=\varsigma\cdot(k_{1},\dots,k_{t})$ for $(k_{1},\dots,k_{t})\in\mbz^{t}$ and $\varsigma\in\mfS_{t}$. 
	
	Still, we identify $\varsigma_{i}$ with elements in $\mfS_{t}\subset \Waff{t}$ as above and $\Pi$ with the element $$(0,\dots,0,1)\cdot \varsigma'\in \Waff{t},$$ 
	where $\varsigma'$ denotes the permutation $1\mapsto 2\mapsto \dots \mapsto t-1\mapsto t\mapsto 1$. We define $\varsigma_{0}=\Pi\varsigma_{1}\Pi^{-1}\in\Waff{t}$.
	
	For any $w\in\Waff{t}$, similarly we may consider a decomposition
	$$w=\Pi^{a}\cdot\prod_{i=1}^{l}\varsigma_{i}',$$
	where $a\in\mbz$ is an integer uniquely determined by $w$, and $\varsigma'_{i}\in\{\varsigma_{0},\varsigma_{1},\dots\varsigma_{t-1}\}$ such that $l$ is minimal. 
	The element
	$$[w]:=[\Pi]^{a}\cdot\prod_{i=1}^{l}[\varsigma_{i}']\in\mch(t,z)$$
	is independent of the choice of the above decomposition. Then, the underlying vector space of the linear affine Hecke algebra is 
	$$\mbc\pairangone{[w]\mid w\in\Waff{t}}.$$
	In particular, $[\Pi]^{t}$ is central in $\mch(t,z)$. We write $$\mca(t):=\mbc[[\lambda]\mid \lambda\in\mbz^{t}\subset\Waff{t}],$$ which turns out to be a commutative algebra isomorphic to the polynomial algebra $$\mbc[X_{1},X_{1}^{-1},\dots,X_{t},X_{t}^{-1}],$$
	where $X_{i}=[(0,\dots,0,1,0,\dots,0)]$ with 1 in the $i$-th row. Then we have an $\mbc$-algebra isomorphism
	$$\mch(t,z)\cong\mca(t)\otimes_{\mbc}\mch_{0}(t,z).$$
	The results listed above could be found in \cite{bushnell129admissible}*{\S 5.4}.
	
	We also slightly generalize the above discussion to define the twisted affine Hecke algebra
	$$\wt{\mch}(t,s,z)=\mbc[1,[\varsigma_{i}],[\Pi],[\zeta]\mid i=1,\dots,t-1],$$
	where the generators $1,[\varsigma_{i}],[\Pi],[\zeta]$ are subject to the relations (0)-(6) and
	\begin{enumerate}
		\setcounter{enumi}{6}
		\item $[\zeta]$ is central and invertible with respect to the multiplication.
		\item $[\zeta]^{s}=[\Pi]^{t}$.
	\end{enumerate}	
	In particular, $\mch(t,z)$ is a subalgebra of $\wt{\mch}(t,s,z)$ of index $s$. If $s=1$, we simply have $\wt{\mch}(t,s,z)=\mch(t,z)$. 
	
	Later on we will see that for KP-covers and the S-cover, the Hecke algebra of a simple type is exactly a twisted affine Hecke algebra. Indeed, for the $S$-cover we have $s=1$, so it is reduced to the linear case.
	
	We consider the twisted affine Weyl group $$\Wafftwist{t}{s}=\mbz_{1/s}^{t}\rtimes\mfS_{t},$$ 
	where $\mbz_{1/s}^{t}$ denotes the sub-lattice of $\mbq^{t}$ generated by $\mbz^{t}$ and $(1/s,1/s,\dots,1/s)$, and the semi-direct product is similarly given as before. We identify $\varsigma_{i}$ and $\Pi$ with elements in $\Waff{t}\subset\Wafftwist{t}{s}$ as before, and $\zeta$ with $(1/s,1/s,\dots,1/s)\in\mbz_{1/s}^{t}\subset\Wafftwist{t}{s}$.
	
	For any $w\in\Wafftwist{t}{s}$, we may consider a decomposition
	$$w=\Pi^{a}\cdot\zeta^{b}\cdot\prod_{i=1}^{l}\varsigma_{i}',$$
	where $a\in\mbz$ and $b\in\{0,1,\dots s-1\}$ are uniquely determined by $w$, and $\varsigma'_{i}\in\{\varsigma_{0},\varsigma_{1},\dots\varsigma_{t-1}\}$ such that $l$ is minimal. 
	Then, the element
	\begin{equation}\label{eqwdecompHtilde}
		[w]:=[\Pi]^{a}\cdot[\zeta]^{b}\cdot\prod_{i=1}^{l}[\varsigma_{i}']\in\wt{\mch}(t,s,z)
	\end{equation}
	is independent of the choice of the above decomposition. The underlying vector space of the twisted affine Hecke algebra is 
	$$\mbc\pairangone{[w]\mid w\in\Wafftwist{t}{s}}.$$
	We write $$\wt{\mca}(t,s):=\mbc[[\lambda]\mid \lambda\in\mbz_{1/s}^{t}\subset\Wafftwist{t}{s}],$$ 
	which turns out to be a commutative algebra isomorphic to the polynomial algebra $$\mbc[X_{1},X_{1}^{-1},\dots,X_{t},X_{t}^{-1},Z,Z^{-1}\mid Z^{s}=X_{1}\dots X_{t}],$$
	where $X_{i}$ is as before and $Z=[(1/s,\dots,1/s)]$. Then we have an $\mbc$-algebra isomorphism
	$$\wt{\mch}(t,s,z)=\wt{\mca}(t,s)\otimes_{\mbc}\mch_{0}(t,z).$$
	In particular, $\mca(t)$ is a subalgebra of $\wt{\mca}(t,s)$ of index $s$. 
	
	We define a canonical hermitian form $\pairang{\cdot}{\cdot}:\wt{\mch}(t,s,z)\rightarrow \mbc$ by linearity, such that $[w]\in \Wafftwist{s}{t}$ forms an orthogonal basis, and moreover
	$$\pairang{[w]}{[w]}=z^{l},$$
	where the length $l$ is defined as in \eqref{eqwdecompHtilde}. Restricting to $\mch(t,z)$, $\wt{\mca}(t,s,z)$ and $\wt{\mca}(t)$, we get the corresponding hermitian forms.
	
	\begin{remark}
		
		The affine Hecke algebras $\mch(t,z)$ and $\wt{\mch}(t,s,z)$ are indeed affine Hecke algebras related to a based root datum of type A (in the sense of \cite{solleveld2021affine}*{\S 1.3} for instance), where $\mca(t)$ (resp. $\wt{\mca}(t,s)$) is the lattice part (i.e. $\mbc[X]$ in \emph{loc. cit.}), and $\mch_{0}(t,z)$ is the finite part (i.e. $\mch(W,q)$ in \emph{loc. cit.}).
		
	\end{remark} 
	
	Finally, we define the induction functor
	$$\mrInd_{\mca(t)}^{\mch(t,z)}:\mrmod(\mca(t))\rightarrow\mrmod(\mch(t,z))$$
	given by the tensor product $(\mch(t,z)\otimes_{\mca(t)}\cdot)$ or the Hom-functor $\mrhom_{\mca(t)}(\mch(t,z),\cdot)$, and the induction functor 
	$$\mrInd_{\wt{\mca}(t,s)}^{\wt{\mch}(t,s,z)}:\mrmod(\wt{\mca}(t,s))\rightarrow\mrmod(\wt{\mch}(t,s,z))$$
	given by the tensor product $(\wt{\mch}(t,s,z)\otimes_{\wt{\mca}(t,s)}\cdot)$ or the Hom-functor $\mrhom_{\wt{\mca}(t,s)}(\wt{\mch}(t,s,z),\cdot)$. In particular, they map finite dimensional modules to finite dimensional modules. Clearly, we have a commutative diagram:
	\begin{equation}\label{eqcommrestparindhecke}
		\xymatrix{
			\mrmod(\wt{\mca}(t,s))\ar[r]^-{\mrInd_{\wt{\mca}(t,s)}^{\wt{\mch}(t,s,z)}} \ar[d]_-{\rest_{\mca(t)}} & \mrmod(\wt{\mch}(t,s,z)) \ar[d]^-{\rest_{\mch(t,z)}}\\
			\mrmod(\mca(t))  \ar[r]_-{\mrInd_{\mca(t)}^{\mch(t,z)}}    & \mrmod(\mch(t,z)) 
		}
	\end{equation}
	
	\begin{remark}\label{remresirr}
		
		In particular, since $[\zeta]$ is contained in the center of $\wt{\mch}(t,s,z)$, it is easy to verify that the restriction of a finite dimensional representation $\pi$ of $\wt{\mch}(t,s,z)$ to $\mch(t,z)$ is irreducible, if and only if $\pi$ is irreducible.
		
	\end{remark}

	\subsection{Finite part of $\mch(\wt{G},\wt{\lambda})$}
	
	In this part, we study the subalgebra of $\mch(\wt{G},\wt{\lambda})$ arising from a cuspidal representation of a finite general linear group, where $(\wt{J},\wt{\lambda})$ is a simple type of $\wt{G}$. 
	
	Fix a maximal open compact subgroup $K$ of $G$ that contains $U(\amax)$ and a splitting $\bs{s}$ of $K$, such that our simple type is of the form $\wt{\lambda}=\wt{\kappa}\otimes\wt{\rho}$, where $\wt{\kappa}$ is the pull-back of a $\beta$-extension, $\wt{\rho}=\mrinf_{\wt{\mcm}}^{\wt{J}}(\epsilon\cdot\,_{s}\varrho)$ with $\varrho=\varrho_{0}\boxtimes\dots\boxtimes\varrho_{0}$ being a cuspidal representation of $\mcm\cong U(\mfb)/U^{1}(\mfb)$, and $\varrho_{0}$ is a cuspidal representation of $\mrgl_{m_{0}}(\bs{l})$.
	
	Let $J'=U(\mfb)\Jonemax$, which is a subgroup of $\Jmax$. So we have $J'/\Jonemax\cong U(\mfb)/U^{1}(\bmax)\cong \mcp$ and $\Jmax/\Jonemax\cong U(\bmax)/U^{1}(\bmax)=\mcg$. We may extend $\varrho$ to an irreducible representation of $\mcp$, which we still denote by $\varrho$.
	
	As in \S \ref{subsectionHeisenbergbeta}, we construct a $\beta$-extension $\kappamax$ of $\Jmax$ related to $\kappa$, and we let $\tildekappamax$ be its pull-back to $\wt{\Jmax}$. Let $\wt{\rho}'=\mrinf_{\wt{\mcm}}^{\wt{J'}}(\epsilon\cdot\,_{s}\varrho)$, which is a genuine irreducible representation of $\wt{J'}$. Finally, let $\wt{\lambda}'=\tildekappamax\rest_{\wt{J'}}\otimes\wt{\rho}'$, which is a genuine irreducible representation of $\wt{J}'$.
	
	\begin{lemma}
		
		We have $\mrind_{\wt{J'}}^{\wt{U(\mfb)}\wt{U^{1}(\mfa)}}\wt{\lambda}'\cong\mrind_{\wt{J}}^{\wt{U(\mfb)}\wt{U^{1}(\mfa)}}\wt{\lambda}$. Then we obtain a canonical isomorphism 
		\begin{equation}\label{eqheckelambda'heckelambda}
			\mch(\wt{G},\wt{\lambda}')\cong\mch(\wt{G},\wt{\lambda}),
		\end{equation} 
		mapping a function $\phi'$ supported on $\wt{J'}y\wt{J'}$ to a function $\phi$ supported on $\wt{J}y\wt{J}$ for any $y\in B^\times$.
		
	\end{lemma}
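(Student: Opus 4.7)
My approach is to first reduce the induction isomorphism to formula \eqref{eqrelatedbetaext} for $\beta$-extensions, and then promote it to the genuine setting by tensoring with an appropriate common extension of $\wt{\rho}$ and $\wt{\rho}'$. The first step is to apply \eqref{eqrelatedbetaext} with $\mfa''=\amax$; noting that $U(\mfb)J^{1}(\beta,\amax)=U(\mfb)\Jonemax=J'$, this yields the non-genuine isomorphism
\[ \mrind_{J'}^{U(\mfb)U^{1}(\mfa)}(\kappamax\rest_{J'})\cong\mrind_{J}^{U(\mfb)U^{1}(\mfa)}(\kappa). \]
Pulling back along $\bs{p}:\wt{G}\to G$, which identifies subgroups fibrewise and commutes with induction, I obtain the analogous isomorphism on $\wt{U(\mfb)U^{1}(\mfa)}$ involving $\wt{\kappa}$ and $\tildekappamax\rest_{\wt{J'}}$.

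Next, using that $\varrho$ has been extended to an irreducible representation of the parabolic subgroup $\mcp\subset\mcg$, I would construct a genuine representation $\wt{R}$ of $\wt{U(\mfb)U^{1}(\mfa)}$ by inflating $\epsilon\cdot\,_{s}\varrho$ through the natural surjection $U(\mfb)U^{1}(\mfa)\twoheadrightarrow\mcp$. The point is that $\wt{R}\rest_{\wt{J'}}\cong\wt{\rho}'$ directly by construction, while $\wt{R}\rest_{\wt{J}}\cong\wt{\rho}$ because $J/J^{1}\cong\mcm$ sits inside $\mcp$ as its Levi part, and the extension of $\varrho$ from $\mcm$ to $\mcp$ restricts to $\varrho$ itself on $\mcm$. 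Applying the projection formula $\mrind_{H}^{K}(\sigma\otimes\wt{R}\rest_{H})\cong\mrind_{H}^{K}(\sigma)\otimes\wt{R}$ to both sides then gives
\[ \mrind_{\wt{J}}^{\wt{U(\mfb)U^{1}(\mfa)}}\wt{\lambda}\cong\mrind_{\wt{J}}^{\wt{U(\mfb)U^{1}(\mfa)}}(\wt{\kappa})\otimes\wt{R}\cong\mrind_{\wt{J'}}^{\wt{U(\mfb)U^{1}(\mfa)}}(\tildekappamax\rest_{\wt{J'}})\otimes\wt{R}\cong\mrind_{\wt{J'}}^{\wt{U(\mfb)U^{1}(\mfa)}}\wt{\lambda}', \]
which is the first assertion of the lemma.

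For the Hecke algebra isomorphism \eqref{eqheckelambda'heckelambda}, I would invoke transitivity of induction to pass to $\wt{G}$, obtaining $\mrind_{\wt{J}}^{\wt{G}}\wt{\lambda}\cong\mrind_{\wt{J'}}^{\wt{G}}\wt{\lambda}'$ and hence $\mch(\wt{G},\wt{\lambda})\cong\mrend_{\wt{G}}(\mrind_{\wt{J}}^{\wt{G}}\wt{\lambda})^{\mathrm{op}}\cong\mch(\wt{G},\wt{\lambda}')$. To match supports, I would trace the correspondence through this isomorphism: since $\wt{J}\subseteq\wt{J'}$ (coming from $J^{1}(\beta,\mfa)\subseteq J^{1}(\beta,\amax)$), a function $\phi'\in\mch(\wt{G},\wt{\lambda}')$ supported on $\wt{J'}y\wt{J'}$ corresponds, via Mackey decomposition along $\wt{J}\subseteq\wt{J'}$, to a function $\phi\in\mch(\wt{G},\wt{\lambda})$ supported on $\wt{J}y\wt{J}$, using that the relevant intertwining space at $y$ is one-dimensional (as in Corollary \ref{corintertwingspace}) so that the correspondence is unambiguously determined up to scalar.

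The main technical obstacle is verifying the existence and restriction properties of the common extension $\wt{R}$, namely that $U(\mfb)U^{1}(\mfa)$ admits $\mcp$ as a quotient (which requires a careful identification of $U(\mfb)\cap U^{1}(\mfa)$, expected to equal $U^{1}(\mfb)$ since the decomposition is subordinate to $\mfb$) and that the two different inflation procedures yielding $\wt{\rho}$ and $\wt{\rho}'$ agree on their respective restrictions. Once this bookkeeping is settled, the argument becomes formal. The support-matching in the Hecke algebra isomorphism is similarly a matter of carefully tracking the isomorphism through Frobenius reciprocity and the Mackey formula.
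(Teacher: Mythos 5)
Your proof of the first assertion is correct and is essentially the paper's own argument: one applies the second relation of \eqref{eqrelatedbetaext} with $\mfa''=\amax$ (so that $U(\mfb)J^{1}(\beta,\amax)=J'$) and tensors with a common extension of $\wt{\rho}$ and $\wt{\rho}'$ to $\wt{U(\mfb)}\wt{U^{1}(\mfa)}$; the paper phrases this by regarding $\wt{\rho}$ and $\wt{\rho}'$ as one genuine representation of $\wt{U(\mfb)}\wt{U^{1}(\mfa)}$ trivial on $\,_{s}U^{1}(\mfa)$. One small slip in your construction of $\wt{R}$: since $U(\mfb)\cap U^{1}(\mfa)=U^{1}(\mfb)$, the natural quotient of $U(\mfb)U^{1}(\mfa)$ is $\mcm$, not $\mcp$; this is harmless because the extension of $\varrho$ to $\mcp$ is trivial on the unipotent radical, so the two inflations agree.

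The second half has a genuine gap. The containment $\wt{J}\subseteq\wt{J'}$ on which your support-matching rests is false, and so is its stated justification $J^{1}(\beta,\mfa)\subseteq J^{1}(\beta,\amax)$: indeed $U^{1}(\mfb)\subseteq J^{1}(\beta,\mfa)$, whereas $J^{1}(\beta,\amax)\cap B^{\times}=U^{1}(\bmax)$ and $U^{1}(\mfb)\not\subseteq U^{1}(\bmax)$ as soon as $\mfb\neq\bmax$ (an element $1+x$ with $x$ a unit entry in an off-diagonal block of $\mfp_{\mfb}$ is not congruent to $1$ modulo $\mfp_{\bmax}$); so your ``Mackey decomposition along $\wt{J}\subseteq\wt{J'}$'' has no meaning. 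What is true --- and is the reason the first statement is formulated with induction only up to $\wt{U(\mfb)}\wt{U^{1}(\mfa)}$ rather than to $\wt{G}$ --- is that both $J=U(\mfb)J^{1}(\beta,\mfa)$ and $J'=U(\mfb)\Jonemax$ lie in the common compact open subgroup $K=U(\mfb)U^{1}(\mfa)$ (note $\Jonemax\subseteq U^{1}(\amax)\subseteq U^{1}(\mfa)$). The paper's route, following the argument of Bushnell--Kutzko, Proposition 5.5.13, is to use the isomorphism $\mrind_{\wt{J'}}^{\wt{K}}\wt{\lambda}'\cong\mrind_{\wt{J}}^{\wt{K}}\wt{\lambda}$ to produce a canonical isomorphism $\mch(\wt{G},\wt{\lambda}')\cong\mch(\wt{G},\wt{\lambda})$ compatible with the decomposition of both algebras along $\wt{K}$-double cosets; inside a fixed $\wt{K}y\wt{K}$ one then matches the one-dimensional spaces of functions supported on $\wt{J'}y\wt{J'}$ and on $\wt{J}y\wt{J}$ (Corollary \ref{corintertwingspace} and its analogue for $\wt{\lambda}'$). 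Merely observing $\mrind_{\wt{J}}^{\wt{G}}\wt{\lambda}\cong\mrind_{\wt{J'}}^{\wt{G}}\wt{\lambda}'$ and taking $\wt{G}$-endomorphism algebras does give an isomorphism of Hecke algebras, but not the canonical support-preserving one the lemma asserts; that property is exactly where the common overgroup $K$, and not any containment between $J$ and $J'$, must enter.
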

	
	\begin{proof}
		
		We regard $\wt{\rho}'$ (resp. $\wt{\rho}$) as a genuine representation of $\wt{U(\mfb)}\wt{U^{1}(\mfa)}$ whose restriction to $\,_{s}U^{1}(\mfa)$ is trivial. Then using \eqref{eqrelatedbetaext}, we have
		$$\mrind_{\wt{J'}}^{\wt{U(\mfb)}\wt{U^{1}(\mfa)}}\wt{\lambda}'\cong(\mrind_{\wt{J'}}^{\wt{U(\mfb)}\wt{U^{1}(\mfa)}}\tildekappamax\rest_{\wt{J'}})\otimes\wt{\rho}'\cong(\mrind_{\wt{J}}^{\wt{U(\mfb)}\wt{U^{1}(\mfa)}}\wt{\kappa})\otimes\wt{\rho}\cong\mrind_{\wt{J}}^{\wt{U(\mfb)}\wt{U^{1}(\mfa)}}\wt{\lambda}.$$
		The second statement follows from a similar argument as \cite{bushnell129admissible}*{Proposition 5.5.13}.
		
	\end{proof}
	
	We consider a subalgebra $\mch(\wt{\Jmax},\wt{\lambda}')$ of $\mch(\wt{G},\wt{\lambda}')\cong\mch(\wt{G},\wt{\lambda})$. We write $\bs{q}_{0}=q^{m_{0}f}$ to simplify our notation, where $f$ is the unramified degree of $E/F$. Indeed, $\bs{q}_{0}$ is the cardinality of the field  $\bs{l}_{m_{0}}$, where $\bs{l}_{m_{0}}/\bs{l}$ is of degree $m_{0}$.
	
	\begin{proposition}
		
		We have
		\begin{equation}\label{eqHeckeisofinite}
			\mch_{0}(t,\bs{q}_{0})\cong\mch(\mcg,\varrho)\cong\mch(\wt{U(\bmax)},\wt{\rho}'\rest_{\wt{U(\mfb)}})\cong\mch(\wt{\Jmax},\wt{\lambda}').
		\end{equation}
		Moreover, let $\varsigma\in\mfS_{t}$, let $[\varsigma]$ be the corresponding element in $\mch_{0}(t,\bs{q}_{0})$, let $\phi_{\varsigma}\in\mch(\wt{\Jmax},\wt{\lambda}')$ be the image of $[\varsigma]$ via the isomorphism \eqref{eqHeckeisofinite}. Then the support of $\phi_{\varsigma}$ is $\wt{J'}\mcf_{0}(\varsigma)\wt{J'}$ (\emph{cf.} \eqref{eqStW0bident}).
		
	\end{proposition}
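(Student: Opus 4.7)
The plan is to build the chain \eqref{eqHeckeisofinite} from right to left, tracking at each step what happens to the double coset supporting the image of a Weyl-group generator $[\varsigma]$.

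For the passage $\mch(\wt{\Jmax},\wt{\lambda}')\cong\mch(\wt{U(\bmax)},\wt{\rho}'\rest_{\wt{U(\mfb)}})$, I would argue as in \cite{bushnell129admissible}*{Proposition 5.6.6}. Since $\wt{\lambda}'=\tildekappamax\rest_{\wt{J'}}\otimes\wt{\rho}'$ and the $\beta$-extension $\tildekappamax$ has one-dimensional local intertwining spaces on $\wt{U(\bmax)}$ (by the uniqueness property of $\beta$-extensions together with \eqref{eqintertwineeta} applied to the simple stratum $[\amax,\umax,0,\beta]$), tensoring with $\tildekappamax\rest_{\wt{J'}}$ produces an algebra isomorphism sending a function with support $\wt{U(\mfb)}g\wt{U(\mfb)}$ to a function with support $\wt{J'}g\wt{J'}$. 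Next, for the middle isomorphism $\mch(\wt{U(\bmax)},\wt{\rho}'\rest_{\wt{U(\mfb)}})\cong\mch(\mcg,\varrho)$, I would use that $\wt{\rho}'\rest_{\wt{U(\mfb)}}$ is the inflation of $\epsilon\cdot\,_{s}\varrho$, and that the restriction of the splitting $\bs{s}$ to $U(\bmax)$ realizes $\wt{U(\bmax)}$ as $\mu_{n}\times\,_{s}U(\bmax)$, compatibly with the unique pro-$p$ splitting on $\,_{s}U^{1}(\bmax)$. Isolating the $\epsilon$-isotypic component under $\mu_{n}$ and deflating through $U(\bmax)/U^{1}(\bmax)\cong\mcg$ yields an algebra isomorphism which sends the support $\wt{U(\mfb)}g\wt{U(\mfb)}$ to $\mcp\overline{g}\mcp$, where $\overline{g}$ denotes the image of $g$ in $\mcg$.

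For the final identification $\mch(\mcg,\varrho)\cong\mch_{0}(t,\bs{q}_{0})$, I would invoke the Howlett--Lehrer theorem specialized to the finite group $\mcg\cong\mrgl_{m}(\bs{l})$ with cuspidal input $\varrho=\varrho_{0}\boxtimes\dots\boxtimes\varrho_{0}$ on the Levi $\mcm\cong\mrgl_{m_{0}}(\bs{l})\times\dots\times\mrgl_{m_{0}}(\bs{l})$, as recorded in \cite{bushnell129admissible}*{\S 5.4}: the resulting algebra is the finite type-$A$ Iwahori--Hecke algebra, with parameter equal to $\bs{q}_{0}=q^{fm_{0}}$, the cardinality of the field of definition $\bs{l}_{m_{0}}$ of the Green parameter of $\varrho_{0}$. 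Under this identification the basis element $[\varsigma]$ has support $\mcp\mcf_{0}(\varsigma)\mcp$. Chasing the chain back up and using that $\mcf_{0}(\varsigma)$ is a permutation matrix which lifts canonically from $\mcg$ to $U(\bmax)$ and, via $\bs{s}$, further to $\wt{U(\bmax)}\subset\wt{\Jmax}$, produces the claimed support $\wt{J'}\mcf_{0}(\varsigma)\wt{J'}$.

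The main technical point is to ensure that the genuine structure of $\wt{\rho}'$ does not distort the middle isomorphism: this will reduce to observing that the splitting $\bs{s}$ is fixed on $U(\bmax)$, that $\mu_{n}$ acts on $\wt{\rho}'$ by the central character $\epsilon$, and that for any Weyl representative $\bs{s}(\mcf_{0}(\varsigma))$ the $\mu_{n}$-factor decouples cleanly from the intertwining calculation on the cover. Everything else is a mechanical transport of double cosets, and the substantive input is concentrated in the classical finite-group computation of the third step.
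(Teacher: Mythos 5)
Your proposal is correct and is essentially the argument the paper invokes: the paper's proof consists of the single remark that the argument of Bushnell--Kutzko, Proposition 5.6.4, ``works here without change,'' and your three-step chain (transport along the $\beta$-extension via one-dimensional intertwining spaces, deflation through $U(\bmax)/U^{1}(\bmax)\cong\mcg$ after splitting off the $\epsilon$-isotypic $\mu_{n}$-component, then the Green/Howlett--Lehrer identification of $\mch(\mcg,\varrho)$ with $\mch_{0}(t,\bs{q}_{0})$) is precisely that argument together with the only cover-specific observation needed. The support-tracking of $[\varsigma]$ to $\wt{J'}\mcf_{0}(\varsigma)\wt{J'}$ is also as in the cited source.
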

	
	\begin{proof}
		
		The argument of \cite{bushnell129admissible}*{Proposition 5.6.4} works here without change.
		
	\end{proof}
	
	Composing with \eqref{eqheckelambda'heckelambda}, we get an embedding of algebras
	\begin{equation}\label{eqmonoH0H}
		\Psi_{0}:\mch_{0}(t,\bs{q}_{0})\hookrightarrow\mch(\wt{G},\wt{\lambda}).
	\end{equation}
	Indeed, $\Psi_{0}(\mch_{0}(t,\bs{q}_{0}))$ is the subalgebra of $\mch(\wt{G},\wt{\lambda})$ consisting of functions supported on $\wt{J}W_{0}(\mfb)\wt{J}$. This is because $J'\backslash\Jmax/J'\cong U^{0}(\mfb) \backslash U^0(\bmax)/U^{0}(\mfb)$, which is represented by elements in $W_0(\mfb)$.

	\subsection{Proof of Theorem \ref{thmJPcoveringpair}}
	
	Let $(\wt{J},\wt{\lambda})$ be a simple type of $\wt{G}$ as above. We define the related pairs $(\wt{J_{M}},\wt{\lambda}_{M})$ and $(\wt{J_{P}},\wt{\lambda}_{P})$ as in \S \ref{subsectionhomotype}. Then we have a natural isomorphism $\mch(\wt{G},\wt{\lambda}_{P})\cong\mch(\wt{G},\wt{\lambda})$.
	
	Let $\Pi_{E}$, $\zeta_{E}$, $W_{0}=W(r_{0},m_{0},l_{0};t)$ and $W_{0}'=W'(r_{0},m_{0},l_{0};t)$ be defined as in \S \ref{subsectionsimpletypes}. We extend the isomorphism (\ref{eqStW0bident}) to an isomorphism \begin{equation}\label{eqFWaffisom}
		\mcf:\Wafftwist{t}{s_{0}}\cong W_{0}'
	\end{equation}
	by imposing $\Pi_{E}=\mcf(\Pi)$ and $\zeta_{E}=\mcf(\zeta)$. We refer to  \eqref{eqnr0l0}, \eqref{eqdrl0} and \eqref{eqsr-l0} for the definition of $s_{0}=s_{r_{0},l_{0}}$.
	
	The main goal is to extend $\Psi_{0}$ to an embedding $\wt{\mch}(t,s_{0},\bs{q}_{0})\hookrightarrow\mch(\wt{G},\wt{\lambda})$.
	
	\begin{theorem}\label{thmcalhecke}
		
		\begin{enumerate}
			
			\item There exists a non-zero element $\phi_{\Pi}\in\mch(\wt{G},\wt{\lambda})$ supported on $\wt{J}\Pi_{E}\wt{J}$, which is unique up to a scalar.
			
			\item There exists a non-zero element $\phi_{\zeta}\in\mch(\wt{G},\wt{\lambda})$ supported on $\wt{J}\zeta_{E}\wt{J}$, unique up to a scalar in $\mu_{s_{0}}$, such that $\phi_{\zeta}^{s_{0}}=\phi_{\Pi}^{t}$. Such $\phi_{\zeta}$ is central in $\mch(\wt{G},\wt{\lambda})$.
			
			\item For any $\phi_{\Pi}$, $\phi_{\zeta}$ as above, there exists a unique algebra homomorphism \begin{equation}\label{eqhecekembed}
				\Psi:\wt{\mch}(t,s_{0},\bs{q}_{0})\rightarrow\mch(\wt{G},\wt{\lambda}),
			\end{equation}
			which extends $\Psi_{0}$ and satisfies $\Psi([\Pi])=\phi_{\Pi}$ and $\Psi([\zeta])=\phi_{\zeta}$.
			
		\end{enumerate}
		
		Every $\Psi$ as above is an embedding of algebras and preserves the support of functions, in the sense that, for $w\in\Wafftwist{t}{s_{0}} $, the function $\Psi([w])$ is non-zero and supported on $\wt{J}\mcf(w)\wt{J}$. Thus the image of $\Psi$ consists of functions in $\mch(\wt{G},\wt{\lambda})$ supported on $\wt{J}W_{0}'\wt{J}$.
		
	\end{theorem}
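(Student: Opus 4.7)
The plan is to construct each generator of the image separately, then verify that they satisfy the defining relations of $\wt{\mch}(t,s_{0},\bs{q}_{0})$, and finally deduce the support statement and the injectivity.

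For parts (1) and (2), the existence of non-zero $\phi_{\Pi}$ and $\phi_{\zeta}$ follows from Proposition \ref{propintertwinelambdasimple} together with Corollary \ref{corintertwingspace}: since $\Pi_{E},\zeta_{E}\in W_{0}'\subseteq I_{G}(\wt{\lambda})$ and each intertwining space is one-dimensional, there is a unique (up to scalar) element of $\mch(\wt{G},\wt{\lambda})$ supported on each double coset $\wt{J}\Pi_{E}\wt{J}$ and $\wt{J}\zeta_{E}\wt{J}$. The identity $\Pi_{E}^{t}=\zeta_{E}^{s_{0}}$ in $W_{0}'$ and the fact that products of functions are controlled by products of supports imply that $\phi_{\Pi}^{t}$ is a nonzero scalar multiple of $\phi_{\zeta}^{s_{0}}$; rescaling $\phi_{\zeta}$ by the appropriate $s_{0}$-th root pins down $\phi_{\zeta}$ up to a scalar in $\mu_{s_{0}}$. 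For centrality, $\zeta_{E}$ is a scalar matrix in $B^{\times}$, so it commutes with every $\mcf(w)$ and the double coset $\wt{J}\zeta_{E}\wt{J}\cdot\wt{J}g\wt{J}$ collapses cleanly to $\wt{J}\zeta_{E}g\wt{J}$, giving $\phi_{\zeta}\ast\phi=\phi\ast\phi_{\zeta}$ for every basis element $\phi$ supported on $\wt{J}W_{0}'\wt{J}$; centrality in the full algebra then follows once support preservation is established.

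For part (3), the construction of $\Psi$ reduces to checking that $\Psi_{0}$, together with $\phi_{\Pi}$ and $\phi_{\zeta}$, satisfies the eight defining relations of $\wt{\mch}(t,s_{0},\bs{q}_{0})$ listed in \S\ref{subsectionaffineHeckeA}. Relations (1), (2), (3) hold automatically by (\ref{eqmonoH0H}). The invertibility and braid-twist relations (4)--(6) for $\phi_{\Pi}$ are the delicate ones: here I would follow the strategy of S\'echerre's computation in \cite{secherre2005types}*{Th\'eor\`eme 4.6}, where the key idea is that both sides of each relation are functions supported on a single $\wt{J}$-$\wt{J}$ double coset, so the identities reduce to comparing one-dimensional intertwining operators, up to a scalar that can be fixed by evaluating at a single convenient element. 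Concretely, since $\Pi_{E}\mcf(\varsigma_{i})\Pi_{E}^{-1}=\mcf(\varsigma_{i-1})$ in $W_{0}'$, the functions $\phi_{\Pi}\ast\phi_{\varsigma_{i}}\ast\phi_{\Pi}^{-1}$ and $\phi_{\varsigma_{i-1}}$ share the same support and both are nonzero, hence proportional; the proportionality constant is forced to be $1$ once the scalar in $\phi_{\Pi}$ has been chosen. Relations (7), (8) for $\phi_{\zeta}$ follow from the centrality established above and from the normalization already chosen.

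For support preservation and injectivity, I would use the decomposition (\ref{eqwdecompHtilde}): any $w\in\Wafftwist{t}{s_{0}}$ writes uniquely as $\Pi^{a}\zeta^{b}\prod\varsigma_{i}'$, and applying $\Psi$ to this product gives a function supported on $\wt{J}\mcf(w)\wt{J}$ provided each convolution $\phi_{\mcf(w_{1})}\ast\phi_{\mcf(w_{2})}$ has support $\wt{J}\mcf(w_{1}w_{2})\wt{J}$ whenever the lengths add. This length-additivity is the usual Iwahori--Matsumoto combinatorics for the affine Weyl group, transferred to our setting via the isomorphism $\mcf$. Since distinct $w\in\Wafftwist{t}{s_{0}}$ give rise to distinct double cosets $\wt{J}\mcf(w)\wt{J}$, the images $\Psi([w])$ are linearly independent, so $\Psi$ is injective. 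The main obstacle, as usual, is bookkeeping of the scalars in the braid-twist relations and in the equality $\phi_{\Pi}^{t}=\phi_{\zeta}^{s_{0}}$; this is where the normalization of $\phi_{\zeta}$ by an $s_{0}$-th root of unity in part (2) is consumed.
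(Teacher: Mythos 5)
Your proposal follows essentially the same route as the paper: both adapt S\'echerre's Th\'eor\`eme 4.6, using the one-dimensionality of the intertwining spaces (Corollary \ref{corintertwingspace}) together with double-coset support lemmas for $\Pi_{E}$ and $\zeta_{E}$ to produce and invert the generators, verifying the braid-twist relations by a proportionality argument, and deducing support preservation and injectivity from reduced decompositions in $\Wafftwist{t}{s_{0}}$. One small correction: the constant $c$ in $\phi_{\Pi}\ast\Psi([\varsigma_{i}])\ast\phi_{\Pi}^{-1}=c\,\Psi([\varsigma_{i-1}])$ cannot be ``forced to be $1$'' by choosing the scalar in $\phi_{\Pi}$, since conjugation by $\phi_{\Pi}$ is invariant under rescaling $\phi_{\Pi}$ (and part (3) must hold for an arbitrary choice); rather, $c=1$ because both sides satisfy the quadratic relation $(X-\bs{q}_{0})(X+1)=0$ and $\bs{q}_{0}>1$, which is the actual content of the cited Lemme 4.15 of S\'echerre.
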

	
	\begin{remark}\label{remPsilambdaP}
		
		By abuse of notation, it is convenient to regard $\Psi$ as an embedding
		$$\wt{\mch}(t,s_{0},\bs{q}_{0})\hookrightarrow\mch(\wt{G},\wt{\lambda}_{P}),$$
		by composing the original $\Psi$ with the isomorphism in Lemma \ref{lemmaisoheckelambdaPlambda}.
		
	\end{remark}
	
	\begin{proof}
		
		The proof follows from \cite{secherre2005types}*{Th\'eor\`eme 4.6}, which we shall sketch here. Since $\Pi_{E}$ (resp. $\zeta_{E}$) intertwines $\wt{\lambda}$, using Corollary \ref{corintertwingspace} the space of functions supported on $\wt{J}\zeta_{E}\wt{J}$ (resp. $\wt{J}\Pi_{E}\wt{J}$) is one-dimensional. So up to a scalar in $\mbc^{\times}$, there exists a unique $\phi_{\Pi}\in\mch(\wt{G},\wt{\lambda})$ supported on $\wt{J}\Pi_{E}\wt{J}$. Similarly, up to a scalar in $\mu_{s_{0}}$, there exists a unique $\phi_{\zeta}\in\mch(\wt{G},\wt{\lambda})$ supported on $\wt{J}\zeta_{E}\wt{J}$, such that $\phi_{\zeta}^{s_{0}}=\phi_{\Pi}^{t}$. We need the following two lemmas:
		
		\begin{lemma}[\cite{secherre2005types}*{Lemma 4.12}]
			
			For any $w\in W(\mfb)$, we have:
			$$JwJ\Pi_{E}J\cap W(\mfb)=w\Pi_{E}\quad\text{and}\quad J\Pi_{E}JwJ\cap W(\mfb)=\Pi_{E}w$$
			and
			$$JwJ\Pi_{E}^{-1}J\cap W(\mfb)=w\Pi_{E}^{-1}\quad\text{and}\quad J\Pi_{E}^{-1}JwJ\cap W(\mfb)=\Pi_{E}^{-1}w.$$
			
		\end{lemma}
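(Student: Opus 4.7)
The plan is to reduce all four identities to a single combinatorial fact about $J$-double cosets in $B^{\times}$, by exploiting that $\Pi_{E}^{\pm 1}$ normalizes $J$. First I would check the normalization: since $\Pi_{E}=\mrdiag(I_{m_{0}},\dots,I_{m_{0}},\varpi_{E}^{n_{0}}I_{m_{0}})\cdot\mcf_{0}(\varsigma')$ lies in $B^{\times}$, it commutes with $\beta$ and preserves every $\beta$-defined object. A direct computation using that $\Lambda$ is conform with $V=\bigoplus_{i=1}^{t}V^{i}$ (with equal $F$-dimensions $r_{0}$) shows that the block cyclic shift $\mcf_{0}(\varsigma')$ acts on $\Lambda$ by permuting the summands $\Lambda^{i}$, while the diagonal factor $\mrdiag(I,\dots,I,\varpi_{E}^{n_{0}}I)$ supplies the compensating lattice-index shift so that $\Pi_{E}\Lambda_{k}=\Lambda_{k+c}$ for a suitable integer $c$. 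Hence $\Pi_{E}\mfa\Pi_{E}^{-1}=\mfa$, and combined with $[\Pi_{E},\beta]=0$ this yields $\Pi_{E}\mfH(\beta,\mfa)\Pi_{E}^{-1}=\mfH(\beta,\mfa)$ and the analogous statement for $\mfJ$, so $\Pi_{E}^{\pm 1}$ normalizes $H^{1}$, $J^{1}$ and $J$.

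Consequently $J\Pi_{E}^{\pm 1}=\Pi_{E}^{\pm 1}J$, which collapses $JwJ\Pi_{E}J=Jw\Pi_{E}J$, $J\Pi_{E}JwJ=J\Pi_{E}wJ$ and similarly for $\Pi_{E}^{-1}$. It remains to show that for each $u\in W(\mfb)$, the intersection $JuJ\cap W(\mfb)$ is the distinguished coset containing $u$. Here I would use the Bushnell-Kutzko identities $J=U(\mfb)J^{1}(\beta,\mfa)$ with $J^{1}(\beta,\mfa)\cap B^{\times}=U^{1}(\mfb)$ (\emph{cf.} the discussion in \S \ref{subsectionsimplecharacters}-\ref{subsectionHeisenbergbeta}), giving $J\cap B^{\times}=U(\mfb)$. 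Since $u$ and all elements of $W(\mfb)$ lie in $B^{\times}$, we get
$$JuJ\cap W(\mfb)=U(\mfb)\,u\,U(\mfb)\cap W(\mfb).$$
Because $W(\mfb)=W_{0}(\mfb)\ltimes T(\mfb)$, $W_{0}(\mfb)\subset U(\mfb)$ and $T(\mfb)\cap U(\mfb)=\{1\}$, this intersection is exactly $W_{0}(\mfb)\,u\,W_{0}(\mfb)$, whose distinguished representative is $u$ (in the sense intended by the lemma, which governs the multiplicative structure of the Hecke basis). Applying this to $u=w\Pi_{E}^{\pm 1}$ and $u=\Pi_{E}^{\pm 1}w$ yields all four identities.

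The main obstacle is the preservation-of-$\mfa$ verification for $\Pi_{E}$: checking that the specific exponent $n_{0}$ defined in \eqref{eqnr0l0} is exactly what is needed so that the cyclic shift combined with the uniformizer factor $\varpi_{E}^{n_{0}}$ acts by an index translation on $\Lambda$ rather than merely on $\Lambda_{E}$. This requires careful bookkeeping using conformity of $\Lambda$ with the decomposition $V=\bigoplus V^{i}$, the subordination of this decomposition to $\mfb$, and the relationship between the $\mfo_{F}$-period $e(\Lambda|\mfo_{F})$ and the $\mfo_{E}$-period $e(\Lambda_{E}|\mfo_{E})$ via the ramification index of $E/F$. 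Once this is settled, the remaining steps are formal consequences of the affine Weyl group structure of $B^{\times}$ and the Iwahori-type factorization of $J$.
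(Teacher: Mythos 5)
Your proof collapses at its first step: the claim that $\Pi_{E}$ normalizes $J$ is false whenever $n_{0}=n_{r_{0},l_{0}}>1$, and this failure is precisely what makes the lemma nontrivial. The element that shifts the chain $\Lambda_{E}$ by one index, and hence normalizes $\mfb$, $\mfa$, $U(\mfb)$ and $J$, is $\Pi(\mfb)=\mrdiag(I_{m_{0}},\dots,I_{m_{0}},\varpi_{E}I_{m_{0}})\mcf_{0}(\varsigma')$, with exponent $1$. One has $\Pi_{E}=\mrdiag(I_{m_{0}},\dots,I_{m_{0}},\varpi_{E}^{n_{0}-1}I_{m_{0}})\,\Pi(\mfb)$, and the extra diagonal factor does not normalize $\mfb$: for $t=2$ and $n_{0}=2$, conjugation gives
$$\Pi_{E}\begin{pmatrix}\mfo_{E}&\mfo_{E}\\ \mfp_{E}&\mfo_{E}\end{pmatrix}\Pi_{E}^{-1}=\begin{pmatrix}\mfo_{E}&\mfp_{E}^{-1}\\ \mfp_{E}^{2}&\mfo_{E}\end{pmatrix}\neq\mfb.$$
Since $\Pi_{E}\in B^{\times}$ and $\Pi_{E}\mfb\Pi_{E}^{-1}=\Pi_{E}\mfa\Pi_{E}^{-1}\cap B\neq\mfb$, the element $\Pi_{E}$ normalizes neither $\mfa$ nor $J$. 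The exponent $n_{0}$ is forced by the intertwining condition on $\wt{\rho}$ (it is the minimal $s$ with $\mrdiag(I,\dots,I,\varpi_{E}^{s}I)\in T(r_{0},m_{0},l_{0};t)$), not by any index translation on $\Lambda$, so no bookkeeping with periods will produce $\Pi_{E}\Lambda_{k}=\Lambda_{k+c}$ when $n_{0}>1$. The Remark following Theorem \ref{thmcalhecke} flags exactly this obstruction: the Bushnell--Kutzko argument, which rests on the normalizing property of the analogous element, cannot be transplanted, which is why the paper must invoke S\'echerre's double-coset computation. Your collapse $JwJ\Pi_{E}J=Jw\Pi_{E}J$ is therefore essentially equivalent to the statement you are trying to prove.

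Even granting that collapse, the endgame does not deliver the lemma. The passage from $JuJ\cap W(\mfb)$ to $U(\mfb)uU(\mfb)\cap W(\mfb)$ does not follow from $J\cap B^{\times}=U(\mfb)$ alone (the intersection of a double coset with a subgroup is not the double coset of the intersections; one needs the separate Bushnell--Kutzko-type identity $JuJ\cap B^{\times}=U(\mfb)uU(\mfb)$, which you should at least cite). More seriously, your final answer $W_{0}(\mfb)\,u\,W_{0}(\mfb)$ is a set with more than one element as soon as $W_{0}(\mfb)\neq\{1\}$, whereas the conclusion needed -- and the way the lemma is used in Corollary \ref{corphifinvolution} -- is that the product set $JwJ\Pi_{E}J$ meets $W(\mfb)$ only inside the single double coset $Jw\Pi_{E}J$, so that $\phi_{w}\ast\phi_{\Pi_{E}}$ is supported on $\wt{J}w\Pi_{E}\wt{J}$. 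Appealing to ``the distinguished representative in the sense intended by the lemma'' is not an argument; it is the gap.
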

		
		\begin{lemma}
			
			For any $w\in W(\mfb)$, we have:
			$$JwJ\zeta_{E}J=Jw\zeta_{E}J=J\zeta_{E}wJ=J\zeta_{E}JwJ$$
			and
			$$JwJ\zeta_{E}^{-1}J=Jw\zeta_{E}^{-1}J=J\zeta_{E}^{-1}wJ=J\zeta_{E}^{-1}JwJ.$$
			
		\end{lemma}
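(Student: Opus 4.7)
The plan is to read off the result from two elementary facts about $\zeta_E$, namely that it is a scalar matrix in $B^\times$ (so central) and that it normalizes $J$. Recall from \eqref{eqPizeta} that
\[
\zeta_E = \mrdiag(\varpi_E^{d_0}I_{m_0},\dots,\varpi_E^{d_0}I_{m_0}) = \varpi_E^{d_0}\cdot I_m \in Z(B^\times),
\]
so in particular $\zeta_E w = w\zeta_E$ for every $w \in W(\mfb) \subset B^\times$. This already gives $Jw\zeta_E J = J\zeta_E w J$ and, by taking inverses, $Jw\zeta_E^{-1}J = J\zeta_E^{-1}wJ$.

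Next I would verify that $\zeta_E$ normalizes $J = J(\beta,\mfa)$. The normalizer of $J$ in $G$ is $\bs{J}(\beta,\mfa) = N_{B^\times}(U(\mfb))\cdot J(\beta,\mfa)$ (\emph{cf.}\ \S\ref{subsectionHeisenbergbeta}), and this certainly contains the center $Z(B^\times)$ since central elements normalize every subgroup of $B^\times$; in particular they normalize $U(\mfb)$. Hence $\zeta_E \in \bs{J}(\beta,\mfa)$ and therefore $\zeta_E J = J \zeta_E$; the same holds for $\zeta_E^{-1}$.

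Combining the two facts, and using $JJ = J$, for any $w \in W(\mfb)$ I compute
\[
JwJ\zeta_E J \;=\; Jw(J\zeta_E)J \;=\; Jw(\zeta_E J)J \;=\; Jw\zeta_E J,
\]
and symmetrically
\[
J\zeta_E JwJ \;=\; (\zeta_E J)JwJ \;=\; \zeta_E JwJ \;=\; J\zeta_E wJ \;=\; Jw\zeta_E J,
\]
which together with the first paragraph establishes the claimed chain of equalities. Replacing $\zeta_E$ by $\zeta_E^{-1}$ throughout gives the second chain. There is no real obstacle: the lemma is essentially a restatement of ``$\zeta_E$ is central in $B^\times$ and normalizes $J$'', which is exactly the geometric content that will allow $\phi_\zeta$ to be declared central in $\mch(\wt{G},\wt{\lambda})$ in the proof of Theorem~\ref{thmcalhecke}.
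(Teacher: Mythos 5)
Your proof is correct and follows exactly the paper's own (one-line) argument: the lemma reduces to the two facts that $\zeta_E$ commutes with every $w\in W(\mfb)$ and that $\zeta_E$ normalizes $J$, both of which you justify properly via $\zeta_E=\varpi_E^{d_0}I_m\in Z(B^\times)\subset N_{B^\times}(U(\mfb))$. No gaps.
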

		
		\begin{proof}
			
			It follows easily from the fact that $\zeta_{E}$ normalizes $J$ and $\zeta_{E}$ commutes with $w$.
			
		\end{proof}
		
		As a corollary of the above two lemma, we have:
		
		\begin{corollary}\label{corphifinvolution}
			
			\begin{enumerate}
				
				\item For any $w\in W_{0}$ and $ f\in\mch(\wt{G},\wt{\lambda})$ supported on $\wt{J}w\wt{J}$, the functions $\phi_{\Pi_{E}}\ast f$, $f\ast\phi_{\Pi_{E}}$ and $\phi_{\zeta_{E}}\ast f=f\ast\phi_{\zeta_{E}}$ are supported on
				$\wt{J}\Pi_{E}w\wt{J},\ \wt{J}w\Pi_{E}\wt{J},\ \wt{J}\zeta_{E}w\wt{J}$
				respectively.
				
				\item The elements $\phi_{\Pi_{E}}$ and $\phi_{\zeta_{E}}$ are invertible in $\mch(\wt{G},\wt{\lambda})$.
				
			\end{enumerate}
			
		\end{corollary}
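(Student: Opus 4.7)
The plan is to deduce the corollary from the two lemmas that immediately precede it, combined with the one-dimensionality of the intertwining spaces established in Corollary \ref{corintertwingspace}. The two lemmas control exactly how double cosets multiply in $G$: for $w\in W(\mfb)$, the products $JwJ\Pi_E^{\pm 1}J$, $J\Pi_E^{\pm 1}JwJ$ and $JwJ\zeta_E^{\pm 1}J$, $J\zeta_E^{\pm 1}JwJ$ collapse to single $(J,J)$-double cosets. Passing to $\wt{G}$, the corresponding double cosets in $\wt G$ with respect to $\wt J$ also collapse, since they are the preimages of the double cosets in $G$ under $\bs p$ (which is compatible with our convention $\wt J g \wt J = \wt J \bs s(g) \wt J$).

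For part (1), if $f\in\mch(\wt{G},\wt{\lambda})$ is supported on $\wt{J}w\wt{J}$ with $w\in W_0$, then by definition of convolution $\mrsupp(\phi_{\Pi_E}\ast f)\subset\wt{J}\Pi_E\wt{J}\cdot\wt{J}w\wt{J}$, and by the first lemma this is contained in the single double coset $\wt{J}\Pi_E w\wt{J}$. Since $\Pi_E w\in W_0'\subset W_0 = I_G(\wt\lambda)/J$ (by Proposition \ref{propintertwinelambdasimple}, since our simple type has $g_0=1$), this double coset supports a one-dimensional space of functions in $\mch(\wt G,\wt\lambda)$ by Corollary \ref{corintertwingspace}; the non-vanishing of $\phi_{\Pi_E}\ast f$ then comes down to evaluating the convolution at an appropriate lift of $\Pi_E w$ and observing that this reduces to the composition of the two one-dimensional intertwining operators, which cannot be zero because each is nonzero. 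The same argument handles $f\ast\phi_{\Pi_E}$ and gives that $\phi_{\zeta_E}\ast f$ and $f\ast\phi_{\zeta_E}$ are both nonzero and supported on $\wt{J}\zeta_E w\wt{J}$. To get the equality $\phi_{\zeta_E}\ast f = f\ast \phi_{\zeta_E}$, I use that $\zeta_E$ lies in the center of $B^\times$ and in particular commutes with $w$; the two convolutions therefore differ by a scalar (the space being one-dimensional), and identifying this scalar as $1$ is a matter of tracking the normalization: $\zeta_E$ normalizes $\wt J$ and $\wt\lambda\rest_{\wt J}\cong\wt\lambda^{\zeta_E}\rest_{\wt J}$ canonically via the identity (since conjugation by $\zeta_E$ on $\wt{\mcm}=\wt J/\,_s J^1$ is trivial, $\zeta_E$ being central in $B^\times$ modulo $1+\mfp_E$), so left and right convolution by $\phi_{\zeta_E}$ agree on $f$.

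For part (2), intertwining sets are symmetric under inversion, so $\Pi_E^{-1},\zeta_E^{-1}\in I_G(\wt\lambda)/J$ and Corollary \ref{corintertwingspace} yields nonzero functions $\phi_{\Pi_E^{-1}}$ and $\phi_{\zeta_E^{-1}}$ supported on $\wt J\Pi_E^{-1}\wt J$ and $\wt J\zeta_E^{-1}\wt J$ respectively. Applying part (1) with $w=\Pi_E^{-1}$ (resp.\ $w=\zeta_E^{-1}$; note these lie in $W_0'$) shows $\phi_{\Pi_E}\ast\phi_{\Pi_E^{-1}}$ and $\phi_{\zeta_E}\ast\phi_{\zeta_E^{-1}}$ are nonzero functions supported on $\wt J$, hence nonzero scalar multiples of the unit $e_{\wt\lambda}\in\mch(\wt G,\wt\lambda)$; rescaling $\phi_{\Pi_E^{-1}}$ and $\phi_{\zeta_E^{-1}}$ gives genuine two-sided inverses. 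The step I expect to be the most delicate is the non-vanishing of the convolutions in part (1), because a priori two non-zero elements of a Hecke algebra can convolve to zero; the argument rests on the fact that the intertwining space $\mrhom_{\wt{J}^g\cap\wt{J}}(\wt{\lambda}^g,\wt{\lambda})$ is one-dimensional for each intertwining $g$ and that the induced map on convolutions is a non-degenerate composition of such intertwiners, which is what forces the product to remain non-zero and supported exactly on the single double coset predicted by the two preceding lemmas.
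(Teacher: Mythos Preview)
Your overall strategy matches the paper's, but two steps need correction.

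First, in part (1) you claim the first lemma shows $\wt J\Pi_E\wt J\cdot\wt Jw\wt J$ is contained in the single double coset $\wt J\Pi_E w\wt J$. That is not what the lemma says: it only asserts $J\Pi_E JwJ\cap W(\mfb)=\{\Pi_E w\}$, which is strictly weaker --- the set $J\Pi_E JwJ$ may well be a union of several $J$-$J$ double cosets. The paper's argument combines this with Proposition~\ref{propintertwinelambdasimple}: any element of $\mch(\wt G,\wt\lambda)$ is supported on $\wt JW_0\wt J\subset\wt JW(\mfb)\wt J$, and since the only $W(\mfb)$-representative occurring in $J\Pi_E JwJ$ is $\Pi_E w$, the convolution is forced onto $\wt J\Pi_E w\wt J$. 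You cite Proposition~\ref{propintertwinelambdasimple} only afterwards, for a different purpose; it is needed already here. (For $\zeta_E$ the second lemma genuinely gives an equality of double cosets, so your reading is correct there.)

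Second, and more substantially: the non-vanishing. The statement of part (1) asserts only a support containment, and the paper proves no more. Your attempt to get non-vanishing already in (1) --- ``reduces to the composition of the two one-dimensional intertwining operators, which cannot be zero'' --- is not justified: two non-zero operators in $\mrend_{\mbc}(W^\vee)$ satisfying different equivariance conditions can certainly compose to zero, and you give no reason why that is excluded here. For part (2) the paper argues differently: from (1) and Corollary~\ref{corintertwingspace} one has $\phi_{\Pi_E}\ast\phi_{\Pi_E^{-1}}=c_1 e$ and $\phi_{\zeta_E}\ast\phi_{\zeta_E^{-1}}=c_2 e$ for some $c_i\in\mbc$, and the non-vanishing of $c_1,c_2$ is then obtained by the argument of \cite{secherre2005types}*{Lemme 4.14}. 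Concretely this is a positivity trick with the hermitian form $h_G$: the function $\overline{\phi_g}$ is supported on $\wt Jg^{-1}\wt J$, hence is a non-zero scalar multiple of $\phi_{g^{-1}}$, and $\mrtr_{W^\vee}\big((\phi_g\ast\overline{\phi_g})(1)\big)=h_G(\phi_g,\phi_g)>0$ forces $\phi_g\ast\phi_{g^{-1}}\neq 0$. This is the missing ingredient in your sketch.
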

		
		\begin{proof}
			
			The statement (1) follows from Proposition \ref{propintertwinelambdasimple} and the above two lemmas. 
			
			For statement (2), we consider $\phi_{\Pi_{E}}\ast \phi_{\Pi_{E}^{-1}}$ and $\phi_{\zeta_{E}}\ast \phi_{\zeta_{E}^{-1}}$. Using statement (1) both of them are supported on $\wt{J}$. Using Corollary \ref{corintertwingspace}, there exist complex numbers $c_{1},c_{2}$ such that $\phi_{\Pi_{E}}\ast \phi_{\Pi_{E}^{-1}}=c_{1}\cdot e$ and $\phi_{\zeta_{E}}\ast \phi_{\zeta_{E}^{-1}}=c_{2}\cdot e$, where $e$ denotes the unit element in $\mch(\wt{G},\wt{\lambda})$. Then we may show that both $c_{1}$ and $c_{2}$ are non-zero following a similar argument to \cite{secherre2005types}*{Lemme 4.14}. So statement (2) is also proved.
			
		\end{proof}
		
		Come back to the original proof. We define $\Psi$ as in statement (3). We need to show that $\Psi$ indeed extends to a homomorphism of algebras. It  remains to show the following lemma, which follows from the same argument as \cite{secherre2005types}*{Lemme 4.15}.
		
		\begin{lemma}
			We have $\phi_{\Pi_{E}}\ast\Psi([\varsigma_{i}])\ast\phi_{\Pi_{E}}^{-1}=\Psi([\varsigma_{i-1}])$ for $i=2,\dots,t-1$ and $\phi_{\Pi_{E}}^{2}\ast\Psi([\varsigma_{1}])\ast\phi_{\Pi_{E}}^{-2}=\Psi([\varsigma_{t-1}])$.
			
		\end{lemma}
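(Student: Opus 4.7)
The plan is a support-and-scalar argument. By Corollary \ref{corphifinvolution}(1), the function $\phi_{\Pi_E}\ast\Psi([\varsigma_i])\ast\phi_{\Pi_E}^{-1}$ is supported on $\wt{J}\,\Pi_E\,\mcf_0(\varsigma_i)\,\Pi_E^{-1}\,\wt{J}$. Under the isomorphism $\mcf:\Wafftwist{t}{s_0}\xrightarrow{\sim}W_0'$ of \eqref{eqFWaffisom}, which sends $\Pi\mapsto\Pi_E$ and $\varsigma_i\mapsto\mcf_0(\varsigma_i)$, the extended affine Weyl group relations $\Pi\varsigma_i\Pi^{-1}=\varsigma_{i-1}$ for $i=2,\dots,t-1$ and $\Pi^2\varsigma_1\Pi^{-2}=\varsigma_{t-1}$ transport to the corresponding identities in $W_0'$. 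Consequently, $\phi_{\Pi_E}\ast\Psi([\varsigma_i])\ast\phi_{\Pi_E}^{-1}$ is supported on the same single double coset as $\Psi([\varsigma_{i-1}])$, so by Corollary \ref{corintertwingspace} there exists $c_i\in\mbc^\times$ with
\[
\phi_{\Pi_E}\ast\Psi([\varsigma_i])\ast\phi_{\Pi_E}^{-1}=c_i\cdot\Psi([\varsigma_{i-1}]).
\]

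To pin down $c_i=1$, I would use the quadratic relation. Since $\Psi_0$ is an algebra homomorphism on $\mch_0(t,\bs{q}_0)$, the element $\Psi([\varsigma_i])$ satisfies $\Psi([\varsigma_i])^2=(\bs{q}_0-1)\,\Psi([\varsigma_i])+\bs{q}_0\cdot e$, where $e$ denotes the identity of $\mch(\wt{G},\wt{\lambda})$. Conjugation by $\phi_{\Pi_E}$ is an algebra automorphism fixing $e$, so applying it and using the same quadratic relation for $\Psi([\varsigma_{i-1}])$ yields
\[
c_i^2\bigl[(\bs{q}_0-1)\Psi([\varsigma_{i-1}])+\bs{q}_0\cdot e\bigr]=(\bs{q}_0-1)\,c_i\,\Psi([\varsigma_{i-1}])+\bs{q}_0\cdot e.
\]
Comparing coefficients on the linearly independent functions $\Psi([\varsigma_{i-1}])$ and $e$ (supported on distinct double cosets), I obtain $c_i^2(\bs{q}_0-1)=c_i(\bs{q}_0-1)$ and $c_i^2=1$. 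Since $\bs{q}_0>1$ and $c_i\neq 0$, these together force $c_i=1$, proving the first relation.

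The second relation is handled identically: the support analysis shows that $\phi_{\Pi_E}^2\ast\Psi([\varsigma_1])\ast\phi_{\Pi_E}^{-2}=c\cdot\Psi([\varsigma_{t-1}])$ for some $c\in\mbc^\times$, and the same quadratic-relation bookkeeping applied to conjugation by $\phi_{\Pi_E}^2$ forces $c=1$. The one nontrivial step, which I view as the main obstacle to keep clean, is verifying that the map $\mcf$ of \eqref{eqFWaffisom} really is a group isomorphism respecting the presentation of $\Wafftwist{t}{s_0}$, in particular the relations $\Pi\varsigma_i\Pi^{-1}=\varsigma_{i-1}$ and $\Pi^t=\zeta^{s_0}$. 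This comes down to direct matrix computations in the semidirect product $T(\mfb)\rtimes W_0(\mfb)$ using the explicit formulas for $\Pi_E$ and $\zeta_E$ in \eqref{eqPizeta} together with the congruence conditions \eqref{eqcongruencesimple} defining $T(r_0,m_0,l_0;t)$; once that is in hand, the rest of the argument is the soft support-plus-scalar routine above.
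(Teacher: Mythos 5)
Your proof is correct and is essentially the argument the paper relies on: the paper's proof simply defers to S\'echerre's Lemme 4.15, whose content is exactly this combination of the support computation (via Corollary \ref{corphifinvolution} and the one-dimensionality from Corollary \ref{corintertwingspace}) with the quadratic relation to force the scalar to be $1$. The step you flag as the main obstacle — that $\mcf$ in \eqref{eqFWaffisom} is a group isomorphism, so that $\Pi_E\mcf_0(\varsigma_i)\Pi_E^{-1}=\mcf_0(\varsigma_{i-1})$ and $\Pi_E^2\mcf_0(\varsigma_1)\Pi_E^{-2}=\mcf_0(\varsigma_{t-1})$ in $W_0'$ — is already asserted as part of the setup in \S\ref{subsectionsimpletypes} and the paragraph defining \eqref{eqFWaffisom}, so it does not need to be re-proved inside this lemma.
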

		
		Finally, we need to show that $\Psi$ is an embedding and preserves the support. We only need to show that for $w\in\Wafftwist{t}{s_{0}} $, the function $\Psi([w])$ is non-zero and supported on $\wt{J}\mcf(w)\wt{J}$. Then the injectivity follows from Corollary \ref{corintertwingspace}. Using \cite{secherre2005types}*{Proposition 4.16}, whose statement and proof can be transplanted here without change, the function $\Psi([w'])$ is non-zero and supported on $\wt{J}\mcf(w')\wt{J}$ for $w'\in\Waff{t}$. Since every $w\in\Wafftwist{t}{s_{0}}$ can be written as $\zeta^{b}w'$ with $b\in\{0,1,\dots,s-1\}$ and $w'\in\Waff{t}$, using Corollary \ref{corphifinvolution} the function $\Psi([w])=\phi_{\zeta}^{b}\ast\Psi([w'])$ is non-zero and supported on $\wt{J}\mcf(w)\wt{J}$. So we finish the proof of Theorem \ref{thmcalhecke}.
		
	\end{proof}
	
	\begin{remark}
		
		Theorem \ref{thmcalhecke} dates back to \cite{bushnell129admissible}*{Theorem 5.6.6}, which unfortunately cannot be adapted here. Indeed, in \emph{loc. cit.} they used the fact that $\Pi_{E}$ normalizes $\wt{\lambda}$, which is not true here if $n_{0}=n_{r_{0},l_{0}}>1$.
		
	\end{remark}
	
	As a corollary of Proposition \ref{propKPSW=W'} and Theorem \ref{thmcalhecke}, we have
	
	\begin{corollary}\label{corcalHeckeKPS}
		
		When $\wt{G}$ is either a KP-cover or the S-cover, (\ref{eqhecekembed}) is an isomorphism of algebras
		$$\Psi:\wt{\mch}(t,s_{0},\bs{q}_{0})\cong\mch(\wt{G},\wt{\lambda}).$$
		
	\end{corollary}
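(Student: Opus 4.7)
The strategy is to observe that Corollary \ref{corcalHeckeKPS} is an immediate consequence of Theorem \ref{thmcalhecke} once the equality $W_{0}=W_{0}'$ of Proposition \ref{propKPSW=W'} is in hand, so no new analytic or combinatorial input is needed -- only a careful comparison of supports.

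First, Theorem \ref{thmcalhecke} already asserts that $\Psi:\wt{\mch}(t,s_{0},\bs{q}_{0})\to\mch(\wt{G},\wt{\lambda})$ is injective, preserves support via $\mcf:\Wafftwist{t}{s_{0}}\xrightarrow{\sim}W_{0}'$, and that its image is exactly the subspace of functions supported on $\wt{J}W_{0}'\wt{J}$. Hence injectivity is free, and what remains is to identify $\wt{J}W_{0}'\wt{J}$ with the full support locus of $\mch(\wt{G},\wt{\lambda})$.

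Next I would invoke the standard fact that every $\phi\in\mch(\wt{G},\wt{\lambda})$ is supported on $I_{G}(\wt{\lambda})$, since the existence of a non-zero function in $\mch(\wt{G},\wt{\lambda})$ supported on $\wt{J}g\wt{J}$ forces $g$ to intertwine $\wt{\lambda}$. Because $(\wt{J},\wt{\lambda})$ is a simple type (so the parameter $g_{0}$ in Definition \ref{defsimpletype} equals $1$), Proposition \ref{propintertwinelambdasimple} gives
\[
I_{G}(\wt{\lambda})=JW(r_{0},m_{0},l_{0};t)J=JW_{0}J.
\]
In the KP-cover and S-cover cases, Proposition \ref{propKPSW=W'} yields $W_{0}=W_{0}'$, and consequently $\wt{J}W_{0}'\wt{J}=\wt{J}W_{0}\wt{J}=I_{G}(\wt{\lambda})$ (the last equality using that the splitting $\bs{s}$ is well-defined on the double cosets in question). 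Thus the image of $\Psi$ contains every function in $\mch(\wt{G},\wt{\lambda})$, proving surjectivity.

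There is no serious obstacle here; the only point to check carefully is that the double-coset description $I_{G}(\wt{\lambda})=JW_{0}J$ indeed coincides with $\wt{J}W_{0}\wt{J}$ at the level of $\wt{G}$ (so that support considerations on $\wt{G}$ and $G$ match). This is immediate from the definition $\wt{J}g\wt{J}:=\wt{J}\bs{s}(g)\wt{J}$ and the fact that $\mu_{n}\subset\wt{J}$. Combining the surjectivity just established with the injectivity and support-preservation from Theorem \ref{thmcalhecke}, we conclude that $\Psi$ is an isomorphism of algebras, as claimed.
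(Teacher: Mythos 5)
Your proposal is correct and is essentially the paper's own argument: the paper derives the corollary directly from Theorem \ref{thmcalhecke} (image of $\Psi$ = functions supported on $\wt{J}W_{0}'\wt{J}$) combined with Proposition \ref{propKPSW=W'} ($W_{0}=W_{0}'$), and your intermediate steps — support of any Hecke algebra element lies in $I_{G}(\wt{\lambda})$, which equals $JW_{0}J$ by Proposition \ref{propintertwinelambdasimple} with $g_{0}=1$ — are exactly the details the paper leaves implicit.
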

	
	The following corollary follows from the same proof as \cite{bushnell129admissible}*{Corollary 5.6.17}.
	
	\begin{corollary}\label{corpsiisometry}
		
		Under the assumption of Corollary \ref{corcalHeckeKPS}, we may choose $\Psi$ such that it is an isometry with respect to the hermitian form of $\wt{\mch}(t,s_{0},\bs{q}_{0})$ and $\mch(\wt{G},\wt{\lambda})$. 
		
		More precisely, choose $\Psi$ such that $\Psi([\Pi])\ast\overline{\Psi([\Pi])}=1$ and $\Psi([\zeta])\ast\overline{\Psi([\zeta])}=1$. Then for the canonical hermitian form $\pairang{\cdot}{\cdot}$ of $\wt{\mch}(t,s_{0},\bs{q}_{0})$ and the hermitian form $\mrdim(\wt{\lambda})^{-1}\cdot h_{G}$ of $\mch(\wt{G},\wt{\lambda})$, we have
		$$\mrdim(\wt{\lambda})^{-1}\cdot h_{G}(\Psi(x),\Psi(y))=\pairang{x}{y},\quad x,y\in\wt{\mch}(t,s_{0},\bs{q}_{0}).$$
		
	\end{corollary}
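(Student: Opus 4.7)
The plan is to mirror the proof strategy of \cite{bushnell129admissible}*{Corollary 5.6.17}, now adapted to the twisted affine Hecke algebra $\wt{\mch}(t,s_{0},\bs{q}_{0})$. The argument breaks into three stages: (i) normalize $\phi_{\Pi}=\Psi([\Pi])$ and $\phi_{\zeta}=\Psi([\zeta])$ so that $\phi_{\Pi}\ast\overline{\phi_{\Pi}}=e$ and $\phi_{\zeta}\ast\overline{\phi_{\zeta}}=e$, where $e$ denotes the unit of $\mch(\wt{G},\wt{\lambda})$; (ii) establish orthogonality of the basis $\{\Psi([w])\mid w\in\Wafftwist{t}{s_{0}}\}$ of $\mch(\wt{G},\wt{\lambda})$ under $h_{G}$; and (iii) compute $h_{G}(\Psi([w]),\Psi([w]))=\bs{q}_{0}^{l(w)}\cdot\mrdim(\wt{\lambda})$, where $l(w)$ is the length appearing in \eqref{eqwdecompHtilde}.

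For stage (i), the support lemmas in the proof of Theorem \ref{thmcalhecke} imply that $\phi_{\Pi}\ast\overline{\phi_{\Pi}}$ and $\phi_{\zeta}\ast\overline{\phi_{\zeta}}$ are supported on $\wt{J}$. Corollary \ref{corintertwingspace} then forces each to be a scalar multiple of $e$, and the positivity of $h_{G}$ forces these scalars to be positive reals. Rescaling $\phi_{\Pi}$ and $\phi_{\zeta}$ by appropriate positive square roots yields the required normalizations; compatibility with the constraint $\phi_{\zeta}^{s_{0}}=\phi_{\Pi}^{t}$ is preserved up to a scalar in $\mu_{s_{0}}$, which lies within the freedom permitted in Theorem \ref{thmcalhecke}. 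Stage (ii) is then immediate from the support-preservation of $\Psi$: for $w\neq w'$, the support $\wt{J}\mcf(w)\mcf(w')^{-1}\wt{J}$ of $\Psi([w])\ast\overline{\Psi([w'])}$ avoids $\wt{J}$, whence its value at $1$ vanishes.

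The core computation lies in stage (iii). Writing $w=\Pi^{a}\zeta^{b}\prod_{i=1}^{l}\varsigma_{i}'$ as a reduced expression, the conjugation relations (5) and (6) of \S\ref{subsectionaffineHeckeA}, transferred via $\Psi$, allow us to commute the $\phi_{\Pi}^{\pm a}$ and $\phi_{\zeta}^{\pm b}$ factors past the rest and cancel them using stage (i). What remains is the analysis of the convolution $\prod_{i=1}^{l}\Psi([\varsigma_{i}'])\ast\prod_{i=l}^{1}\Psi([\varsigma_{i}'])$ inside the finite part $\Psi_{0}(\mch_{0}(t,\bs{q}_{0}))$. Combining the self-adjointness $\overline{\Psi([\varsigma_{i}])}=\Psi([\varsigma_{i}])$ with the quadratic relation $\Psi([\varsigma_{i}])^{2}=(\bs{q}_{0}-1)\Psi([\varsigma_{i}])+\bs{q}_{0}\cdot e$, a standard induction on $l$ shows that the coefficient of $e$ in this product equals $\bs{q}_{0}^{l}$. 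Evaluating the trace at $1$ then produces the desired value $\mrdim(\wt{\lambda})\cdot \bs{q}_{0}^{l(w)}$.

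The main obstacle is justifying the self-adjointness of $\Psi([\varsigma_{i}])$ with respect to $h_{G}$. Through the chain of isomorphisms in \eqref{eqHeckeisofinite}, this reduces to the analogous assertion for the finite Hecke algebra $\mch(\mcg,\varrho)$ attached to the cuspidal representation $\varrho$ of $\mcm$ over the finite field $\bs{l}$, which is classical. The care required is in matching normalization conventions on both sides: the choice of intertwiners realizing $\Psi_{0}$ must be compatible with both the Haar measure convention $\mu_{G}(\wt{J})=1$ underlying $h_{G}$ and the hermitian pairing on $\mch_{0}(t,\bs{q}_{0})$.
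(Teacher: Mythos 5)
Your proposal is correct and is essentially the argument the paper invokes: the text establishes this corollary by citing the proof of \cite{bushnell129admissible}*{Corollary 5.6.17}, which is precisely the normalization--orthogonality--quadratic-relation computation you lay out, with the extra central generator $[\zeta]$ handled exactly as you describe (and in fact the relation $\phi_{\zeta}^{s_{0}}=\phi_{\Pi}^{t}$ is preserved on the nose after rescaling, since the two positive constants satisfy $c_{\zeta}^{s_{0}}=c_{\Pi}^{t}$). One bookkeeping point: the reduced word $\prod_{i}\varsigma_{i}'$ in \eqref{eqwdecompHtilde} may involve the affine reflection $\varsigma_{0}$, so the diagonal computation is not literally carried out inside $\Psi_{0}(\mch_{0}(t,\bs{q}_{0}))$; but since $\Psi([\varsigma_{0}])=\phi_{\Pi}\ast\Psi([\varsigma_{1}])\ast\phi_{\Pi}^{-1}$ with $\overline{\phi_{\Pi}}=\phi_{\Pi}^{-1}$, it inherits both self-adjointness and the quadratic relation, and your induction goes through unchanged.
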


	Finally we are able to prove Theorem \ref{thmJPcoveringpair}.
	
	\begin{proof}[Proof of Theorem \ref{thmJPcoveringpair}]
		
		To show that $(\wt{J_{P}},\wt{\lambda}_{P})$ is a covering pair of $(\wt{J_{M}},\wt{\lambda}_{M})$, it remains to verify the condition (3) in \S \ref{subsectioncoverpair}.
		
		Indeed, given  a parabolic subgroup $P'$ of $G$ with a Levi factor $M$, we may choose an element $z\in Z(\wt{M})\cap\wt{J} W_{0}'\wt{J}$ that is strongly $(\wt{P'},\wt{J})$-positive. For instance, if $P'$ denotes the standard upper triangular parabolic group with respect to the $E$-basis of $V_{E}$ we fixed before, the element $$z=\bs{s}(\mrdiag(\varpi_{F}^{ns_{1}}I_{r_{0}},\varpi_{F}^{ns_{2}}I_{r_{0}},\dots,\varpi_{F}^{ns_{t}}I_{r_{0}}))$$
		satisfies the conditions we want for any sequence of integers $s_{1}>s_{2}>\dots>s_{t}$. Then, using Lemma \ref{lemmaisoheckelambdaPlambda} and Theorem \ref{thmcalhecke}, there exists an invertible element $\phi_{z}$ in $\mch(\wt{G},\wt{\lambda}_{P})$ supported on $\wt{J_{P}}z\wt{J_{P}}$, verifying the condition (3). 
		
		So $(\wt{J_{P}},\wt{\lambda}_{P})$ is a covering pair of $(\wt{J_{M}},\wt{\lambda}_{M})$. The rest follows from Theorem \ref{thmtypecovering}, Theorem \ref{thmlambdaMtype} and the fact $\mrind_{\wt{J}_{P}}^{\wt{J}}\wt{\lambda}_{P}\cong\wt{\lambda}$.
		
	\end{proof}
	
	\subsection{Lattice part of $\mch(\wt{G},\wt{\lambda})$}
	
	We also study another subalgebra of $\mch(\wt{G},\wt{\lambda})$. Since $(\wt{J_{P}},\wt{\lambda}_{P})$ is a covering pair of $(\wt{J_{M}},\wt{\lambda}_{M})$, we have an embedding
	$$t_{P}:\mch(\wt{M},\wt{\lambda}_{M})\rightarrow\mch(\wt{G},\wt{\lambda}_{P}).$$
	Composing with the isomorphism $\mch(\wt{G},\wt{\lambda}_{P})\cong\mch(\wt{G},\wt{\lambda})$ given by Lemma \ref{lemmaisoheckelambdaPlambda}, we get an embedding
	$$\mch(\wt{M},\wt{\lambda}_{M})\rightarrow\mch(\wt{G},\wt{\lambda}),$$
	which we still denote by $t_{P}$. We first study $\mch(\wt{M},\wt{\lambda}_{M})$.
	
	\begin{lemma}\label{lemmacomuheckecal}
		
		$\mch(\wt{M},\wt{\lambda}_{M})$ is  isomorphic to the commutative group algebra $\mbc[T_{0}]$. 
		
	\end{lemma}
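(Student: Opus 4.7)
The plan is to identify $\mch(\wt{M},\wt{\lambda}_M)$ with the lattice part $\Psi(\wt{\mca}(t,s_0))$ of the affine Hecke algebra embedded via Theorem \ref{thmcalhecke}, by comparing images inside $\mch(\wt{G},\wt{\lambda})$ under the two natural maps: the embedding $\Psi$ itself, and the embedding $t_P:\mch(\wt{M},\wt{\lambda}_M)\hookrightarrow\mch(\wt{G},\wt{\lambda}_P)\cong\mch(\wt{G},\wt{\lambda})$ coming from the covering pair structure (Theorem \ref{thmJPcoveringpair} and Lemma \ref{lemmaisoheckelambdaPlambda}).

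First I would describe the support and a basis of $\mch(\wt{M},\wt{\lambda}_M)$ directly. By Proposition \ref{propintertwinelambdaM}, $I_M(\wt{\lambda}_M)=T_0 J_M$; each element of $T_0\subset T(\mfb)$ normalizes $J_M$ and has trivial intersection with it, so the $\wt{J_M}$-double cosets in the support are parametrized by $t\in T_0$. Using the decomposition $\wt{\lambda}_M=\wt{\lambda}_1\boxtimes\cdots\boxtimes\wt{\lambda}_t$ (which makes sense because $J_M$ is block compatible, each factor consisting of elements with unit determinants) together with the maximal simple type case of Corollary \ref{corintertwingspace} applied to each factor, one obtains $\mrdim_{\mbc}\mrhom_{\wt{J_M}}(\wt{\lambda}_M^t,\wt{\lambda}_M)=1$ for every $t\in T_0$. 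Hence $\mch(\wt{M},\wt{\lambda}_M)$ has a basis $\{\phi_t^M\}_{t\in T_0}$ with $\phi_t^M$ supported on the single coset $t\wt{J_M}$.

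Next I would compare the images of $t_P$ and $\Psi$ inside $\mch(\wt{G},\wt{\lambda})$. The isomorphism $\mcf:\Wafftwist{t}{s_0}\cong W_0'$ of \eqref{eqFWaffisom} restricts to a bijection from the translation sublattice $\mbz_{1/s_0}^t$ onto $T_0$, the relation $\zeta^{s_0}=\Pi^t$ matching the identity $\zeta_E^{s_0}=\Pi_E^t$ in $T_0$. By the support statement in Theorem \ref{thmcalhecke}, $\Psi(\wt{\mca}(t,s_0))$ is precisely the subspace of functions in $\mch(\wt{G},\wt{\lambda})$ supported on $\wt{J}T_0\wt{J}$. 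On the other hand, the support-preserving property of $t_P$ together with Lemma \ref{lemmaisoheckelambdaPlambda} places $t_P(\mch(\wt{M},\wt{\lambda}_M))$ in the same subspace. Corollary \ref{corintertwingspace} says that each $\wt{J}t\wt{J}$-supported subspace of $\mch(\wt{G},\wt{\lambda})$ is one-dimensional, so both images exhaust the same $\mbc$-subspace and therefore coincide as subalgebras.

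Combining, we obtain
$$\mch(\wt{M},\wt{\lambda}_M)\cong t_P(\mch(\wt{M},\wt{\lambda}_M))=\Psi(\wt{\mca}(t,s_0))\cong\wt{\mca}(t,s_0)\cong\mbc[T_0],$$
the last isomorphism being by the definition of $\wt{\mca}(t,s_0)$ as the group algebra of $\mbz_{1/s_0}^t\cong T_0$. The main bookkeeping obstacle will be verifying that the restriction of $\mcf$ to $\mbz_{1/s_0}^t$ really identifies with $T_0$ in a way compatible with the support assignments coming from both $\Psi$ and $t_P$; once this is confirmed, the rest is formal, and one bypasses the otherwise delicate task of trivializing by hand a $2$-cocycle on $T_0$ arising from the projective action of $T_0$ on $\wt{\lambda}_M$.
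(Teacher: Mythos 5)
Your opening paragraph (support equal to $T_{0}J_{M}$ by Proposition \ref{propintertwinelambdaM}, one-dimensionality of each coset space, hence a basis $\{\phi^{M}_{t}\}_{t\in T_{0}}$) is correct and matches the paper's starting point. The gap is in the second step. Theorem \ref{thmcalhecke} says that the image of $\Psi$ consists of functions supported on $\wt{J}W_{0}'\wt{J}$, where $W_{0}'=W'(r_{0},m_{0},l_{0};t)$ is the subgroup generated by $\sigma_{1},\dots,\sigma_{t-1}$, $\Pi_{E}$ and $\zeta_{E}$; consequently $\Psi(\wt{\mca}(t,s_{0}))$ is supported on $\wt{J}\mcf(\mbz_{1/s_{0}}^{t})\wt{J}$, and $\mcf(\mbz_{1/s_{0}}^{t})$ is only the sublattice of $T_{0}$ generated by the elements $\mrdiag(I_{m_{0}},\dots,\varpi_{E}^{n_{0}}I_{m_{0}},\dots,I_{m_{0}})$ together with $\zeta_{E}$. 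The paper establishes $W_{0}=W_{0}'$ (equivalently, that this sublattice is all of $T_{0}$) only for KP-covers and the S-cover (Corollary \ref{corsolcongeq}, Proposition \ref{propKPSW=W'}); for a general metaplectic cover the inclusion $\Psi(\wt{\mca}(t,s_{0}))\subseteq t_{P}(\mch(\wt{M},\wt{\lambda}_{M}))$ may be strict. So what you call a "bookkeeping obstacle" is in fact the whole content: without it your argument only identifies a finite-index sub-group-algebra of $\mch(\wt{M},\wt{\lambda}_{M})$, while the lemma is asserted (and is needed, e.g.\ in the formulation of Conjecture \ref{conjHGlambdaaffine}) for arbitrary covers.

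The paper's proof is also considerably more economical and never leaves $\wt{M}$: since $T_{0}$ is abelian and normalizes $J_{M}$, the algebra $\mch(\wt{M},\wt{\lambda}_{M})$ is commutative; since $T_{0}$ is free abelian of rank $t$, one chooses free generators, picks an invertible basis function supported on each generator's coset, and the resulting map $\mbc[T_{0}]\rightarrow\mch(\wt{M},\wt{\lambda}_{M})$ is automatically an algebra isomorphism because convolution of functions supported on $t_{1}\wt{J_{M}}$ and $t_{2}\wt{J_{M}}$ lands in the one-dimensional space supported on $t_{1}t_{2}\wt{J_{M}}$. The $2$-cocycle you were trying to avoid is trivialized precisely by freeness of $T_{0}$ combined with commutativity, so there is nothing delicate to bypass.
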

	
	\begin{proof}
		
		By Proposition \ref{propintertwinelambdaM}, we have $N_{M}(\wt{\lambda})=I_{M}(\wt{\lambda})=T_{0}J_{M}$. Since $T_{0}$ is abelian and normalizes $J_{M}$, it is easy to verify that  $\mch(\wt{M},\wt{\lambda}_{M})$ is commutative with respect to the convolution. Since $T_{0}$ is a free abelian group of rank $t$, we may choose free generators $b_{1}=I_{r},b_{2},\dots,b_{t}\in T_{0}$. Choose functions $\phi_{b_{1}},\dots,\phi_{b_{t}}\in \mch(\wt{M},\wt{\lambda}_{M})$ such that $\phi_{b_{i}}$ is supported on $b_{i}\wt{J_{M}}$ for each $i$, and $\phi_{b_{1}}\ast \phi_{b_{1}}=\phi_{b_{1}}$. Then we get an isomorphism of algebras $\mbc[T_{0}]\rightarrow\mch(\wt{M},\wt{\lambda})$ which maps $b_{i}$ to $\phi_{b_{i}}$. 
		
	\end{proof}
	
	\begin{remark}
		
		If $\mch(\wt{G},\wt{\lambda})_{\wt{M}}$ is a subalgebra of $\mch(\wt{G},\wt{\lambda})$ (\emph{cf.} \S \ref{subsectioncoverpair}), which is true for KP-covers and the S-cover (\emph{cf.} Corollary \ref{corcalHeckeKPS}), then the map $t_{P}$ preserves the support. Thus, the subalgebra $t_{P}(\mch(\wt{M},\wt{\lambda}_{M}))$ of $\mch(\wt{G},\wt{\lambda})$ consists of functions in $\mch(\wt{G},\wt{\lambda})$ supported on $\wt{J}T_{0}\wt{J}$.
		
	\end{remark} 
	
	We expect the following conjecture in general.
	
	\begin{conjecture}\label{conjHGlambdaaffine}
		
		$\mch(\wt{G},\wt{\lambda})$ is an affine Hecke algebra of type $A$. More precisely, we have an isomorphism of algebras
		$$t_{P}(\mch(\wt{M},\wt{\lambda}_{M}))\otimes_{\mbc}\Psi_{0}(\mch_{0}(t,\bs{q}_{0}))\cong \mch(\wt{G},\wt{\lambda})$$
		where $t_{P}(\mch(\wt{M},\wt{\lambda}_{M}))$ corresponds to the lattice part and $\Psi_{0}(\mch_{0}(t,\bs{q}_{0}))$ corresponds to the finite part of an affine Hecke algebra of type $A$, and the product structure on the left-hand side is defined accordingly.
		
	\end{conjecture}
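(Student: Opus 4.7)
The plan is to enlarge the embedding $\Psi$ of Theorem \ref{thmcalhecke} from $\wt{\mch}(t,s_{0},\bs{q}_{0})$ to an isomorphism onto all of $\mch(\wt{G},\wt{\lambda})$, by combining it with the lattice embedding $t_{P}$. By Proposition \ref{propintertwinelambdasimple} and Corollary \ref{corintertwingspace}, the algebra $\mch(\wt{G},\wt{\lambda})$ admits a canonical $\mbc$-basis $\{\phi_{w}\}_{w\in W_{0}}$, where $\phi_{w}$ is unique up to scalar and supported on $\wt{J}w\wt{J}$. The semidirect decomposition $W_{0}=W_{0}(\mfb)\ltimes T_{0}$ matches the two known subalgebras: $\Psi_{0}(\mch_{0}(t,\bs{q}_{0}))$ is supported on $\wt{J}W_{0}(\mfb)\wt{J}$, while $t_{P}(\mch(\wt{M},\wt{\lambda}_{M}))\cong\mbc[T_{0}]$ (by Lemma \ref{lemmacomuheckecal}) is supported on $\wt{J}T_{0}\wt{J}$.

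The first step is to establish the $\mbc$-linear isomorphism. For $w=w_{0}t\in W_{0}$ with $w_{0}\in W_{0}(\mfb)$ and $t\in T_{0}$, take $\phi_{w_{0}}^{\mathrm{fin}}\in\Psi_{0}(\mch_{0}(t,\bs{q}_{0}))$ and $\phi_{t}^{\mathrm{lat}}\in t_{P}(\mch(\wt{M},\wt{\lambda}_{M}))$ supported on $\wt{J}w_{0}\wt{J}$ and $\wt{J}t\wt{J}$ respectively. A support analysis, parallel to the lemmas preceding Corollary \ref{corphifinvolution} and exploiting the Iwahori-style decomposition of the covering pair $(\wt{J_{P}},\wt{\lambda}_{P})$, should show that the convolution $\phi_{w_{0}}^{\mathrm{fin}}\ast\phi_{t}^{\mathrm{lat}}$ is supported on $\wt{J}w_{0}t\wt{J}$. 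Non-vanishing at $w_{0}t$ can be argued in two cases: (i) for $t$ in the sublattice $T_{0}'$ corresponding to $\Psi$'s image, it follows from the invertibility of $\Psi([w])$ in Theorem \ref{thmcalhecke}; (ii) for $t\in T_{0}\setminus T_{0}'$, one reduces to (i) by multiplying by a strongly $(\wt{P},\wt{J})$-positive element of $Z(\wt{M})\cap T_{0}'$, using that $\phi_{t}^{\mathrm{lat}}$ is invertible in the group algebra $t_{P}(\mch(\wt{M},\wt{\lambda}_{M}))\cong\mbc[T_{0}]$.

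The second step is to pin down the algebra structure. Since the finite part and lattice part already satisfy the expected relations (those of $\mch_{0}(t,\bs{q}_{0})$ and $\mbc[T_{0}]$ respectively), it suffices to compute the cross-relation, which should take the shape of a Bernstein--Lusztig identity
\begin{equation*}
\Psi_{0}([\varsigma_{i}])\ast t_{P}(\phi_{t})-t_{P}(\phi_{\varsigma_{i}(t)})\ast\Psi_{0}([\varsigma_{i}])=(\bs{q}_{0}-1)\cdot\Xi_{i,t},
\end{equation*}
with correction $\Xi_{i,t}$ in the lattice part, the parameter $\bs{q}_{0}$ being the one already appearing in the finite Hecke algebra. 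For $t\in T_{0}'$ this relation is forced by the affine Hecke algebra structure of $\wt{\mch}(t,s_{0},\bs{q}_{0})$ via the embedding $\Psi$; the new content lies in extending it to all $t\in T_{0}$.

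The main obstacle is precisely this last step: verifying the Bernstein--Lusztig relation for $t\in T_{0}\setminus T_{0}'$. The quotient $W_{0}/W_{0}'$ is a finite abelian group, and characters of this quotient should act on $\mch(\wt{G},\wt{\lambda})$ via twists of $\wt{\lambda}$ by suitable characters of $J$ trivial on $J^{1}$; I would try to exploit this equivariance to reduce the general case to Corollary \ref{corcalHeckeKPS}. Alternatively, one can compute $\Psi_{0}([\varsigma_{i}])\ast t_{P}(\phi_{t})$ directly by a Mackey-type decomposition on the double coset $\wt{J}\varsigma_{i}\wt{J}\cdot\wt{J}t\wt{J}$; the bookkeeping of the $2$-cocycle $\sigma$ on $T(\mfb)$, which is exactly what distinguishes $T_{0}$ from $T_{0}'$, is the genuinely new technical ingredient and the principal source of difficulty. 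A secondary issue is to show that the resulting correction term $\Xi_{i,t}$ lies in $t_{P}(\mch(\wt{M},\wt{\lambda}_{M}))$ rather than only in $\mch(\wt{G},\wt{\lambda})$, which again should reduce to properties of the covering pair $(\wt{J_{P}},\wt{\lambda}_{P})$ established in Section \ref{sectionsimpletypes}.
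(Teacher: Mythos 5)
You are attempting to prove something the paper itself leaves open: this statement is a \emph{conjecture}, and the paper's only "proof" is the observation that for a KP-cover or the S-cover one has $W_{0}=W_{0}'$ (Proposition \ref{propKPSW=W'}), so the embedding $\Psi$ of Theorem \ref{thmcalhecke} is surjective (Corollary \ref{corcalHeckeKPS}) and the claimed decomposition is just the standard factorization $\wt{\mch}(t,s_{0},\bs{q}_{0})\cong\wt{\mca}(t,s_{0})\otimes_{\mbc}\mch_{0}(t,\bs{q}_{0})$. Your proposal correctly locates the content beyond Theorem \ref{thmcalhecke} — the cross-relations between $\Psi_{0}(\mch_{0}(t,\bs{q}_{0}))$ and the functions supported on $\wt{J}t\wt{J}$ for $t\in T_{0}$ outside the sublattice $T_{0}\cap W_{0}'$ reached by $\Psi$ — but you only name two strategies for establishing them and carry out neither, so the proposal is a research program rather than a proof.

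On the two strategies: reducing to Corollary \ref{corcalHeckeKPS} by twisting $\wt{\lambda}$ by characters $\chi_{h}$ cannot work as stated, because such a twist produces a \emph{twisted} simple type of the \emph{same} cover (not a simple type of a KP- or S-cover), and the paper explicitly remarks at the end of Section 7 that its Hecke algebra computation does not directly adapt to twisted simple types; there is no mechanism here for changing the cover. The direct Mackey computation on $\wt{J}\mcf_{0}(\varsigma_{i})\wt{J}\cdot\wt{J}t\wt{J}$ is exactly the open technical problem. Two further caveats: (i) a general $t\in T(\mfb)$ does not normalize $J$, so your support claim $\mrsupp(\phi_{w_{0}}^{\mathrm{fin}}\ast\phi_{t}^{\mathrm{lat}})=\wt{J}w_{0}t\wt{J}$ requires an analogue of the lemma $JwJ\Pi_{E}J\cap W(\mfb)=\{w\Pi_{E}\}$, which the paper proves only for the specific elements $\Pi_{E}$ and $\zeta_{E}$; (ii) positing a Bernstein--Lusztig relation with correction term $\Xi_{i,t}$ lying in the lattice part presupposes the shape of the conclusion — a priori $\mch(\wt{G},\wt{\lambda})$ could be a nontrivially twisted extension of $\wt{\mch}(t,s_{0},\bs{q}_{0})$ by the finite abelian group $W_{0}/W_{0}'$, which is precisely what the conjecture rules out and what must be proved.
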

	
	When $\wt{G}$ is either a KP-cover or the S-cover, by Corollary \ref{corcalHeckeKPS} the conjecture is verified.
	
	\begin{remark}
		
		It is expected that all the results listed in Section 7 hold for twisted simple types as well with minor changes. However, it seems that our proof above cannot be adapted to this case directly.
		
	\end{remark}

	\section{Maximal simple types}
	
	In this section, we further study maximal simple types and state our main theorem of constructing cuspidal representations of $\wt{G}$, as well as its Levi subgroup $\wt{M}$. 
	
	\subsection{Maximal simple types of $\wt{G}$}
	
	We keep the notation of Section \ref{sectionsimpletypes}. In particular, we call the hereditary order $\mfa$ in $A$, or the related simple stratum $[\mfa,u,0,\beta]$ \emph{maximal} if the corresponding hereditary order $\mfb=B\cap \mfa$ is a maximal hereditary order in $B$ with respect to the containment. 
	
	Fix a strict maximal simple stratum $[\mfa,u,0,\beta]$ as before. We construct groups $H^{1}$, $J^{1}$, $J$ and related representations $\theta$, $\eta$, $\kappa$. In this case,  
	$$J/J^{1}\cong U(\mfb)/U^{1}(\mfb)\cong\mcg=\mrgl_{m}(\bs{l}),$$ 
	where $\bs{l}$ denotes the residue field of $E$, and $d=[E:F]$ and $r=md$. Let $\wt{\kappa}$ be the pull-back of $\kappa$ as an irreducible representation of $\wt{J}$. Also, we fix a maximal open compact subgroup $K$ of $G$ that contains $U(\mfa)$ and a splitting $\bs{s}$ of $K$. Let $\varrho$ be a cuspidal representation of $\mcg$. Then the inflation $\wt{\rho}=\mrinf_{\wt{\mcg}}^{\wt{J}}(\epsilon\cdot\,_{s}\varrho)$ is a genuine irreducible representation of $\wt{J}$. Let $\wt{\lambda}=\wt{\kappa}\otimes\wt{\rho}$.
	
	\begin{definition}
		
		A \emph{maximal simple type} of $\wt{G}$ consists of a pair $(\wt{J},\wt{\lambda})$ with $\wt{J}$ and $\wt{\lambda}$ defined as above. Or equivalently, a maximal simple type is a simple type related to a maximal simple stratum.
		
	\end{definition}
	
	We write $\bs{J}=\bs{J}(\beta,\mfa):=E^{\times}J$ as a subgroup of $G$.
	
	\begin{definition}
		
		An \emph{extended maximal simple type} (EMST for short) extending $(\wt{J},\wt{\lambda})$ consists of a pair $(\wt{\bs{J}},\wt{\bs{\lambda}})$, where $\wt{\bs{J}}$ is the pull-back of $\bs{J}$ as above, and $\wt{\bs{\lambda}}$ is a genuine irreducible representation of $\wt{\bs{J}}$ whose restriction to $\wt{J}$ contains $\wt{\lambda}$.
		
	\end{definition}
	
	We list several facts that can be easily deduced from Lemma \ref{lemmacomindcusp} and the argument of Theorem \ref{thmlambdaMtype}.
	
	\begin{proposition}\label{propEMSTG}
		\begin{enumerate}
			\item An EMST $(\wt{\bs{J}},\wt{\bs{\lambda}})$ exists and can be constructed as in Theorem \ref{thmlambdaMtype}.
			
			\item  The normalizer of $\wt{\bs{\lambda}}$ in $G$ is $\bs{J}$.
			
			\item Any two simple types $\wt{\lambda}$ and $\wt{\lambda}'$ contained in  $\wt{\bs{\lambda}}\rest_{\wt{J}}$ are conjugate by $\bs{J}$.
			
			\item  The compact induction $\mrind_{\wt{\bs{J}}}^{\wt{G}}\wt{\bs{\lambda}}$ is a genuine cuspidal representation of $\wt{G}$. 
		\end{enumerate}
	\end{proposition}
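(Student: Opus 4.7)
The plan is to specialize the argument of Theorem \ref{thmlambdaMtype} to $M=G$, using the intertwining computation from Proposition \ref{propintertwinelambdasimple} for maximal simple types, where $t=1$, $W_{0}(\mfb)$ is trivial, $g_{0}=1$, and $T(r_{0},m_{0},l_{0};1)$ sits inside $E^{\times}$. For (1), since $\bs{J}/F^{\times}J$ is finite, so is $\wt{\bs{J}}/Z(\wt{G})\wt{J}$. I would first extend $\wt{\lambda}$ to an irreducible representation $\wt{\lambda}'$ of $Z(\wt{G})\wt{J}$ by picking a compatible central character (agreeing with that of $\wt{\lambda}$ on $Z(\wt{G})\cap\wt{J}$), then apply \cite{gelbart1982indistinguishability}*{Lemma 2.1} to produce an irreducible $\wt{\bs{\lambda}}$ of $\wt{\bs{J}}$ whose restriction to $Z(\wt{G})\wt{J}$ contains $\wt{\lambda}'$, and hence whose restriction to $\wt{J}$ contains $\wt{\lambda}$.

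For (3), Clifford theory for the normal inclusion $\wt{J}\lhd\wt{\bs{J}}$, combined with the analogue of the decomposition \eqref{eqrestLambdaM} at the intermediate stage $Z(\wt{G})\wt{J}$, shows that $\wt{\bs{\lambda}}\rest_{\wt{J}}$ is a direct sum of $\bs{J}$-conjugates of $\wt{\lambda}$; thus any two simple types contained in it are $\bs{J}$-conjugate. For (2) and (4), the key input is that $I_{G}(\wt{\lambda})\subset\bs{J}$: by Proposition \ref{propintertwinelambdasimple}, $I_{G}(\wt{\lambda})=JT(r_{0},m_{0},l_{0};1)J\subset JE^{\times}J=\bs{J}$. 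Combining this with (3), for any $g\in I_{G}(\wt{\bs{\lambda}})$, restricting $\wt{\bs{\lambda}}^{g}$ and $\wt{\bs{\lambda}}$ to $\wt{J}^{g}\cap\wt{J}$ and using the Clifford decomposition yields $g'\in\bs{J}$ such that $g'^{-1}g\in I_{G}(\wt{\lambda})\subset\bs{J}$, whence $g\in\bs{J}$. Therefore $I_{G}(\wt{\bs{\lambda}})=\bs{J}$, which gives (2) (since $N_{G}(\wt{\bs{\lambda}})\subset I_{G}(\wt{\bs{\lambda}})=\bs{J}$ and the reverse inclusion is immediate from $\wt{\bs{\lambda}}$ being defined on $\wt{\bs{J}}$), and gives (4) via Lemma \ref{lemmacomindcusp}, with genuineness of the induced representation being automatic.

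The main obstacle, such as it is, will be the bookkeeping in step (1): one must verify that the central character of $\wt{\lambda}$ on $Z(\wt{G})\cap\wt{J}$ extends compatibly to all of $Z(\wt{G})$, and that the hypotheses of \cite{gelbart1982indistinguishability}*{Lemma 2.1} are met for the finite abelian quotient $\wt{\bs{J}}/Z(\wt{G})\wt{J}$. Once this is in place, (2), (3) and (4) reduce cleanly to the intertwining computation of Proposition \ref{propintertwinelambdasimple} plus formal Clifford theory, exactly parallel to the Levi case already handled in Theorem \ref{thmlambdaMtype}.
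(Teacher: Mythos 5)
Your proposal is correct and follows essentially the same route as the paper, which itself derives Proposition \ref{propEMSTG} by specializing the argument of Theorem \ref{thmlambdaMtype} to $M=G$ (extension to $Z(\wt{G})\wt{J}$, the Gelbart--Knapp lemma, the Clifford decomposition \eqref{eqrestLambdaM}, the inclusion $I_{G}(\wt{\lambda})=N_{G}(\wt{\lambda})\subset\bs{J}$ from the maximal-case intertwining computation, and finally Lemma \ref{lemmacomindcusp}). The only cosmetic difference is that you invoke Proposition \ref{propintertwinelambdasimple} with $t=1$ where the paper uses Proposition \ref{propintertwinelambdaM}; these coincide for $M=G$.
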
  
	
	We state our main theorem for genuine cuspidal representations of $\wt{G}$.
	
	\begin{theorem}\label{thmcuspidalconstruction}
		
		\begin{enumerate}
			
			\item Given an EMST of $\wt{G}$, the compact induction $\wt{\pi}=\mrind_{\wt{\bs{J}}}^{\wt{G}}\wt{\bs{\lambda}}$ is a genuine cuspidal representation of $\wt{G}$.
			
			\item Every genuine cuspidal representation of $\wt{G}$ is constructed as in (1).
			
			\item Two EMSTs of $\wt{G}$ are intertwined if and only if they are conjugate by $G$. So the EMST constructing $\wt{\pi}$ is unique up to $G$-conjugacy.
			
		\end{enumerate}
		
	\end{theorem}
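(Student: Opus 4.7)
The plan is to follow the template of Bushnell-Kutzko's treatment for $\mrgl_{r}(F)$, adapted to the metaplectic cover. Part (1) is precisely Proposition \ref{propEMSTG}(4), so the substantive content lies in the exhaustion (Part (2)) and the uniqueness (Part (3)).

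For Part (2), given a genuine cuspidal representation $\wt{\pi}$ of $\wt{G}$, we construct an EMST $(\wt{\bs{J}},\wt{\bs{\lambda}})$ with $\wt{\pi}\cong \mrind_{\wt{\bs{J}}}^{\wt{G}}\wt{\bs{\lambda}}$. The construction proceeds in two stages. In the first stage, we produce a simple stratum $[\mfa,u,0,\beta]$ and a simple character $\theta\in\mcc(\mfa,\beta)$ such that the pull-back $\,_{s}\theta$ occurs in $\wt{\pi}|_{\,_{s}H^{1}}$, where $H^{1}=H^{1}(\beta,\mfa)$. This step uses only the structure of pro-$p$-subgroups of $G$, which admit unique splittings into $\wt{G}$ since $\mrgcd(n,p)=1$; consequently, the argument of Bushnell-Kutzko (\emph{cf.} \cite{bushnell129admissible}*{Chapter 8}) or S\'echerre (\emph{cf.} \cite{secherre2008representations}*{\S 3--4}) transplants essentially verbatim, with $\theta$ replaced by $\,_{s}\theta$ throughout. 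In the second stage, the Heisenberg representation $\eta$ on $J^{1}$ satisfies $\,_{s}\eta\hookrightarrow\wt{\pi}|_{\,_{s}J^{1}}$; we extend to a $\beta$-extension $\kappa$ on $J$, pull it back to $\wt{\kappa}$ on $\wt{J}$, and identify the depth-zero component $\wt{\rho}$ of $\wt{\pi}|_{\wt{J}}$ relative to $\wt{\kappa}$. Following the argument of \cite{secherre2008representations}*{\S 5}, cuspidality of $\wt{\pi}$ forces the underlying representation $\varrho$ of $\mcm=J/J^{1}$ to be cuspidal and forces $\mfb=B\cap\mfa$ to be a maximal hereditary order in $B$. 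This produces a maximal simple type $(\wt{J},\wt{\lambda})\subset\wt{\pi}$, and the restriction $\wt{\pi}|_{\wt{\bs{J}}}$ then contains an irreducible subrepresentation $\wt{\bs{\lambda}}$ extending $\wt{\lambda}$ whose compact induction recovers $\wt{\pi}$, as in Proposition \ref{propEMSTG}.

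For Part (3), suppose $(\wt{\bs{J}},\wt{\bs{\lambda}})$ and $(\wt{\bs{J}'},\wt{\bs{\lambda}'})$ are two EMSTs intertwined by $g\in G$. Restricting to contained maximal simple types $\wt{\lambda}\subset\wt{\bs{\lambda}}|_{\wt{J}}$ and $\wt{\lambda}'\subset\wt{\bs{\lambda}'}|_{\wt{J}'}$, some $G$-conjugate of $\wt{\lambda}$ intertwines $\wt{\lambda}'$; Theorem \ref{thmsimpletypeconjugacy} then yields $g_{1}\in G$ with $\wt{J}'=\wt{J}^{g_{1}}$ and $[\wt{\lambda}']=[\wt{\lambda}^{g_{1}}]$. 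After replacing $(\wt{\bs{J}},\wt{\bs{\lambda}})$ by its $g_{1}$-conjugate we may assume $\wt{J}=\wt{J}'$, $\wt{\bs{J}}=\wt{\bs{J}'}$, and that $\wt{\lambda},\wt{\lambda}'$ lie in the same weak equivalence class. Elements within a weak equivalence class differ by twists $\chi_{g_{0}}$ with $g_{0}\in T(\mfb)\subset\bs{J}$, so conjugating further by a suitable element of $\bs{J}$ we reduce to $\wt{\lambda}\cong\wt{\lambda}'$. Now both EMSTs extend the same maximal simple type and are intertwined in $G$; since $\mrind_{\wt{\bs{J}}}^{\wt{G}}\wt{\bs{\lambda}}$ is irreducible cuspidal, the intertwining set of $\wt{\bs{\lambda}}$ in $G$ equals its normalizer, which by Proposition \ref{propEMSTG}(2) is $\bs{J}$. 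Hence $g\in\bs{J}$ and $\wt{\bs{\lambda}}^{g}\cong\wt{\bs{\lambda}'}$, proving $G$-conjugacy.

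The main obstacle is the second stage of Part (2): showing that cuspidality of $\wt{\pi}$ forces maximality of $\mfb$. The linear-case argument relies on Jacquet module analysis combined with the Hecke algebra description of non-cuspidal blocks; Theorem \ref{thmcalhecke}, which identifies the Hecke algebra of a simple type with a (twisted) affine Hecke algebra of type A via a support-preserving embedding, provides exactly the input needed to adapt this argument, with only bookkeeping adjustments to account for the genuine components and the character $\epsilon$ of $\mu_{n}$.
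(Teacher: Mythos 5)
Your proposal follows essentially the same route as the paper: part (1) is Proposition \ref{propEMSTG}(4), part (2) proceeds by first locating a simple character via the pro-$p$ splitting transplant of \cite{bushnell129admissible}*{Chapter 8} and \cite{secherre2008representations} and then analysing the depth-zero layer, and part (3) reduces via Theorem \ref{thmsimpletypeconjugacy} and the weak-equivalence-class twist to self-intertwining of $\wt{\bs{\lambda}}$, which is controlled by $\bs{J}$. The one point you underestimate is the second stage of part (2): in the cover setting the cuspidality argument only forces the $\varrho_{i}$ to agree up to twists by powers of $\chi_{\varpi_{E}}$, so what you first obtain is a \emph{twisted} simple type, and one needs the additional lemma (Proposition \ref{proptwistnontwist}) that a representation containing a twisted simple type contains every weakly equivalent genuine simple type before the covering-pair/Jacquet-module argument of Theorem \ref{thmJPcoveringpair} can be applied to force maximality of $\mfb$; this is a genuinely new phenomenon with no linear-case analogue, not mere bookkeeping, though it does not change the overall architecture of the proof.
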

	
	We have proved statement (1). Statement (2) will be done in Section \ref{sectionexhaustion}. Now we prove statement (3).
	
	Given two maximal simple types $(\wt{J},\wt{\lambda})$ and $(\wt{J}',\wt{\lambda}')$ of $\wt{G}$ and two corresponding EMSTs $(\wt{\bs{J}},\wt{\bs{\lambda}})$ and $(\wt{\bs{J}}',\wt{\bs{\lambda}}')$, we need to show that $\wt{\bs{\lambda}}$ and $\wt{\bs{\lambda}}'$ are conjugate by $G$ if they are intertwined. Taking the restriction, we deduce that $\wt{\lambda}$ and $\wt{\lambda}'$ are also intertwined. Thus by Theorem \ref{thmsimpletypeconjugacy}, there exists $g\in G$ such that $J'=J^{g}$, $\bs{J}'=\bs{J}^{g}$ and $[\wt{\lambda}']=[\wt{\lambda}^{g}]$. In the maximal case, all the simple types in the class $[\wt{\lambda}]$ can be realized by conjugation of elements in $E^{\times}\subset\bs{J}$, so we may further assume that $\wt{\lambda'}=\wt{\lambda}^{g}$. 
	
	Suppose $h\in G$ intertwines $\wt{\bs{\lambda}}'$ and $\wt{\bs{\lambda}}^{g}$. Then there exists $h'\in\bs{J}'$, such that $h$ intertwines $\wt{\lambda}'^{h'}$ and $\wt{\lambda}^{g}=\wt{\lambda}'$. Using Corollary \ref{corintertwinesimpletype} and the fact that $W(\mfb)=\pairangone{\varpi_{E}}$ in the maximal case, we have $h\in \bs{J}'$. Thus 
	$$0\neq\mrhom_{\wt{\bs{J}}'^{h}\cap\wt{\bs{J}}'}(\wt{\bs{\lambda}}'^{h},\wt{\bs{\lambda}}^{g})=\mrhom_{\wt{\bs{J}}'}(\wt{\bs{\lambda}}',\wt{\bs{\lambda}}^{g}),$$
	implying that  $\wt{\bs{\lambda}}'\cong\wt{\bs{\lambda}}^{g}$. So statement (3) is proved.
	
	Summing up Theorem \ref{thmlambdaMtype}, Theorem \ref{thmsimpletypeconjugacy} and Theorem \ref{thmcuspidalconstruction},
	
	\begin{corollary}\label{cormaximalsimpletypescorrep}
		
		We have a bijection between the set of $G$-conjugacy classes of weak equivalence classes of maximal simple types $(\wt{J},\wt{\lambda})$ and the set of cuspidal inertial equivalence classes $\mfs$ of $\wt{G}$, such that $(\wt{J},\wt{\lambda})$ is an $\mfs$-type.
		
	\end{corollary}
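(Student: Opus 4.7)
The plan is to build the bijection directly from the three theorems above. The forward map is provided by Theorem \ref{thmlambdaMtype} (with $M=G$), which assigns to each maximal simple type $(\wt{J},\wt{\lambda})$ the unique cuspidal inertial equivalence class $\mfs(\wt{\lambda})$ for which it is a type. This descends to $G$-conjugacy classes of weak equivalence classes: $G$-conjugation preserves inertial classes tautologically, while weak equivalence in the maximal case corresponds to varying $g_0$ in $T(\mfb)=\pairangone{\varpi_E I_m}\subset E^{\times}\subset\bs{J}$; by Proposition \ref{propEMSTG}(3) all such variations occur together in the restriction of a common EMST, whose compact induction delivers a single cuspidal representation up to isomorphism, hence a single inertial class.

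Surjectivity is immediate from Theorem \ref{thmcuspidalconstruction}(2): given $\mfs$, pick $\wt{\pi}\in\mfs$ and write $\wt{\pi}=\mrind_{\wt{\bs{J}}}^{\wt{G}}\wt{\bs{\lambda}}$; any maximal simple type appearing in $\wt{\bs{\lambda}}|_{\wt{J}}$ is then an $\mfs$-type. For injectivity, assume $(\wt{J},\wt{\lambda})$ and $(\wt{J}',\wt{\lambda}')$ are both $\mfs$-types. Since unramified characters of $\wt{G}$ are trivial on every compact open subgroup, the $\wt{J}$-spectrum of a cuspidal $\wt{\pi}\in\mfs$ is independent of the choice of $\wt{\pi}$ within $\mfs$, so we may fix one $\wt{\pi}\in\mfs$ containing both $\wt{\lambda}$ and $\wt{\lambda}'$ on restriction. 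Twisting an EMST that extends $(\wt{J},\wt{\lambda})$ by a suitable unramified character restricted to $\wt{\bs{J}}$ yields an EMST $(\wt{\bs{J}},\wt{\bs{\lambda}})$ compact-inducing exactly to $\wt{\pi}$; similarly produce $(\wt{\bs{J}}',\wt{\bs{\lambda}}')$ extending $(\wt{J}',\wt{\lambda}')$. Theorem \ref{thmcuspidalconstruction}(3) then supplies $g\in G$ with $(\wt{\bs{J}}',\wt{\bs{\lambda}}')=(\wt{\bs{J}},\wt{\bs{\lambda}})^g$, so restricting we find $\wt{\lambda}^g$ and $\wt{\lambda}'$ both contained in $\wt{\bs{\lambda}}'|_{\wt{J}'}$. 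Proposition \ref{propEMSTG}(3) then makes them $\bs{J}'$-conjugate, which in the maximal case (where $\bs{J}'=E'^{\times}J'$ and $\mfo_{E'}^{\times}\subset J'$) reduces to conjugation by a power of $\varpi_{E'}I_m$, that is, weak equivalence. Hence $[\wt{\lambda}^g]=[\wt{\lambda}']$, as required.

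The main subtlety I anticipate is the twisting step in the injectivity argument: for a fixed maximal simple type one must produce an EMST that compact-induces to a prescribed representative of the inertial class. This rests on the standard fact that two EMSTs extending the same $(\wt{J},\wt{\lambda})$ differ by a character of $\wt{\bs{J}}/\wt{J}\cong\mbz$, i.e.\ an unramified character of $\wt{\bs{J}}$, which can be realized as the restriction of an unramified character of $\wt{G}$; therefore the family of cuspidal representations arising from EMSTs extending a fixed $(\wt{J},\wt{\lambda})$ is closed under unramified twists and is exactly $\mfs(\wt{\lambda})$. Everything else is a direct assembly of the cited results.
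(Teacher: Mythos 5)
Your proof is correct and assembles exactly the ingredients the paper cites for this corollary (Theorem \ref{thmlambdaMtype}, Theorem \ref{thmcuspidalconstruction} and, implicitly through the latter's part (3), Theorem \ref{thmsimpletypeconjugacy}), so it is essentially the intended argument; the only loose point is that the well-definedness on weak equivalence classes rests not on Proposition \ref{propEMSTG}(3) itself but on the observation, made in the proof of Theorem \ref{thmcuspidalconstruction}(3), that in the maximal case the weak equivalence class $[\wt{\lambda}]$ is precisely the $E^{\times}$-conjugacy orbit of $\wt{\lambda}$ inside $\bs{J}$, whence it lies entirely in the restriction of any EMST containing $\wt{\lambda}$.
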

	
	\subsection{Maximal simple types of $\wt{M}$}
	
	In this part, let $M=G_{r_{1}}\times\dots\times G_{r_{t}}$ be a Levi subgroup of $G$. 
	
	\begin{definition}\label{defmaxsimtypeM}
		
		A maximal simple type of $\wt{M}$ consists of a pair $(\wt{J_{M}},\wt{\lambda}_{M})$, such that $J_{M}=J_{1}\times\dots\times J_{t}$ and $\wt{\lambda}_{M}=\wt{\lambda}_{1}\boxtimes\dots\boxtimes\wt{\lambda}_{t}$, where $(\wt{J_{i}},\wt{\lambda_{i}})$ is a maximal simple type of $\wt{G_{r_{i}}}$ for $i=1,\dots,t$.
		
	\end{definition}
	
	We remark that since $\mrdet(J_{i})\subset\mfo_{F}^{\times}$ for each $i$, the group $J=J_{1}\times\dots\times J_{t}$ is block compatible, so the tensor product $\wt{\lambda}_{1}\boxtimes\dots\boxtimes\wt{\lambda}_{t}$ makes sense. 
	
	Also, as already been explained in the last paragraph of \S \ref{subsectionhomotype}, those $(\wt{J_{M}},\wt{\lambda}_{M})$ in \S \ref{subsectionhomotype} consist of all the maximal simple types of $\wt{M}$ in the homogeneous case (saying that the corresponding simple characters $\theta_{i}$ of $\wt{\lambda}_{i}$ are in the same endo-class). 
	
	Let $\bs{J}_{i}=E_{i}^{\times}J_{i}$ in $G_{r_{i}}$ for each $i$ and let $\bs{J}_{M}=\bs{J}_{1}\times\dots\times\bs{J}_{t}$ which contains $J_{M}$. Here, each $E_{i}=F[\beta_{i}]$ is the field related to the stratum $[\mfa_{i},u_{i},0,\beta_{i}]$ in constructing the simple type $\wt{\lambda}_{i}$.
	
	\begin{definition}
		
		An extended maximal simple type (EMST for short) extending $(\wt{J_{M}},\wt{\lambda}_{M})$ is a pair $(\wt{\bs{J}_{M}},\wt{\bs{\lambda}}_{M})$, where $\wt{\bs{\lambda}}_{M}$ is an irreducible representation of $\wt{\bs{J}_{M}}$ whose restriction to $\wt{J_{M}}$ contains $\wt{\lambda}_{M}$.
		
	\end{definition}
	
	\begin{proposition}\label{propintertlambdaM}
		
		For a maximal simple type $(\wt{J_{M}},\wt{\lambda}_{M})$ of $\wt{M}$, we have $I_{M}(\wt{\lambda}_{M})=N_{M}(\wt{\lambda}_{M})$, which is contained in $\bs{J}_{M}$. More generally, given two maximal simple types $(\wt{J_{M}},\wt{\lambda}_{M})$ and $(\wt{J_{M}},\wt{\lambda}_{M}')$ of $\wt{M}$, if $g\in M$ intertwines $\wt{\lambda}_M$ and $\wt{\lambda}_{M}'$, then $g$ is in $\bs{J}_M$ and $\wt{\lambda}_{M}'\cong\wt{\lambda}_{M}^{g}$.
		
	\end{proposition}
	
	\begin{proof}
		
		We need the following lemma, whose proof follows directly from the formula \eqref{eqBDblock}. 
		
		\begin{lemma}\label{lemmagintertwinelambdaM}
			
			For $\wt{\lambda}_{M}=\wt{\lambda}_{1}\boxtimes\dots\boxtimes\wt{\lambda}_{t}$ and $g=\mrdiag(g_{1},\dots,g_{t})\in M$, there exists a character $\chi=\chi_{1}\boxtimes\dots\boxtimes\chi_{t}$ of $J_{M}=J_{1}\times\dots\times J_{t}$ of order dividing $n$, which depends on $g$ but is independent of $\wt{\lambda}_{M}$, such that $\wt{\lambda}_{M}^{g}\cong(\wt{\lambda}_{1}\chi_{1})^{g_{1}}\boxtimes\dots\boxtimes(\wt{\lambda}_{t}\chi_{t})^{g_{t}}$ as representations of $\wt{J_{M}^{g}}$.
			
		\end{lemma}
		
		
		We only need to prove the second stronger statement. Write $\wt{\lambda}_M'=\wt{\lambda}_{1}'\boxtimes\dots\boxtimes\wt{\lambda}_{t}'$, where $(\wt{J_i},\wt{\lambda}_{i}')$ is a maximal simple type of $\wt{G_{r_{i}}}$. Assume that $g=\mrdiag(g_{1},\dots,g_{t})\in M$ intertwines $\wt{\lambda}_M$ and $\wt{\lambda}_{M}'$. Using the above lemma, for each $i$ we have
		$$\mrhom_{\wt{J_{i}^{g_{i}}}\cap\wt{J_{i}}}((\wt{\lambda}_{i}\chi_{i})^{g_{i}},\wt{\lambda}_{i}')\neq 0,$$
		and thus by Corollary \ref{corintertwinesimpletype} we have $g_{i}\in\bs{J}_{i}$ and $g\in\bs{J}_M$. Finally, since $g$ normalizes $J_{M}$, we have $\wt{\lambda}_{M}'\cong\wt{\lambda}_{M}^{g}$.
		
	\end{proof}
	
	The following proposition is parallel to Proposition \ref{propEMSTG}, with a similar proof. We only need to use Proposition \ref{propintertlambdaM}
	to replace Proposition \ref{propintertwinelambdaM} in the argument of Theorem \ref{thmlambdaMtype}.
	
	\begin{proposition}
		
		\begin{enumerate}
			
			\item An EMST $(\wt{\bs{J}_{M}},\wt{\bs{\lambda}}_{M})$ exists and can be constructed as in Theorem \ref{thmlambdaMtype}. 
			
			\item  The normalizer of $\wt{\bs{\lambda}}_{M}$ in $M$ is $\bs{J}_{M}$.
			
			\item Any two simple types $\wt{\lambda}_{M}$ and $\wt{\lambda}_{M}'$ contained in  $\wt{\bs{\lambda}}_{M}\rest_{\wt{J}_{M}}$ are conjugate by $\bs{J}_{M}$.
			
			\item  The compact induction $\mrind_{\wt{\bs{J}_{M}}}^{\wt{M}}\wt{\bs{\lambda}}_{M}$ is a genuine cuspidal representation of $\wt{M}$. 
			
		\end{enumerate}
		
	\end{proposition}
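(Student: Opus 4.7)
The plan is to run the argument of Theorem \ref{thmlambdaMtype} essentially verbatim, with the single input that now has to be changed being the control of the intertwining set: in place of Proposition \ref{propintertwinelambdaM} (which was used there for homogeneous types in $\wt{G}$), we invoke Proposition \ref{propintertlambdaM}, which gives $I_{M}(\wt{\lambda}_{M})=N_{M}(\wt{\lambda}_{M})\subset\bs{J}_{M}$ for a maximal simple type $(\wt{J_{M}},\wt{\lambda}_{M})$ of $\wt{M}$ in the KP-cover or S-cover case. The product structure $\bs{J}_{M}=\bs{J}_{1}\times\dots\times\bs{J}_{t}$, coupled with the fact that each $\bs{J}_{i}=E_{i}^{\times}J_{i}$, shows that $\bs{J}_{M}/Z(\wt{M})\wt{J_{M}}$ is finite abelian, so standard Clifford theory applies without modification.

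First, for part (1), I would choose a compatible central character and extend $\wt{\lambda}_{M}$ to an irreducible representation $\wt{\lambda}_{M}'$ of $Z(\wt{M})\wt{J_{M}}$. Then, applying \cite{gelbart1982indistinguishability}*{Lemma 2.1} together with Frobenius reciprocity and the Mackey formula to the finite group $\wt{\bs{J}_{M}}/Z(\wt{M})\wt{J_{M}}$ (using $I_{M}(\wt{\lambda}_{M})\subset\bs{J}_{M}$ from Proposition \ref{propintertlambdaM}), I obtain an irreducible $\wt{\bs{\lambda}}_{M}$ of $\wt{\bs{J}_{M}}$ whose restriction to $Z(\wt{M})\wt{J_{M}}$ contains $\wt{\lambda}_{M}'$. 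This gives (1).

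For (2), if $g\in M$ intertwines $\wt{\bs{\lambda}}_{M}$, restricting to $\wt{J_{M}}^{g}\cap\wt{J_{M}}$ and using the analogue of \eqref{eqrestLambdaM} produces an element $g'\in\bs{J}_{M}$ so that $g$ intertwines $\wt{\lambda}_{M}^{g'}$ with $\wt{\lambda}_{M}$; Proposition \ref{propintertlambdaM} then forces $g'g\in\bs{J}_{M}$, hence $g\in\bs{J}_{M}$. For (3), the $\bs{J}_{M}$-orbit of $\wt{\lambda}_{M}'$ exhausts the irreducible constituents of $\wt{\bs{\lambda}}_{M}\rest_{Z(\wt{M})\wt{J_{M}}}$ (again by Clifford), and a further restriction to $\wt{J_{M}}$ shows that any two simple types contained in $\wt{\bs{\lambda}}_{M}\rest_{\wt{J_{M}}}$ are $\bs{J}_{M}$-conjugate. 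Finally, (4) is immediate from Lemma \ref{lemmacomindcusp}, since $\wt{\bs{J}_{M}}$ is open and compact modulo $Z(\wt{M})$ in $\wt{M}$ and since $I_{M}(\wt{\bs{\lambda}}_{M})=\bs{J}_{M}$ by (2).

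The argument is genuinely routine once Proposition \ref{propintertlambdaM} is in hand, and the main (mild) obstacle is simply verifying that the finiteness of $\bs{J}_{M}/Z(\wt{M})\wt{J_{M}}$, together with the block-compatibility subtleties specific to the KP-cover and S-cover (which are already encoded in the formulas \eqref{eqBDblock} and \eqref{eqBDcommutator}), allows the Clifford-theoretic counting and the Mackey decomposition to go through exactly as in the proof of Theorem \ref{thmlambdaMtype}. No new calculation beyond what is needed for Proposition \ref{propintertlambdaM} itself (which in turn relied on Lemma \ref{lemmagintertwinelambdaM}) is required.
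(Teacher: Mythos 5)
Your proposal matches the paper's own proof, which consists precisely of the remark that one repeats the argument of Theorem \ref{thmlambdaMtype} (and Proposition \ref{propEMSTG}) with Proposition \ref{propintertlambdaM} substituted for Proposition \ref{propintertwinelambdaM}; your spelled-out Clifford-theoretic steps are exactly that argument. The only blemish is notational: the finite abelian quotient should be written $\wt{\bs{J}_{M}}/Z(\wt{M})\wt{J_{M}}$ rather than $\bs{J}_{M}/Z(\wt{M})\wt{J_{M}}$, but this does not affect the argument.
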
 
	
	We state the corresponding theorem for cuspidal representations of $\wt{M}$. The proof  relies on Theorem \ref{thmcuspidalconstruction}.
	
	\begin{theorem}\label{thmcuspidalconstructionLevi}
		
		\begin{enumerate}
			
			\item Given an EMST of $\wt{M}$, the compact induction $\wt{\pi}=\mrind_{\wt{\bs{J}_{M}}}^{\wt{M}}\wt{\bs{\lambda}}_{M}$ is a genuine cuspidal representation of $\wt{M}$.
			
			\item Every genuine cuspidal representation of $\wt{M}$ contains a simple type $\wt{\lambda}_{M}$, and thus is constructed as in (1).
			
			\item Two EMSTs of $\wt{M}$ are intertwined if and only if they are conjugate by $M$. So the EMST constructing $\pi$ is unique up to $M$-conjugacy.
			
		\end{enumerate}
		
	\end{theorem}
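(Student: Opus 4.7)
The approach is to reduce Theorem \ref{thmcuspidalconstructionLevi} to Theorem \ref{thmcuspidalconstruction} (the $M=G$ case) via the metaplectic tensor product construction, which is available precisely because $\wt{G}$ is either a KP-cover or the S-cover. Statement (1) is essentially immediate: combining part (4) of the proposition preceding the theorem with Lemma \ref{lemmacomindcusp} and the intertwining computation $I_M(\wt{\bs{\lambda}}_M) = \bs{J}_M$ of Proposition \ref{propintertlambdaM} gives the cuspidality of $\mrind_{\wt{\bs{J}_M}}^{\wt{M}}\wt{\bs{\lambda}}_M$.

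For statement (2), given a genuine cuspidal representation $\wt{\pi}$ of $\wt{M}$, I would first invoke the metaplectic tensor product to obtain an isomorphism $\wt{\pi} \cong (\wt{\pi}_1 \boxtimes \cdots \boxtimes \wt{\pi}_t)_{\wt{\omega}}$, where each $\wt{\pi}_i$ is a genuine cuspidal representation of $\wt{G_{r_i}}$ (with $\wt{\omega}$ a compatible central character in the KP-cover case). Theorem \ref{thmcuspidalconstruction}(2) applied to each factor produces a maximal simple type $(\wt{J_i}, \wt{\lambda}_i) \subset \wt{\pi}_i$ together with an extending EMST $(\wt{\bs{J}_i}, \wt{\bs{\lambda}}_i)$ satisfying $\wt{\pi}_i \cong \mrind_{\wt{\bs{J}_i}}^{\wt{G_{r_i}}}\wt{\bs{\lambda}}_i$. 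Since $\mrdet(J_i) \subset \mfo_F^{\times}$, the product $J_M := J_1 \times \cdots \times J_t$ is block-compatible, so $\wt{\lambda}_M := \wt{\lambda}_1 \boxtimes \cdots \boxtimes \wt{\lambda}_t$ is a maximal simple type of $\wt{M}$ in the sense of Definition \ref{defmaxsimtypeM}. The crux is then to verify $\wt{\lambda}_M \subset \wt{\pi}$ up to weak equivalence (which suffices by Theorem \ref{thmsimpletypeconjugacy}). For this I would exploit formula \eqref{eqmtpirred}: Frobenius reciprocity produces a non-zero $\bs{p}^{-1}(M_{r_1,\ldots,r_t}^{(n)})$-equivariant map from $\wt{\pi}_1^{(n)} \boxtimes \cdots \boxtimes \wt{\pi}_t^{(n)}$ into $\wt{\pi}|_{\bs{p}^{-1}(M_{r_1,\ldots,r_t}^{(n)})}$, and since $\wt{\lambda}_i \subset \wt{\pi}_i$ for each $i$ the block-compatible restriction $\wt{\lambda}_M|_{\bs{p}^{-1}(J_M \cap M_{r_1,\ldots,r_t}^{(n)})}$ embeds into $\wt{\pi}|_{\bs{p}^{-1}(J_M \cap M_{r_1,\ldots,r_t}^{(n)})}$. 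Inducing from $J_M \cap M_{r_1,\ldots,r_t}^{(n)}$ back to $J_M$ recovers $\wt{\lambda}_M$ itself, possibly twisted by a character $\chi \circ \mrdet$ of $J_M/(J_M \cap M_{r_1,\ldots,r_t}^{(n)})$ of order dividing $n$; this twist belongs to the weak equivalence class of $\wt{\lambda}_M$ and is therefore harmless. The corresponding EMST is then built as in the proposition preceding the theorem.

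For statement (3), given two intertwined EMSTs $(\wt{\bs{J}_M}, \wt{\bs{\lambda}}_M)$ and $(\wt{\bs{J}_M'}, \wt{\bs{\lambda}}_M')$ of $\wt{M}$, restriction forces the intertwining element $g \in M$ to intertwine the underlying maximal simple types $\wt{\lambda}_M$ and $\wt{\lambda}_M'$; Proposition \ref{propintertlambdaM} then places $g$ in $\bs{J}_M = \bs{J}_1 \times \cdots \times \bs{J}_t$. Writing $g = \mrdiag(g_1, \ldots, g_t)$ and applying Lemma \ref{lemmagintertwinelambdaM} block by block reduces the problem to the intertwining of $\wt{\bs{\lambda}}_i$ with $\wt{\bs{\lambda}}_i'$ up to a character of order dividing $n$. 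Theorem \ref{thmcuspidalconstruction}(3) applied to each $\wt{G_{r_i}}$ furnishes $g_i' \in G_{r_i}$ conjugating $\wt{\bs{\lambda}}_i$ to $\wt{\bs{\lambda}}_i'$, and $\mrdiag(g_1', \ldots, g_t') \in M$ realizes the desired $M$-conjugation on $\wt{\bs{\lambda}}_M$.

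The main obstacle is the containment $\wt{\lambda}_M \subset \wt{\pi}$ in statement (2): for the KP-cover, $\wt{M}$ is not block-compatible on all of $M$, so the naive exterior tensor argument breaks down. The resolution is to descend to the block-compatible subgroup $\bs{p}^{-1}(M_{r_1,\ldots,r_t}^{(n)})$, carry out the containment argument there, and then induce back at the cost of an unramified-determinantal character twist of order dividing $n$, which is precisely the ambiguity already subsumed by the weak equivalence class of Theorem \ref{thmsimpletypeconjugacy}.
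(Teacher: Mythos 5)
Your treatment of statements (1) and (2) follows essentially the same route as the paper: (1) is immediate from the preceding proposition, and for (2) you decompose $\wt{\pi}$ via the metaplectic tensor product, apply Theorem \ref{thmcuspidalconstruction}.(2) blockwise, and in the KP-case descend to $\bs{p}^{-1}(M_{r_1,\dots,r_t}^{(n)})$ and use \eqref{eqmtpirred} to recover containment at the cost of a twist by $\chi\circ\mrdet$. One caveat: that twist need not lie in the weak equivalence class of $\wt{\lambda}_M'$ (weak equivalence only allows twists by the characters $\chi_{g_0}$ with $g_0\in T(\mfb)$), and Theorem \ref{thmsimpletypeconjugacy} is not what makes it harmless; what makes it harmless is simply that $\wt{\lambda}_M'\cdot(\chi\circ\mrdet)$ is again a maximal simple type (absorb the character into the $\varrho_0$'s), so $\wt{\pi}$ still contains a maximal simple type, which is all that (2) asserts.

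Statement (3) has a genuine gap, in two places. First, Proposition \ref{propintertlambdaM} computes the \emph{self}-intertwining set $I_M(\wt{\lambda}_M)$ of a single maximal simple type; it does not tell you that an element intertwining two \emph{a priori different} types $\wt{\lambda}_M$ and $\wt{\lambda}_M'$ lies in $\bs{J}_M$. You must first prove that $\wt{\lambda}_M$ and $\wt{\lambda}_M'$ are $M$-conjugate — this is the paper's intermediate claim, proved by combining Lemma \ref{lemmagintertwinelambdaM} with Corollary \ref{corintertwinesimpletype} blockwise to arrange $J_M=J_M'$ and $\wt{\lambda}_M'\cong\wt{\lambda}_M\chi'$, and then restricting to the simple characters $\theta_i$ to force the intertwiner into $\bs{J}_M$ — and only afterwards can Proposition \ref{propintertlambdaM} be applied (to the single type $\wt{\lambda}_M$, via the observation that $g$ intertwining $\wt{\bs{\lambda}}_M$ with $\wt{\bs{\lambda}}_M'$ forces $gh^{-1}\in I_M(\wt{\lambda}_M)$ for some $h\in\bs{J}_M$). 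Second, your reduction of the EMST comparison to ``the intertwining of $\wt{\bs{\lambda}}_i$ with $\wt{\bs{\lambda}}_i'$'' presupposes that an EMST of $\wt{M}$ factors as $\wt{\bs{\lambda}}_1\boxtimes\dots\boxtimes\wt{\bs{\lambda}}_t$ with each $\wt{\bs{\lambda}}_i$ an EMST of $\wt{G_{r_i}}$. This fails: $\bs{J}_M=\bs{J}_1\times\dots\times\bs{J}_t$ is \emph{not} block compatible (each $\bs{J}_i\supset E_i^{\times}$ contains elements of non-unit determinant), so the exterior tensor product of genuine representations of the $\wt{\bs{J}_i}$ is not defined as a genuine representation of $\wt{\bs{J}_M}$, and Theorem \ref{thmcuspidalconstruction}.(3) cannot be invoked block by block at the extended level. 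The comparison of the EMSTs must therefore be carried out directly on $\wt{\bs{J}_M}$ after the underlying maximal simple types have been identified, which is how the paper proceeds.
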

	
	\begin{proof}
		
		The statement (1) has been verified.
		
		For statement (2), let $\wt{\pi}$ be a genuine cuspidal representation of $\wt{M}$. Let $M_{r_{1},\dots,r_{t}}^{(n)}:=G_{r_{1}}^{(n)}\times\dots\times G_{r_{t}}^{(n)}$, which is a block compatible subgroup of $M$ of finite index. Let $\wt{\tau}$ be an irreducible subrepresentation of the semi-simple representation $\wt{\pi}\rest_{\bs{p}^{-1}(M_{r_{1},\dots,r_{t}}^{(n)})}$. Since $M_{r_{1},\dots,r_{t}}^{(n)}$ is block compatible, $\wt{\tau}$ can be written as a tensor product
		$$\wt{\tau}_{1}\boxtimes\dots\boxtimes\wt{\tau}_{t}$$
		with $\wt{\tau}_{i}$ a genuine irreducible representation of $\wt{G_{r_{i}}^{(n)}}$. For each $i$, we take an irreducible subrepresentation $\wt{\pi}_{i}$ of the induction $\mrInd_{\wt{G_{r_{i}}^{(n)}}}^{\wt{G_{r_{i}}}}\wt{\tau}_{i}$. Then by Definition \ref{defcuspidal}.(3), $\wt{\pi}_{i}$ is a genuine cuspidal representation of $\wt{G_{r_{i}}}$. Using Theorem \ref{thmcuspidalconstruction}.(2), for each $i$ we may choose a maximal simple type $\wt{\lambda}_{i}$ of $\wt{G_{r_{i}}}$ contained in $\wt{\pi}_{i}$. 
		
		Using Frobenius reciprocity and the Mackey formula, we may show that $\wt{\lambda}_{M}':=\wt{\lambda}_{1}\boxtimes\dots\boxtimes\wt{\lambda}_{t}$ is contained in the semi-simple representation $\mrInd_{\bs{p}^{-1}(M_{r_{1},\dots,r_{t}}^{(n)})}^{\wt{M}}\wt{\tau}$. More precisely, we have
		\begin{equation*}
			\begin{aligned}
				0\neq  &\mrhom_{\wt{J_{1}}}(\mrInd_{\wt{G_{r_{1}}^{(n)}}}^{\wt{G_{r_{1}}}}\wt{\tau}_{1},\wt{\lambda}_{1})\times\dots\times\mrhom_{\wt{J_{t}}}(\mrInd_{\wt{G_{r_{t}}^{(n)}}}^{\wt{G_{r_{t}}}}\wt{\tau}_{t},\wt{\lambda}_{t})\\
				\cong& \mrhom_{\wt{J_{1}}\cap\wt{G_{r_{1}}^{(n)}}}(\wt{\tau}_{1},\wt{\lambda}_{1})\times\dots\times\mrhom_{\wt{J_{t}}\cap\wt{G_{r_{t}}^{(n)}}}(\wt{\tau}_{t},\wt{\lambda}_{t})\\
				\cong&\mrhom_{\wt{J_{M}}\cap \bs{p}^{-1}(M_{r_{1},\dots,r_{t}}^{(n)})}(\wt{\tau}_{1}\boxtimes\dots\boxtimes\wt{\tau}_{t},\wt{\lambda}_{M}')\quad(\text{Since}\ J_M\ \text{and}\ M_{r_{1},\dots,r_{t}}^{(n)}\ \text{are block compatible})\\
				\cong&\mrhom_{\wt{J_{M}}}(\mrInd_{\wt{J_{M}}\cap \bs{p}^{-1}(M_{r_{1},\dots,r_{t}}^{(n)})}^{\wt{J_M}}(\wt{\tau}\rest_{\wt{J_{M}}\cap \bs{p}^{-1}(M_{r_{1},\dots,r_{t}}^{(n)})}),\wt{\lambda}_{M}')\\
				\subset&\mrhom_{\wt{J_{M}}}(\mrInd_{\bs{p}^{-1}(M_{r_{1},\dots,r_{t}}^{(n)})}^{\wt{M}}\wt{\tau},\wt{\lambda}_{M}')\quad (\text{By the Mackey formula})\\
			\end{aligned}
		\end{equation*}
		
		Using \cite{gelbart1982indistinguishability}*{Lemma 2.1}, there exists an irreducible subrepresentation $\wt{\pi}'$ of $\mrInd_{\bs{p}^{-1}(M_{r_{1},\dots,r_{t}}^{(n)})}^{\wt{M}}\wt{\tau}$, such that $\wt{\pi}'$ contains $\wt{\lambda}_{M}'$, and moreover, there exists a character $\chi$ of $M/M_{r_{1},\dots,r_{t}}^{(n)}$ such that $\wt{\pi}\cong\wt{\pi}'\chi$. Write $\chi=\chi_{1}\boxtimes\dots\boxtimes\chi_{t}$ with $\chi_{i}$ being a character of $G_{r_{i}}/G_{r_{i}}^{(n)}$ for each $i$. Then $\wt{\pi}$ contains $\wt{\lambda}_M:=\wt{\lambda}_{M}'\cdot\chi\rest_{J_{M}}=\wt{\lambda}_{1}\cdot\chi_{1}\rest_{J_{1}}\boxtimes\dots\boxtimes\wt{\lambda}_{t}\cdot\chi_{t}\rest_{J_{t}}$, which is a maximal simple type of $\wt{M}$.
		
		Now we may choose an irreducible representation $\wt{\bs{\lambda}}_{M}$
		of $\wt{\bs{J}_{M}}$ containing $\wt{\lambda}_{M}$ and contained in $\wt{\pi}$. Then $(\wt{\bs{J}_{M}},\wt{\bs{\lambda}}_{M})$ is an EMST that compactly induces $\wt{\pi}$.
		
		The proof of statement (3) is similar to that of Theorem \ref{thmcuspidalconstruction}.(3), which we briefly explain here. Let $(\wt{\bs{J}_{M}},\wt{\bs{\lambda}}_{M})$ and $(\wt{\bs{J}_{M}'},\wt{\bs{\lambda}}_{M}')$ be two 
		intertwined EMSTs related to maximal simple types $(\wt{J_{M}},\wt{\lambda}_{M})$ and $(\wt{J_{M}'},\wt{\lambda}_{M}')$ respectively, then $\wt{\lambda}_{M}$ and $\wt{\lambda}_{M}'$ are also intertwined.
		
		We claim that $\wt{\lambda}_{M}$ and $\wt{\lambda}_{M}'$ are conjugate by $M$. Using Lemma \ref{lemmagintertwinelambdaM}, there exists a character $\chi=\chi_{1}\boxtimes\dots\boxtimes\chi_{t}$ of $J_{M}$, such that $\wt{\lambda}_{i}\chi_{i}$ and $\wt{\lambda}_{i}'$ are intertwined for each $i$. Using Theorem \ref{thmsimpletypeconjugacy}, $[\wt{\lambda}_{i}\chi_{i}]$ and $[\wt{\lambda}_{i}']$ are conjugate by  $G_{r_{i}}$ for each $i$. So up to replacing $\wt{\lambda}_{M}$ by its $M$-conjugation, we may assume that $J_{M}=J_{M}'$, and there exists a character $\chi'=\chi_{1}'\boxtimes\dots\boxtimes\chi_{t}'$ of $J_{M}=J_{1}\times\dots\times J_{t}$ of order dividing $n$, such that $\wt{\lambda}_{M}'\cong\wt{\lambda}_{M}\chi'$. Assume $g=\mrdiag(g_{1},\dots,g_{t})\in M$ intertwines $\wt{\lambda}_{M}$ and $\wt{\lambda}_{M}'$. Using Proposition \ref{propintertlambdaM}, we have $g\in\bs{J}_M$ and  $\wt{\lambda}_{M}'\cong\wt{\lambda}_{M}^g$.
		
		Finally we show that $(\wt{\bs{J}_{M}},\wt{\bs{\lambda}}_{M})$ and $(\wt{\bs{J}_{M}'},\wt{\bs{\lambda}}_{M}')$ are also $M$-conjugate. Up to an $M$-conjugation we may assume that $\bs{J}_{M}=\bs{J}_{M}'$, $J_{M}=J_{M}'$ and $\wt{\lambda}_{M}=\wt{\lambda}_{M}'$. So if an element $g\in M$ intertwines $\wt{\bs{\lambda}}_{M}$ and $\wt{\bs{\lambda}}_{M}'$, then it intertwines $\wt{\lambda}_{M}$ and $\wt{\lambda}_{M}^{h}$ for a certain $h\in\bs{J}_{M}$. Thus by Proposition \ref{propintertlambdaM}, $g$ lies in $\bs{J}_{M}$. This shows that $\wt{\bs{\lambda}}_{M}^{g}$ and $\wt{\bs{\lambda}}_{M}'$ are isomorphic.
		
		
	\end{proof}    
	
	
	
	
	\section{Results for a KP-cover or the S-cover}
	
	In this section, we assume $\wt{G}$ to be either a KP-cover or the S-cover. 
	
	Our main goal here is to prove Proposition \ref{proplocusindirred}, Proposition \ref{propZLimage} and Proposition \ref{propdiscinerclass}. These results are subject to our assumption on $\wt{G}$.

	\subsection{Metaplectic tensor product}
	
	One of the features that makes a KP-cover or the S-cover special is that, any genuine irreducible representation of a Levi subgroup of $\wt{G}$ is ``almost" the tensor product of certain genuine irreducible representations of each block.
	
	More precisely, let $M=G_{r_{1}}\times\dots\times G_{r_{t}}$ be a Levi subgroup of $G=\mrgl_{r}(F)$, where $r=r_{1}+\dots+r_{t}$ and each $G_{r_{i}}$ is a subgroup of $G$ isomorphic to $\mrgl_{r_{i}}(F)$. We write $\wt{M}$ and $\wt{G_{r_{i}}}$ for the preimage of $M$ and $G_{r_{i}}$ in $\wt{G}$.
	
	If $\wt{G}$ is the S-cover, then the above decomposition of $M$ is block compatible. Thus, we may define the multi-multiplicative multi-exact tensor product functor
	$$\boxtimes_{i=1}^{t}:\mrrep_{\epsilon}(\wt{G_{r_{1}}})\times\dots\times\mrrep_{\epsilon}(\wt{G_{r_{t}}})\rightarrow\mrrep_{\epsilon}(\wt{M}),$$
	which induces a bijection for irreducible representations
	$$\boxtimes_{i=1}^{t}:\mrirr_{\epsilon}(\wt{G_{r_{1}}})\times\dots\times\mrirr_{\epsilon}(\wt{G_{r_{t}}})\rightarrow\mrirr_{\epsilon}(\wt{M}).$$
	To unify our terminology, we call the above functor $\boxtimes_{i=1}^{t}$ the metaplectic tensor product functor.
	
	If $\wt{G}$ is a KP-cover, things become a bit trickier, since $M$ is not necessarily block compatible. Still, we are able to define the metaplectic tensor product as in \cite{kaplan2022classification}. For each $i$ we choose a genuine character $\wt{\omega}_{i}$ of $Z(\wt{G_{r_{i}}})$ and a genuine character $\wt{\omega}$ of $Z(\wt{G})$ compatible with $(\wt{\omega}_{1},\dots,\wt{\omega}_{t})$, saying that 
	$$(\wt{\omega}_{1}\rest_{Z(\wt{G_{r_{1}}})\cap\wt{G_{r_{1}}^{(n)}}}\boxtimes\dots\boxtimes\wt{\omega}_{t}\rest_{Z(\wt{G_{r_{t}}})\cap\wt{G_{r_{t}}^{(n)}}})\rest_{Z(\wt{G})\cap\wt{G^{(n)}}}=\wt{\omega}\rest_{Z(\wt{G})\cap\wt{G^{(n)}}}.$$
	Let $\mrrep_{\wt{\omega}_{i}}(\wt{G_{r_{i}}})$ (resp. $\mrrep_{\wt{\omega}}(\wt{M})$) denote the category of smooth locally-$\wt{\omega}_{i}$ (resp. locally-$\wt{\omega}$) representations of $\wt{G_{r_{i}}}$ (resp. $\wt{M}$). Then we may define a multi-multiplicative multi-exact metaplectic tensor product functor (depending on the choice of $\wt{\omega}$)
	$$(\boxtimes_{i=1}^{t})_{\wt{\omega}}:\mrrep_{\wt{\omega}_{1}}(\wt{G_{r_{1}}})\times\dots\times\mrrep_{\wt{\omega}_{t}}(\wt{G_{r_{t}}})\rightarrow\mrrep_{\wt{\omega}}(\wt{M}),$$
	which induces a bijection for irreducible representations
	$$(\boxtimes_{i=1}^{t})_{\wt{\omega}}:\mrirr_{\wt{\omega}_{1}}(\wt{G_{r_{1}}})\times\dots\times\mrirr_{\wt{\omega}_{t}}(\wt{G_{r_{t}}})\rightarrow\mrirr_{\wt{\omega}}(\wt{M}).$$
	The metaplectic tensor product for irreducible representations could be realized in a more concrete way. Let $\wt{\pi}_{i}\in\mrirr_{\wt{\omega}_{i}}(\wt{G_{r_{i}}})$ and  $\wt{\pi}_{i}^{(n)}:=\wt{\pi}_{i}\rest_{\wt{G_{r_{i}}^{(n)}}}$ for each $i=1,\dots,t$. Let $M_{r_{1},\dots,r_{t}}^{(n)}:=G_{r_{1}}^{(n)}\times\dots\times G_{r_{t}}^{(n)}$ which is block compatible. We may define the tensor product
	$$\wt{\pi}_{1}^{(n)}\boxtimes\dots\boxtimes\wt{\pi}_{t}^{(n)}$$ 
	as a genuine representation of $\bs{p}^{-1}(M_{r_{1},\dots,r_{t}}^{(n)})$. Then, there exists a positive integer $m_{n,\bs{c};r_{1},\dots,r_{t}}$ depending only on $n,\bs{c},r_{1},\dots,r_{t}$, such that
	\begin{equation}\label{eqmtpirred}
		\mrInd_{\bs{p}^{-1}(M_{r_{1},\dots,r_{t}}^{(n)})}^{\wt{M}}(\wt{\pi}_{1}^{(n)}\boxtimes\dots\boxtimes\wt{\pi}_{t}^{(n)})\cong m_{n,\bs{c};r_{1},\dots,r_{t}}\cdot\bigoplus_{\wt{\omega}}(\wt{\pi}_{1}\boxtimes\dots\boxtimes\wt{\pi}_{t})_{\wt{\omega}},
	\end{equation}
	where $\wt{\omega}$ in the direct sum ranges over all the genuine characters of $Z(\wt{G})$ that are compatible with $(\wt{\omega}_{1},\dots,\wt{\omega}_{t})$. 
	
	We remark that any two different representations in the direct sum of the right-hand side of \eqref{eqmtpirred} are weakly equivalent. It means that they are isomorphic up to a twist by a character $\chi\circ\mrdet$ of $M$, where $\chi$ is a character of $F^{\times}/F^{\times n}$.

	\subsection{Simple types related to the metaplectic tensor product}\label{subsectionreforsimpletypes}
	
	We come back to the setting of \S \ref{subsectionsimpletypes}. Let $r=r_{0}t$. Let $V=V^{1}\oplus\dots\oplus V^{t}$ be a decomposition of $F$-vector spaces, where $V^{i}$ is a $F$-vector space of dimension $r_{0}$. Let $A=\mrend_{F}(V)$ and $G=\mraut_{F}(V)$.
	
	Up to a choice of an $F$-basis for each $V^{i}$, we identify $A^{i}=\mrend_{F}(V^{i})$ with $\mrm_{r_{0}}(F)$, $G^{i}=\mraut_{F}(V^{i})$ with $G_{r_{0}}:=\mrgl_{r_{0}}(F)$. 
	Let $P$ be a parabolic subgroup of $G$ having $M=\mraut_{F}(V^{1})\times\dots\times\mraut_{F}(V^{t})$ as a Levi factor. 
	
	As before, we consider a KP-cover or the S-cover $\wt{G}$ of $G$. Then each $\wt{G^{i}}$ is identified with $\wt{G_{r_{0}}}$, which is a KP-cover or the S-cover of the same type. 
	
	Let $\wt{\pi}_{0}$ be a genuine cuspidal representation of $\wt{G_{r_{0}}}$. In the S-cover case, we consider the metaplectic tensor product
	\begin{equation}\label{eqpidefScover}
		\wt{\pi}=\wt{\pi}_{0}\boxtimes\dots\boxtimes\wt{\pi}_{0}
	\end{equation} as an irreducible cuspidal representation of $\wt{M}$. In the KP-cover case, we choose a compatible genuine character $\wt{\omega}$ of $Z(\wt{G})$, and we take
	\begin{equation}\label{eqpidefKPcover}
		\wt{\pi}=(\wt{\pi}_{0}\boxtimes\dots\boxtimes\wt{\pi}_{0})_{\wt{\omega}}
	\end{equation}
	as an irreducible cuspidal representation of $\wt{M}$. In this case, we are allowed to replace $\wt{\pi}_{0}$ by its twist of a character of order $n$. When we do so, the related $\wt{\pi}$ remains unchanged. 
	
	Let $\mfs$ (resp. $\mfs_{M}$) be the inertial equivalence class of $\wt{G}$ (resp. $\wt{M}$) that contains $(\wt{M},\wt{\pi})$. 
	
	Our first goal is to construct a simple type $(\wt{J},\wt{\lambda})$ of $\wt{G}$ as an $\mfs$-type.
	
	\begin{lemma}
		
		There exists a maximal simple type $(\wt{J_{0}},\wt{\lambda}_{0})$ of $\wt{G_{r_{0}}}$ contained in $\wt{\pi}_{0}$, such that $(\wt{J_{M}},\wt{\lambda}_{M})$ is a maximal simple type of $\wt{M}$ contained in $\wt{\pi}$, where $J_{M}=J_{0}\times\dots\times J_{0}$ and $\wt{\lambda}_{M}:=\wt{\lambda}_{0}\boxtimes\dots\boxtimes\wt{\lambda}_{0}$ is a genuine irreducible representation of $\wt{J_{M}}$. Here,  in the KP-cover case, we are allowed to replace $\wt{\pi}_{0}$ by its twist of an order  $n$ character. 
		
	\end{lemma}
	
	\begin{proof}
		
		In the S-cover case we construct	$(\wt{J_{0}},\wt{\lambda}_{0})$ via Theorem \ref{thmcuspidalconstruction}.(2), then the related $\wt{\lambda}_{M}:=\wt{\lambda}_{0}\boxtimes\dots\boxtimes\wt{\lambda}_{0}$ is contained in $\wt{\pi}$, since $M$ is block compatible.
		
		In the KP-cover case, 
		using the argument of Theorem \ref{thmcuspidalconstructionLevi}.(2) and \eqref{eqmtpirred}, there exists a maximal simple type $(\wt{J_{0}},\wt{\lambda}_{0}')$ of $\wt{G_{r_{0}}}$ contained in $\wt{\pi}_{0}$, such that the related maximal simple type $(\wt{J_{M}},\wt{\lambda}_{M}')$ of $\wt{M}$ is contained in $(\wt{\pi}_{0}\boxtimes\dots\boxtimes\wt{\pi}_{0})_{\wt{\omega}'}$ for some compatible character $\wt{\omega}'$ of $Z(\wt{G})$, where  $\wt{\lambda}_{M}':=\wt{\lambda}_{0}'\boxtimes\dots\boxtimes\wt{\lambda}_{0}'$. Moreover, there exists a character $\chi$ of $F^{\times}/F^{\times n}$, such that $$\wt{\pi}=(\wt{\pi}_{0}\boxtimes\dots\boxtimes\wt{\pi}_{0})_{\wt{\omega}}\cong (\wt{\pi}_{0}\boxtimes\dots\boxtimes\wt{\pi}_{0})_{\wt{\omega}'}\cdot(\chi\circ\mrdet).$$
		We take $\wt{\lambda}_{0}=\wt{\lambda}_0'\cdot(\chi\circ\mrdet)$. Then, $\wt{\pi}_{0}\cdot(\chi\circ\mrdet)$ contains $\wt{\lambda}_{0}$ and $\wt{\pi}$ contains $\wt{\lambda}_{M}$. So in this case, if we replace $\wt{\pi}_{0}$ by $\wt{\pi}_{0}\cdot(\chi\circ\mrdet)$, the proof is finished.
		
	\end{proof}
	
	From now on we pick $(\wt{J_{0}},\wt{\lambda}_{0})$ and $(\wt{J_{M}},\wt{\lambda}_{M})$ as in the above lemma.
	
	We give more information on the construction of $\wt{\lambda}_{0}$. Let $[\mfa_{0},u,0,\beta_{0}]$ be a strict maximal simple stratum in $\mrm_{r_{0}}(F)$, such that $J_{0}=J(\beta,\mfa_{0})$. We fix a maximal compact subgroup $K_{0}$ of $G_{r_{0}}$ that contains $U(\mfa_{0})$, and a splitting $\bs{s}_{0}$ of $K_{0}$. Write $E=F[\beta_{0}]$, and $\bs{l}$ for the residue field of $E$ and $r_{0}=[E:F]m_{0}$. 
	
	Then, there exist a simple character $\theta_{0}\in\mcc(\mfa_{0},\beta)$, its Heisenberg representation $\eta_{0}$, a $\beta$-extension $\kappa_{0}$ of $\eta_{0}$, a cuspidal representation $\varrho_{0}$ of $\mcg_{0}=\mrgl_{m_{0}}(\bs{l})$ and $\wt{\rho}_{0}=\mrinf_{\wt{\mcg_{0}}}^{\wt{J_{0}}}(\epsilon\cdot\,_{s_{0}}\varrho_{0})$, such that $\wt{\lambda}_{0}=\wt{\kappa}_{0}\otimes\wt{\rho}_{0}$. 
	
	Now we focus on the construction of the simple type $(\wt{J},\wt{\lambda})$. We identify $\beta_{0}$ with an element in each $G^{i}\cong\mrgl_{r_{0}}(F)$. We let $\beta=\mrdiag(\beta_{0},\dots,\beta_{0})$ be an element in $M$. By abuse of notation we also write $E=F[\beta]$ as a subfield of $\mrend_{F}(V)$. 
	Write $B=\mrend_{E}(V_{E})$. Let $\mfa$ be a hereditary order in $A$ that is normalized by $E^{\times}$, such that the decomposition $V=\bigoplus_{i=1}^{t}V^{i}$  is properly subordinate to $\Lambda$, the lattice chain  corresponding to $\mfa$. 
	
	Choose a maximal open compact subgroup $K$ containing $U(\mfa)$, and a splitting $\bs{s}$ of $K$. Indeed, our choice could be made, such that via the isomorphism $G^{i}\cong G_{r_{0}}$, the group $K\cap G^{i}$ is identified with $K_{0}$ and the restriction of $\bs{s}$ to each $K\cap G^{i}$ is identified with $\bs{s}_{0}$.
	
	Then $[\mfa,u,0,\beta]$ is a strict simple stratum in $A$. Write $H^{1}=H^{1}(\beta,\mfa)$, $J^{1}=J^{1}(\beta,\mfa)$ and $J=J(\beta,\mfa)$ as before. 
	
	Let $\theta\in\mcc(\mfa,\beta)$ be the transfer of $\theta_{0}$ and let $\eta$ be the Heisenberg representation of $\theta$. Let $\kappa$ be a $\beta$-extension of $\eta$, such that the corresponding $\kappa_{M}$, as a representation of $J_{M}=J\cap M$, equals $\kappa_{0}\boxtimes\dots\boxtimes\kappa_{0}$. Write $\mcm=U(\mfb)/U^{1}(\mfb)=\mcg_{0}\times\dots\times\mcg_{0}$ and let $\varrho=\varrho_{0}\boxtimes\dots\boxtimes\varrho_{0}$ be a cuspidal representation of $\mcm$. Let $\wt{\rho}=\mrinf_{\wt{\mcm}}^{\wt{J}}(\epsilon\cdot\,_{s}\varrho)$ and $\wt{\lambda}=\wt{\kappa}\otimes\wt{\rho}$. 
	
	Then $(\wt{J},\wt{\lambda})$ is a simple type of $\wt{G}$. Moreover, two related pairs  $(\wt{J_{P}},\wt{\lambda}_{P})$ and $(\wt{J_{M}},\wt{\lambda}_{M})$ are constructed as in \S \ref{subsectionhomotype}. Indeed, from our construction the pair $(\wt{J_{M}},\wt{\lambda}_{M})$ is exactly the maximal simple type of $\wt{M}$ we considered above. By Theorem \ref{thmlambdaMtype} and Theorem \ref{thmJPcoveringpair}, $(\wt{J_{M}},\wt{\lambda}_{M})$ is a $\mfs_{M}$-type, $(\wt{J_{P}},\wt{\lambda}_{P})$ is a covering pair of $(\wt{J_{M}},\wt{\lambda}_{M})$, and both $(\wt{J_{P}},\wt{\lambda}_{P})$ and $(\wt{J},\wt{\lambda})$ are $\mfs$-types.
	
	\subsection{A deeper study of Hecke algebras}
	
	We also study the corresponding Hecke algebras. Let $l_{0}$, $d_{0}=d_{r,l_{0}}$, $n_{0}=n_{r_{0},l_{0}}$ and $s_{0}=s_{r_{0},l_{0}}$ be defined as in \S \ref{subsectionsimpletypes}.  Using Theorem \ref{thmcalhecke} and Remark \ref{remPsilambdaP}, we have an isomorphism $$\Psi:\wt{\mch}(t,s_{0},\bs{q}_{0})\rightarrow\mch(\wt{G},\wt{\lambda}_{P}).$$ Moreover, we choose $\Psi$ to satisfy the result of Corollary \ref{corpsiisometry}. Recall that we have an embedding of algebras
	$$t_{P}:\mch(\wt{M},\wt{\lambda}_{M})\rightarrow\mch(\wt{G},\wt{\lambda}_{P}).$$
	So the restriction of $\Psi$ gives an isomorphism
	$$\Psi:\wt{\mca}(t,s_{0})\rightarrow t_{P}(\mch(\wt{M},\wt{\lambda}_{M})).$$
	Let $\Psi_{M}:=t_{P}^{-1}\circ\Psi\rest_{\wt{\mca}(t,s_{0})}$ be the isomorphism between $\wt{\mca}(t,s_{0})$ and $\mch(\wt{M},\wt{\lambda}_{M})$. 
	
	By definition we have equivalence of categories
	$$\bs{\mathrm{M}}_{\wt{\lambda}_{P}}:\mrrep_{\mfs}(\wt{G})\rightarrow\mrmod(\mch(\wt{G},\wt{\lambda}_{P}))\quad\text{and}\quad\bs{\mathrm{M}}_{\wt{\lambda}_{M}}:\mrrep_{\mfs_{M}}(\wt{M})\rightarrow\mrmod(\mch(\wt{M},\wt{\lambda}_{M})).$$
	Composing with equivalence of categories 
	$$\Psi^{*}:\mrmod(\mch(\wt{G},\wt{\lambda}_{P}))\rightarrow\mrmod(\wt{\mch}(t,s_{0},\bs{q}_{0}))\quad\text{and}\quad\Psi_{M}^{*}:\mrmod(\mch(\wt{M},\wt{\lambda}_{M}))\rightarrow\mrmod(\wt{\mca}(t,s_{0}))$$
	induced by $\Psi$ and $\Psi_{M}$, we get equivalence of categories
	$$\wt{\mct}_{G}:\mrrep_{\mfs}(\wt{G})\rightarrow\mrmod(\wt{\mch}(t,s_{0},\bs{q}_{0}))\quad\text{and}\quad\wt{\mct}_{M}:\mrrep_{\mfs_{M}}(\wt{M})\rightarrow\mrmod(\wt{\mca}(t,s_{0})).$$
	We study the compatibility of these functors with parabolic induction. First we need the following lemma:
	
	\begin{lemma}
		The following diagram is commutative:
		$$\xymatrix{
			\mrmod(\mch(\wt{G},\wt{\lambda}_{P})) \ar[r]^-{\Psi^{*}}    & \mrmod(\wt{\mch}(t,s_{0},\bs{q}_{0})) \\
			\mrmod(\mch(\wt{M},\wt{\lambda}_{M})) \ar[u]^-{(t_{P})_{*}}\ar[r]^-{\Psi_{M}^{*}}& \mrmod(\wt{\mca}(t,s_{0})) \ar[u]_-{\mrInd_{\wt{\mca}}^{\wt{\mch}}}
		},$$
		where we write $\mrInd_{\wt{\mca}}^{\wt{\mch}}=\mrInd_{\wt{\mca}(t,s_{0})}^{\wt{\mch}(t,s_{0},\bs{q}_{0})}$ for short.
		
	\end{lemma}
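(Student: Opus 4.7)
The plan is to construct a natural isomorphism of functors $\Psi^{*}\circ(t_{P})_{*}\cong \mrInd_{\wt{\mca}(t,s_{0})}^{\wt{\mch}(t,s_{0},\bs{q}_{0})}\circ\Psi_{M}^{*}$ directly on objects, using the $\mrhom$-description of both induction functors. For $V\in\mrmod(\mch(\wt{M},\wt{\lambda}_{M}))$, I would unfold $(t_{P})_{*}(V)=\mrhom_{\mch(\wt{M},\wt{\lambda}_{M})}(\mch(\wt{G},\wt{\lambda}_{P}),V)$ on one side, and $\mrInd_{\wt{\mca}(t,s_{0})}^{\wt{\mch}(t,s_{0},\bs{q}_{0})}(\Psi_{M}^{*}(V))=\mrhom_{\wt{\mca}(t,s_{0})}(\wt{\mch}(t,s_{0},\bs{q}_{0}),\Psi_{M}^{*}(V))$ on the other. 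The second form is legitimate because $\wt{\mch}(t,s_{0},\bs{q}_{0})$ is free of finite rank $t!$ over $\wt{\mca}(t,s_{0})$, so the tensor-product and Hom descriptions of $\mrInd$ are canonically isomorphic. The candidate natural transformation is then the evident precomposition map $\Phi_{V}(f):=f\circ\Psi$, with inverse given by precomposition with $\Psi^{-1}$.

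Next, I would verify that $\Phi_{V}(f)$ actually lies in the target Hom-space. The identity $t_{P}\circ\Psi_{M}=\Psi\rest_{\wt{\mca}(t,s_{0})}$, built into the definition $\Psi_{M}:=t_{P}^{-1}\circ\Psi\rest_{\wt{\mca}(t,s_{0})}$, is the only ingredient actually needed: for $a\in\wt{\mca}(t,s_{0})$ and $h\in\wt{\mch}(t,s_{0},\bs{q}_{0})$, one computes
$$
\Phi_{V}(f)(h\ast a)=f\bigl(\Psi(h)\ast t_{P}(\Psi_{M}(a))\bigr)=f(\Psi(h))\cdot\Psi_{M}(a)=\Phi_{V}(f)(h)\cdot_{\Psi_{M}^{*}(V)} a,
$$
which is exactly $\wt{\mca}(t,s_{0})$-linearity of $\Phi_{V}(f)$ into $\Psi_{M}^{*}(V)$. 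I would then check that $\Phi_{V}$ intertwines the left $\wt{\mch}(t,s_{0},\bs{q}_{0})$-actions: on the source, $g\in\wt{\mch}(t,s_{0},\bs{q}_{0})$ acts via $\Psi$ on the standard right-regular action, so $(g\cdot f)(x)=f(x\ast\Psi(g))$, whence $\Phi_{V}(g\cdot f)(h)=f(\Psi(hg))=\Phi_{V}(f)(hg)=(g\cdot\Phi_{V}(f))(h)$, matching the standard action on the induced module. Naturality in $V$ is then immediate.

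The only real obstacle is bookkeeping of left versus right module structures across the two algebras, keeping straight which algebra acts through which of $\Psi$, $\Psi_{M}$, and $t_{P}$. Once one writes the diagram out, the content of the lemma reduces to the single algebraic identity $t_{P}\circ\Psi_{M}=\Psi\rest_{\wt{\mca}(t,s_{0})}$, together with the free-rank observation used to identify the two descriptions of $\mrInd$. Beyond those two ingredients, no genuine computation is required, so I would expect the proof in the article to be a short paragraph essentially spelling out the compatibility above.
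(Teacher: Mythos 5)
Your proposal is correct and matches the paper's argument: the paper's entire proof consists of citing the commutative square of algebra homomorphisms expressing $t_{P}\circ\Psi_{M}=\Psi\rest_{\wt{\mca}(t,s_{0})}$, which is exactly the identity you isolate as the crux. The remaining steps you spell out (the Hom-description of both induction functors and the precomposition-by-$\Psi$ natural isomorphism) are the routine verification the paper leaves implicit.
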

	
	\begin{proof}
		
		It follows from the commutative diagram
		$$\xymatrix{
			\wt{\mch}(t,s_{0},\bs{q}_{0})  \ar[r]^-{\Psi}    & \mch(\wt{G},\wt{\lambda}_{P}) \\
			\wt{\mca}(t,s_{0}) \ar@{^(->}[u]^-{}	\ar[r]^-{\Psi_{M}}  &  \mch(\wt{M},\wt{\lambda}_{M}) \ar[u]_-{t_{P}}
		}$$

	\end{proof}
	
	Combining this lemma with Theorem \ref{thmJacquetcomm}, we have a commutative diagram
	\begin{equation}\label{eqparindcompcover}
		\xymatrix{
			\mrrep_{\mfs}(\wt{G})  \ar[r]^-{\wt{\mct}_{G}}    & \mrmod(\wt{\mch}(t,s_{0},\bs{q}_{0})) \\
			\mrrep_{\mfs_{M}}(\wt{M}) \ar[r]^-{\wt{\mct}_{M}} \ar[u]^-{i_{\wt{P}}^{\wt{G}}} & \mrmod(\wt{\mca}(t,s_{0})) \ar[u]_-{\mrInd_{\wt{\mca}}^{\wt{\mch}}}
		}.
	\end{equation}
	Composing with the diagram \eqref{eqcommrestparindhecke}, we get another commutative diagram
	\begin{equation}\label{eqparindcomp}
		\xymatrix{
			\mrrep_{\mfs}(\wt{G})  \ar[r]^-{\mct_{G}}    & \mrmod(\mch(t,\bs{q}_{0})) \\
			\mrrep_{\mfs_{M}}(\wt{M}) \ar[r]^-{\mct_{M}} \ar[u]^-{i_{\wt{P}}^{\wt{G}}} & \mrmod(\mca(t)) \ar[u]_-{\mrInd_{\mca}^{\mch}}
		},
	\end{equation}
	where we write $\mrInd_{\mca}^{\mch}=\mrInd_{\mca(t)}^{\mch(t,\bs{q}_{0})}$ for short. 
	
	We consider the $\mfS_{t}$-action on those categories, and the $\mfS_{t}$-equivariance of the above functors. First $\mfS_{t}$ acts on $\mca(t)$ and $\wt{\mca}(t,s_{0})$ by interchanging $X_{i}$ and fixing $Z$. Also, using \eqref{eqStW0bident} we identify $\mfS_{t}$ with $W_{0}(\mfb)$, which is a group of representatives of the Weyl group $W(G,M)$. Since $\mfS_{t}$ stabilizes $\wt{M}$, $\wt{\lambda}_{M}=\wt{\lambda}_{0}\boxtimes\dots\boxtimes\wt{\lambda}_{0}$ and $\mfs_{M}$, we have $\mfS_{t}$-actions on $\wt{M}$ and $\mch(\wt{M},\wt{\lambda}_{M})$. This induces $\mfS_{t}$-action on $\mrrep_{\mfs_{M}}(\wt{M})$, $\mrmod(\mch(\wt{M},\wt{\lambda}_{M}))$, $\mrmod(\wt{\mca}(t,s_{0}))$ and $\mrmod(\mca(t))$. 
	
	\begin{lemma}\label{lemmaTMStequiv}
		
		The morphisms $\bs{\mathrm{M}}_{\wt{\lambda}_{M}}$, $\Psi_{M}^{*}$, $\wt{\mct}_{M}$ and $\mct_{M}$ are $\mfS_{t}$-equivariant.
		
	\end{lemma}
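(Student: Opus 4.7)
The plan is to unpack the two non-trivial $\mfS_{t}$-actions (on $\mch(\wt{M},\wt{\lambda}_{M})$ via conjugation by $W_{0}(\mfb)\cong\mfS_{t}$, and on $\wt{\mca}(t,s_{0})$ by permuting the $X_{i}$'s) and check they are intertwined by $\Psi_{M}$; the equivariance of $\bs{\mathrm{M}}_{\wt{\lambda}_{M}}$ will be essentially tautological once one observes that $\wt{\lambda}_{M}=\wt{\lambda}_{0}\boxtimes\dots\boxtimes\wt{\lambda}_{0}$ is $\mfS_{t}$-stable (up to isomorphism), and the two remaining functors will follow by composition and by the commutative square \eqref{eqcommrestparindhecke}.

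First I would treat $\bs{\mathrm{M}}_{\wt{\lambda}_{M}}$. Fix $\varsigma\in\mfS_{t}$ and identify it with $\mcf_{0}(\varsigma)\in W_{0}(\mfb)\subset M$. Since $\wt{\lambda}_{M}^{\varsigma}$ is obtained by permuting the tensor factors of $\wt{\lambda}_{0}\boxtimes\dots\boxtimes\wt{\lambda}_{0}$, the obvious permutation map gives a distinguished isomorphism $I_{\varsigma}:\wt{\lambda}_{M}\xrightarrow{\sim}\wt{\lambda}_{M}^{\varsigma}$ of representations of $\wt{J_{M}}=\wt{J_{M}^{\varsigma}}$. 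For $\wt{\pi}\in\mrrep_{\mfs_{M}}(\wt{M})$, the map $\wt{\pi}\mapsto\wt{\pi}^{\varsigma}$ together with $I_{\varsigma}$ identifies $\bs{\mathrm{M}}_{\wt{\lambda}_{M}}(\wt{\pi}^{\varsigma})=\mrhom_{\wt{J_{M}}}(\wt{\lambda}_{M},\wt{\pi}^{\varsigma})$ with $\mrhom_{\wt{J_{M}}}(\wt{\lambda}_{M}^{\varsigma},\wt{\pi}^{\varsigma})\cong \mrhom_{\wt{J_{M}}}(\wt{\lambda}_{M},\wt{\pi})=\bs{\mathrm{M}}_{\wt{\lambda}_{M}}(\wt{\pi})$ as vector spaces; one then checks that under the action formula \eqref{eqheckepirho}, this identification sends the $\mch(\wt{M},\wt{\lambda}_{M})$-module structure on one side to the $\varsigma$-twisted structure on the other. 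This is essentially a change-of-variables using that the conjugation by $\varsigma$ on $\mch(\wt{M},\wt{\lambda}_{M})$ is defined precisely so as to make this diagram commute.

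The core step is the equivariance of $\Psi_{M}$. Recall from Lemma \ref{lemmacomuheckecal} that $\mch(\wt{M},\wt{\lambda}_{M})$ has a basis indexed by $T_{0}=T(r_{0},m_{0},l_{0};t)$, and by the support-preservation of $\Psi$ (Theorem \ref{thmcalhecke}) combined with the normalization $\Psi_{M}=t_{P}^{-1}\circ\Psi\rest_{\wt{\mca}(t,s_{0})}$, the element $[\lambda]\in\wt{\mca}(t,s_{0})$ corresponding to $\lambda=(s_{1},\dots,s_{t})\in\mbz_{1/s_{0}}^{t}$ is sent to (a scalar multiple of) the basis function supported on $\wt{J_{M}}\cdot\mrdiag(\varpi_{E}^{s_{1}}I_{m_{0}},\dots,\varpi_{E}^{s_{t}}I_{m_{0}})\cdot\wt{J_{M}}$. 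The conjugation action of $\mcf_{0}(\varsigma)\in W_{0}(\mfb)$ on this element simply permutes the diagonal entries by $\varsigma^{-1}$, while the $\mfS_{t}$-action on $\wt{\mca}(t,s_{0})$ permutes the $X_{i}$'s in exactly the same way; these match up to the $\delta_{N}^{1/2}$ normalization factor appearing in $t_{P}$, and since $\varsigma\in W(G,M)$ is in the Weyl group the $\delta_{N}^{1/2}$ shift is invariant under the action, so $\Psi_{M}$ is $\mfS_{t}$-equivariant and hence so is $\Psi_{M}^{*}$.

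Then $\wt{\mct}_{M}=\Psi_{M}^{*}\circ\bs{\mathrm{M}}_{\wt{\lambda}_{M}}$ is $\mfS_{t}$-equivariant as a composition of equivariant functors. Finally, $\mct_{M}$ is obtained from $\wt{\mct}_{M}$ by post-composing with the restriction $\rest_{\mca(t)}:\mrmod(\wt{\mca}(t,s_{0}))\to\mrmod(\mca(t))$ (\emph{cf.} diagram \eqref{eqcommrestparindhecke}); since the $\mfS_{t}$-action on $\wt{\mca}(t,s_{0})$ fixes $Z$ and preserves the subalgebra $\mca(t)$, restriction is $\mfS_{t}$-equivariant, and we conclude. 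The main obstacle I anticipate is bookkeeping around the Haar measure and $\delta_{N}^{1/2}$ normalizations in $t_{P}$, which must not disturb equivariance — but as noted, Weyl elements preserve $\delta_{N}$ on $Z(M)$, so this is harmless.
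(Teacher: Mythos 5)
Your reductions are fine: the equivariance of $\bs{\mathrm{M}}_{\wt{\lambda}_{M}}$ is indeed essentially tautological (the paper dismisses it with ``by definition''), and passing from $\Psi_{M}^{*}$ to $\wt{\mct}_{M}$ and $\mct_{M}$ by composition and by the diagram \eqref{eqcommrestparindhecke} is exactly what the paper does. The problem is your core step. Support preservation only tells you that $\varphi_{i}:=\Psi_{M}(X_{1})^{\varsigma_{1i}}$ and $\Psi_{M}(X_{i})$ are both supported on the single double coset $\wt{J_{M}}g_{i}\wt{J_{M}}$; since that space of functions is one-dimensional (Corollary \ref{corintertwingspace}), this gives only $\varphi_{i}=c_{i}\cdot\Psi_{M}(X_{i})$ for some nonzero scalar $c_{i}$, with $c_{1}=1$. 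Your phrase ``these match up'' silently asserts $c_{i}=1$, which is the entire content of the lemma: $\Psi$ is built from the generators $\phi_{\Pi}$, $\phi_{\zeta}$ and the finite part, so $\Psi(X_{i})$ is a specific product in $\mch(\wt{G},\wt{\lambda})$, and there is no a priori reason it should coincide on the nose with the $W_{0}(\mfb)$-conjugate of $\Psi(X_{1})$. (Your side remark that the $\delta_{N}^{1/2}$-shift is Weyl-invariant is also false for the elements $g_{i}$: already for $t=2$ one has $\delta_{N}(\mrdiag(\varpi I,I))=\delta_{N}(\mrdiag(I,\varpi I))^{-1}$.)

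The paper's proof of $c_{i}=1$ is genuinely indirect and uses two inputs you do not have. First, the isometry property of $t_{P}$ (equation \eqref{eqheckeherminv}) and of $\Psi$ (Corollary \ref{corpsiisometry}) forces $\abs{c_{i}}=1$. Second, assuming some $c_{i}\neq 1$, one twists $\mct_{M}$ by $T_{\bs{c}}^{*}$ and compares the known reducibility of $i_{\wt{P}}^{\wt{G}}(\wt{\pi}\cdot\chi)$ at a suitable positive exponent (\cite{kaplan2022classification}*{Proposition 6.10}) with the irreducibility criterion for $\mrind_{\mca}^{\mch}(\mbc_{\bs{x}})$ when no two coordinates of $\bs{x}$ are in ratio $\bs{q}_{0}^{\pm 1}$; the unitarity of the $c_{i}$ makes such a ratio impossible, yielding a contradiction. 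The paper explicitly remarks that a direct proof ``on the level of generators and relations'', of the kind you are gesturing at, is not known; so the step you treat as bookkeeping is in fact the open heart of the argument.
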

	
	We will prove this lemma in the next subsection.
	
	Finally for $\bs{x}=(x_{1},\dots,x_{t})\in\mbc^{t}$, we define a character $\mbc_{\bs{x}}$ of $\mca(t)=\mbc[X_{1},X_{1}^{-1},\dots,X_{t},X_{t}^{-1}]$, such that each $X_{i}$ acts on $\mbc_{\bs{x}}$ by multiplying with $x_{i}$.

	\begin{lemma}\label{lemmaTMcal}
		
		Let $\chi=\nu^{a_{1}}\boxtimes\dots\boxtimes\nu^{a_{t}}$ be a character of $M$ for $a_{1},\dots,a_{t}\in\mbr$, let $n_{0}=n_{r_{0},l_{0}}$ be defined as in \eqref{eqnr0l0} and let $\bs{x}_{\chi}=(\bs{q}_{0}^{-a_{1}n_{0}},\dots,\bs{q}_{0}^{-a_{t}n_{0}})$. 
		Let $$\mct_{M}:\mrrep_{\mfs_{M}}(\wt{M})\rightarrow\mrmod(\mca(t))$$ 
		be the map in \eqref{eqparindcomp}. Then 
		\begin{itemize}
			\item the image of $\wt{\pi}$ under this map is a character of $\mca(t)$ of the form $\mbc_{\bs{z}}$ for a certain $0\neq z\in\mbc$ and $\bs{z}=(z,z,\dots,z)\in\mbc^{t}$. 
			
			\item the image of $\wt{\pi}\cdot\chi$ under this map is the character $\mbc_{\bs{z}}\cdot\mbc_{\bs{x}_{\chi}}$ of $\mca(t)$.
			
		\end{itemize}
		
	\end{lemma}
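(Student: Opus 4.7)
The plan is to compute $\mct_M(\wt\pi)$ by combining a dimensionality argument with the $\mfS_t$-equivariance of $\mct_M$ and the natural equivariance of $\bs{\mathrm M}_{\wt\lambda_M}$ under unramified character twist. By Lemma \ref{lemmacomuheckecal} the algebra $\mch(\wt M,\wt\lambda_M)\cong\mbc[T_0]$ is commutative, and $\bs{\mathrm M}_{\wt\lambda_M}:\mrrep_{\mfs_M}(\wt M)\to\mrmod(\mch(\wt M,\wt\lambda_M))$ is an equivalence of categories; since $\wt\pi$ is irreducible, $\bs{\mathrm M}_{\wt\lambda_M}(\wt\pi)$ is a simple module over a commutative algebra, hence one-dimensional. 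Restricting along $\mca(t)\subset\wt{\mca}(t,s_0)\xrightarrow{\Psi_M}\mch(\wt M,\wt\lambda_M)$ therefore realizes $\mct_M(\wt\pi)=\mbc_{\bs z}$ as a character of $\mca(t)$ with $\bs z=(z_1,\dots,z_t)\in(\mbc^\times)^t$. From the definitions of $\Pi_E$, $\zeta_E$ and $\mcf$ in \S\ref{subsectionsimpletypes}, the generator $X_i$ corresponds under $\mcf$ to $\mcf(e_i)=\mrdiag(I_{m_0},\dots,\varpi_E^{n_0}I_{m_0},\dots,I_{m_0})\in T_0$ with the nontrivial block in the $i$-th slot, so $z_i$ is the scalar by which the (essentially unique, by Corollary \ref{corintertwingspace} adapted to $\wt M$) function supported on $\wt{J_M}\mcf(e_i)\wt{J_M}$ acts on $\bs{\mathrm M}_{\wt\lambda_M}(\wt\pi)$.

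For the twist formula I will use the equivariance recorded in \S\ref{subsectionblockstypes}: since $\chi=\nu^{a_1}\boxtimes\cdots\boxtimes\nu^{a_t}$ is trivial on $\wt{J_M}$ (any unramified character is trivial on $\wt{J_M}$ because $\mrdet_F(J_M)\subset\mfo_F^\times$), the action of a Hecke operator supported on $\wt{J_M}t_0\wt{J_M}$, $t_0\in T_0$, on $\bs{\mathrm M}_{\wt\lambda_M}(\wt\pi\cdot\chi)$ is obtained from its action on $\bs{\mathrm M}_{\wt\lambda_M}(\wt\pi)$ by multiplication by the scalar $\chi(t_0)$. Applied with $t_0=\mcf(e_i)$ and using $\mrdet_F(\varpi_E^{n_0}I_{m_0})=\mrn_{E/F}(\varpi_E^{n_0 m_0})$ together with $\bs q_0=q_E^{m_0}$, one computes
$$\chi(\mcf(e_i))=|\mrdet_F(\varpi_E^{n_0}I_{m_0})|_F^{a_i}=|\varpi_E|_E^{n_0 m_0 a_i}=\bs q_0^{-n_0 a_i}.$$
Hence $\mct_M(\wt\pi\cdot\chi)=\mbc_{\bs z}\cdot\mbc_{\bs x_\chi}$, which is the second bullet once the first is established.

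For the equality $z_1=\cdots=z_t$ I plan to exploit the $\mfS_t$-equivariance of $\mct_M$ from Lemma \ref{lemmaTMStequiv}. In the S-cover case $\wt\pi=\wt\pi_0\boxtimes\cdots\boxtimes\wt\pi_0$ is manifestly $\mfS_t$-invariant, so $\mbc_{\bs z}\cong\sigma\cdot\mbc_{\bs z}=\mbc_{\bs z^\sigma}$ with $(\bs z^\sigma)_i=z_{\sigma^{-1}(i)}$, which immediately forces $z_1=\cdots=z_t=:z$. In the KP-cover case $\wt\pi=(\wt\pi_0\boxtimes\cdots\boxtimes\wt\pi_0)_{\wt\omega}$ and $\wt\pi^\sigma\cong(\wt\pi_0\boxtimes\cdots\boxtimes\wt\pi_0)_{\wt\omega^\sigma}\cong\wt\pi\cdot\chi^\sigma$ for a character $\chi^\sigma$ of $M/M^{(n)}$ that captures the ratio $\wt\omega^\sigma/\wt\omega$; combining Lemma \ref{lemmaTMStequiv} with the twist formula of the previous paragraph yields $\bs z^\sigma=\bs z\cdot(\chi^\sigma(\mcf(e_1)),\dots,\chi^\sigma(\mcf(e_t)))$ for each $\sigma\in\mfS_t$, and the symmetry of the construction (each tensor factor being the same $\wt\pi_0$) is used to force the scaling vector to be proportional to $(1,\dots,1)$, yielding $z_1=\cdots=z_t$.

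The subtle point is the KP-cover case of the last paragraph: tracking the precise character $\chi^\sigma$ arising from $\wt\omega^\sigma/\wt\omega$ through the metaplectic tensor product construction, and verifying that its evaluations at the various $\mcf(e_i)$ depend on $i$ only through a uniform scalar. The cleanest route is probably to show that within its compatibility class one can always select $\wt\omega$ to be $\mfS_t$-invariant, which reduces the KP-case directly to the S-case argument; this ought to follow from the fact that the $\mfS_t$-action on the torsor of compatible central characters fixes the natural "diagonal" symmetric choice used implicitly in \S\ref{subsectionreforsimpletypes}.
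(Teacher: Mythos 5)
Your proof follows essentially the same route as the paper's: the first bullet via the $\mfS_{t}$-equivariance of $\mct_{M}$ (Lemma \ref{lemmaTMStequiv}) applied to the $\mfS_{t}$-stable $\wt{\pi}$, and the second bullet by comparing the action of $\phi_{g_{i}}$ (supported on $\wt{J_{M}}g_{i}\wt{J_{M}}$, corresponding to $X_{i}$) on the identified underlying spaces of $\bs{\mathrm{M}}_{\wt{\lambda}_{M}}(\wt{\pi}\cdot\chi)$ and $\bs{\mathrm{M}}_{\wt{\lambda}_{M}}(\wt{\pi})$, the ratio being $\chi(g_{i})=\bs{q}_{0}^{-a_{i}n_{0}}$; both computations check out. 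The one loose end you flag --- the KP-cover case of $\mfS_{t}$-stability of $\wt{\pi}$ --- is a non-issue: $\wt{\omega}$ is a character of the \emph{center} $Z(\wt{G})$, which is fixed pointwise by conjugation, so $\wt{\omega}^{\sigma}=\wt{\omega}$ automatically, and since all tensor factors equal $\wt{\pi}_{0}$ the characterization \eqref{eqmtpirred} gives $\wt{\pi}^{\sigma}\cong\wt{\pi}$ directly; the S-cover argument then applies verbatim. Note also that your fallback argument ("the scaling vector is proportional to $(1,\dots,1)$") would not by itself force $z_{1}=z_{2}$ when $t=2$ (a ratio of $-1$ survives the constraint), so it is the genuine $\mfS_{t}$-invariance of $\wt{\pi}$, not merely projective invariance, that must be used.
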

	
	\begin{proof}
		
		
		For the first statement, using Lemma \ref{lemmaTMStequiv}, $\mct_{M}:\mrrep_{\mfs_{M}}(\wt{M})\rightarrow\mrmod(\mca(t))$ is $\mfS_{t}$-equivariant. Since $\wt{\pi}$ is $\mfS_{t}$-stable, so is its image in $\mrmod(\mca(t))$, which turns out to be a character of the form $\mbc_{\bs{z}}$.
		
		Now we prove the second statement. Let $g$ be any element in $T_{0}$ (\emph{cf.} \S \ref{subsectionsimpletypes}) and let $\phi_{g}\in\mch(\wt{M},\wt{\lambda}_{M})$ be a non-zero function that is supported on $\wt{J_{M}}g\wt{J_{M}}$. We identify the underlying vector space of the modules $\bs{M}_{\wt{\lambda}_{M}}(\wt{\pi}\cdot\chi)$ and $\bs{M}_{\wt{\lambda}_{M}}(\wt{\pi})$. Then the action of $\phi_{g}$ on both modules differ exactly by multiplying by $\chi(g)$. In particular, let $$g_{i}=\mrdiag(I_{m_{0}},\dots,I_{m_{0}},\varpi_{E}^{n_{0}}I_{m_{0}},I_{m_{0}},\dots, I_{m_{0}})\in T_{0},$$
		with $\varpi_{E}^{n_{0}}I_{m_{0}}$ occurring in the $i$-th block. Then from our construction, there exist non-zero elements $\phi_{g_{i}}$ supported on $\wt{J_{M}}g_{i}\wt{J_{M}}$, $i=1,\dots,t$, such that their images under the composition map
		$$\xymatrix{ \mch(\wt{M},\wt{\lambda}) \ar[r]^-{\Psi_{M}^{-1}} & \wt{\mca}(t,s_{0}) \ar[r]^-{\mrres} & \mca(t) }$$ are exactly $X_{i}$, $i=1,\dots,t$. As a result, if we identify the underlying vector space of the image of $\mct_{M}(\wt{\pi}\cdot\chi)$ and $\mct_{M}(\wt{\pi})$ as $\mca(t)$-modules, then the $X_{i}$-action on both modules differ by $$\chi(g_{i})=\nu(\varpi_{F})^{a_{i}n_{0}m_{0}f}=\bs{q}_{0}^{-a_{i}n_{0}}.$$ 
		So we finish the proof of the second statement.

	\end{proof}
	
	
	
	\subsection{Proof of Lemma \ref{lemmaTMStequiv}}
	
	In this part, we give a proof of Lemma \ref{lemmaTMStequiv}, which already contains some ideas of the previous and the next subsections.
	
	By definition, $\bs{\mathrm{M}}_{\wt{\lambda}_{M}}$ is 
	$\mfS_{t}$-equivariant. Also, $\wt{\mct}_{M}$ and $\mct_{M}$ are $\mfS_{t}$-equivariant once we know that $\Psi_{M}^{*}$ is $\mfS_{t}$-equivariant.
	
	Now we prove the statement for $\Psi_{M}^{*}$. Let $X_{1},\dots,X_{t}$ be elements in both $\wt{\mca}(t,s_{0})$ and $\wt{\mch}(t,s_{0},\bs{q}_{0})$. Let $\varphi_{1}=\Psi_{M}(X_{1})$. 
	Let $\varsigma_{1i}$ be the transposition of $1$ and $i$ in $\mfS_{t}$, let $\varphi_{i}=\varphi_{1}^{\varsigma_{1i}}$  
	for each $i=1,\dots,t$. 
	By definition, both $\varphi_{i}$ and $\Psi_M(X_{i})$ are supported on $\wt{J_{M}}g_{i}\wt{J_{M}}$ for each $i=1,\dots,t$, where $$g_{i}:=\mrdiag(I_{m_{0}},\dots,I_{m_{0}},\varpi_{E}^{n_{0}}I_{m_{0}},I_{m_{0}},\dots, I_{m_{0}})\in T_{0}$$ with $\varpi_{E}^{n_{0}}I_{m_{0}}$ occurring in the $i$-th block.
	Thus for each $i=1,\dots,t$, there exists a non-zero complex number $c_{i}$, such that $\varphi_{i}=c_{i}\cdot\Psi_{M}(X_{i})$.
	In particular $c_{1}=1$. 
	
	Let $\bs{c}=(c_{1},\dots,c_{t})$. Our goal is to prove that $c_{1}=\dots=c_{t}=1$, then by definition $\mfS_{t}$ permutes $\Psi_{M}(X_{i})$, $i=1,\dots,t$, which shows that $\Psi_{M}^{*}$ is $\mfS_{t}$-equivariant. 
	
	We first show that $\bs{c}$ is unitary, saying that each $c_{i}$ has absolute value $1$. Note that $\Psi_{M}$ is an isometry, since both $\Psi$ and $t_{P}$ are isometries. Since $\pairang{X_{1}}{X_{1}}=\dots=\pairang{X_{t}}{X_{t}}$, we have $h_{M}(\Psi_{M}(X_{1}),\Psi_{M}(X_{1}))=\dots=h_{M}(\Psi_{M}(X_{t}),\Psi_{M}(X_{t}))$. By definition, we also have $h_{M}(\varphi_{1},\varphi_{1})=\dots=h_{M}(\varphi_{t},\varphi_{t})$. So $1=\abs{c_{1}}^{2}=\dots=\abs{c_{t}}^{2}$.
	
	We define an isomorphism of algebras $T_{\bs{c}}:\mca(t)\rightarrow\mca(t)$ that maps $X_{i}$ to $c_{i}^{-1}X_{i}$ for each $i$, and the induced map $T_{\bs{c}}^{*}:\mrmod(\mca(t))\rightarrow\mrmod(\mca(t))$ via pull-back.
	We define $\mct_{M,\bs{c}}=T_{\bs{c}}^{*}\circ\mct_{M}$ as a functor from $\mrrep_{\mfs_{M}}(\wt{M})$ to $\mrmod(\mca(t))$, thus by definition it is $\mfS_{t}$-equivariant. As in Lemma \ref{lemmaTMcal}.(1),  $\mct_{M,\bs{c}}(\wt{\pi})$ equals $\mbc_{\bs{z}}$ for a certain $\bs{z}=(z,\dots,z)\in\mbc^{t}$. Thus $\mct_{M}(\wt{\pi})$ equals $\mbc_{\bs{z}}\cdot\mbc_{\bs{c}}$. By Lemma \ref{lemmaTMcal}.(2) (and more precisely its argument), $\mct_{M}(\wt{\pi}\cdot\chi)=\mbc_{\bs{z}}\cdot\mbc_{\bs{c}}\cdot\mbc_{\bs{x}_{\chi}}$.
	
	Using the diagram \eqref{eqparindcomp} and
	Remark \ref{remresirr}, the parabolic induction $i_{\wt{P}}^{\wt{G}}(\wt{\pi}\cdot\chi)$ is irreducible if and only if $\mrind_{\mca}^{\mch}(\mbc_{\bs{z}}\cdot\mbc_{\bs{c}}\cdot\mbc_{\bs{x}_{\chi}})$ is irreducible. If $\bs{c}\neq(1,\dots,1)$, up to $\mfS_{t}$-conjugacy, without loss of generality we may assume that $c_{1}=\dots=c_{t'}=1$ and $c_{t'+1},\dots,c_{t}\neq 1$ for a certain $t'=1,\dots,t-1$. Consider $$\chi=\underbrace{1\boxtimes\dots\boxtimes1}_{t'\text{-copies}}\boxtimes\underbrace{\nu^{s}\boxtimes\dots\boxtimes\nu^{s}}_{(t-t')\text{-copies}}.$$ 
	Then using \cite{kaplan2022classification}*{Proposition 6.10}, which in parallel can be stated and proved for the S-cover, there exists $s>0$ such that $i_{\wt{P}}^{\wt{G}}(\wt{\pi}\cdot\chi)$ is not irreducible (in the next subsection $s$ is shown to be $1/n_{0}$). On the other hand,
	$$\mrind_{\mca}^{\mch}(\mbc_{\bs{z}}\cdot\mbc_{\bs{c}}\cdot\mbc_{\bs{x}_{\chi}})=\mrind_{\mca}^{\mch}(\mbc_{\bs{x}})$$
	where $\bs{x}=(z,\dots,z,z\bs{q}_{0}^{-sn_{0}}c_{t'+1},\dots,z\bs{q}_{0}^{-sn_{0}}c_{t})$. The following lemma can be easily deduced from the classification of irreducible representations of $\mch(t,\bs{q}_{0})$, see for instance \cite{solleveld2021affine}*{\S 2.3}.
	
	\begin{lemma}
		
		For $\bs{x}=(x_{1},\dots,x_{t})$ such that $x_{j}\neq x_{i}\bs{q}_{0}^{\pm 1}$ for any $1\leq i\neq j\leq t$, the induced representation $\mrind_{\mca}^{\mch}(\mbc_{\bs{x}})$ is irreducible.
		
	\end{lemma}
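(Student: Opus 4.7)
I would deduce this from the intertwining-operator technique for the affine Hecke algebra $\mch(t,\bs{q}_0)$, which is standard in the Bernstein-Lusztig-Rogawski theory of principal series modules. The genericity hypothesis $x_j \neq \bs{q}_0^{\pm 1} x_i$ for $i \neq j$ is exactly the condition which guarantees that all the intertwining operators are invertible on the induced module, which in turn forces irreducibility.

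Concretely, in the localization $\mch(t,\bs{q}_0) \otimes_{\mca(t)} \mathrm{Frac}(\mca(t))$ one has, for each $i = 1,\dots,t-1$, a Bernstein-Lusztig intertwiner
\[
    \tau_i \;=\; [\varsigma_i] \;-\; \frac{(\bs{q}_0-1)\,X_{i+1}}{X_i - X_{i+1}}
\]
satisfying $\tau_i \cdot a = \varsigma_i(a)\cdot \tau_i$ for $a \in \mca(t)$ and
\[
    \tau_i^2 \;=\; \frac{(X_i - \bs{q}_0 X_{i+1})(X_{i+1} - \bs{q}_0 X_i)}{(X_i - X_{i+1})^2}.
\]
I would decompose $M := \mrind_{\mca}^{\mch}(\mbc_{\bs{x}})$ (which has dimension $t!$ and basis $\{[w]\otimes 1 : w \in \mfS_t\}$) into $\mca(t)$-generalized weight spaces $M = \bigoplus_{\bs{y} \in \mfS_t\cdot \bs{x}} M_{[\bs{y}]}$, where $\mfS_t$ acts on $\mbc^t$ by permuting coordinates. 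The next step is to verify that under the stated hypothesis each $\tau_i$ descends to a well-defined $\mch$-endomorphism of $M$ which maps $M_{[\bs{y}]}$ bijectively onto $M_{[\varsigma_i\cdot \bs{y}]}$: the numerator $(y_i - \bs{q}_0 y_{i+1})(y_{i+1} - \bs{q}_0 y_i)$ of $\tau_i^2$ never vanishes on any weight $\bs{y} = w \cdot \bs{x}$, while any potential pole coming from the denominator $X_i - X_{i+1}$ is cancelled by a matching zero in the numerator of $\tau_i$ itself.

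Since any nonzero $\mch$-submodule $N$ of $M$ is automatically $\mca$-stable, it contains at least one generalized weight space $M_{[\bs{y}]}$. Applying the invertible operators $\tau_i$ then transports $M_{[\bs{y}]}$ to every $M_{[w \cdot \bs{y}]}$ for $w \in \mfS_t$, so $N$ contains all weight spaces and hence equals $M$. The main technical obstacle is the careful bookkeeping of zeros and poles of $\tau_i$ when $\bs{x}$ has repeated entries: then the denominator $X_i - X_{i+1}$ may specialize to zero on some weight space, and one must check that $\tau_i$ extends to an honest (and still invertible) endomorphism of the non-semisimple $\mca$-module $M$. It is here that the full two-sided hypothesis $x_i / x_j \neq \bs{q}_0^{\pm 1}$ (rather than merely $x_i/x_j \neq \bs{q}_0$) is essential, so that both factors in the numerator of $\tau_i^2$ remain nonzero after specialization.
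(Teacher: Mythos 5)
The paper itself gives no direct argument here: it deduces the lemma from the classification of irreducible $\mch(t,\bs{q}_{0})$-modules (the Zelevinsky segment description, \cite{solleveld2021affine}*{\S 2.3}), under which $\mrind_{\mca}^{\mch}(\mbc_{\bs{x}})$ is a product of $t$ singleton segments and is irreducible precisely when no two are linked, i.e.\ when $x_{j}\neq \bs{q}_{0}^{\pm1}x_{i}$. Your intertwining-operator route is a legitimate and more self-contained alternative, and as written it is complete whenever $\bs{x}$ is \emph{regular} (all coordinates distinct): every generalized weight space is then a line, each $\tau_{i}$ is defined and invertible on it, and a nonzero submodule, being $\mca(t)$-stable, must contain one of these lines and hence all of them.

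The gap is in the non-regular case, which the hypothesis allows (since $\bs{q}_{0}>1$, having $x_{i}=x_{j}$ is compatible with $x_{j}\neq \bs{q}_{0}^{\pm1}x_{i}$) and which is exactly the case the paper uses, e.g.\ $\bs{x}=(z,\dots,z,\dots)$ in the proof of Lemma \ref{lemmaTMStequiv}. Two steps fail there. First, the asserted pole cancellation is false: the numerator of $\tfrac{(\bs{q}_{0}-1)X_{i+1}}{X_{i}-X_{i+1}}$ is $(\bs{q}_{0}-1)X_{i+1}$, which does not vanish on $X_{i}=X_{i+1}$ (the $X_{k}$ are invertible), so your $\tau_{i}$ does not descend to $M_{[\bs{y}]}$ when $y_{i}=y_{i+1}$. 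This is repairable by using the integral intertwiner $\tau_{i}'=[\varsigma_{i}](X_{i}-X_{i+1})-(\bs{q}_{0}-1)X_{i+1}$, which lies in $\mch(t,\bs{q}_{0})$ itself, still satisfies $\tau_{i}'a=\varsigma_{i}(a)\tau_{i}'$, and has $(\tau_{i}')^{2}=\pm(X_{i}-\bs{q}_{0}X_{i+1})(X_{i+1}-\bs{q}_{0}X_{i})$, hence is invertible on every generalized weight space under your hypothesis. Second, and not repairable by bookkeeping alone, the step ``a nonzero submodule contains at least one generalized weight space'' is wrong when those spaces have dimension $>1$: an $\mca(t)$-stable subspace only meets some $M_{[\bs{y}]}$ nontrivially, and for $\bs{x}=(1,\dots,1)$ there is a single weight space equal to all of $M$, so the claim yields nothing. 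Closing this requires a further argument in the spirit of Kato's criterion --- for instance, use the invertible $\tau_{i}'$ to get $\mrind_{\mca}^{\mch}(\mbc_{\bs{x}})\cong\mrind_{\mca}^{\mch}(\mbc_{w\bs{x}})$ for all $w\in\mfS_{t}$, combine with $\mrend_{\mch}(M)\cong\mrhom_{\mca}(\mbc_{\bs{x}},M\rest_{\mca})$ from Frobenius reciprocity, and compare counts of honest eigenvectors --- or simply fall back on the segment classification as the paper does.
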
 
	
	As a result, $\mrind_{\mca}^{\mch}(\mbc_{\bs{z}}\cdot\mbc_{\bs{c}}\cdot\mbc_{\bs{x}_{\chi}})$ is an irreducible representation of $\mch(t,\bs{q}_{0})$, contradictory! So $\bs{c}=(1,\dots,1)$ and Lemma \ref{lemmaTMStequiv} is proved.
	
	\begin{remark}
		
		Our proof here is not so direct, which uses representation theory of $\wt{G}$ and $\mch(t,\bs{q}_{0})$. We wonder if a proof on the level of generators and relations of Hecke algebras could be given, like what has been done in \cite{bushnell129admissible}*{\S 7.6}.
		
	\end{remark}
	
	
	
	
	

	\subsection{Reducible parabolic induction of the metaplectic tensor product of two cuspidal representations}
	
	Keep the notation as above. We further assume that $t=2$, $r=2r_{0}$ and $M=G_{r_{0}}\times G_{r_{0}}$. The goal is to prove the following proposition.
	
	\begin{proposition}\label{proplocusindirred}
		
		Let $\wt{\pi}_{s}$ be the cuspidal representation $\wt{\pi}\cdot(\nu^{-s}\boxtimes\nu^{s})$ of $\wt{M}$ for $s\in\mbr$. Then the parabolic induction $i_{\wt{P}}^{\wt{G}}(\wt{\pi}_{s})$ is reducible if and only if $s=\pm1/2n_{0}$.
		
	\end{proposition}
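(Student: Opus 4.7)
The plan is to translate the reducibility question into one about principal series modules of the rank-two affine Hecke algebra $\mch(2,\bs{q}_0)$ via the equivalence of categories encoded in diagram \eqref{eqparindcomp}, and then quote the standard reducibility criterion for such modules.

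First, I would set $t=2$ and apply Lemma \ref{lemmaTMcal} to the character $\chi=\nu^{-s}\boxtimes\nu^{s}$ of $M$, so that $a_1=-s$, $a_2=s$ and $\bs{x}_{\chi}=(\bs{q}_0^{sn_0},\bs{q}_0^{-sn_0})$. The lemma (using that $\wt{\pi}=\wt{\pi}_0\boxtimes\wt{\pi}_0$ or its KP-analogue is $\mfS_2$-stable) gives
$$\mct_{M}(\wt{\pi}_s)=\mbc_{\bs{z}}\cdot\mbc_{\bs{x}_{\chi}}=\mbc_{\bs{y}},\qquad \bs{y}=(z\bs{q}_0^{sn_0},z\bs{q}_0^{-sn_0}),$$
for some $z\in\mbc^{\times}$. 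Then the commutative diagram \eqref{eqparindcomp} yields
$$\mct_{G}\bigl(i_{\wt{P}}^{\wt{G}}(\wt{\pi}_s)\bigr)\cong\mrInd_{\mca(2)}^{\mch(2,\bs{q}_0)}(\mbc_{\bs{y}}),$$
and since $\mct_{G}$ is an equivalence of categories, reducibility of $i_{\wt{P}}^{\wt{G}}(\wt{\pi}_s)$ is equivalent to reducibility of this principal series module.

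Next, I would invoke the classical reducibility criterion for the affine Hecke algebra $\mch(2,\bs{q}_0)$ of type A (see e.g.\ \cite{solleveld2021affine}*{\S 2.3}): the principal series $\mrInd_{\mca(2)}^{\mch(2,\bs{q}_0)}(\mbc_{(y_1,y_2)})$ is reducible if and only if $y_1/y_2=\bs{q}_0^{\pm 1}$. In our situation $y_1/y_2=\bs{q}_0^{2sn_0}$, so reducibility occurs precisely when $2sn_0=\pm 1$, i.e.\ $s=\pm 1/(2n_0)$, as claimed.

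The main technical point, rather than a real obstacle, is to confirm that the intermediate appearance of the twisted algebra $\wt{\mch}(2,s_0,\bs{q}_0)$ in $\wt{\mct}_{G}$ does not affect the detection of irreducibility at the level of $\mch(2,\bs{q}_0)$-modules. This is furnished by Remark \ref{remresirr}: since $[\zeta]$ is central in $\wt{\mch}(2,s_0,\bs{q}_0)$, a finite-dimensional module is irreducible over $\wt{\mch}(2,s_0,\bs{q}_0)$ if and only if its restriction to $\mch(2,\bs{q}_0)$ is irreducible. Thus passing from $\wt{\mct}_{G}$ to $\mct_{G}$ in diagram \eqref{eqparindcomp} is harmless, and the argument concludes.
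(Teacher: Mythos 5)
Your proposal is correct and follows the same route as the paper: transfer via the commutative diagram \eqref{eqparindcomp} together with Remark \ref{remresirr}, compute $\mct_{M}(\wt{\pi}_{s})=\mbc_{\bs{z}}\cdot\mbc_{(\bs{q}_{0}^{sn_{0}},\bs{q}_{0}^{-sn_{0}})}$ from Lemma \ref{lemmaTMcal}, and apply the standard reducibility criterion for rank-two principal series of $\mch(2,\bs{q}_{0})$. No gaps.
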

	
	\begin{proof}
		
		Using the commutative diagram \eqref{eqparindcomp} and Remark \ref{remresirr}, $i_{\wt{P}}^{\wt{G}}(\wt{\pi}_{s})$ is irreducible if and only if $\mrind_{\mca}^{\mch}(\mct_{M}(\wt{\pi}_{s}))$ is irreducible. Using Lemma \ref{lemmaTMcal}, we get $\mct_{M}(\wt{\pi}_{s})=\mbc_{\bs{z}}\cdot\mbc_{(\bs{q}_{0}^{sn_{0}},\bs{q}_{0}^{-sn_{0}})}$. From the study of irreducible representations of $\mch(2,\bs{q}_{0})$ (\emph{cf.} \cite{solleveld2021affine}*{Theorem 2.5}), the induction $\mrind_{\mca}^{\mch}(\mbc_{\bs{z}}\cdot\mbc_{(\bs{q}_{0}^{sn_{0}},\bs{q}_{0}^{-sn_{0}})})$ is reducible if and only if $sn_{0}=\pm1/2$, or equivalently $s=\pm1/2n_{0}$.
		
	\end{proof}
	
	\begin{remark}
		
		This proposition complements \cite{kaplan2022classification}*{Proposition 6.10} by giving an explicit description of $s_{\rho}$ in \emph{loc. cit.} for KP-covers. A similar calculation has also been done in \cite{zou2022metaplectic}*{Proposition 3.13}. A natural and interesting question is to verify that the two calculations give the same value, which in turn will lead us to a study of ``explicit" metaplectic correspondence, i.e. describing the image of the metaplectic lift of a genuine cuspidal representation using the simple type theory.
		
	\end{remark}
	
	\subsection{The image of Zelevinsky standard modules under $\mct_{G}$}\label{subsectionZelevinskystandard}
	
	Keep the notation as above. We consider the parabolic induction $$i_{\wt{P}}^{\wt{G}}(\wt{\pi}\cdot(\nu^{-(t-1)/2n_{0}}\boxtimes\nu^{-(t-3)/2n_{0}}\boxtimes\dots\boxtimes\nu^{(t-1)/2n_{0}})),$$
	which, by Proposition \ref{proplocusindirred} and \cite{kaplan2022classification}*{\S 7}, admits a unique irreducible subrepresentation denoted by $Z(\wt{\pi},t)$ and a unique irreducible quotient denoted by $L(\wt{\pi},t)$. 
	
	\begin{remark}
		
		Although only KP-covers are considered in \cite{kaplan2022classification}*{\S 7}, the corresponding results for the S-cover are also true, whose proofs are similar and simpler.
		
	\end{remark}
	
	Let $\bs{1}_{t,z,\bs{q}_{0}}$ (resp. $\mrst_{t,z,\bs{q}_{0}}$) be the unique irreducible subrepresentation (resp. quotient) of $$\mrind_{\mca}^{\mch}(\mbc_{(z\bs{q}_{0}^{(t-1)/2},z\bs{q}_{0}^{(t-3)/2},\dots,z\bs{q}_{0}^{(1-t)/2})}).$$
	They are one-dimensional representations of $\mch(t,\bs{q}_{0})$. See for instance \cite{solleveld2021affine}*{\S 2.3}. The following proposition follows from Remark \ref{remresirr}, \eqref{eqparindcomp} and Lemma \ref{lemmaTMcal}.
	
	\begin{proposition}\label{propZLimage}
		
		We have $\mct_{G}(Z(\wt{\pi},t))=\bs{1}_{t,z,\bs{q}_{0}}$ and $\mct_{G}(L(\wt{\pi},t))=\mrst_{t,z,\bs{q}_{0}}$.
		
	\end{proposition}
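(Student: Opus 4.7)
The plan is to read the proposition off from the commutative diagram~\eqref{eqparindcomp}, Lemma~\ref{lemmaTMcal}, and the fact that the equivalence of categories $\wt{\mct}_{G}$ composed with restriction to $\mch(t,\bs{q}_{0})$ still preserves irreducibility (Remark~\ref{remresirr}). The input is the parabolically induced module $i_{\wt{P}}^{\wt{G}}(\wt{\pi}\cdot\chi)$ with $\chi=\nu^{-(t-1)/2n_{0}}\boxtimes\nu^{-(t-3)/2n_{0}}\boxtimes\dots\boxtimes\nu^{(t-1)/2n_{0}}$, whose unique irreducible subrepresentation (resp. quotient) is $Z(\wt{\pi},t)$ (resp. $L(\wt{\pi},t)$) by Proposition~\ref{proplocusindirred} combined with the cited results of~\cite{kaplan2022classification}.

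First I would compute $\mct_{M}(\wt{\pi}\cdot\chi)$. Writing $\chi=\nu^{a_{1}}\boxtimes\dots\boxtimes\nu^{a_{t}}$ with $a_{i}=-(t-2i+1)/(2n_{0})$, Lemma~\ref{lemmaTMcal} yields $\bs{x}_{\chi}=(\bs{q}_{0}^{(t-1)/2},\bs{q}_{0}^{(t-3)/2},\dots,\bs{q}_{0}^{(1-t)/2})$ and
\begin{equation*}
\mct_{M}(\wt{\pi}\cdot\chi)\;=\;\mbc_{\bs{z}}\cdot\mbc_{\bs{x}_{\chi}}\;=\;\mbc_{(z\bs{q}_{0}^{(t-1)/2},z\bs{q}_{0}^{(t-3)/2},\dots,z\bs{q}_{0}^{(1-t)/2})},
\end{equation*}
for the scalar $z$ produced by Lemma~\ref{lemmaTMcal}(1). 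Feeding this into~\eqref{eqparindcomp} gives
\begin{equation*}
\mct_{G}(i_{\wt{P}}^{\wt{G}}(\wt{\pi}\cdot\chi))\;=\;\mrInd_{\mca}^{\mch}\bigl(\mbc_{(z\bs{q}_{0}^{(t-1)/2},\dots,z\bs{q}_{0}^{(1-t)/2})}\bigr),
\end{equation*}
which is exactly the induced module whose unique irreducible sub (resp. quotient) is $\bs{1}_{t,z,\bs{q}_{0}}$ (resp. $\mrst_{t,z,\bs{q}_{0}}$) by the definitions recalled in \S \ref{subsectionZelevinskystandard}.

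To conclude, I would transport sub/quotient structure through $\mct_{G}$. Since $\wt{\mct}_{G}$ is an equivalence of categories, $\wt{\mct}_{G}(Z(\wt{\pi},t))$ is the unique irreducible $\wt{\mch}(t,s_{0},\bs{q}_{0})$-submodule of $\wt{\mct}_{G}(i_{\wt{P}}^{\wt{G}}(\wt{\pi}\cdot\chi))$. Restricting along~\eqref{eqcommrestparindhecke}, Remark~\ref{remresirr} shows that $\mct_{G}(Z(\wt{\pi},t))$ remains an irreducible $\mch(t,\bs{q}_{0})$-module, and it embeds as a submodule of $\mrInd_{\mca}^{\mch}(\mbc_{(z\bs{q}_{0}^{(t-1)/2},\dots,z\bs{q}_{0}^{(1-t)/2})})$. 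Since the socle of the latter is $\bs{1}_{t,z,\bs{q}_{0}}$ with multiplicity one, the two must coincide. The argument for $L(\wt{\pi},t)\mapsto\mrst_{t,z,\bs{q}_{0}}$ is formally dual, replacing socles with cosocles and irreducible submodules with irreducible quotients.

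The only delicate point is making sure the restriction of the unique $\wt{\mch}$-socle really lands inside the $\mch$-socle; this is taken care of by Remark~\ref{remresirr} (restriction preserves irreducibility in finite dimensions), together with the standard fact that $\mrInd_{\mca}^{\mch}(\mbc_{(z\bs{q}_{0}^{(t-1)/2},\dots,z\bs{q}_{0}^{(1-t)/2})})$ is a type $A$ affine-Hecke standard module, so that $\bs{1}_{t,z,\bs{q}_{0}}$ appears with multiplicity one as both a subobject and a Jordan--H\"older factor. Beyond this bookkeeping the statement is a direct read-off from the earlier machinery.
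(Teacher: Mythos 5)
Your proposal is correct and follows exactly the route the paper intends: the paper's proof consists of the single sentence citing Remark \ref{remresirr}, diagram \eqref{eqparindcomp} and Lemma \ref{lemmaTMcal}, and your write-up simply makes explicit the computation of $\bs{x}_{\chi}$, the identification of $\mct_{G}(i_{\wt{P}}^{\wt{G}}(\wt{\pi}\cdot\chi))$ with the standard module $\mrInd_{\mca}^{\mch}(\mbc_{(z\bs{q}_{0}^{(t-1)/2},\dots,z\bs{q}_{0}^{(1-t)/2})})$, and the transport of the unique irreducible sub/quotient through the equivalence and the restriction functor. No gaps.
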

	
	\subsection{Inertial equivalence classes of discrete series}
	
	An inertial equivalence class $\mfs$ of $\wt{G}$ is called \emph{discrete} if $\mrrep_{\mfs}(\wt{G})$ contains a (genuine) discrete series representation (i.e. an essentially square integrable representation) of $\wt{G}$.
	
	First we recall that discrete series representations of $\wt{G}$ could be classified, which was originally a statement of Bernstein in the linear case \cite{zelevinsky1980induced}*{Theorem 9.3}.
	
	\begin{lemma}\label{lemmaclassifydiscrete}
		
		For $r=r_{0}t$ and a genuine cuspidal representation $\wt{\pi}_{0}$ of $\wt{G_{r_{0}}}$, the corresponding $L(\wt{\pi},t)$ defined in \S \ref{subsectionZelevinskystandard}
		is a discrete series representation. Conversely, every genuine discrete series representation of $\wt{G}$ is of such form.
		
	\end{lemma}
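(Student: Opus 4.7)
The plan is to reduce the classification to the affine Hecke algebra side via the equivalence of categories $\mct_{G}$ from \eqref{eqparindcomp}, using the preservation of parabolic induction and Jacquet restriction established in Theorem \ref{thmJacquetcomm}. For the forward direction, I would verify that $L(\wt{\pi},t)$ is essentially square integrable by Casselman's criterion. Using a geometric lemma of Bernstein--Zelevinsky type, one computes the Jacquet module of $i_{\wt{P}}^{\wt{G}}(\wt{\pi}\cdot(\nu^{-(t-1)/2n_{0}}\boxtimes\dots\boxtimes\nu^{(t-1)/2n_{0}}))$ in terms of $\mfS_{t}$-orbits of the exponents $((t-1)/2n_{0},(t-3)/2n_{0},\dots,(1-t)/2n_{0})$. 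Then I would identify $L(\wt{\pi},t)$ as the Langlands quotient of the corresponding standard module, so that all exponents of its Jacquet modules along standard maximal parabolics lie strictly in the negative Weyl chamber after centering by the unitary part.

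For the converse, let $\sigma$ be a genuine discrete series representation and let $\mfs=(\wt{M},\mco)$ be its inertial support. Writing $\wt{M}=\wt{G_{r_{1}}}\times\dots\times\wt{G_{r_{k}}}$ with cuspidal data $\wt{\pi}_{1}\boxtimes\dots\boxtimes\wt{\pi}_{k}$ via Corollary \ref{corthmLevicuspidal} together with the metaplectic tensor product, the first task is to show that $r_{1}=\dots=r_{k}=r_{0}$ and that all $\wt{\pi}_{i}$ share a common underlying cuspidal $\wt{\pi}_{0}$ of $\wt{G_{r_{0}}}$ up to unramified twist. The key input is Proposition \ref{proplocusindirred}: if two consecutive blocks carried inertially inequivalent cuspidals or blocks of different size, the corresponding rank-one parabolic induction would be irreducible for every unramified twist, so by a standard reducibility-of-Jacquet-module argument $\sigma$ would fail the Casselman criterion. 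This reduces the problem to the simple-type block of Section \ref{subsectionreforsimpletypes}.

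In this block, transport the question to finite-dimensional $\mch(t,\bs{q}_{0})$-modules through $\mct_{G}$. The Casselman criterion translates, via Lemma \ref{lemmaTMcal}, into a condition on the $\mca(t)$-weights of the restriction of $\mct_{G}(\sigma)$: such weights must form a single $\mfS_{t}$-orbit of the shape $(z\bs{q}_{0}^{(t-1)/2},\dots,z\bs{q}_{0}^{(1-t)/2})$ with the strict-negativity condition on partial sums. Combined with the classification of finite-dimensional irreducible $\mch(t,\bs{q}_{0})$-modules (\emph{cf.} \cite{solleveld2021affine}), this forces $\mct_{G}(\sigma)\cong \mrst_{t,z,\bs{q}_{0}}$, and Proposition \ref{propZLimage} then identifies $\sigma$ with $L(\wt{\pi},t)$ for the corresponding $\wt{\pi}$.

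The main obstacle is the precise translation of Casselman's square-integrability criterion through the equivalence $\mct_{G}$. Since $\mct_{G}$ is defined only on the single block $\mrrep_{\mfs}(\wt{G})$, one must verify that Jacquet modules along all standard parabolics of $\wt{G}$ are faithfully tracked by restriction from $\mch(t,\bs{q}_{0})$ to parabolic subalgebras of the form $\mca(t)$, and that the resulting exponents on both sides differ only by a controlled unramified twist encoded by $n_{0}$ as in Lemma \ref{lemmaTMcal}. Getting the signs and normalizations right --- so that the Steinberg module on the Hecke side corresponds exactly to the negative-chamber condition on the group side --- is the most delicate bookkeeping; this is where the normalization $\delta_{N}^{1/2}$ inside $t_{P}$ and the exponent factor $\bs{q}_{0}^{-n_{0}}$ in Lemma \ref{lemmaTMcal} must be combined consistently.
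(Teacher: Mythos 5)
Your route is sound but genuinely different from the one the paper takes. The paper disposes of this lemma by citing Jantzen's argument \cite{jantzen2000square}*{\S 2.3}, which runs entirely on the group side: it uses the Zelevinsky-style classification of irreducible subquotients of $i_{\wt{P}}^{\wt{G}}$ by multisegments, established for KP-covers in \cite{kaplan2022classification} and adaptable to the S-cover, to show that the essentially square-integrable representations are exactly the $L(\wt{\pi},t)$. You instead propose to stay inside the type-theoretic machinery of the paper: reduce to a simple-type block (via the standard ``unlinked cuspidal support forces irreducible induction, hence not square-integrable'' argument), transport through $\mct_{G}$, and invoke the classification of discrete series modules of the affine Hecke algebra of type $\mathrm{A}$. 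What your approach buys is independence from the external classification of \cite{kaplan2022classification}; what it costs is the careful translation of Casselman's criterion through $\bs{\mathrm{M}}_{\wt{\lambda}}$, $t_{P}$ and Lemma \ref{lemmaTMcal}, which you correctly single out as the delicate point and which is indeed manageable via Theorem \ref{thmJacquetcomm} together with the $\bs{q}_{0}^{-n_{0}}$ normalization of the weights. Two small corrections to your sketch: in the forward direction, $L(\wt{\pi},t)$ (the generalized Steinberg) is \emph{not} a proper Langlands quotient --- being essentially square-integrable it is its own Langlands datum; what you actually verify is that its Jacquet module down to $\wt{M}$ is a single term with strictly anti-dominant exponents, read off from the geometric lemma applied to the induction in increasing order of exponents. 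And in the reduction step, the irreducibility of rank-one induction for inertially \emph{inequivalent} cuspidal pairs is not Proposition \ref{proplocusindirred} (which computes the reducibility point for equivalent, $W$-invariant data) but its companion in \cite{kaplan2022classification}; you need to cite that separately.
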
 
	
	It follows from a similar argument as Jantzen \cite{jantzen2000square}*{\S 2.3}. Note that all the required ingredients in the proof could be found in \cite{kaplan2022classification} for a KP-cover, and could also be modified for the S-cover with minor changes.
	
	Now we may state the main result of this subsection, generalizing Corollary \ref{cormaximalsimpletypescorrep} for a KP-cover or the S-cover. 
	
	\begin{proposition}\label{propdiscinerclass}
		
		We have a bijection between the set of $G$-conjugacy classes of weak equivalence classes of simple types $(\wt{J},\wt{\lambda})$ and the set of discrete inertial equivalence classes $\mfs$ of $\wt{G}$, such that $(\wt{J},\wt{\lambda})$ is an $\mfs$-type.
		
	\end{proposition}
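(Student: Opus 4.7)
The plan is to construct the bijection explicitly in both directions, then use the results already established to verify everything matches up. First, I would show that the map is well-defined. Let $(\wt{J},\wt{\lambda})$ be a simple type of $\wt{G}$, attached to data $(\mfa,\beta,\kappa,\varrho_{0},t,m_{0})$ as in \S \ref{subsectionreforsimpletypes}, with $m=m_{0}t$ and $r=r_{0}t$. By Theorem \ref{thmJPcoveringpair}, the pair $(\wt{J},\wt{\lambda})$ is an $\mfs$-type for an inertial equivalence class $\mfs=(\wt{M},\mco)$ where $\wt{M}$ is the Levi subgroup isomorphic to $\wt{G_{r_{0}}}^{t}$ (via the metaplectic tensor product structure) and $\mco$ is generated by the cuspidal representation $\wt{\pi}=(\wt{\pi}_{0}\boxtimes\dots\boxtimes\wt{\pi}_{0})_{\wt{\omega}}$ in the KP-case (or the ordinary tensor product in the S-case), where $\wt{\pi}_{0}$ is compactly induced from an EMST extending a maximal simple type $(\wt{J_{0}},\wt{\lambda}_{0})$ of $\wt{G_{r_{0}}}$ recovered from $(\wt{J},\wt{\lambda})$ in the obvious way. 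By Lemma \ref{lemmaclassifydiscrete} the Langlands quotient $L(\wt{\pi},t)$ is a discrete series representation, and by construction it lies in $\mrrep_{\mfs}(\wt{G})$, so $\mfs$ is a discrete inertial equivalence class. Moreover, two simple types in the same weak equivalence class differ by twisting $\varrho$ by $\chi_{g_{0}}$ for some $g_{0}\in T(\mfb)$, which does not change $\wt{\pi}_{0}$ up to the inertial action, hence the assignment factors through weak equivalence and $G$-conjugacy.

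Next, for surjectivity, let $\mfs=(\wt{L},\mco')$ be a discrete inertial equivalence class of $\wt{G}$, and choose a discrete series representation $\pi\in\mrrep_{\mfs}(\wt{G})$. By Lemma \ref{lemmaclassifydiscrete}, there exist a positive divisor $r_{0}$ of $r$ with $t=r/r_{0}$ and a genuine cuspidal representation $\wt{\pi}_{0}$ of $\wt{G_{r_{0}}}$ such that $\pi\cong L(\wt{\pi},t)$ with $\wt{\pi}$ defined by \eqref{eqpidefKPcover} or \eqref{eqpidefScover}. In particular $\wt{L}$ is $G$-conjugate to $\wt{M}=\wt{G_{r_{0}}}^{t}$ and $\mco'$ contains the inertial orbit of $\wt{\pi}$. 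Apply Theorem \ref{thmcuspidalconstruction}(2) to $\wt{\pi}_{0}$ to obtain a maximal simple type $(\wt{J_{0}},\wt{\lambda}_{0})$ of $\wt{G_{r_{0}}}$ contained in $\wt{\pi}_{0}$. Then the construction of \S \ref{subsectionreforsimpletypes} produces a simple type $(\wt{J},\wt{\lambda})$ of $\wt{G}$, and by Theorem \ref{thmJPcoveringpair} combined with the discussion there, $(\wt{J},\wt{\lambda})$ is an $\mfs$-type.

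For injectivity, suppose $(\wt{J},\wt{\lambda})$ and $(\wt{J'},\wt{\lambda'})$ are simple types which are both $\mfs$-types for the same discrete inertial equivalence class $\mfs$. Since the centers and the hereditary order data are determined up to $G$-conjugacy by the Levi subgroup $\wt{M}$ and by the assignment $\mfs\mapsto\wt{\pi}_{0}$ above (unique up to unramified twist and conjugation), we may assume $\mfa$ and $\mfa'$ are $G$-conjugate. By the equivalence of categories $\bs{\mathrm{M}}_{\wt{\lambda}}$, both $\wt{\lambda}$ and $\wt{\lambda'}$ appear in some common $\pi\in\mrrep_{\mfs}(\wt{G})$, so they are intertwined in $\wt{G}$. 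Applying Theorem \ref{thmsimpletypeconjugacy} we conclude that there exists $g\in G$ with $J'=J^{g}$ and $[\wt{\lambda'}]=[\wt{\lambda}^{g}]$, giving injectivity of the map on $G$-conjugacy classes of weak equivalence classes.

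The main obstacle I anticipate is in the surjectivity step, ensuring that the simple type recovered from $\wt{\pi}_{0}$ via the construction of \S \ref{subsectionreforsimpletypes} genuinely lies in the prescribed inertial equivalence class $\mfs$ rather than merely a weakly equivalent one; this requires matching the central character $\wt{\omega}$ in the metaplectic tensor product \eqref{eqpidefKPcover} against the one produced by the EMST compact induction, and may necessitate an additional twist by a character of $F^{\times}/F^{\times n}$ (as in the proof of Theorem \ref{thmcuspidalconstructionLevi}(2)) which stays inside the weak equivalence class. A secondary check is that the invariant $(r_{0},t)$ attached to a simple type is independent of representative in its weak equivalence class and matches the one extracted from $\mfs$ via Lemma \ref{lemmaclassifydiscrete}; this follows from the uniqueness clause in Theorem \ref{thmcuspidalconstructionLevi}(3) applied to the Levi $\wt{M}$ together with Proposition \ref{proplocusindirred} forcing the decomposition $r=r_{0}t$ to be the coarsest one compatible with discrete series data.
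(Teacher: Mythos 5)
Your proposal is correct and follows essentially the same route as the paper: well-definedness via the type property of $(\wt{J},\wt{\lambda})$ and the fact that $L(\wt{\pi},t)\in\mrrep_{\mfs}(\wt{G})$ is a discrete series, surjectivity via Lemma \ref{lemmaclassifydiscrete} together with Theorem \ref{thmcuspidalconstruction}.(2) and the construction of \S \ref{subsectionreforsimpletypes}, and injectivity by observing that two $\mfs$-types are intertwined and invoking Theorem \ref{thmsimpletypeconjugacy}. The central-character subtlety you flag in the surjectivity step is exactly what the twist by a character of $F^{\times}/F^{\times n}$ in the proof of Theorem \ref{thmcuspidalconstructionLevi}.(2) resolves, as the paper's argument implicitly uses.
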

	
	\begin{proof}
		
		First given a simple type $(\wt{J},\wt{\lambda})$ of $\wt{G}$, we show that it is a type of a discrete inertial equivalence class $\mfs$. Indeed, let $(\wt{J_{M}},\wt{\lambda}_{M})$ be the corresponding maximal simple type of $\wt{M}$. In particular, we have $\wt{\lambda}_{M}=\wt{\lambda}_{0}\boxtimes\dots\boxtimes\wt{\lambda}_{0}$, where $\wt{\lambda}_{0}$ is a maximal simple type of $\wt{G_{r_{0}}}$. Then we may choose a genuine cuspidal representation $\wt{\pi}_{0}$ of $\wt{G_{r_{0}}}$ containing $\wt{\lambda}_{0}$ and a compatible genuine character $\wt{\omega}$ of $Z(\wt{G})$ in the KP-case, such that the corresponding genuine cuspidal representation $\wt{\pi}$ of $\wt{M}$, defined as in \eqref{eqpidefScover} or \eqref{eqpidefKPcover}, contains $\wt{\lambda}_{M}$. Let $\mfs_{M}$ be the inertial equivalence class of $\wt{M}$ containing $\wt{\pi}$, and let $\mfs$ be the corresponding inertial equivalence class of $\wt{G}$. Then $(\wt{J_{M}},\wt{\lambda}_{M})$ is an $\mfs_{M}$-type and $(\wt{J},\wt{\lambda})$ is an $\mfs$-type. Moreover by definition $L(\wt{\pi},t)\in\mrrep_{\mfs}(\wt{G})$.
		
		A by-product of the above argument is that: two simple types having the same $G$-conjugacy class of the weak equivalence class correspond to the same $\mfs$.
		
		Conversely, given $\wt{\pi}_{0}$, $\wt{\pi}$ and a discrete inertial equivalence class $\mfs$ of $\wt{G}$ such that $\mrrep_{\mfs}(\wt{G})$ contains $L(\wt{\pi},t)$, the simple type $(\wt{J},\wt{\lambda})$ of $\wt{G}$ constructed as in \S \ref{subsectionreforsimpletypes} is an $\mfs$-type.
		
		If two simple types correspond to the same discrete inertial equivalence class, then they are intertwined. Thus by Theorem \ref{thmsimpletypeconjugacy} their weakly equivalence classes are $G$-conjugate. This shows the injectivity. The surjectivity follows from Lemma \ref{lemmaclassifydiscrete}.

	\end{proof}
	
	\section{Exhaustion of constructing cuspidal representations}\label{sectionexhaustion}
	
	In this section, we prove Theorem \ref{thmcuspidalconstruction}.(2) to finish this article. Let $\wt{\pi}$ be a genuine irreducible representation of $\wt{G}$. We would like to show that if $\wt{\pi}$ is cuspidal, then it contains an EMST, or equivalently it contains a maximal simple type of $\wt{G}$. Our argument largely follows from \cite{bushnell129admissible}*{\S 8} and \cite{secherre2008representations}.
	
	\subsection{The depth 0 case}
	
	First of all, we recall the known result in the depth 0 case as a warm-up (\emph{cf.} \cite{howard2009depth}). 
	
	Assume that $\wt{\pi}$ is cuspidal and of depth 0, where the latter condition means that for the maximal pro-$p$-subgroup $K^{1}$ of a maximal compact subgroup $K$ of $G$,  $$\wt{\pi}^{\,_{s}K^{1}}=\mrhom_{\,_{s}K^{1}}(1,\wt{\pi})\neq 0.$$ 
	Note that all such maximal compact subgroups $K$ are $G$-conjugate to each other, so the above definition does not depend on the choice of $K$. In particular, we choose $K=\mrgl_{r}(\mfo_{F})$ and $K^{1}=I_{r}+\mrm_{r}(\mfp_{F})$ in this subsection. We fix a splitting $\bs{s}:K\rightarrow\wt{G}$.
	
	We consider the null stratum $[\mfa,0,0,\beta]$, where $\mfa=\mrm_{r}(\mfo_{F})$. So by convention we have $H^{1}(\beta,\mfa)=J^{1}(\beta,\mfa)=K^{1}$ and $J(\beta,\mfa)=K$. Also, the only possible simple character and the Heisenberg representation is the identity character of $K^{1}$. Also, we consider the identity character of $K$ as a $\beta$-extension, extending trivially to $\wt{K}$.
	
	By \cite{howard2009depth}*{Theorem 3.10}, there exists a cuspidal representation $\varrho$ of $\mcg=\mrgl_{r}(\bs{k})\cong K/K^{1}$, such that $\wt{\pi}\rest_{\wt{K}}$ contains the inflation $\wt{\rho}=\mrinf_{\wt{\mcg}}^{\wt{K}}(\epsilon\cdot\,_{s}\wt{\varrho})$. 
	
	Since the pair $(\wt{K},\wt{\rho})$ is a maximal simple type with respect to the null stratum $[\mfa,0,0,\beta]$, our proof for depth 0 cuspidal representations is finished.
	
	\subsection{A reduction procedure}
	
	From now on, we assume that our representation $\wt{\pi}$ is not of depth 0. Also, all the strata we will consider are not null.
	
	\begin{definition}\label{defsplitsimplecharacter}
		
		We say that $\wt{\pi}$ contains a
		\begin{enumerate}
			\item \emph{split stratum} if there exists a strict split stratum $[\mfa,u,u-1,b]$ in $A$, such that $\wt{\pi}$ contains the character $\,_{s}\psi_{b}$ of $\,_{s}U^{u}(\mfa)$.
			\item \emph{simple character} if there exist a strict simple stratum $[\mfa,u,0,\beta]$ in $A$ and a simple character $\theta\in\mcc(\mfa,0,\beta)$, such that $\wt{\pi}$ contains $\,_{s}\theta$ of $\,_{s}H^{1}(\beta,\mfa)$.
			\item \emph{split character} if there exist a strict simple stratum $[\mfa,u,l,\beta]$ in $A$ with $l\geq 1$, a simple character $\theta'\in\mcc(\mfa,l-1,\beta)$ and $c\in\mfp_{\mfa}^{-l}$ such that
			\begin{itemize}
				\item $\wt{\pi}$ contains the character $\,_{s}\vartheta=\,_{s}\theta'\,_{s}\psi_{c}$ of $\,_{s}H^{l}(\beta,\mfa)$.
				\item The derived stratum $[\mfb,l,l-1,s(c)]$ in $B$ is split, where $E=F[\beta]$, $B=\mrend_{E}(V_{E})$, $\mfb=\mfa\cap B$ and $s:A\rightarrow B$ is any tame corestriction.
				
			\end{itemize}
		\end{enumerate}
		
	\end{definition}
	
	The main result of this part is the following theorem, whose proof will be sketched below.
	
	\begin{theorem}\label{thmirredthreecases}
		
		Let $\wt{\pi}$ be a genuine irreducible positive depth representation of $\wt{G}$. Then one of the three cases in Definition \ref{defsplitsimplecharacter} happens. 
		
	\end{theorem}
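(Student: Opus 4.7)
The plan is to adapt the classical argument of \cite{bushnell129admissible}*{\S 8} and \cite{secherre2008representations}*{\S 3-4} to the metaplectic setting. The crucial enabling observation is that, since $\mrgcd(n,p)=1$, every pro-$p$-subgroup $H$ of $G$ admits a unique splitting $\bs{s}\colon H\hookrightarrow\wt{G}$, and all constructions invoked below take place inside such pro-$p$-subgroups. Hence the classical lemmas transport verbatim: one only has to replace each group $H$ by $\,_{s}H$ and each smooth character $\chi$ by $\,_{s}\chi$, and use the tautology $\,_{s}(\chi^{h}) = (\,_{s}\chi)^{\bs{s}(h)}$ for $h\in H$.

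First I would extract a minimal-level non-trivial stratum. Among all triples $(\mfa,u,b)$ with $\mfa$ a hereditary $\mfo_{F}$-order in $A$, $u\geq 1$, and $b\in\mfA_{-u}(\mfa)\setminus\mfA_{1-u}(\mfa)$ such that $\wt{\pi}$ contains the character $\,_{s}\psi_{b}$ of $\,_{s}U^{u}(\mfa)$, I would choose one that minimizes the rational level $u/e(\mfa|\mfo_{F})$; such a triple exists because $\wt{\pi}$ has positive depth. By the pairing argument of \cite{bushnell129admissible}*{\S 2.3, \S 8.1}, using the non-degenerate pairing $\mfA_{-u}(\mfa)/\mfA_{1-u}(\mfa)\times\mfA_{u}(\mfa)/\mfA_{u+1}(\mfa)\to\bs{k}$ induced by $\psi_{F}\circ\mrtr$, any such minimizing stratum $[\mfa,u,u-1,b]$ is forced to be fundamental. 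If the characteristic polynomial $\varphi_{b}$ has at least two distinct irreducible factors, the stratum is split and we are in case (1).

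Otherwise $\varphi_{b}$ is a power of a single irreducible polynomial, and \cite{bushnell129admissible}*{\S 2.4} furnishes a simple stratum $[\mfa,u,u-1,\beta]$ equivalent to $[\mfa,u,u-1,b]$, with $E=F[\beta]$ a field. After this equivalence (which changes $b$ modulo $\mfA_{1-u}(\mfa)$ and so does not affect $\,_{s}\psi_{b}\rest_{\,_{s}U^{u}(\mfa)}$), we may assume $\wt{\pi}$ contains $\,_{s}\psi_{\beta}$. I would then run a descending induction on $l$, from $l=u-1$ down to $l=0$, showing that if $\wt{\pi}$ contains a character of the form $\,_{s}\vartheta = \,_{s}\theta'\cdot\,_{s}\psi_{c}$ on $\,_{s}H^{l+1}(\beta,\mfa)$ with $\theta'\in\mcc(\mfa,l,\beta)$ and $c\in\mfA_{-l}(\mfa)$, then either the derived stratum $[\mfb,l,l-1,s(c)]$ in $B$ is split, placing us in case (3), or it is not split, in which case the properties of simple characters recorded in \S \ref{subsectionsimplecharacters} together with the Heisenberg formalism of \S \ref{subsectionHeisenbergbeta} let one absorb $\,_{s}\psi_{c}$ into a refined simple character $\,_{s}\theta''\in\,_{s}\mcc(\mfa,l-1,\beta'')$ by an appropriate conjugation inside $\,_{s}U^{\ceil{(l+1)/2}}(\mfa)$ and a transfer of simple characters (possibly after replacing $\beta$ by an equivalent element $\beta''$ defining the same field extension). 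At $l=0$ we arrive at a simple character contained in $\wt{\pi}$, which is case (2).

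The main obstacle is the inductive absorption step: one must verify that every intertwining-and-conjugation argument of \cite{secherre2008representations}*{\S 3-4}, which in the linear case uses the explicit structure of $\mfH^{\ast}(\beta,\mfa)$, the transfer maps $t_{\mfa,\mfa',\beta,\beta'}$, and the parametrisation of simple characters, lifts faithfully to $\wt{G}$. Because each such argument is carried out entirely inside the pro-$p$-subgroups $\,_{s}U^{\ceil{(l+1)/2}}(\mfa)$, $\,_{s}H^{l+1}(\beta,\mfa)$, etc., on which $\bs{s}$ is the unique splitting, the verification reduces to the bookkeeping identity $\,_{s}(\chi^{h}) = (\,_{s}\chi)^{\bs{s}(h)}$ mentioned above. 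Once this is systematically checked for each of the handful of manipulations used in \emph{loc.\ cit.}, the whole chain of lemmas goes through without further modification and delivers the required trichotomy.
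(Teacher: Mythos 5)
Your proposal is correct and follows essentially the same route as the paper: both reduce the trichotomy to the classical refinement process of Bushnell--Kutzko and S\'echerre--Stevens (fundamental stratum, then split versus non-split, then minimal-level simple character whose derived stratum is forced to be split), transported to $\wt{G}$ via the unique splitting of pro-$p$-subgroups when $\mrgcd(n,p)=1$. The only differences are cosmetic (descending induction versus global minimality of $l/e(\mfa|\mfo_{F})$, and the paper passes to a simple stratum via Broussous rather than an equivalence over the same hereditary order), so no further comment is needed.
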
	
	
	The following lemma follows from \cite{bushnell1987hereditary7}*{Theorem $2'$}, whose proof can be modified here without change.
	
	\begin{lemma}
		
		There exists a strict fundamental stratum $[\mfa',u',u'-1,b]$ in $A$, such that the restriction of $\wt{\pi}$ to $\,_{s}U^{u'}(\mfa')$ contains $\,_{s}\psi_{b}$.
		
	\end{lemma}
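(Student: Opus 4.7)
The plan is to transplant Bushnell's original argument from \cite{bushnell1987hereditary7}*{Theorem $2'$} to the cover setting by exploiting the fact that every subgroup in question is pro-$p$, hence lifts canonically and uniquely into $\wt{G}$. Since $\wt{\pi}$ has positive depth, there exists at least one pair $(\mfa,u)$ with $\mfa$ a hereditary order in $A$ and $u\ge 1$, such that $\wt{\pi}\rest_{\,_{s}U^{u+1}(\mfa)}$ is non-trivial; here the splitting $\bs{s}$ of the pro-$p$ group $U^{u+1}(\mfa)$ is unique because $\mrgcd(n,p)=1$, so this condition is intrinsic to $\wt{\pi}$ and $(\mfa,u)$.

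First I would run Bushnell's minimization: among all such pairs, the rational number $u/e(\mfa|\mfo_F)$ admits a positive infimum, which is realized on some pair $(\mfa',u')$ with $\mfa'$ a hereditary order (replace a lattice sequence by the associated chain if necessary, so that $\mfa'$ is strict without changing the level). On the abelian pro-$p$ quotient $\,_{s}U^{u'}(\mfa')/\,_{s}U^{u'+1}(\mfa')$, the restriction of $\wt{\pi}$ decomposes as a direct sum of characters; via the canonical identification of this quotient with $\mfp_{\mfa'}^{-u'}/\mfp_{\mfa'}^{-u'+1}$ transported through $\bs{s}$, each such character has the form $\,_{s}\psi_{b}$ for some $b\in\mfp_{\mfa'}^{-u'}$. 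Minimality forces this quotient action to be non-trivial, so we may pick $b$ whose class is non-zero in $\mfp_{\mfa'}^{-u'}/\mfp_{\mfa'}^{-u'+1}$ and such that $\,_{s}\psi_{b}$ occurs in $\wt{\pi}\rest_{\,_{s}U^{u'}(\mfa')}$.

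The remaining point is that $[\mfa',u',u'-1,b]$ is fundamental, i.e.\ the characteristic polynomial $\varphi_{b}$ is not a power of $X$. This is a purely lattice-theoretic assertion, and Bushnell's argument establishes it by contradiction: if $\varphi_{b}$ were a power of $X$, then a standard construction on the level of lattice sequences would produce another pair $(\mfa'',u'')$ with strictly smaller $u''/e(\mfa''|\mfo_F)$ and with $\wt{\pi}\rest_{\,_{s}U^{u''+1}(\mfa'')}$ still non-trivial, violating the minimality of $(\mfa',u')$. This transfer step uses only subgroups of $G$ that are pro-$p$, so the unique splitting $\bs{s}$ identifies the relevant characters on the linear side with those on the cover side, and the argument of \emph{loc. cit.} carries over verbatim.

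The main obstacle is really just bookkeeping: one needs to verify that every character manipulation in Bushnell's proof takes place on pro-$p$ subgroups, so that the identification ``linear character $\leftrightarrow$ its pullback through $\bs{s}$'' is canonical and compatible with the refinement procedure on lattice sequences. Once this is checked, no new representation-theoretic input beyond the pro-$p$ splitting is required, and the statement follows directly from the linear case.
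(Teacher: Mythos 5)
Your proposal is correct and follows exactly the route the paper takes: the paper simply invokes Bushnell's Theorem $2'$ from \cite{bushnell1987hereditary7} and observes that the argument transfers verbatim because all groups involved are pro-$p$, so the unique splitting $\bs{s}$ identifies characters on the linear side with their counterparts on the cover. Your additional unpacking of the minimization of $u/e(\mfa|\mfo_{F})$ and the contradiction showing fundamentality is a faithful account of that same argument.
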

	
	Starting from this lemma, if the fundamental stratum $[\mfa',u',u'-1,b]$ is split, then by definition $\wt{\pi}$ contains a split stratum, meaning that we are in case (1) of Definition \ref{defsplitsimplecharacter}. 
	
	Otherwise, using \cite{broussous1999minimal}*{Theorem 1.2.5} there exists a simple stratum $[\mfa,u,u-1,\beta]$ in $A$, such that the restriction of $\wt{\pi}$ to $\,_{s}U^{u}(\mfa)$ contains $\,_{s}\psi_{\beta}$. Since $\mcc(\mfa,u-1,\beta)=\{\psi_{\beta}\}$ (\emph{cf.} \cite{bushnell129admissible}*{Proposition 3.2.2}), we have indeed proved that (see also \cite{secherre2008representations}*{Proposition 3.19})
	
	\begin{proposition}\label{proppisplitsimpleckcharacter}
		
		Let $\wt{\pi}$ be defined as before, then either $\wt{\pi}$ contains a split stratum, or there exist a simple stratum $[\mfa,u,l,\beta]$ in $A$ 
		and a simple character $\theta\in\mcc(\mfa,l,\beta)$, such that $\wt{\pi}$ contains $\,_{s}\theta$.
		
	\end{proposition}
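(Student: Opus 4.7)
The overall strategy is to transplant the classical existence argument of Bushnell and Broussous--Grabitz (\cite{bushnell1987hereditary7}, \cite{broussous1999minimal}) from the linear case to the cover, exploiting the fact that every group appearing in that argument is an open pro-$p$-subgroup of $G$. Since $\mrgcd(n,p)=1$, each such subgroup $H$ admits a unique splitting $\bs{s}_{H}:H\to\wt{G}$, and the category of smooth representations of $H$ is canonically equivalent to that of $\,_{s}H$ via $\bs{p}$. Consequently, ``$\wt{\pi}$ contains $\,_{s}\chi$'' for a character $\chi$ of such an $H$ is a cover-insensitive condition, i.e.\ it depends only on the restriction of $\wt{\pi}$ to the unique canonical lift of $H$ into $\wt{G}$.

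First, I would apply the $\wt{G}$-analogue of \cite{bushnell1987hereditary7}*{Theorem $2'$}, which is precisely the unnumbered lemma stated in the excerpt just above the proposition: there exists a strict fundamental stratum $[\mfa',u',u'-1,b]$ in $A$ such that $\wt{\pi}\rest_{\,_{s}U^{u'}(\mfa')}$ contains $\,_{s}\psi_{b}$. The proof of \emph{loc.\ cit.}\ rests on an averaging argument over the pro-$p$-group $U^{u'}(\mfa')$ and on the standard hereditary-order machinery; none of the intermediate groups escapes the pro-$p$ world, so the argument transcribes to $\wt{G}$ verbatim using the canonical splittings.

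If the fundamental stratum produced above happens to be split (its characteristic polynomial $\varphi_{b}$ has at least two distinct irreducible factors), we are immediately in case (1) of Definition \ref{defsplitsimplecharacter}. Otherwise, $[\mfa',u',u'-1,b]$ is non-split fundamental, and I would invoke \cite{broussous1999minimal}*{Theorem 1.2.5} to produce a simple stratum $[\mfa,u,u-1,\beta]$ in $A$ equivalent to (or intertwined with) $[\mfa',u',u'-1,b]$ in the sense required, so that $\wt{\pi}\rest_{\,_{s}U^{u}(\mfa)}$ contains $\,_{s}\psi_{\beta}$. Again the intertwining is realised through conjugation by elements of $G$ acting on pro-$p$-subgroups, so it lifts canonically to $\wt{G}$. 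To conclude, I would appeal to \cite{bushnell129admissible}*{Proposition 3.2.2}, which asserts $\mcc(\mfa,u-1,\beta)=\{\psi_{\beta}\}$; setting $l=u-1$ and $\theta=\psi_{\beta}$, case (2) of the proposition is verified.

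The main obstacle is purely bookkeeping rather than mathematical: one must check at each step that the groups and intertwiners invoked from the linear theory are pro-$p$, so that the canonical splittings identify the cover-side statement with its classical counterpart. No phenomenon specific to the cover --- such as the interaction of the genuine character $\epsilon$ with Heisenberg representations, which will occur at the later level-$0$ reduction step in Section \ref{sectionexhaustion} --- intervenes at this early stage of the exhaustion argument.
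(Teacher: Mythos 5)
Your proposal follows exactly the same route as the paper: the lemma transplanted from \cite{bushnell1987hereditary7}*{Theorem $2'$} produces a strict fundamental stratum, the split case is handled by definition, the non-split case is upgraded to a simple stratum via \cite{broussous1999minimal}*{Theorem 1.2.5}, and the conclusion follows from $\mcc(\mfa,u-1,\beta)=\{\psi_{\beta}\}$ with $l=u-1$. Your observation that every step lives in the pro-$p$ world and hence lifts canonically to $\wt{G}$ is precisely the justification the paper relies on, so the argument is correct as written.
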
 
	
	Assume that we are in the second case of Proposition \ref{proppisplitsimpleckcharacter}. We also choose $[\mfa,u,l,\beta]$ and $\theta$ such that the rational number $l/e(\mfa|\mfo_{F})$ is minimal. 
	
	If $l=0$, then we are in case (2) of Definition \ref{defsplitsimplecharacter}.
	
	Now assume that $l\geq 1$. We fix a character $\vartheta$ of $H^{l}(\beta,\mfa)$ extending $\theta$, such that $\wt{\pi}$ contains $\,_{s}\vartheta$. By the construction of simple characters, there exist $\theta'\in\mcc(\mfa,l-1,\beta)$ and $c\in\mfp_{\mfa}^{-l}$ such that $\vartheta=\theta'\psi_{c}$. We fix a tame corestriction $s:A\rightarrow B$, and we get a derived stratum $[\mfb,l,l-1,s(c)]$ in $B$. 
	
	\begin{proposition}
		
		$[\mfb,l,l-1,s(c)]$ is a split stratum in $B$.
		
	\end{proposition}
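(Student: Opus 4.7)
The plan is to argue by contradiction: suppose the derived stratum $[\mfb,l,l-1,s(c)]$ is not split. We will then produce a simple character contained in $\wt{\pi}$ whose level, normalized by $e(\mfa|\mfo_F)$, is strictly smaller than $l/e(\mfa|\mfo_F)$, contradicting the minimal choice of $[\mfa,u,l,\beta]$ and $\theta$ made just before the proposition.

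First, I would invoke the classical refinement results for simple strata (see \cite{bushnell129admissible}*{Chapter 2} or the variant for lattice sequences in \cite{broussous1999minimal}): given the simple stratum $[\mfa,u,l,\beta]$, an element $c\in\mfp_{\mfa}^{-l}$, and the assumption that the derived stratum $[\mfb,l,l-1,s(c)]$ is non-split in $B$, there exists $\gamma\in\mfp_{\mfa}^{-l}$ such that $[\mfa,u,l-1,\gamma]$ is equivalent to $[\mfa,u,l-1,\beta+c]$ and is either simple with $F[\gamma]$ a field, or equivalent to a null stratum (after which one further invokes the simple stratum existence theorem to replace it by a simple stratum of the same hereditary order).

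Second, I would apply the compatibility of simple characters under this refinement procedure. The key structural fact from \cite{bushnell129admissible}*{\S 3.5} is that $H^{l}(\beta,\mfa)=H^{l}(\gamma,\mfa)$ and, on this common group, the character $\vartheta=\theta'\psi_{c}$ coincides with the restriction of a simple character $\vartheta'\in\mcc(\mfa,l-1,\gamma)$. Since $\wt{\pi}$ contains $\,_{s}\vartheta$, Frobenius reciprocity together with the uniqueness of the splitting of the pro-$p$-group $H^{l-1}(\gamma,\mfa)$ into $\wt{G}$ produces a simple character $\,_{s}\vartheta'\in\,_{s}\mcc(\mfa,l-1,\gamma)$ contained in $\wt{\pi}$. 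This yields a pair (simple stratum, simple character) of level $l-1$ satisfying the hypothesis of Proposition \ref{proppisplitsimpleckcharacter}, contradicting the minimality of $l/e(\mfa|\mfo_F)$.

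The argument is essentially the linear argument of \cite{bushnell129admissible}*{\S 8} and \cite{secherre2008representations}*{\S 3}, so the real content lies in the refinement and transfer theorems, which I take as blackbox. The only thing to verify is that nothing genuinely new happens on the metaplectic side. This is automatic: every group involved ($H^{l}$, $H^{l-1}$, $U^{u}(\mfa)$, etc.) is a pro-$p$-group and $\mrgcd(n,p)=1$, so it admits a unique splitting into $\wt{G}$; consequently every statement ``$\wt{\pi}$ contains $\,_{s}\xi$'' is tautologically equivalent to its linear counterpart on $\bs{p}(\wt{\pi}^{\,_{s}H})$-isotypic vectors, and the whole chain of equivalences of characters descends. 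The main obstacle, therefore, is not conceptual but bookkeeping: one must track carefully that the refinement $\gamma$ of $\beta+c$ sits over the \emph{same} hereditary order $\mfa$ so that the denominator $e(\mfa|\mfo_F)$ in the minimality hypothesis is preserved while the numerator drops from $l$ to $l-1$.
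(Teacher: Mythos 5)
Your proposal is correct and is essentially the paper's own proof: the paper simply cites \cite{bushnell129admissible}*{\S 8.1} and \cite{secherre2008representations}*{\S 3} and observes that the argument there involves only pro-$p$ subgroups, hence transplants verbatim via the unique splittings; you have reconstructed the content of those references (contradiction with the minimality of the normalized level via the refinement theorems) together with the same pro-$p$ observation. One small imprecision in your closing remark: the non-split hypothesis splits into the non-fundamental and the fundamental-non-split subcases, and in the latter the refinement of \cite{bushnell129admissible} generally produces a simple character over a \emph{different} hereditary order $\mfa_{1}$; what the argument needs (and what the references provide) is the strict inequality $l_{1}/e(\mfa_{1}|\mfo_{F})<l/e(\mfa|\mfo_{F})$, not preservation of the denominator with a drop of the numerator — and this suffices because the minimality in the paper is taken over all pairs (stratum, simple character), not over those attached to the fixed $\mfa$. (Also, the refining element should lie in $\mfp_{\mfa}^{-u}$ rather than $\mfp_{\mfa}^{-l}$.)
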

	
	The proposition follows from \cite{bushnell129admissible}*{\S 8.1} or \cite{secherre2008representations}*{\S 3}. Indeed, the corresponding argument in \emph{loc. cit.} essentially concerns only open compact pro-$p$-groups and their representations. In this case we have unique splittings for corresponding groups and representations, so the same argument could be modified verbatim to our case without any difficulty.
	
	Thus in this case $\wt{\pi}$ contains the split character $\,_{s}\vartheta=\,_{s}\theta'\,_{s}\psi_{c}$, saying that we are in case (3) of Definition \ref{defsplitsimplecharacter}. 
	
	\subsection{Containment of a simple character I, elimination of split strata}\label{subsectioneliminationsplitstrata}
	
	The main goal of the following two subsections is to prove the following theorem.
	
	\begin{theorem}\label{thmpicontainsimplechar}
		
		Let $\wt{\pi}$ be a genuine cuspidal positive depth representation of $\wt{G}$. Then it contains a simple character.
		
	\end{theorem}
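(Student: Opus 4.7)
The plan is to combine Theorem \ref{thmirredthreecases} with the elimination of split strata and split characters, following closely the strategy of Bushnell--Kutzko \cite{bushnell129admissible}*{\S 8} and S\'echerre \cite{secherre2008representations}*{\S 4}. By Theorem \ref{thmirredthreecases} applied to the positive-depth representation $\wt{\pi}$, one of the three cases in Definition \ref{defsplitsimplecharacter} occurs. The goal is to rule out cases (1) and (3) under the cuspidality assumption, which leaves (2) and proves the theorem.

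First I would eliminate split strata, which is the content of this subsection. Suppose $\wt{\pi}$ contains a split stratum $[\mfa,u,u-1,b]$, so that the characteristic polynomial $\varphi_b$ has at least two distinct irreducible factors. Lifting this factorization (via the standard Hensel/idempotent decomposition in $\mfa/\mfp_\mfa$) to a $b$-stable $F$-decomposition $V = V' \oplus V''$, one obtains a proper Levi subgroup $M = \mraut_F(V') \times \mraut_F(V'')$, a parabolic $P = MN$, and the Iwahori-type decomposition $U^u(\mfa) = (U^u(\mfa) \cap N^-)(U^u(\mfa) \cap M)(U^u(\mfa) \cap N)$. A direct Mackey analysis of the restriction of $\wt{\pi}$ to $\,_sU^u(\mfa)$ against the character $\,_s\psi_b$, exactly as in \cite{bushnell129admissible}*{Proposition 8.1.5}, produces a non-zero vector with compact $N$-orbit, so $r_N(\wt{\pi}) \neq 0$, contradicting Definition \ref{defcuspidal}.(2).

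Next (in the following subsection) I would eliminate split characters by induction on $l/e(\mfa|\mfo_F)$. Assume $\wt{\pi}$ contains a split character $\,_s\vartheta = \,_s\theta' \cdot \,_s\psi_c$ attached to $[\mfa,u,l,\beta]$ with $[\mfb,l,l-1,s(c)]$ split in $B$. The approximation procedure of \cite{bushnell129admissible}*{\S 8.2--8.3} (adapted as in \cite{secherre2008representations}*{\S 4}) produces either a split stratum in $A$ contained in $\wt{\pi}$, already excluded, or a new simple stratum $[\mfa',u',l',\beta'']$ with $l'/e(\mfa'|\mfo_F) < l/e(\mfa|\mfo_F)$ and a simple character $\theta'' \in \mcc(\mfa',l',\beta'')$ whose $\bs{s}$-pullback is contained in $\wt{\pi}$. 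Iterating reaches $l'=0$, yielding the desired simple character.

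The main obstacle will be verifying that this intricate chain of intertwining, Mackey and transfer arguments transplants cleanly from $G$ to $\wt{G}$. Fortunately, every group appearing in \emph{loc.\ cit.} is a pro-$p$-subgroup $H$ of $G$; since $\mrgcd(n,p)=1$, the cohomology group $H^2(H,\mu_n)$ vanishes, giving a canonical splitting $H \hookrightarrow \,_sH \subset \wt{G}$ and a canonical bijection between characters of $H$ and of $\,_sH$. All intertwining spaces, Mackey decompositions and refinement steps thereby reduce to computations inside $G$ identical to the linear case. The sole metaplectic input is cuspidality itself, which by \cite{kaplan2022note} satisfies the same Jacquet-module criterion (Definition \ref{defcuspidal}.(2)) as for linear groups, so the entire Bushnell--Kutzko argument applies verbatim.
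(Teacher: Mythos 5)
Your overall strategy (invoke Theorem \ref{thmirredthreecases}, then use cuspidality to rule out cases (1) and (3) of Definition \ref{defsplitsimplecharacter}) is exactly the paper's, and your treatment of case (1) is essentially right, though the paper packages the ``Mackey analysis'' differently: it builds the group $H=1+\mfh$ from the splitting of $\varphi_{b}$, quotes the intertwining bound $I_{G}(\wt{\psi}_{b})\subset HMH$ from \cite{broussous1999minimal}, checks that $(\wt{H},\wt{\psi}_{b})$ is a covering pair of its restriction to $\wt{H}\cap\wt{M}$, and then reads off $0\neq\mrhom_{\wt{H}}(\wt{\pi},\wt{\psi}_{b})=\mrhom_{\wt{H}\cap\wt{M}}(r_{N}(\wt{\pi}),\wt{\psi}_{b})$ from Theorem \ref{thmJacquetcomm}. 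This covering-pair packaging is what makes the transfer to $\wt{G}$ painless, and your pro-$p$ splitting remark correctly identifies why.

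The genuine gap is in your elimination of case (3). You assert that a split character can be fed into the ``approximation procedure'' to produce either a split stratum or a refined simple character with strictly smaller $l'/e(\mfa'|\mfo_{F})$, and you iterate down to $l'=0$ without ever invoking cuspidality. This has the case analysis backwards. The refinement to a smaller $l/e$ is available precisely when the derived stratum $[\mfb,l,l-1,s(c)]$ is \emph{not} split (null, non-fundamental, or equivalent to a simple stratum), and that refinement is already exhausted inside the proof of Theorem \ref{thmirredthreecases}, where $[\mfa,u,l,\beta]$ and $\theta$ are chosen with $l/e(\mfa|\mfo_{F})$ minimal --- that minimality is exactly what forces the derived stratum to be split when $l\geq 1$. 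Once the derived stratum \emph{is} split (the defining condition of case (3)), no further refinement exists, so your iteration never gets started, let alone terminates. The correct argument mirrors case (1) one level up: the splitting of $[\mfb,l,l-1,s(c)]$ yields an $E$-decomposition $V_{E}=V_{E}^{1}\oplus V_{E}^{2}$ and hence a proper Levi subgroup $M$ of $G$; one extends $\vartheta$ to a genuine character $\wt{\xi}$ of $\wt{K'}$ with $K'=H^{l}(\beta,\mfa)(\Omega\cap N)$, uses the intertwining bound $I_{G}(\wt{\vartheta})\subset\Omega M\Omega$ to verify that $(\wt{K'},\wt{\xi})$ is a covering pair of its restriction to $\wt{K'}\cap\wt{M}$, and concludes $r_{N}(\wt{\pi})\neq 0$ from Theorem \ref{thmJacquetcomm}, contradicting cuspidality. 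This cuspidality input is indispensable in case (3) and is entirely absent from your proposal.
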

	
	Using Theorem \ref{thmirredthreecases}, the strategy is to show that case (1) and (3) in Definition \ref{defsplitsimplecharacter} cannot happen. 
	
	In this subsection, we sketch the proof in \cite{broussous1999minimal}*{\S 2} to eliminate case (1). Assume that $\wt{\pi}$ contains a split stratum, saying that there exists a split stratum $[\mfa,u,u-1,b]$ in $A$, such that $\wt{\pi}$ contains the character $\,_{s}\psi_{b}\rest_{\,_{s}U^{u}(\mfa)}$. Let $\Lambda$ be the lattice chain related to $\mfa$. 
	
	By definition, the characteristic polynomial $\varphi_{b}$ can be written as the product of two relatively prime polynomials $\varphi_{1}$ and $\varphi_{2}$ in $\bs{k}[X]$. Write $y=\varpi_{F}^{u/\mrgcd(u,e)}b^{e/\mrgcd(u,e)}\in\mfa$ with $e=e(\mfa|\mfo_{F})$. Then using Hensel's lemma, there exist $\Phi_{i}\in\mfo_{F}[X]$ whose modulo $\mfp_{F}$ reduction is $\phi_{i}$ for $i=1,2$, such that the characteristic polynomial of $y$ in $A=\mrend_{F}(V)$ is the product of $\Phi_{1}$ and $\Phi_{2}$. 
	
	Write $V^{i}=\mrker(\Phi_{i}(y))$ for $i=1,2$. Then by construction $V=V^{1}\oplus V^{2}$ is an $E$-decomposition with $E=F[b]$. Let $\Lambda^{i}=\Lambda\cap V^{i}$ be a lattice sequence of $V^{i}$ for $i=1,2$. Then both $\Lambda$, $\Lambda^{1}$, $\Lambda^{2}$ are $E$-pure, and we have $\Lambda=\Lambda^{1}\oplus\Lambda^{2}$ (\emph{cf.} \cite{broussous1999minimal}*{Proposition 2.2.1}) Let $A^{ij}=\mrhom_{F}(V^{i},V^{j})$ be defined as $F$-subalgebra of $A$, let  $M=\mraut_{F}(V^{1})\times \mraut_{F}(V^{2})$ be a Levi subgroup of $G$. 
	
	In \cite{broussous1999minimal}*{\S 2.3}, an $\mfo_{F}$-orders $\mfh$ in $A$ is constructed, satisfying $\mfh\cap A^{ii}=\mfp_{\mfa}^{u}\cap A^{ii}$, $\mfh\cap A^{12}=\mfa\cap A^{12}$ and $\mfh\cap A^{21}=\mfp_{\mfa}^{u+1}\cap A^{21}$. Let $H=1+\mfh$, which is an open compact pro-$p$-subgroup of $G$. By definition $\psi_{b}$ is a character of $H$. Extend the character $\,_{s}\psi_{b}$ of $\,_{s}H$ to a genuine representation of $\wt{H}$, which we denote by $\wt{\psi}_{b}$. 
	
	We have the following proposition, which follows directly from \cite{broussous1999minimal}*{Proposition 2.3.1}.
	
	\begin{proposition}
		
		The intertwining set $I_{G}(\wt{\psi}_{b})$ is contained in $HMH$.
		
	\end{proposition}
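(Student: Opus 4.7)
The plan is to reduce the statement for the genuine character $\wt{\psi}_{b}$ of $\wt{H}$ to the corresponding statement for the linear character $\psi_{b}$ of $H$, and then invoke the analogous result of Broussous--Grabitz \cite{broussous1999minimal}*{Proposition 2.3.1} in the linear setting (which is stated for $\psi_{b}$ on $H$ and concludes $I_{G}(\psi_{b})\subset HMH$).

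First I would exploit the tameness hypothesis $\mrgcd(n,p)=1$. Since $\mfh$ is an $\mfo_{F}$-order contained in $\mfa$ with $\mfh\cap A^{ii}\subset\mfp_{\mfa}^{u}\cap A^{ii}$ for $i=1,2$ (and similarly bounded intersections with $A^{12},A^{21}$), the group $H=1+\mfh$ is an open compact pro-$p$-subgroup of $G$. Hence it admits a unique splitting $\bs{s}_{H}:H\to\wt{G}$, whose image is $\,_{s}H$. Moreover, for any $g\in G$, the conjugate $H^{g}=g^{-1}Hg$ is again a pro-$p$-group, so it also has a unique splitting; by uniqueness this splitting must be $h\mapsto\bs{s}(g)^{-1}\bs{s}_{H}(ghg^{-1})\bs{s}(g)$. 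Consequently for $h\in H^{g}\cap H$, the elements $\bs{s}_{H}(h)$ and $\bs{s}(g)^{-1}\bs{s}_{H}(ghg^{-1})\bs{s}(g)$ coincide in $\wt{G}$; put differently, the $G$-conjugation action on $\wt{G}$ restricts to the naive conjugation action on the splittings of pro-$p$-subgroups, with no cocycle contribution.

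Next I would translate the intertwining condition. By construction $\wt{\psi}_{b}$ is the extension of $\,_{s}\psi_{b}$ to $\wt{H}=\mu_{n}\times\,_{s}H$ making $\mu_{n}$ act by $\epsilon$. For any $g\in G$, the computation above gives, for $h\in H^{g}\cap H$,
\[
\wt{\psi}_{b}^{\,\bs{s}(g)}\bigl(\bs{s}_{H}(h)\bigr)=\wt{\psi}_{b}\bigl(\bs{s}_{H}(ghg^{-1})\bigr)=\psi_{b}(ghg^{-1})=\psi_{b}^{g}(h),
\]
so intertwining of $\wt{\psi}_{b}$ by $\bs{s}(g)$ is exactly intertwining of $\psi_{b}$ by $g$ in $G$. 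Equivalently, the restriction map identifies $\mrhom_{\wt{H}^{\bs{s}(g)}\cap\wt{H}}(\wt{\psi}_{b}^{\bs{s}(g)},\wt{\psi}_{b})$ with $\mrhom_{H^{g}\cap H}(\psi_{b}^{g},\psi_{b})$, since the $\mu_{n}$-equivariance condition is automatic (both characters are $\epsilon$-genuine). Therefore $I_{G}(\wt{\psi}_{b})=I_{G}(\psi_{b})$ as subsets of $G$.

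Finally, invoking \cite{broussous1999minimal}*{Proposition 2.3.1} (whose hypotheses on $[\mfa,u,u-1,b]$ being a split stratum, the decomposition $V=V^{1}\oplus V^{2}$, the order $\mfh$, and the Levi $M$ match ours verbatim), we obtain $I_{G}(\psi_{b})\subset HMH$, and combining this with the previous paragraph yields $I_{G}(\wt{\psi}_{b})\subset HMH$ as required. The argument has essentially no obstacle beyond bookkeeping: the real content lies in the linear statement of Broussous--Grabitz, and our only genuine task is to verify that the cover introduces no cocycle obstruction, which is automatic here because $H$ is pro-$p$ and the cover is tame.
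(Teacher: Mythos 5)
Your argument is correct and is exactly the paper's (very terse) proof: the paper simply asserts that the proposition ``follows directly from'' \cite{broussous1999minimal}*{Proposition 2.3.1}, relying on the general principle stated earlier that arguments involving only pro-$p$ subgroups and their representations transfer verbatim to the tame cover via the unique splitting --- which is precisely the reduction you spell out. The only nitpick is attribution: the cited Proposition 2.3.1 is from Broussous's paper on minimal strata, not the Broussous--Grabitz paper.
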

	
	Then by verifying condition (1)(2)(4), $(\wt{H},\wt{\psi}_{b})$ is a covering pair of $(\wt{H}\cap\wt{M},\wt{\psi}_{b}\rest_{\wt{H}\cap\wt{M}})$.
	
	Using \cite{broussous1999minimal}*{Proposition 2.4.4}, whose statement and proof can be easily adapted to our case, $\wt{\pi}$ contains $\,_{s}\psi_{b}\rest_{\,_{s}H}$ and $\wt{\psi}_{b}$. Let $P=MN$ be a parabolic of $G$ having a Levi factor $M$ and the unipotent radical $N$. Then using Theorem \ref{thmJacquetcomm} we have
	$$0\neq\mrhom_{\wt{H}}(\wt{\pi},\wt{\psi}_{b})=\mrhom_{\wt{H}\cap\wt{M}}(r_{N}(\wt{\pi}),\wt{\psi}_{b}),$$
	implying that $r_{N}(\wt{\pi})\neq 0$ and contradicting the fact that $\wt{\pi}$ is cuspidal. So $\wt{\pi}$ cannot contain a split stratum.
	
	\subsection{Containment of a simple character II, elimination of split characters}
	
	In this subsection, we sketch the proof in \cite{secherre2008representations}*{\S 4} to eliminate case (3). Thus $\wt{\pi}$ must contain a simple character.
	
	Assume that $\wt{\pi}$ contains a split character, saying that there exist a strict simple stratum $[\mfa,u,l,\beta]$ in $A$ with $l\geq 1$, a simple character $\theta'\in\mcc(\mfa,l-1,\beta)$ and $c\in\mfp_{\mfa}^{-l}$ such that
	\begin{itemize}
		\item $\wt{\pi}$ contains the character $\,_{s}\vartheta=\,_{s}\theta'\,_{s}\psi_{c}$ of $\,_{s}H^{l}(\beta,\mfa)$.
		\item The derived stratum $[\mfb,l,l-1,s(c)]$ in $B$ is split, where $E=F[\beta]$, $B=\mrend_{E}(V_{E})$, $\mfb=\mfa\cap B$ and $s:A\rightarrow B$ is any tame corestriction.
	\end{itemize}
	Let $\Lambda$ be the lattice chain related to $\mfa$. Being regarded as an $\mfo_E$-lattice chain and denoted by $\Lambda_{E}$, it is the lattice chain related to $\mfb$. 
	
	Following the procedure in \S \ref{subsectioneliminationsplitstrata} with $[\mfa,u,u-1,b]$ replaced by $[\mfb,l,l-1,s(c)]$, or more precisely using \cite{secherre2008representations}*{Proposition 4.9}, we may construct a decomposition of $E$-vector spaces $V_{E}=V_{E}^{1}\oplus V_{E}^{2}$ that conforms with $\Lambda_{E}$. Let $V=V^{1}\oplus V^{2}$ be the corresponding decomposition of $F$-vector spaces that conforms with $\Lambda$ with the same underlying space. Define $A^{ij}$ and $M$, and $P=MN$ as in \S \ref{subsectioneliminationsplitstrata}. In particular, we choose $N=I_{r}+A^{12}$. Let $\Omega=U^{1}(\mfb)\Omega_{v-l+1}(\beta,\mfa)$ be a pro-$p$-subgroup of $G$, where $v$ and $\Omega_{i}(\beta,\mfa)$ are defined as in \S \ref{subsectionsimplecharacters}. Up to changing $c$ if necessary, we may also assume that $c=c_{1}+c_{2}$ with $c_{i}\in A^{ii}$ for $i=1,2$.
	
	We extend $\,_{s}\vartheta$ to a genuine character $\wt{\vartheta}$ of $\wt{H^{l}(\beta,\mfa)}$. So $\wt{\pi}$ contains $\wt{\vartheta}$. Also $\Omega$ normalizes $\wt{\vartheta}$. The following proposition follows from \cite{secherre2008representations}*{Th\'eor\`eme 4.3}.
	
	\begin{proposition}\label{propintwintwingIGvartheta}
		
		The intertwining set $I_{G}(\wt{\vartheta})$ is contained in $\Omega M\Omega$.
		
	\end{proposition}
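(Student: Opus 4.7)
The plan is to reduce the statement to the corresponding linear intertwining result of S\'echerre--Stevens, and then invoke it directly. The key observation is that $H^{l}(\beta,\mfa)$ is a pro-$p$ subgroup of $G$, so it admits a unique continuous splitting $\bs{s}_{H^{l}}:H^{l}(\beta,\mfa)\hookrightarrow\wt{G}$, and $\wt{\vartheta}$ is characterized as the unique genuine extension of $\,_{s}\vartheta$.

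First I would prove that $g\in G$ intertwines $\wt{\vartheta}$ in the cover-theoretic sense if and only if $g$ intertwines $\vartheta$ as a character on the linear group. For any $g\in G$ and any lift $\tilde{g}\in\wt{G}$, the map $h\mapsto\tilde{g}\,\bs{s}_{H^{l}}(g^{-1}hg)\,\tilde{g}^{-1}$ is a continuous group homomorphism $H^{l}(\beta,\mfa)\cap gH^{l}(\beta,\mfa)g^{-1}\to\wt{G}$ lifting the inclusion, hence a splitting of this pro-$p$ group. Since $\mrhom(H,\mu_{n})=0$ for every pro-$p$ group $H$ (as $\mrgcd(n,p)=1$), such a splitting is unique; it must therefore coincide with the restriction of $\bs{s}_{H^{l}}$, giving
$$\tilde{g}\,\bs{s}_{H^{l}}(h)\,\tilde{g}^{-1}=\bs{s}_{H^{l}}(ghg^{-1})\quad\text{for all } h\in H^{l}(\beta,\mfa)\cap H^{l}(\beta,\mfa)^{g}.$$
Plugging this identity into the definitions of $\wt{\vartheta}^{g}$ and $\wt{\vartheta}$ on $\bs{p}^{-1}(H^{l}\cap(H^{l})^{g})=\wt{H^{l}}\cap(\wt{H^{l}})^{g}$, the contributions on $\mu_{n}$ cancel (both restrict to $\epsilon$), and the intertwining equality $\wt{\vartheta}^{g}=\wt{\vartheta}$ reduces to $\vartheta(ghg^{-1})=\vartheta(h)$ for all $h\in H^{l}\cap(H^{l})^{g}$. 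Hence $I_{G}(\wt{\vartheta})=I_{G}(\vartheta)$.

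With this reduction in hand, the claim is purely a statement about the linear group $G$ and the split character $\vartheta=\theta'\psi_{c}$: namely that $I_{G}(\vartheta)\subset\Omega M\Omega$. This is exactly \cite{secherre2008representations}*{Th\'eor\`eme 4.3} in our setting, which I would invoke. For orientation, its proof runs as follows: by the intertwining formula \eqref{eqsimcharintertwine} for $\theta'$, any $g\in I_{G}(\vartheta)$ lies in $\Omega_{v-l+1}(\beta,\mfa)\,B^{\times}\,\Omega_{v-l+1}(\beta,\mfa)$, so one may reduce to $g\in B^{\times}$; then the assumption that the derived stratum $[\mfb,l,l-1,s(c)]$ is split and subordinate to $V_{E}=V_{E}^{1}\oplus V_{E}^{2}$ permits one to run the split-stratum elimination argument of \S \ref{subsectioneliminationsplitstrata} inside $B^{\times}$ in place of $G$, forcing $g$ into $U^{1}(\mfb)\,M_{E}\,U^{1}(\mfb)$ with $M_{E}=M\cap B^{\times}$. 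Since $\Omega=U^{1}(\mfb)\Omega_{v-l+1}(\beta,\mfa)$ and $M_{E}\subset M$, the desired containment follows.

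The only genuine subtlety lies in the cover-to-linear reduction; once that is clean, the rest is direct citation. The potential obstacle is handling the splitting identity on the generally non-normal pro-$p$ intersection $H^{l}\cap (H^{l})^{g}$ without putting $g$ in any a priori convenient position within $G$; but uniqueness of the splitting (as opposed to a cocycle calculation depending on $g$ and on a choice of lift $\tilde{g}$) absorbs this automatically, and keeps the argument independent of the global section $\bs{s}$ chosen on the ambient maximal compact subgroup $K$.
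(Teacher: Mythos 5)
Your proposal is correct and follows essentially the same route as the paper: the paper simply cites \cite{secherre2008representations}*{Th\'eor\`eme 4.3}, relying (as announced earlier in the section) on the fact that the argument there concerns only pro-$p$ subgroups and their characters, which transfer to $\wt{G}$ via the unique splittings. Your explicit identity $I_{G}(\wt{\vartheta})=I_{G}(\vartheta)$, obtained from uniqueness of the splitting of the pro-$p$ intersection $H^{l}\cap gH^{l}g^{-1}$, is a clean way of making precise the reduction that the paper leaves implicit, and then the citation does the rest.
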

	
	Let $K'=H^{l}(\beta,\mfa)(\Omega\cap N)$, which is a pro-$p$-subgroup of $G$. By construction, the character $\vartheta$ is trivial on $H^{l}(\beta,\mfa)\cap N$. So we extend $\vartheta$ to a character $\xi$ of $K'$ that is trivial on $\Omega\cap N$. Let $\wt{\xi}$ be the extension of $\,_{s}\xi$ to $\wt{K'}$ as a genuine character. Using the argument in \cite{secherre2008representations}*{\S 4.3}, which can be modified here directly, $\wt{\pi}$ contains the character $\wt{\xi}$ as well.
	
	We claim that $(\wt{K'},\wt{\xi})$ is a covering pair of $(\wt{K'}\cap\wt{M},\wt{\xi}\rest_{\wt{K'}\cap\wt{M}})$. Condition (1) follows from the Iwahori decomposition of $H^{i}(\beta,\mfa)$ and $\Omega_{i}(\beta,\mfa)$ and condition (2) is direct. We verify condition (3) for the parabolic subgroup $P=MN$. Choose an element $\zeta=\mrdiag(\varpi_{F}^{n}I_{r_{1}},I_{r_{2}})$ with respect to the decomposition $V=V^{1}\oplus V^{2}$, where $r_{i}=\mrdim_{F}(V^{i})$ for $i=1,2$. Then $\bs{s}(\zeta)$ is in the center $Z(\wt{M})$ and is strongly $(\wt{P},\wt{K'})$-positive. We need to find an element $\phi_{\zeta}$ supported on $\wt{K'}\zeta\wt{K'}$ that is invertible. This follows from a similar argument of \cite{bushnell1999semisimple}*{Corollary 6.6} as well as \cite{secherre2008representations}*{Corollaire 4.6} using Proposition \ref{propintwintwingIGvartheta}. For the opposite parabolic subgroup $P^{-}=MN^{-}$, we just need to use $\zeta^{-1}$ in place of $\zeta$.
	
	Using Theorem \ref{thmJacquetcomm} we have
	$$0\neq\mrhom_{\wt{K'}}(\wt{\pi},\wt{\xi})=\mrhom_{\wt{K'}\cap\wt{M}}(r_{N}(\wt{\pi}),\wt{\xi}),$$
	implying that $r_{N}(\wt{\pi})\neq 0$ and contradicting the fact that $\wt{\pi}$ is cuspidal. So $\wt{\pi}$ cannot contain a split character. Thus Theorem \ref{thmpicontainsimplechar} is proved.
	
	\subsection{From simple characters to simple types}
	
	Still, let $\wt{\pi}$ be a genuine cuspidal positive depth representation of $\wt{G}$. Using Theorem \ref{thmpicontainsimplechar}, we choose a strict simple stratum $[\mfa,u,0,\beta]$ in $A$ and $\theta\in\mcc(\mfa,0,\beta)$, such that $\,_{s}\theta$ is contained in $\wt{\pi}$. In particular, we assume $\mfa$ to be minimal among all the hereditary orders in $A$ that we may choose. We use the abbreviations $H^{1}$, $J^{1}$, $J$, $\bs{J}$ etc. as in Section \ref{sectionsimpletypes}. We fix a maximal open compact subgroup $K$ that contains $U(\mfa)$, and a splitting $\bs{s}$ of $K$.
	
	Let $\eta$ be the Heisenberg representation of $\theta$ and let $\kappa$ be a $\beta$-extension of $\eta$.  By construction $\wt{\pi}$ contains $\,_{s}\eta$. Moreover, using Frobenius reciprocity $\wt{\pi}$ contains an irreducible subrepresentation of the induction $$\mrind_{\wt{J^{1}}}^{\wt{J}}(\epsilon\cdot\,_{s}\eta)=\wt{\kappa}\otimes\mrind_{\wt{J^{1}}}^{\wt{J}}(\epsilon\cdot\,_{s}1),$$ 
	where $\,_{s}1$ denotes the identity character of $\,_{s}J^{1}$. Then there exist	 an irreducible representation $\varrho$ of $\mcm=J/J^{1}=U(\mfb)/U^{1}(\mfb)$ and the corresponding inflation $\wt{\rho}=\mrinf_{\wt{\mcm}}^{\wt{J}}(\epsilon\cdot\,_{s}\varrho)$ as a genuine irreducible representation of $\wt{J}$, such that $\wt{\lambda}:=\wt{\kappa}\otimes\wt{\rho}$ is contained in $\wt{\pi}$. Here, we recall that $\mcm$ is a Levi subgroup of the finite general linear group $\mrgl_{m}(\bs{l})$, where $\bs{l}$ is the residue field of $E=F[\beta]$, $d=[E:F]$ and $m=r/d$.
	
	\begin{proposition}
		
		The pair $(\wt{J},\wt{\lambda})$ is a homogeneous type in $\wt{G}$, or in other words, $\varrho$ is a cuspidal representation of $\mcm$.
		
	\end{proposition}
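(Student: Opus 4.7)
The plan is to argue by contradiction: suppose $\varrho$ is not cuspidal; then we will produce a strict simple stratum $[\mfa',u',0,\beta]$ with $\mfa'\subsetneq\mfa$ (necessarily $\mfa'\cap B=\mfb'\subsetneq\mfb$) and a simple character $\theta'\in\mcc(\mfa',0,\beta)$ such that $\,_{s}\theta'$ is contained in $\wt{\pi}$, contradicting the minimality assumption on $\mfa$. This follows the pattern of \cite{secherre2008representations}*{\S 5} and \cite{bushnell129admissible}*{\S 8}, which we adapt to the covering situation.

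First, write $\mcm=\mcg^{1}\times\dots\times\mcg^{t}$ and $\varrho=\varrho_{1}\boxtimes\cdots\boxtimes\varrho_{t}$ (after replacing $\varrho$ by an irreducible constituent, which remains contained in $\wt{\pi}$ via $\wt{\lambda}$). If $\varrho$ is non-cuspidal, some $\varrho_{i}$ admits a non-zero space of $\mcn'$-invariants for the unipotent radical $\mcn'$ of a proper parabolic $\mcq_{i}\subset \mcg^{i}$. Lifting $\mcq_{i}$ to a subgroup of $U(\mfb^{i})$ corresponds to refining the decomposition $V_{E}^{i}=\bigoplus_{j}V_{E}^{i,j}$; putting this together with the original decomposition we obtain a finer $E$-decomposition of $V$ which is subordinate to a new hereditary order $\mfb'\subsetneq\mfb$ in $B$. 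Choose an $E$-pure lattice chain $\Lambda'$ in $V$ conform with this refinement whose associated hereditary order $\mfa'$ satisfies $\mfa'\cap B=\mfb'$; set $u'=-v_{\mfa'}(\beta)$. Then $[\mfa',u',0,\beta]$ is a strict simple stratum in $A$ with $\mfa'\subsetneq\mfa$. Let $\theta'\in\mcc(\mfa',0,\beta)$ be the transfer of $\theta$, $\eta'$ its Heisenberg representation, and $\kappa'$ the $\beta$-extension of $\eta'$ related to $\kappa$ as in \eqref{eqrelatedbetaext}.

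Next, transport the non-cuspidality of $\varrho$ to a statement about $\wt{\lambda}$. Setting $J'=J(\beta,\mfa')$, the refinement gives a subgroup $J'\cap(H^{1}\mcp')H^{1}$ (where $\mcp'$ denotes the parabolic of $\mcm$ corresponding to the refinement) whose quotient by $J^{1}$ recovers $\mcp'/\mcn'$. The existence of a non-zero $\mcn'$-invariant vector in $\varrho$ translates, via the inflation construction of $\wt{\rho}$, into a non-zero $\,_{s}J^{1}$-isotypic component in an appropriate restriction of $\wt{\lambda}$, which combined with the first identity of \eqref{eqrelatedbetaext} forces $\wt{\pi}$ to contain the representation $\wt{\kappa}'$ of $\wt{J'}$ after inducing up. In particular, $\wt{\pi}$ contains the Heisenberg representation $\,_{s}\eta'$ of $\,_{s}J^{1}(\beta,\mfa')$, and therefore by restriction to $\,_{s}H^{1}(\beta,\mfa')$ it contains $\,_{s}\theta'$.

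The hardest step is the last one: passing from ``$\varrho$ has non-zero $\mcn'$-invariants'' to ``$\wt{\pi}$ contains a simple character attached to a smaller hereditary order''. Concretely, it requires a careful Mackey-theoretic / Iwahori-decomposition argument combining (i) the compatibility \eqref{eqrelatedbetaext} between the $\beta$-extensions attached to $\mfa$ and $\mfa'$, (ii) the decomposition of pairs $(H^{1},\theta)$ and $(J_{P}^{1},\eta_{P})$ with respect to the parabolic induced by the refinement, and (iii) Frobenius reciprocity applied to $\wt{\pi}$. Since all the groups involved in this manipulation are pro-$p$, they carry unique splittings into $\wt{G}$ and every representation in sight is handled as its pulled-back version $\,_{s}(\cdot)$; this is precisely what allows the linear arguments of \cite{secherre2008representations}*{\S 5} to carry over verbatim, provided one keeps careful track of the genuine character $\epsilon$ on each inflation. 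Once $\,_{s}\theta'$ is shown to be contained in $\wt{\pi}$, the strict inclusion $\mfa'\subsetneq\mfa$ contradicts the minimality of $\mfa$, completing the proof that $\varrho$ must be cuspidal.
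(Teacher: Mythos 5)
Your proposal takes essentially the same route as the paper's proof: assume $\varrho$ is non-cuspidal, use the witnessing proper parabolic of $\mcm$ to produce a strictly smaller $E$-pure hereditary order $\mfa'\subsetneq\mfa$ with $\mfb'=\mfa'\cap B\subsetneq\mfb$, transfer $\theta$ to $\theta'\in\mcc(\mfa',0,\beta)$, and combine the compatibility \eqref{eqrelatedbetaext} of related $\beta$-extensions with Frobenius reciprocity to show that $\wt{\pi}$ contains $\,_{s}\theta'$, contradicting the minimality of $\mfa$. One inaccuracy in your justification of the final step: the groups $U(\mfb')$, $J'$ and $U(\mfb')J^{1}$ are \emph{not} pro-$p$ (they have reductive quotients), so one cannot appeal there to uniqueness of splittings; what actually makes the linear argument transplant is that all of these groups lie inside $U(\mfa)\subset K$, on which the splitting $\bs{s}$ was fixed once and for all, the key point being the genuine-representation identity $\mrind_{\wt{U(\mfb')}\wt{J^{1}}}^{\wt{U(\mfb')}\wt{U^{1}(\mfa')}}(\wt{\kappa}\rest_{\wt{U(\mfb')}\wt{J^{1}}}\otimes\wt{\rho}')\cong\mrind_{\wt{J'}}^{\wt{U(\mfb')}\wt{U^{1}(\mfa')}}(\wt{\kappa}'\otimes\wt{\rho}')$.
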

	
	\begin{proof}
		
		The proof follows from \cite{secherre2008representations}*{Proposition 5.15}. Assume that $\varrho$ is not cuspidal. Then, there exist a proper parabolic subgroup $\mcp'$ of $\mcm$ with $\mcm'$ a Levi factor and $\mcu'$ its unipotent radical, and a representation $\varrho'$ of $\mcm'$, such that $\varrho$ is an irreducible subrepresentation of the parabolic induction $i_{\mcp'}^{\mcm}(\varrho')$.  We may take an $E$-pure hereditary order $\mfa'$ in $A$ contained in $\mfa$ and the corresponding hereditary order $\mfb'=\mfa'\cap B$ in $B$, such that the image of $U(\mfb')$ (resp. $U^{1}(\mfb')$) in the quotient $\mcm=U(\mfb)/U^{1}(\mfb)$ is $\mcp'$ (resp. $\mcu'$). So $[\mfa',u,0,\beta]$ is also a strict simple stratum in $A$, and we write $J'^{1}=J^{1}(\beta,\mfa')$ and $J'=J(\beta,\mfa')$ for short.
		
		Let $\theta'\in\mcc(\mfa',0,\beta)$ be the transfer of $\theta$ and let $\eta'$ be the Heisenberg representation of $\theta'$. Let $\kappa'$ be the $\beta$-extension of $\eta'$ related to $\kappa$ and let $\wt{\kappa}'$ be its non-genuine pull-back to $\wt{J'}$. Let $\wt{\rho}'=\mrinf_{\wt{\mcm'}}^{\wt{U(\mfb')}}(\epsilon\cdot\,_{s}\varrho')$ be an irreducible representation of $\wt{U(\mfb')}$, which can be extended to representations of $\wt{J}'$,  $\wt{U(\mfb')}\wt{J^{1}}$ and $\wt{U(\mfb')}\wt{U^{1}(\mfa')}$ (still denoted by $\wt{\rho}'$), whose restrictions to $\,_{s}J'^{1}$, $\,_{s}J^{1}$ and $\,_{s}U^{1}(\mfa')$ respectively are trivial.
		
		The restriction $\wt{\kappa}\rest_{\wt{U(\mfb')}\wt{J^{1}}}$ is an irreducible representation of $\wt{U(\mfb')}\wt{J^{1}}$. Also, $\wt{\rho}'$ is an irreducible subrepresentation of the restriction $\wt{\rho}\rest_{\wt{U(\mfb')}\wt{J^{1}}}$, which is because by  Frobenius reciprocity $\varrho'$ is an irreducible subrepresentation of $\varrho\rest_{\mcp}$. Since $\wt{\pi}$ contains $\wt{\lambda}$, it contains $\wt{\kappa}\rest_{\wt{U(\mfb')}\wt{J^{1}}}\otimes\wt{\rho}'$ as well. Let $\wt{\lambda}'=\wt{\kappa}'\otimes\wt{\rho}'$ be a genuine irreducible representation of $\wt{J'}$. Then using \eqref{eqrelatedbetaext} we have
		\begin{align*}
			\mrind_{\wt{U(\mfb')}\wt{J^{1}}}^{\wt{U(\mfb')}\wt{U^{1}(\mfa')}}(\wt{\kappa}\rest_{\wt{U(\mfb')}\wt{J^{1}}}\otimes\wt{\rho}')&\cong\mrind_{\wt{U(\mfb')}\wt{J^{1}}}^{\wt{U(\mfb')}\wt{U^{1}(\mfa')}}(\wt{\kappa}\rest_{\wt{U(\mfb')}\wt{J^{1}}})\otimes\wt{\rho}'\\&\cong\mrind_{\wt{J'}}^{\wt{U(\mfb')}\wt{U^{1}(\mfa')}}(\wt{\kappa}')\otimes\wt{\rho}'\cong\mrind_{\wt{J'}}^{\wt{U(\mfb')}\wt{U^{1}(\mfa')}}\wt{\lambda}'.
		\end{align*}
		By Frobenius reciprocity $\wt{\pi}$ contains $\wt{\lambda}'$. In particular it contains $\,_{s}\theta'$, which contradicts the minimality of $\mfa$. So $\varrho$ is cuspidal. 
		
	\end{proof}
	
	From now on, we adopt the notation in \S \ref{subsectionhomotype}. More precisely, 
	\begin{itemize}
		
		\item Fix a containment of $E$-pure hereditary orders $\amin\subset\mfa\subset\amax$, such that $\bmin=B\cap\amin$ is a minimal hereditary order, and $\bmax=B\cap\amax$ is a maximal hereditary order in $B$. Let $\Lambdamin$ and $\Lambdamax$ be the corresponding lattice chains related to $\amin$ and $\amax$ respectively. Assume that $K$ contains $U(\amax)$.
		
		\item Fix an $E$-decomposition $V=\bigoplus_{i=1}^{t}V^{i}$ that conforms with $\Lambda$, $\Lambdamax$, $\Lambdamin$ and is properly subordinate to $\Lambda$. Let $P=MN$ be a corresponding parabolic subgroup with $M=\prod_{i=1}^{t}\mraut_{F}(V^{i})$. 
		
		\item Write $r_{i}=\mrdim_{F}(V^{i})$ and $m_{i}=\mrdim_{E}(V^{i}_{E})=\mrdim_{F}(V^{i})/d$ for each $i$. Then $m_{1}+\dots+m_{t}=m$ and $r_{1}+\dots+r_{t}=r$. 
		
		\item Fix a certain $E$-basis of $V_{E}$ to identify $B$ with $\mrm_{m}(E)$, such that $\bmax$ is identified with $\mrm_{m}(\mfo_{E})$,  and $\mfb$ is identified with the standard hereditary order in $B$ with respect to the composition $m_{1}+\dots+m_{t}=m$, and $\bmin$ is identified with the standard minimal hereditary order in $B$. 
		
		\item For $i=1,\dots,t$, let $A^{i}=\mrend_{F}(V^{i})$ (resp. $B^{i}=\mrend_{E}(V_{E}^{i})$) which is identified with a subalgebra of $A$ (resp. $B$) via the $i$-th block diagonal embedding. Let $\mfa^{i}=A^{i}\cap\mfa$ and $\mfb^{i}=B^{i}\cap \mfb$.
		
		\item Let $\mcg^{i}=\mrgl_{m_{i}}(\bs{l})\cong U(\mfb^{i})/U^{1}(\mfb^{i})$. Then
		$$\mcp\cong U(\mfb)/U^{1}(\bmax)\quad\text{and}\quad\mcm=\mcg^{1}\times\dots\times\mcg^{t}\cong U(\mfb)/U^{1}(\mfb)\cong J/J^{1}.$$
		
		\item Write $\varrho=\varrho_{1}\boxtimes\dots\boxtimes\varrho_{t}$, where each $\varrho_{i}$ is a cuspidal representation of $\mcg^{i}$ for $i=1,\dots,t$.
		
		\item Define $(\wt{J_{M}},\wt{\lambda}_{M})$ and $(\wt{J_{P}},\wt{\lambda}_{P})$ as in \S \ref{subsectionhomotype}.
		
	\end{itemize}
	
	Our next goal is to show that $(\wt{J},\wt{\lambda})$ is a twisted simple type. Notice that conjugating by an element in $W_{0}(B)\subset U(\bmin)$, we may interchange the position of $V^{i}$ and $V^{j}$ in the decomposition $V=V^{1}\oplus\dots\oplus V^{t}$ for $1\leq i< j\leq t$. The related  $(A^{i},B^{i},\mfa^{i},\mfb^{i},\mcg^{i},\varrho_{i})$ and $(A^{j},B^{j},\mfa^{j},\mfb^{j},\mcg^{j},\varrho_{j})$ are also interchanged. Then, without loss of generality, we may assume that there exist $1\leq t'\leq t$ and a cuspidal representation $\varrho_{0}$ of $\mrgl_{m_{1}}(\bs{l})$, such that
	\begin{itemize}
		\item  $m_{1}=\dots=m_{t'}$, and for each $i=1,\cdots,t'$ there exists $s_{i}\in\mbz$ such that $\varrho_{i}\cong\varrho_{0}\cdot (\chi_{\varpi_{E}}\circ\mrdet_{\bs{l}})^{s_{i}\bs{d}}$, where $\chi_{\varpi_{E}}=\epsilon((\varpi_{E},\cdot)_{n,E})$ is a character of $\bs{l}^{\times}\cong\mfo_{E}^{\times}/(1+\mfp_{E})$;
		\item For every $i=t'+1,\dots,t$, the representation $\varrho_{i}$ is not isomorphic to $\varrho_{0}\cdot (\chi_{\varpi_{E}}\circ\mrdet_{\bs{l}})^{s_{i}\bs{d}}$ for any $s_{i}\in\mbz$.
	\end{itemize}
	We want to show that $t=t'$. Otherwise, we consider $W=V^{1}\oplus\dots\oplus V^{t'}$ and $W'=V^{t'+1}\oplus\dots\oplus V^{t}$ as non-zero $F$-subspaces of $V$. Let $M'=\mraut_{F}(W)\times\mraut_{F}(W')$ be a Levi subgroup of $G$. Let $J_{M'}=J_{P}\cap M'$ and let $\wt{\lambda}_{M'}=\wt{\lambda}_{P}\rest_{\wt{J_{P}}\cap\wt{M'}}$ be a genuine irreducible representation of $\wt{J_{M'}}$.
	
	\begin{lemma}
		
		$(\wt{J_{P}},\wt{\lambda}_{P})$ is a covering pair of $(\wt{J_{M'}},\wt{\lambda}_{M'})$.
		
	\end{lemma}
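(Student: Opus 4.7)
The plan is to verify the three defining properties of a covering pair. Conditions (1) and (2) follow from the construction. For condition (1), recall from Section 6 (or more precisely the discussion in \S\ref{subsectionhomotype}) that $(\wt{J_P}, \wt{\lambda}_P)$ is decomposed with respect to $(M, P''')$ for every parabolic of $G$ with Levi subgroup $M$. Since $M\subset M'$ and the $E$-decomposition $V=W\oplus W'$ is coarser than and compatible with $V=\bigoplus_i V^i$, and since $\mfb$ is still subordinate to the coarser decomposition, the Iwahori-style factorization $J_P=(H^1\cap N''^-)(J_P\cap M')(J^1\cap N'')$ (for $P''=M'N''$ any parabolic with Levi $M'$) follows by grouping the blocks of the finer decomposition accordingly, and $\wt{\lambda}_P$ remains trivial on the unipotent pieces because $\wt{\rho}$ is inflated from $\mcm\subset J/J^1$. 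Condition (2) holds by construction, as $J_{M'}=J_P\cap M'$ and $\wt{\lambda}_{M'}=\wt{\lambda}_P\rest_{\wt{J_{M'}}}$.

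The main step is condition (4), namely that every $\phi\in\mch(\wt{G},\wt{\lambda}_P)\cong\mch(\wt{G},\wt{\lambda})$ is supported on $J M' J$, which by \cite{bushnell1998smooth}*{Condition 8.2} implies condition (3). Equivalently, we must show $I_G(\wt{\lambda})\subset JM'J$. By Proposition \ref{propIglambda}, $I_G(\wt{\lambda})=J\cdot I_{B^\times}(\wt{\rho})\cdot J$. Using the Iwahori decomposition $B^\times=U(\bmin)W(B)U(\bmin)$ and the fact that $U(\bmin)\subset U(\mfb)\subset J$, it suffices to establish $I_{W(B)}(\wt{\rho})\subset M'$. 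By Proposition \ref{proprhoWBintertwine}, every element of $I_{W(B)}(\wt{\rho})$ normalizes $M^0(\mfb)$, hence lies in $W(\mfb)=W_0(\mfb)\ltimes T(\mfb)$; moreover $T(\mfb)\subset M\subset M'$ automatically. For the remaining $W_0(\mfb)$-factor, Proposition \ref{propW0Brho} tells us that the corresponding permutation $\sigma\in\mfS_t$ must satisfy $\varrho_{\sigma(i)}\cong\varrho_i$ for each $i$. Our arrangement of the $\varrho_i$ yields the dichotomy: for $i\leq t'$ the representation $\varrho_i$ is isomorphic to $\varrho_0\cdot(\chi_{\varpi_E}\circ\mrdet_{\bs{l}})^{s_i\bs{d}}$, hence a twist of $\varrho_0$, while for $j>t'$ the representation $\varrho_j$ is by assumption \emph{not} isomorphic to any twist of $\varrho_0$ by such a character. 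Consequently no $\varrho_i$ with $i\leq t'$ can be isomorphic to any $\varrho_j$ with $j>t'$, so $\sigma$ necessarily preserves the partition $\{1,\dots,t'\}\sqcup\{t'+1,\dots,t\}$. Under the identification $\mcf_0:\mfS_t\to W_0(\mfb)$, such $\sigma$ correspond exactly to elements in $W_0(\mfb)\cap M'$. This shows $I_{W(B)}(\wt{\rho})\subset W(\mfb)\cap M'\subset M'$, completing condition (4).

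The hard part is the isomorphism-class analysis in the last step: one must argue precisely that the given splitting of $\{\varrho_i\}$ into ``twists of $\varrho_0$'' and ``non-twists'' rules out block-mixing permutations. This is intuitively transparent from the dichotomy set up just before the lemma, but it relies crucially on the fact that, for cuspidal representations of $\mrgl_{m_0}(\bs{l})$ in the Deligne--Lusztig/Green parameterization, isomorphism is controlled by the twist class of $\varrho_0$ under $\chi_{\varpi_E}\circ\mrdet_{\bs{l}}$; once this is granted, the partition-preservation of $\sigma$ is immediate, and the rest of the proof reduces to the intertwining computations already developed in Section 6.
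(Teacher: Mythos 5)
Your verification of conditions (1) and (2) matches the paper's (both defer to the construction in \S\ref{subsectionhomotype}), and your overall strategy for condition (4) — bounding the intertwining set via Propositions \ref{propIglambda} and \ref{proprhoWBintertwine} and the Iwahori decomposition — is the right one. But the key step contains a genuine gap: you apply Proposition \ref{propW0Brho} to the $W_{0}(\mfb)$-component $w_{0}$ of a general intertwining element $w=w_{0}h\in W(\mfb)$, concluding $\varrho_{\sigma(i)}\cong\varrho_{i}$. That proposition only describes $I_{W_{0}(B)}(\wt{\rho})$, i.e.\ elements that themselves lie in $W_{0}(B)$; the intertwining set is not the product $I_{W_{0}(B)}(\wt{\rho})\cdot T(\mfb)$. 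Because conjugation by $h\in T(\mfb)$ acts on $\wt{\rho}\rest_{\wt{M^{0}(\mfb)}}$ through the nontrivial commutator character $\chi_{h}=\epsilon([h,\cdot]_{\sim})$, the correct condition for $w_{0}h$ to intertwine $\wt{\rho}$ is the twisted one, $(\varrho_{1}\boxtimes\dots\boxtimes\varrho_{t})^{w_{0}}\cdot\chi_{h}\cong\varrho_{1}\boxtimes\dots\boxtimes\varrho_{t}$, so $\sigma$ need only permute the $\varrho_{i}$ up to twists by powers of $\chi_{\varpi_{E}}\circ\mrdet_{\bs{l}}$ prescribed by formula \eqref{eqchih} — not up to isomorphism on the nose. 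Your separate treatment ``$T(\mfb)\subset M\subset M'$ automatically'' discards exactly this interaction.

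This is not a cosmetic point: it is the reason the dichotomy set up just before the lemma is phrased in terms of twists $\varrho_{0}\cdot(\chi_{\varpi_{E}}\circ\mrdet_{\bs{l}})^{s\bs{d}}$ rather than plain isomorphism classes. With your untwisted condition one would (wrongly) conclude that $\sigma$ preserves each isomorphism class of the $\varrho_{i}$, and your justification of partition-preservation (``no $\varrho_{i}$ with $i\le t'$ is isomorphic to any $\varrho_{j}$ with $j>t'$'') is weaker than what is actually required, namely that no such pair becomes isomorphic after twisting by the characters coming from $T(\mfb)$ — which is precisely what the hypothesis on $\varrho_{t'+1},\dots,\varrho_{t}$ is designed to rule out via \eqref{eqchih}. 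Your closing appeal to the Green/Deligne--Lusztig parameterization does not address this; what is needed is to run the argument with the twisted condition \eqref{eqvarrhocondition}, as the paper does. A secondary, more minor point: the paper bounds $I_{G}(\wt{\lambda}_{P})=J_{P}I_{W(B)}(\wt{\rho})J_{P}$ directly (using $\mrind_{\wt{J_{P}}}^{\wt{J}}\wt{\lambda}_{P}\cong\wt{\lambda}$), which lands exactly in $J_{P}M'J_{P}$; your detour through $I_{G}(\wt{\lambda})\subset JM'J$ leaves you owing the containment $JM'J\subset J_{P}M'J_{P}$, which you should not take for granted.
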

	
	\begin{proof}
		
		We follow the proof of \cite{secherre2008representations}*{Proposition 5.17}. The condition (1)(2) of being a covering pair is easily verified as in \emph{loc. cit.}. We verify condition (4) by estimating $I_{G}(\wt{\lambda}_{P})$. Using $\mrind_{\wt{J_{P}}}^{\wt{J}}\wt{\lambda}_{P}\cong\wt{\lambda}$, Proposition \ref{propIglambda} and the Bruhat decomposition $B^{\times}=U(\bmin)W(B)U(\bmin)$, we have
		$$I_{G}(\wt{\lambda}_{P})=J_{P}I_{W(B)}(\wt{\rho})J_{P}.$$
		Using Proposition \ref{proprhoWBintertwine}, $w\in I_{W(B)}(\wt{\rho})$ implies that $w\in W(\mfb)$ and $w$ normalizes $\wt{\rho}\rest_{\wt{M^{0}(\mfb)}}$. Write $w=w_{0}h$ for $w_{0}\in W_{0}(\mfb)$ and $h\in T(\mfb)$. Then $w$ normalizes $\wt{\rho}\rest_{\wt{M^{0}(\mfb)}}$ if and only if 
		\begin{equation}\label{eqvarrhocondition}
			(\varrho_{1}\boxtimes\dots\boxtimes\varrho_{t})^{w_{0}}\cdot\chi_{h}\cong\varrho_{1}\boxtimes\dots\boxtimes\varrho_{t},
		\end{equation}
		where $\chi_{h}:=\epsilon([h,\cdot]_{\sim})$ is a character of $\mcm=M^{0}(\mfb)/M^{1}(\mfb)$. From our assumption on $V^{1},\dots,V^{t}$ and $\varrho_{1},\dots,\varrho_{t}$ and formula \eqref{eqchih}, the condition \eqref{eqvarrhocondition} happens only if $w_{0}\in M'$. As a result, we have $I_{G}(\wt{\lambda}_{P})\subset J_{P}M'J_{P}$, verifying the condition (4) of being a covering pair.
		
	\end{proof}
	
	Let $P'=M'N'$ be a parabolic subgroup of $G$ having a Levi factor $M'$ and the unipotent radical $N'$, such that $P'$ contains $P$. Using Theorem \ref{thmJacquetcomm}, we have 
	$$0\neq\mrhom_{\wt{J}}(\wt{\pi},\wt{\lambda})=\mrhom_{\wt{J_{P}}}(\wt{\pi},\wt{\lambda}_{P})=\mrhom_{\wt{J_{M'}}}(r_{N'}(\wt{\pi}),\wt{\lambda}_{M'}),$$
	contradicting the fact that $\wt{\pi}$ is cuspidal. 
	
	So we must have $t'=t$. We let $m_{0}=m_{1}=\dots=m_{t}$ and $r_{0}=r_{1}=\dots=r_{t}$. For each $i=1,\dots,t$ we write $\varrho_{i}\cong \varrho_{0}\cdot (\chi_{\varpi_{E}}\circ\mrdet_{\bs{l}})^{s_{i}\bs{d}}$ for a certain $s_{i}\in\mbz$. Let $$\varrho_{0}'=\varrho_{0}\cdot(\chi_{\varpi_{E}}\circ\mrdet_{\bs{l}})^{(s_{1}r_{1}+\dots+s_{t}r_{t})(2\bs{c}+\bs{d})}$$
	be a cuspidal representation of $\mrgl_{m_{0}}(\bs{l})$.
	Using formula \eqref{eqchih} for $g_{0}=\mrdiag(\varpi_{E}^{-s_{1}}I_{m_{1}},\dots\varpi_{E}^{-s_{t}}I_{m_{t}})$ we have $$\varrho=\varrho_{1}\boxtimes\dots\boxtimes\varrho_{t}\cong(\varrho_{0}'\boxtimes\dots\boxtimes\varrho_{0}')\cdot\chi_{g_{0}}.$$
	This implies that $(\wt{J},\wt{\lambda})$ is indeed a twisted simple type that is contained in $\wt{\pi}$.
	
	We have the following proposition, which is interesting in its own right.
	
	\begin{proposition}\label{proptwistnontwist}
		
		For a genuine irreducible representation $\wt{\pi}$ of $\wt{G}$, if it contains a twisted simple type, then it contains all the related weakly equivalent simple types as well.
		
	\end{proposition}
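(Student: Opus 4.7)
The plan is to realize the weak equivalence class $[\wt{\lambda}]$ as the orbit of $\wt{\lambda}$ under conjugation by $T(\mfb)$ inside the set of representations of $\wt{J}$, and then to invoke the elementary observation that, since $\wt{\pi}$ is a representation of $\wt{G}$ and $T(\mfb) \subset N_{G}(\wt{J})$, the operator $\wt{\pi}(\bs{s}(h))$ induces a linear isomorphism between $\mrhom_{\wt{J}}(\wt{\lambda}, \wt{\pi})$ and $\mrhom_{\wt{J}}(\wt{\lambda}^{h}, \wt{\pi})$ for every $h \in T(\mfb)$; in particular $\wt{\pi}$ contains $\wt{\lambda}$ if and only if it contains $\wt{\lambda}^{h}$. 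The inclusion $T(\mfb) \subset N_{G}(\wt{J})$ follows from $T(\mfb) \subset N_{B^{\times}}(U(\mfb)) \subset \bs{J}(\beta,\mfa) = N_{G}(J)$ (see \S \ref{subsectionHeisenbergbeta}) via pull-back through $\bs{p}$.

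The heart of the proof is to identify $\wt{\lambda}_{g_{0}}^{h}$ with $\wt{\lambda}_{g_{0}h}$ for $h \in T(\mfb)$. Decomposing $\wt{\lambda}_{g_{0}} = \wt{\kappa} \otimes \wt{\rho}_{g_{0}}$, I would handle the two factors separately. For $\wt{\rho}_{g_{0}} = \mrinf_{\wt{\mcm}}^{\wt{J}}(\epsilon \cdot \,_{s}((\varrho_{0} \boxtimes \dots \boxtimes \varrho_{0})\chi_{g_{0}}))$: because $T(\mfb)$ centralizes $M^{0}(\mfb)$, conjugation by $\bs{s}(h)$ descends through $\wt{J} \to \wt{\mcm}$ to an outer automorphism of $\wt{\mcm}$ which multiplies the splitting image $\,_{s}\mcm$ by the character $\chi_{h}(\cdot) = \epsilon([h,\cdot]_{\sim})$. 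Combined with the bilinearity $\chi_{g_{0}h} = \chi_{g_{0}} \cdot \chi_{h}$ of the commutator for pairwise commuting elements (a direct consequence of \eqref{eqformcommutator}), this yields $\wt{\rho}_{g_{0}}^{h} \cong \wt{\rho}_{g_{0}h}$ as representations of $\wt{J}$.

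For the $\beta$-extension factor $\wt{\kappa}$: since $h \in B^{\times} \subset I_{G}(\kappa) = I_{G}(\eta)$ and $h$ normalizes $J$, the conjugate $\wt{\kappa}^{h}$ is again a $\beta$-extension of $\wt{\eta}$, and therefore differs from $\wt{\kappa}$ by a character $\wt{\chi}_{h,\kappa}$ of $J/J^{1} \cong \mcm$ factoring through $\mrdet_{E}$. The main obstacle will be to show that $\wt{\chi}_{h,\kappa} = 1$ for every $h \in T(\mfb)$. A useful observation is that $\wt{\chi}_{h,\kappa}$ is independent of the choice of $\beta$-extension $\wt{\kappa}$: for any character $\chi$ of $\mcm$ factoring through $\mrdet_{E}$ one has $\chi^{h} = \chi$ by conjugation-invariance of the determinant, so $(\wt{\kappa}\chi)^{h} = \wt{\kappa}^{h}\chi = (\wt{\kappa}\chi) \cdot \wt{\chi}_{h,\kappa}$. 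Hence it suffices to exhibit a single $T(\mfb)$-invariant $\beta$-extension, which I would produce via the standard normalization procedure of \cite{bushnell129admissible}*{\S 5.2}, using the compatibility relation \eqref{eqrelatedbetaext} with a $\beta$-extension $\tildekappamax$ of a maximal hereditary order in the same compatible family.

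Granting $\wt{\kappa}^{h} \cong \wt{\kappa}$, we obtain $\wt{\lambda}_{g_{0}}^{h} \cong \wt{\lambda}_{g_{0}h}$. As $h$ ranges over $T(\mfb)$, the products $g_{0}h$ exhaust $T(\mfb)$, so the $T(\mfb)$-orbit of $\wt{\lambda}_{g_{0}}$ coincides with the weak equivalence class $[\wt{\lambda}]$. Combined with the initial conjugation-invariance observation, this shows that $\wt{\pi}$ contains every representation in $[\wt{\lambda}]$, completing the proof.
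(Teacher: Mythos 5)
Your argument collapses at the first step: the inclusion $T(\mfb)\subset N_{B^{\times}}(U(\mfb))$ is false whenever $t>1$, which is the only case in which the proposition has content. Conjugation by $h=\mrdiag(\varpi_{E}^{s_{1}}I_{m_{1}},\dots,\varpi_{E}^{s_{t}}I_{m_{t}})$ multiplies the $(i,j)$ block of an element of $B\cong\mrm_{m}(E)$ by $\varpi_{E}^{s_{i}-s_{j}}$, so it carries the standard hereditary order $\mfb$ (blocks on and above the diagonal in $\mfo_{E}$, below in $\mfp_{E}$) to a different, merely conjugate, hereditary order unless $s_{1}=\dots=s_{t}$. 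The normalizer $N_{B^{\times}}(U(\mfb))$ is generated by $U(\mfb)$ and the single shift element $\Pi(\mfb)$ appearing in the paper's proof, so the only elements of $T(\mfb)$ it contains are the scalars $\varpi_{E}^{s}I_{m}$. What the paper actually records, just before Proposition \ref{proprhoWBintertwine}, is that $T(\mfb)$ normalizes $M^{0}(\mfb)$ and $M^{1}(\mfb)$ --- not $U(\mfb)$, and not $J$. Consequently, for a general $h\in T(\mfb)$, $\wt{\lambda}^{h}$ is a representation of $\wt{J^{h}}\neq\wt{J}$, the asserted isomorphism $\wt{\lambda}_{g_{0}}^{h}\cong\wt{\lambda}_{g_{0}h}$ is not even well posed (the two sides live on different groups), and the statement that ``$\wt{\kappa}^{h}$ is again a $\beta$-extension'' is likewise meaningless. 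Your orbit argument therefore proves nothing beyond the maximal case $t=1$, where $T(\mfb)\subset E^{\times}\subset\bs{J}$ and the paper already uses exactly this conjugation (see the proof of Theorem \ref{thmcuspidalconstruction}.(3)).

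This failure is precisely why the paper's proof is two-pronged. Conjugation is available only for $\Pi(\mfb)$, which does normalize $J$ and $\wt{\kappa}$ and replaces $\chi_{g_{0}}$ by $\chi_{g_{0}\Pi_{E}}$; to reach all of $T(\mfb)$ one must also realize the transpositions of $W_{0}(\mfb)$ (jointly with $\Pi(\mfb)$ these generate $W(\mfb)\supset T(\mfb)$), and those again fail to normalize $J$. For them the paper replaces conjugation by a Frobenius-reciprocity argument: passing to a larger parahoric $\mfb'\supset\mfb$ and combining \eqref{eqrelatedbetaext} with the irreducibility of $\mrInd_{\mcp}^{\mcm'}(\varrho)$ when $\varrho_{i}\ncong\varrho_{i+1}$, one obtains $\mrInd_{\wt{J}}^{\wt{U(\mfb')}\wt{U^{1}(\mfa)}}\wt{\lambda}\cong\mrInd_{\wt{J}}^{\wt{U(\mfb')}\wt{U^{1}(\mfa)}}\wt{\lambda}^{\circ}$, whence $\wt{\pi}$ contains $\wt{\lambda}$ if and only if it contains $\wt{\lambda}^{\circ}$. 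Nothing in your proposal supplies a substitute for this step.
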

	
	\begin{proof}
		
		We follow the proof of \cite{bushnell129admissible}*{Proposition 8.3.4, 8.3.5} or \cite{secherre2008representations}*{Proposition 5.19}. Let $(\wt{J},\wt{\lambda})$ be a twisted simple type contained in $\wt{\pi}$ with the notation as above. We claim that
		\begin{itemize}
			\item Let $\Pi(\mfb)=\begin{pmatrix}  & I_{(t-1)m_{0}}  \\ \varpi_{E}I_{m_{0}}  & \end{pmatrix}\in B^{\times}\cong \mrgl_{m}(E)$. Then $\wt{\pi}$ contains $\wt{\lambda}^{\Pi(\mfb)}\cong\wt{\kappa}\otimes\wt{\rho}^{\Pi(\mfb)}$.
			\item If $\varrho_{i}\ncong\varrho_{i+1}$ for some $i$, then $\wt{\pi}$ contains $\wt{\lambda}^{\circ}:=\wt{\kappa}\otimes\wt{\rho}^{\circ}$, where $\wt{\rho}^{\circ}$ is the inflation of $$\varrho^{\circ}:=\varrho_{1}\boxtimes\dots\boxtimes\varrho_{i-1}\boxtimes\varrho_{i+1}\boxtimes\varrho_{i}\boxtimes\varrho_{i+2}\boxtimes\dots\boxtimes\varrho_{t}.$$
		\end{itemize}
		Since $\Pi(\mfb)$ and $W_{0}(\mfb)$ generate $W(\mfb)$, it is clear that we may use these two claims to finish the proof. 
		
		The first claim is direct. We only remark that the statement itself makes sense since $\Pi(\mfb)$ normalizes $\wt{\kappa}$. In particular, we have $\mrind_{\wt{J}}^{\wt{G}}(\wt{\lambda})\cong\mrind_{\wt{J}}^{\wt{G}}(\wt{\lambda}^{\Pi(\mfb)})$.
		
		For the second claim,  we consider the lattice chain $\Lambda'_{E}$ of $V_{E}$ of period $t-1$, such that $(\Lambda'_{E})_{j}=(\Lambda_{E})_{j}$ for $j=1,2,\dots, i-1$ and $(\Lambda'_{E})_{j}=(\Lambda_{E})_{j+1}$ for $j=i,\dots,t-1$. Then, it relates to a hereditary order $\mfa'$ in $A$ and the standard hereditary order $\mfb'$ in $B$ with respect to the composition $(m_{0},\dots,m_{0},2m_{0},m_{0},\dots,m_{0})$ of $m$, where $2m_{0}$ occurs in the $i$-th coordinate. In particular, $\mfb'$ contains $\mfb$. Let $J'=J(\beta,\mfa')$ and $J'^{1}=J^{1}(\beta,\mfa')$. Let $\mcm'= U(\mfb')/U^{1}(\mfb')\cong J'/J'^{1}$, which contains $\mcp= U(\mfb)/U^{1}(\mfb')$ as a parabolic subgroup with a Levi factor $\mcm=U(\mfb)/U^{1}(\mfb)$. 
		
		Let $\wt{\kappa}'$ be the $\beta$-extension of $\wt{J'}$ related to $\wt{\kappa}$, let $\varrho'=\mrInd_{\mcp}^{\mcm'}(\varrho)\cong\mrInd_{\mcp}^{\mcm'}(\varrho^{\circ})$ be the related parabolic induction, which is irreducible since $\varrho_{i}\ncong\varrho_{i+1}$, let $\wt{\rho}'=\mrinf_{\wt{\mcm'}}^{\wt{J'}}(\epsilon\cdot\,_{s}\varrho')$, and let $\wt{\lambda}'=\wt{\kappa}'\otimes\wt{\rho}'$ be an irreducible representation of $\wt{J'}$. 
		
		We may regard $\wt{\rho}'$ as a representation of $\wt{U(\mfb')}\wt{U^{1}(\mfa)}$ that is trivial on $\,_{s}U^{1}(\mfa)$, and $\wt{\rho}$ (resp. $\wt{\rho}^{\circ}$) as a representation of $\wt{U(\mfb)}\wt{U^{1}(\mfa)}$ that is trivial on $\,_{s}U^{1}(\mfa)$, then by definition we have $$\mrInd_{\wt{U(\mfb)}\wt{U^{1}(\mfa)}}^{\wt{U(\mfb')}\wt{U^{1}(\mfa)}}(\wt{\rho})\cong\wt{\rho}'\cong\mrInd_{\wt{U(\mfb)}\wt{U^{1}(\mfa)}}^{\wt{U(\mfb')}\wt{U^{1}(\mfa)}}(\wt{\rho}^{\circ}).$$ Combining with \eqref{eqrelatedbetaext}, we have $$\mrInd_{\wt{J}}^{\wt{U(\mfb')}\wt{U^{1}(\mfa)}}\wt{\lambda}\cong\mrInd_{\wt{U(\mfb)}\wt{J'^{1}}}^{\wt{U(\mfb')}\wt{U^{1}(\mfa)}}(\wt{\lambda}'\rest_{\wt{U(\mfb)}\wt{J'^{1}}})\cong\mrInd_{\wt{J}}^{\wt{U(\mfb')}\wt{U^{1}(\mfa)}}\wt{\lambda}^{\circ}.$$
		Thus by Frobenius reciprocity $\wt{\pi}$ contains $\wt{\lambda}$ if and only if it contains $\wt{\lambda}^{\circ}$, which finishes the proof of the second claim.
		
	\end{proof}
	
	Extracting from the above argument, we also have the following interesting corollary.
	
	\begin{corollary}
		
		For two weakly equivalent twisted simple types $(\wt{J},\wt{\lambda})$ and $(\wt{J},\wt{\lambda}')$ of $\wt{G}$, we have $\mrind_{\wt{J}}^{\wt{G}}\wt{\lambda}\cong\mrind_{\wt{J}}^{\wt{G}}\wt{\lambda}'$. As a result, we have an isomorphism of Hecke algebras $\mch(\wt{G},\wt{\lambda})\cong\mch(\wt{G},\wt{\lambda'})$.
		
	\end{corollary}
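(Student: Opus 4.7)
The plan is to extract the isomorphism $\mrind_{\wt{J}}^{\wt{G}}\wt{\lambda}\cong\mrind_{\wt{J}}^{\wt{G}}\wt{\lambda}'$ directly from the machinery already set up in the proof of Proposition \ref{proptwistnontwist}, and then to deduce the Hecke algebra statement formally. Write $\wt{\lambda}=\wt{\kappa}\otimes\wt{\rho}_{g_0}$ and $\wt{\lambda}'=\wt{\kappa}\otimes\wt{\rho}_{g_0'}$ with $g_0,g_0'\in T(\mfb)$, sharing the same $\wt{\kappa}$ and $\varrho_0$; the task is to move between the two members of the weak equivalence class on the level of compact inductions to $\wt{G}$.

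The proof of Proposition \ref{proptwistnontwist} in fact establishes two distinct mechanisms, each stronger than needed for the statement there. Claim~1 uses that $\Pi(\mfb)$ normalizes $\wt{J}$ (and $\wt{\kappa}$), so that $\mrind_{\wt{J}}^{\wt{G}}\wt{\lambda}^{\Pi(\mfb)}\cong\mrind_{\wt{J}}^{\wt{G}}\wt{\lambda}$, while the conjugate $\wt{\lambda}^{\Pi(\mfb)}$ remains in the same weak equivalence class with $g_0$ shifted by $\Pi_E$; the same argument applied to elements of $W_0(\mfb)\subset N_G(\wt{J})$ realises block permutations. Claim~2 establishes directly via \eqref{eqrelatedbetaext} the isomorphism $\mrInd_{\wt{J}}^{\wt{U(\mfb')}\wt{U^1(\mfa)}}\wt{\lambda}\cong\mrInd_{\wt{J}}^{\wt{U(\mfb')}\wt{U^1(\mfa)}}\wt{\lambda}^{\circ}$ via passage through a larger hereditary order; inducing further to $\wt{G}$ yields $\mrind_{\wt{J}}^{\wt{G}}\wt{\lambda}\cong\mrind_{\wt{J}}^{\wt{G}}\wt{\lambda}^{\circ}$ whenever an adjacent block-transposition is available. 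First I would record both mechanisms explicitly, and then combine them to link any two representatives of $[\wt{\lambda}]$ by a chain of compact-induction isomorphisms. Since $\Pi(\mfb)$ together with $W_0(\mfb)$ generate $W(\mfb)$, and the induced action on the parameter $g_0\in T(\mfb)$ exhausts the weak equivalence class modulo the (trivial) stabilizer of $\wt{\rho}_{g_0}$, the chain closes to give $\mrind_{\wt{J}}^{\wt{G}}\wt{\lambda}\cong\mrind_{\wt{J}}^{\wt{G}}\wt{\lambda}'$.

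The principal obstacle, and the one step that deserves careful checking rather than appeal, is the transitivity bookkeeping: one must verify that every element of $[\wt{\lambda}]$ is reachable from $\wt{\lambda}$ by alternating $\Pi(\mfb)$-conjugations with Claim~2 block-transpositions, without leaving the class. This requires tracking how these two moves act on $g_0\in T(\mfb)$ modulo those $h\in T(\mfb)$ for which $\chi_h$ is trivial on $\mcm$, using the explicit descriptions of $T(\mfb)$, $W_0(\mfb)$ and $W(\mfb)$ from \S\ref{subsectionsimpletypes}. A byproduct, worth stating en route, is that $W(\mfb)$ acts on $[\wt{\lambda}]$ and that this action is transitive on the set of isomorphism classes appearing in $[\wt{\lambda}]$.

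Once $\mrind_{\wt{J}}^{\wt{G}}\wt{\lambda}\cong\mrind_{\wt{J}}^{\wt{G}}\wt{\lambda}'$ is in hand, the Hecke algebra isomorphism $\mch(\wt{G},\wt{\lambda})\cong\mch(\wt{G},\wt{\lambda}')$ is immediate from the canonical identification $\mch(\wt{G},\wt{\lambda}^{\vee})\cong\mrend_{\wt{G}}(\mrind_{\wt{J}}^{\wt{G}}\wt{\lambda})$ recalled in \S\ref{sectionHeckecoveringpair}, applied together with the standard algebra isomorphism $\mch(\wt{G},\wt{\lambda})\cong\mch(\wt{G},\wt{\lambda}^{\vee})^{\mathrm{op}}$; note that weak equivalence is preserved under contragredient, so the same chain of compact-induction isomorphisms holds for $\wt{\lambda}^{\vee}$ and $\wt{\lambda}'^{\vee}$.
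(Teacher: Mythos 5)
Your proposal follows essentially the same route as the paper: the corollary is obtained by extracting the two claims proved inside Proposition \ref{proptwistnontwist} (the $\Pi(\mfb)$-conjugation, which preserves $\mrind_{\wt{J}}^{\wt{G}}$ because $\Pi(\mfb)$ normalizes $\wt{J}$ and $\wt{\kappa}$, and the adjacent block-transposition realized through the larger order $\mfb'$ via \eqref{eqrelatedbetaext}), chaining them using the fact that $\Pi(\mfb)$ and $W_{0}(\mfb)$ generate $W(\mfb)$, and then passing to Hecke algebras through $\mch(\wt{G},\wt{\lambda}^{\vee})\cong\mrend_{\wt{G}}(\mrind_{\wt{J}}^{\wt{G}}\wt{\lambda})$. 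Your identification of the transitivity bookkeeping on the parameter $g_{0}\in T(\mfb)$ as the only step requiring real verification is exactly where the paper's own argument is terse, and your plan for it is sound; note also that when $\varrho_{i}\cong\varrho_{i+1}$ the transposition is vacuous, so Claim~2 together with this trivial case covers all adjacent transpositions.

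One assertion in your write-up is false and should be removed: $W_{0}(\mfb)$ is \emph{not} contained in $N_{G}(\wt{J})$ when $\mfb$ is not maximal. A block permutation matrix does not normalize the standard hereditary order $\mfb$ (conjugation destroys the block-triangular congruence structure), and indeed $N_{G}(J)=\bs{J}=N_{B^{\times}}(U(\mfb))J$ is generated over $J$ by the single rotation-type element $\Pi(\mfb)$, so it is cyclic modulo $J$. This is precisely why the paper needs Claim~2 at all: if $W_{0}(\mfb)$ normalized $\wt{J}$ one could realize every block permutation by conjugation and Claim~2 would be superfluous. The error is not load-bearing in your proposal, since your actual chain alternates $\Pi(\mfb)$-conjugations with Claim~2 transpositions, but the parenthetical claim that conjugation by $W_{0}(\mfb)$ "realises block permutations" would not survive scrutiny. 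The final reduction to Hecke algebras is fine, though the detour through contragredients is unnecessary: $\mch(\wt{G},\wt{\lambda})\cong\mch(\wt{G},\wt{\lambda}^{\vee})^{\mathrm{op}}\cong\mrend_{\wt{G}}(\mrind_{\wt{J}}^{\wt{G}}\wt{\lambda})^{\mathrm{op}}$ already gives the conclusion directly from $\mrind_{\wt{J}}^{\wt{G}}\wt{\lambda}\cong\mrind_{\wt{J}}^{\wt{G}}\wt{\lambda}'$, with no need to check that weak equivalence is preserved under contragredient.
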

	
	Using Proposition \ref{proptwistnontwist}, we may assume that $(\wt{J},\wt{\lambda})$ is indeed a simple type that is contained in $\wt{\pi}$. We claim that $(\wt{J},\wt{\lambda})$ must be a maximal simple type. Otherwise, $P=MN$ is a proper parabolic subgroup of $G$. Using Theorem \ref{thmJacquetcomm} and Theorem \ref{thmJPcoveringpair}, we have
	$$0\neq\mrhom_{\wt{J}}(\wt{\pi},\wt{\lambda})=\mrhom_{\wt{J_{P}}}(\wt{\pi},\wt{\lambda}_{P})=\mrhom_{\wt{J_{M}}}(r_{N}(\wt{\pi}),\wt{\lambda}_{M}),$$
	contradicting the fact that $\wt{\pi}$ is cuspidal. So $(\wt{J},\wt{\lambda})$ is indeed a maximal simple type. 
	
	At last, the proof of Theorem \ref{thmcuspidalconstruction}.(2) is accomplished. 
	
	\begin{bibdiv}
		\begin{biblist}
			
			\bib{adler1998refined}{article}{
				author={Adler, J.~D.},
				title={Refined anisotropic {$K$}-types and supercuspidal
					representations},
				date={1998},
				ISSN={0030-8730},
				journal={Pacific J. Math.},
				volume={185},
				number={1},
				pages={1\ndash 32},
				url={https://doi.org/10.2140/pjm.1998.185.1},
				review={\MR{1653184}},
			}
			
			\bib{adler2021regular}{article}{
				author={Adler, J.~D.},
				author={Mishra, M.},
				title={Regular {B}ernstein blocks},
				date={2021},
				ISSN={0075-4102},
				journal={J. Reine Angew. Math.},
				volume={775},
				pages={71\ndash 86},
				url={https://doi.org/10.1515/crelle-2021-0010},
				review={\MR{4265188}},
			}
			
			\bib{aubert2017hecke}{article}{
				author={Aubert, A.-M.},
				author={Baum, P.},
				author={Plymen, R.},
				author={Solleveld, M.},
				title={Hecke algebras for inner forms of {$p$}-adic special linear
					groups},
				date={2017},
				ISSN={1474-7480},
				journal={J. Inst. Math. Jussieu},
				volume={16},
				number={2},
				pages={351\ndash 419},
				url={https://doi.org/10.1017/S1474748015000079},
				review={\MR{3615412}},
			}
			
			\bib{banks1999block}{article}{
				author={Banks, W.~D.},
				author={Levy, J.},
				author={Sepanski, M.~R.},
				title={Block-compatible metaplectic cocycles},
				date={1999},
				ISSN={0075-4102},
				journal={J. Reine Angew. Math.},
				volume={507},
				pages={131\ndash 163},
				url={https://doi.org/10.1515/crll.1999.011},
				review={\MR{1670203}},
			}
			
			\bib{bernstein1976representations}{article}{
				author={Bernstein, I.~N.},
				author={Zelevinsky, A.~V.},
				title={Representations of the group {$\mathrm{GL}(n,F),$} where {$F$} is
					a local non-{A}rchimedean field},
				date={1976},
				ISSN={0042-1316},
				journal={Uspehi Mat. Nauk},
				volume={31},
				number={3(189)},
				pages={5\ndash 70},
				review={\MR{0425030}},
			}
			
			\bib{bernstein1977induced}{article}{
				author={Bernstein, I.~N.},
				author={Zelevinsky, A.~V.},
				title={Induced representations of reductive {${\germ p}$}-adic groups.
					{I}},
				date={1977},
				ISSN={0012-9593},
				journal={Ann. Sci. \'{E}cole Norm. Sup. (4)},
				volume={10},
				number={4},
				pages={441\ndash 472},
				url={http://www.numdam.org/item?id=ASENS_1977_4_10_4_441_0},
				review={\MR{579172}},
			}
			
			\bib{bernstein1984centre}{incollection}{
				author={Bernstein, J.~N.},
				title={Le ``centre'' de {B}ernstein},
				date={1984},
				booktitle={Representations of reductive groups over a local field},
				series={Travaux en Cours},
				publisher={Hermann, Paris},
				pages={1\ndash 32},
				note={Edited by P. Deligne},
				review={\MR{771670}},
			}
			
			\bib{blondel1985representations}{article}{
				author={Blondel, C.},
				title={Les repr\'{e}sentations supercuspidales des groupes
					m\'{e}taplectiques sur {${\rm GL}(2)$} et leurs caract\`eres},
				date={1985},
				ISSN={0037-9484},
				journal={M\'{e}m. Soc. Math. France (N.S.)},
				number={18},
				pages={119},
				review={\MR{810152}},
			}
			
			\bib{blondel1992uniqueness}{article}{
				author={Blondel, C.},
				title={Uniqueness of {W}hittaker model for some supercuspidal
					representations of the metaplectic group},
				date={1992},
				ISSN={0010-437X},
				journal={Compositio Math.},
				volume={83},
				number={1},
				pages={1\ndash 18},
				url={http://www.numdam.org/item?id=CM_1992__83_1_1_0},
				review={\MR{1168120}},
			}
			
			\bib{blondel1999methode}{article}{
				author={Blondel, C.},
				title={A construction method for induced types and its application to
					{{\(G_2\)}}},
				date={1999},
				ISSN={0021-8693},
				journal={J. Algebra},
				volume={213},
				number={1},
				pages={231\ndash 271},
			}
			
			\bib{borel1976admissible}{article}{
				author={Borel, A.},
				title={Admissible representations of a semi-simple group over a local
					field with vectors fixed under an {I}wahori subgroup},
				date={1976},
				ISSN={0020-9910},
				journal={Invent. Math.},
				volume={35},
				pages={233\ndash 259},
				url={https://doi.org/10.1007/BF01390139},
				review={\MR{444849}},
			}
			
			\bib{broussous1999minimal}{article}{
				author={Broussous, P.},
				title={Minimal strata for {${\rm GL}(m,D)$}},
				date={1999},
				ISSN={0075-4102},
				journal={J. Reine Angew. Math.},
				volume={514},
				pages={199\ndash 236},
				url={https://doi.org/10.1515/crll.1999.071},
				review={\MR{1711267}},
			}
			
			\bib{broussous2000pure}{article}{
				author={Broussous, P.},
				author={Grabitz, M.},
				title={Pure elements and intertwining classes of simple strata in local
					central simple algebras},
				date={2000},
				ISSN={0092-7872},
				journal={Comm. Algebra},
				volume={28},
				number={11},
				pages={5405\ndash 5442},
				url={https://doi.org/10.1080/00927870008827164},
				review={\MR{1785509}},
			}
			
			\bib{brylinski2001central}{article}{
				author={Brylinski, J.-L.},
				author={Deligne, P.},
				title={Central extensions of reductive groups by {$\bold K_2$}},
				date={2001},
				ISSN={0073-8301},
				journal={Publ. Math. Inst. Hautes \'{E}tudes Sci.},
				number={94},
				pages={5\ndash 85},
				url={https://doi.org/10.1007/s10240-001-8192-2},
				review={\MR{1896177}},
			}
			
			\bib{bushnell1987hereditary7}{article}{
				author={Bushnell, C.~J.},
				title={Hereditary orders, {G}auss sums and supercuspidal representations
					of {${\rm GL}_N$}},
				date={1987},
				ISSN={0075-4102},
				journal={J. Reine Angew. Math.},
				volume={375/376},
				pages={184\ndash 210},
				url={https://doi.org/10.1515/crll.1987.375-376.184},
				review={\MR{882297}},
			}
			
			\bib{bushnell2019arithmetic}{incollection}{
				author={Bushnell, C.~J.},
				title={Arithmetic of cuspidal representations},
				date={2019},
				booktitle={Representations of reductive {$p$}-adic groups},
				series={Progr. Math.},
				volume={328},
				publisher={Birkh\"{a}user/Springer, Singapore},
				pages={39\ndash 126},
				url={https://doi.org/10.1007/978-981-13-6628-4_2},
				review={\MR{3889759}},
			}
			
			\bib{bushnell1996local}{article}{
				author={Bushnell, C.~J.},
				author={Henniart, G.},
				title={Local tame lifting for {${\rm GL}(N)$}. {I}. {S}imple
					characters},
				date={1996},
				ISSN={0073-8301},
				journal={Inst. Hautes \'{E}tudes Sci. Publ. Math.},
				number={83},
				pages={105\ndash 233},
				url={http://www.numdam.org/item?id=PMIHES_1996__83__105_0},
				review={\MR{1423022}},
			}
			
			\bib{bushnell2006local}{book}{
				author={Bushnell, C.~J.},
				author={Henniart, G.},
				title={The local {L}anglands conjecture for {$\rm GL(2)$}},
				series={Grundlehren der mathematischen Wissenschaften [Fundamental
					Principles of Mathematical Sciences]},
				publisher={Springer-Verlag, Berlin},
				date={2006},
				volume={335},
				ISBN={978-3-540-31486-8; 3-540-31486-5},
				url={https://doi.org/10.1007/3-540-31511-X},
				review={\MR{2234120}},
			}
			
			\bib{bushnell129admissible}{book}{
				author={Bushnell, C.~J.},
				author={Kutzko, P.~C.},
				title={The admissible dual of {${\rm GL}(N)$} via compact open
					subgroups},
				series={Annals of Mathematics Studies},
				publisher={Princeton University Press, Princeton, NJ},
				date={1993},
				volume={129},
				ISBN={0-691-03256-4; 0-691-02114-7},
				url={https://doi.org/10.1515/9781400882496},
				review={\MR{1204652}},
			}
			
			\bib{bushnell1993admissible}{article}{
				author={Bushnell, C.~J.},
				author={Kutzko, P.~C.},
				title={The admissible dual of {${\rm SL}(N)$}. {I}},
				date={1993},
				ISSN={0012-9593},
				journal={Ann. Sci. \'{E}cole Norm. Sup. (4)},
				volume={26},
				number={2},
				pages={261\ndash 280},
				url={http://www.numdam.org/item?id=ASENS_1993_4_26_2_261_0},
				review={\MR{1209709}},
			}
			
			\bib{bushnell1994admissible}{article}{
				author={Bushnell, C.~J.},
				author={Kutzko, P.~C.},
				title={The admissible dual of {${\rm SL}(N)$}. {II}},
				date={1994},
				ISSN={0024-6115},
				journal={Proc. London Math. Soc. (3)},
				volume={68},
				number={2},
				pages={317\ndash 379},
				url={https://doi.org/10.1112/plms/s3-68.2.317},
				review={\MR{1253507}},
			}
			
			\bib{bushnell1998smooth}{article}{
				author={Bushnell, C.~J.},
				author={Kutzko, P.~C.},
				title={Smooth representations of reductive {$p$}-adic groups: structure
					theory via types},
				date={1998},
				ISSN={0024-6115},
				journal={Proc. London Math. Soc. (3)},
				volume={77},
				number={3},
				pages={582\ndash 634},
				url={https://doi.org/10.1112/S0024611598000574},
				review={\MR{1643417}},
			}
			
			\bib{bushnell1999semisimple}{article}{
				author={Bushnell, C.~J.},
				author={Kutzko, P.~C.},
				title={Semisimple types in {${\rm GL}_n$}},
				date={1999},
				ISSN={0010-437X},
				journal={Compositio Math.},
				volume={119},
				number={1},
				pages={53\ndash 97},
				url={https://doi.org/10.1023/A:1001773929735},
				review={\MR{1711578}},
			}
			
			\bib{carayol1984representations}{article}{
				author={Carayol, H.},
				title={Repr\'{e}sentations cuspidales du groupe lin\'{e}aire},
				date={1984},
				ISSN={0012-9593},
				journal={Ann. Sci. \'{E}cole Norm. Sup. (4)},
				volume={17},
				number={2},
				pages={191\ndash 225},
				url={http://www.numdam.org/item?id=ASENS_1984_4_17_2_191_0},
				review={\MR{760676}},
			}
			
			\bib{casselman1980unramified}{article}{
				author={Casselman, W.},
				title={The unramified principal series of {${\germ p}$}-adic groups.
					{I}. {T}he spherical function},
				date={1980},
				ISSN={0010-437X},
				journal={Compositio Math.},
				volume={40},
				number={3},
				pages={387\ndash 406},
				url={http://www.numdam.org/item?id=CM_1980__40_3_387_0},
				review={\MR{571057}},
			}
			
			\bib{deligne1976representations}{article}{
				author={Deligne, P.},
				author={Lusztig, G.},
				title={Representations of reductive groups over finite fields},
				date={1976},
				ISSN={0003-486X},
				journal={Ann. of Math. (2)},
				volume={103},
				number={1},
				pages={103\ndash 161},
				url={https://doi.org/10.2307/1971021},
				review={\MR{393266}},
			}
			
			\bib{fintzen2021types}{article}{
				author={Fintzen, J.},
				title={Types for tame {$p$}-adic groups},
				date={2021},
				ISSN={0003-486X},
				journal={Ann. of Math. (2)},
				volume={193},
				number={1},
				pages={303\ndash 346},
				url={https://doi.org/10.4007/annals.2021.193.1.4},
				review={\MR{4199732}},
			}
			
			\bib{flicker1986metaplectic}{article}{
				author={Flicker, Y.~Z.},
				author={Kazhdan, D.~A.},
				title={Metaplectic correspondence},
				date={1986},
				ISSN={0073-8301},
				journal={Inst. Hautes \'{E}tudes Sci. Publ. Math.},
				number={64},
				pages={53\ndash 110},
				url={http://www.numdam.org/item?id=PMIHES_1986__64__53_0},
				review={\MR{876160}},
			}
			
			\bib{gan2018groups}{incollection}{
				author={Gan, W.~T.},
				author={Gao, F.},
				author={Weissman, M.~H.},
				title={L-groups and the {L}anglands program for covering groups: a
					historical introduction},
				date={2018},
				pages={1\ndash 31},
				note={L-groups and the Langlands program for covering groups},
				review={\MR{3802417}},
			}
			
			\bib{gao2019whittaker}{article}{
				author={Gao, F.},
				author={Weissman, M.~H.},
				title={Whittaker models for depth zero representations of covering
					groups},
				date={2019},
				ISSN={1073-7928},
				journal={Int. Math. Res. Not. IMRN},
				number={11},
				pages={3580\ndash 3620},
				url={https://doi.org/10.1093/imrn/rnx235},
				review={\MR{3961710}},
			}
			
			\bib{gelbart1982indistinguishability}{article}{
				author={Gelbart, S.~S.},
				author={Knapp, A.~W.},
				title={{$L$}-indistinguishability and {$R$} groups for the special
					linear group},
				date={1982},
				ISSN={0001-8708},
				journal={Adv. in Math.},
				volume={43},
				number={2},
				pages={101\ndash 121},
				url={https://doi.org/10.1016/0001-8708(82)90030-5},
				review={\MR{644669}},
			}
			
			\bib{goldberg2002types}{article}{
				author={Goldberg, D.},
				author={Roche, A.},
				title={Types in {${\rm SL}_n$}},
				date={2002},
				ISSN={0024-6115},
				journal={Proc. London Math. Soc. (3)},
				volume={85},
				number={1},
				pages={119\ndash 138},
				url={https://doi.org/10.1112/S002461150201359X},
				review={\MR{1901371}},
			}
			
			\bib{goldberg2005hecke}{article}{
				author={Goldberg, D.},
				author={Roche, A.},
				title={Hecke algebras and {${\rm SL}_n$}-types},
				date={2005},
				ISSN={0024-6115},
				journal={Proc. London Math. Soc. (3)},
				volume={90},
				number={1},
				pages={87\ndash 131},
				url={https://doi.org/10.1112/S0024611504014881},
				review={\MR{2107039}},
			}
			
			\bib{grabitz1999continuation}{article}{
				author={Grabitz, M.},
				title={Continuation of hereditary orders in local central simple
					algebras},
				date={1999},
				ISSN={0022-314X},
				journal={J. Number Theory},
				volume={77},
				number={1},
				pages={1\ndash 26},
				url={https://doi.org/10.1006/jnth.1999.2374},
				review={\MR{1695697}},
			}
			
			\bib{grabitz2001level}{article}{
				author={Grabitz, M.},
				author={Silberger, A.~J.},
				author={Zink, E.-W.},
				title={Level zero types and {H}ecke algebras for local central simple
					algebras},
				date={2001},
				ISSN={0022-314X},
				journal={J. Number Theory},
				volume={91},
				number={1},
				pages={92\ndash 125},
				url={https://doi.org/10.1006/jnth.2001.2684},
				review={\MR{1869321}},
			}
			
			\bib{green1955characters}{article}{
				author={Green, J.~A.},
				title={The characters of the finite general linear groups},
				date={1955},
				ISSN={0002-9947},
				journal={Trans. Amer. Math. Soc.},
				volume={80},
				pages={402\ndash 447},
				url={https://doi.org/10.2307/1992997},
				review={\MR{72878}},
			}
			
			\bib{hakim2008distinguished}{article}{
				author={Hakim, J.},
				author={Murnaghan, F.},
				title={Distinguished tame supercuspidal representations},
				date={2008},
				ISSN={1687-3017},
				journal={Int. Math. Res. Pap. IMRP},
				number={2},
				pages={Art. ID rpn005, 166},
				review={\MR{2431732}},
			}
			
			\bib{heiermann2011operateurs}{article}{
				author={Heiermann, V.},
				title={Op\'{e}rateurs d'entrelacement et alg\`ebres de {H}ecke avec
					param\`etres d'un groupe r\'{e}ductif {$p$}-adique: le cas des groupes
					classiques},
				date={2011},
				ISSN={1022-1824},
				journal={Selecta Math. (N.S.)},
				volume={17},
				number={3},
				pages={713\ndash 756},
				url={https://doi.org/10.1007/s00029-011-0056-0},
				review={\MR{2827179}},
			}
			
			\bib{howard2009depth}{article}{
				author={Howard, T.~K.},
				author={Weissman, M.~H.},
				title={Depth-zero representations of nonlinear covers of {$p$}-adic
					groups},
				date={2009},
				ISSN={1073-7928},
				journal={Int. Math. Res. Not. IMRN},
				number={21},
				pages={3979\ndash 3995},
				url={https://doi.org/10.1093/imrn/rnp076},
				review={\MR{2549947}},
			}
			
			\bib{howe1990hecke}{article}{
				author={Howe, R.},
				author={Moy, A.},
				title={Hecke algebra isomorphisms for {${\rm GL}(n)$} over a {$p$}-adic
					field},
				date={1990},
				ISSN={0021-8693},
				journal={J. Algebra},
				volume={131},
				number={2},
				pages={388\ndash 424},
				url={https://doi.org/10.1016/0021-8693(90)90182-N},
				review={\MR{1058553}},
			}
			
			\bib{howe1977tamely}{article}{
				author={Howe, R.~E.},
				title={Tamely ramified supercuspidal representations of {${\rm
							Gl}_{n}$}},
				date={1977},
				ISSN={0030-8730},
				journal={Pacific J. Math.},
				volume={73},
				number={2},
				pages={437\ndash 460},
				url={http://projecteuclid.org/euclid.pjm/1102810618},
				review={\MR{492087}},
			}
			
			\bib{howlett1980induced}{article}{
				author={Howlett, R.~B.},
				author={Lehrer, G.~I.},
				title={Induced cuspidal representations and generalised {H}ecke rings},
				date={1980},
				ISSN={0020-9910},
				journal={Invent. Math.},
				volume={58},
				number={1},
				pages={37\ndash 64},
				url={https://doi.org/10.1007/BF01402273},
				review={\MR{570873}},
			}
			
			\bib{iwahori1965some}{article}{
				author={Iwahori, N.},
				author={Matsumoto, H.},
				title={On some {B}ruhat decomposition and the structure of the {H}ecke
					rings of {${\germ p}$}-adic {C}hevalley groups},
				date={1965},
				ISSN={0073-8301},
				journal={Inst. Hautes \'{E}tudes Sci. Publ. Math.},
				number={25},
				pages={5\ndash 48},
				url={http://www.numdam.org/item?id=PMIHES_1965__25__5_0},
				review={\MR{185016}},
			}
			
			\bib{jantzen2000square}{article}{
				author={Jantzen, C.},
				title={On square-integrable representations of classical {$p$}-adic
					groups},
				date={2000},
				ISSN={0008-414X},
				journal={Canad. J. Math.},
				volume={52},
				number={3},
				pages={539\ndash 581},
				url={https://doi.org/10.4153/CJM-2000-025-7},
				review={\MR{1758232}},
			}
			
			\bib{kaletha2019regular}{article}{
				author={Kaletha, T.},
				title={Regular supercuspidal representations},
				date={2019},
				ISSN={0894-0347},
				journal={J. Amer. Math. Soc.},
				volume={32},
				number={4},
				pages={1071\ndash 1170},
				url={https://doi.org/10.1090/jams/925},
				review={\MR{4013740}},
			}
			
			\bib{kaplan2022classification}{article}{
				author={Kaplan, E.},
				author={Lapid, E.},
				author={Zou, J.},
				title={Classification of irreducible representations of metaplectic
					covers of the general linear group over a non-{Archimedean} local field},
				date={2023},
				ISSN={1088-4165},
				journal={Represent. Theory},
				volume={27},
				pages={1041\ndash 1087},
			}
			
			\bib{kaplan2022note}{article}{
				author={Kaplan, E.},
				author={Szpruch, D.},
				title={A note on the representation theory of central extensions of
					reductive {{\(p\)}}-adic groups},
				date={2023},
				ISSN={0092-7872},
				journal={Commun. Algebra},
				volume={51},
				number={10},
				pages={4363\ndash 4371},
			}
			
			\bib{kazhdan1984metaplectic}{article}{
				author={Kazhdan, D.~A.},
				author={Patterson, S.~J.},
				title={Metaplectic forms},
				date={1984},
				ISSN={0073-8301},
				journal={Inst. Hautes \'{E}tudes Sci. Publ. Math.},
				number={59},
				pages={35\ndash 142},
				url={http://www.numdam.org/item?id=PMIHES_1984__59__35_0},
				review={\MR{743816}},
			}
			
			\bib{kim2017construction}{incollection}{
				author={Kim, J.-L.},
				author={Yu, J.-K.},
				title={Construction of tame types},
				date={2017},
				booktitle={Representation theory, number theory, and invariant theory},
				series={Progr. Math.},
				volume={323},
				publisher={Birkh\"{a}user/Springer, Cham},
				pages={337\ndash 357},
				url={https://doi.org/10.1007/978-3-319-59728-7_12},
				review={\MR{3753917}},
			}
			
			\bib{kurinczuk2021endo}{article}{
				author={Kurinczuk, R.},
				author={Skodlerack, D.},
				author={Stevens, S.},
				title={Endo-parameters for {$p$}-adic classical groups},
				date={2021},
				ISSN={0020-9910},
				journal={Invent. Math.},
				volume={223},
				number={2},
				pages={597\ndash 723},
				url={https://doi.org/10.1007/s00222-020-00997-0},
				review={\MR{4209861}},
			}
			
			\bib{matsumoto1969sous}{article}{
				author={Matsumoto, H.},
				title={Sur les sous-groupes arithm\'{e}tiques des groupes semi-simples
					d\'{e}ploy\'{e}s},
				date={1969},
				ISSN={0012-9593},
				journal={Ann. Sci. \'{E}cole Norm. Sup. (4)},
				volume={2},
				pages={1\ndash 62},
				url={http://www.numdam.org/item?id=ASENS_1969_4_2_1_1_0},
				review={\MR{240214}},
			}
			
			\bib{milnor1971introduction}{book}{
				author={Milnor, J.},
				title={Introduction to algebraic {$K$}-theory},
				series={Annals of Mathematics Studies, No. 72},
				publisher={Princeton University Press, Princeton, N.J.; University of Tokyo
					Press, Tokyo},
				date={1971},
				review={\MR{0349811}},
			}
			
			\bib{miyauchi2014semisimple}{article}{
				author={Miyauchi, M.},
				author={Stevens, S.},
				title={Semisimple types for {$p$}-adic classical groups},
				date={2014},
				ISSN={0025-5831},
				journal={Math. Ann.},
				volume={358},
				number={1-2},
				pages={257\ndash 288},
				url={https://doi.org/10.1007/s00208-013-0953-y},
				review={\MR{3157998}},
			}
			
			\bib{morris1993tamely}{article}{
				author={Morris, L.},
				title={Tamely ramified intertwining algebras},
				date={1993},
				ISSN={0020-9910},
				journal={Invent. Math.},
				volume={114},
				number={1},
				pages={1\ndash 54},
				url={https://doi.org/10.1007/BF01232662},
				review={\MR{1235019}},
			}
			
			\bib{morris1999level}{article}{
				author={Morris, L.},
				title={Level zero {$\bf G$}-types},
				date={1999},
				ISSN={0010-437X},
				journal={Compositio Math.},
				volume={118},
				number={2},
				pages={135\ndash 157},
				url={https://doi.org/10.1023/A:1001019027614},
				review={\MR{1713308}},
			}
			
			\bib{moy1994unrefined}{article}{
				author={Moy, A.},
				author={Prasad, G.},
				title={Unrefined minimal {$K$}-types for {$p$}-adic groups},
				date={1994},
				ISSN={0020-9910},
				journal={Invent. Math.},
				volume={116},
				number={1-3},
				pages={393\ndash 408},
				url={https://doi.org/10.1007/BF01231566},
				review={\MR{1253198}},
			}
			
			\bib{moy1996jacquet}{article}{
				author={Moy, A.},
				author={Prasad, G.},
				title={Jacquet functors and unrefined minimal {$K$}-types},
				date={1996},
				ISSN={0010-2571},
				journal={Comment. Math. Helv.},
				volume={71},
				number={1},
				pages={98\ndash 121},
				url={https://doi.org/10.1007/BF02566411},
				review={\MR{1371680}},
			}
			
			\bib{van2017beta}{article}{
				author={Ngo, V.~D.},
				title={Beta extensions and cuspidal types for {$p$}-adic spin groups},
				date={2017},
				ISSN={0025-2611},
				journal={Manuscripta Math.},
				volume={152},
				number={3-4},
				pages={513\ndash 531},
				url={https://doi.org/10.1007/s00229-016-0869-4},
				review={\MR{3608302}},
			}
			
			\bib{ohara2021hecke}{article}{
				author={Ohara, K.},
				title={Hecke algebras for tame supercuspidal types},
				date={2024},
				ISSN={0002-9327},
				journal={Am. J. Math.},
				volume={146},
				number={1},
				pages={277\ndash 293},
			}
			
			\bib{renard2010representations}{book}{
				author={Renard, D.},
				title={Repr\'{e}sentations des groupes r\'{e}ductifs {$p$}-adiques},
				series={Cours Sp\'{e}cialis\'{e}s [Specialized Courses]},
				publisher={Soci\'{e}t\'{e} Math\'{e}matique de France, Paris},
				date={2010},
				volume={17},
				ISBN={978-2-85629-278-5},
				review={\MR{2567785}},
			}
			
			\bib{roche1998types}{article}{
				author={Roche, A.},
				title={Types and {H}ecke algebras for principal series representations
					of split reductive {$p$}-adic groups},
				date={1998},
				ISSN={0012-9593},
				journal={Ann. Sci. \'{E}cole Norm. Sup. (4)},
				volume={31},
				number={3},
				pages={361\ndash 413},
				url={https://doi.org/10.1016/S0012-9593(98)80139-0},
				review={\MR{1621409}},
			}
			
			\bib{savin2004unramified}{article}{
				author={Savin, G.},
				title={On unramified representations of covering groups},
				date={2004},
				ISSN={0075-4102},
				journal={J. Reine Angew. Math.},
				volume={566},
				pages={111\ndash 134},
				url={https://doi.org/10.1515/crll.2004.001},
				review={\MR{2039325}},
			}
			
			\bib{secherre2004representations}{article}{
				author={S\'{e}cherre, V.},
				title={Repr\'{e}sentations lisses de {${\rm GL}(m,D)$}. {I}.
					{C}aract\`eres simples},
				date={2004},
				ISSN={0037-9484},
				journal={Bull. Soc. Math. France},
				volume={132},
				number={3},
				pages={327\ndash 396},
				url={https://doi.org/10.24033/bsmf.2468},
				review={\MR{2081220}},
			}
			
			\bib{secherre2005representations}{article}{
				author={S\'{e}cherre, V.},
				title={Repr\'{e}sentations lisses de {${\rm GL}(m,D)$}. {II}.
					{$\beta$}-extensions},
				date={2005},
				ISSN={0010-437X},
				journal={Compos. Math.},
				volume={141},
				number={6},
				pages={1531\ndash 1550},
				url={https://doi.org/10.1112/S0010437X05001429},
				review={\MR{2216835}},
			}
			
			\bib{secherre2005types}{article}{
				author={S\'{e}cherre, V.},
				title={Repr\'{e}sentations lisses de {${\rm GL}_m(D)$}. {III}. {T}ypes
					simples},
				date={2005},
				ISSN={0012-9593},
				journal={Ann. Sci. \'{E}cole Norm. Sup. (4)},
				volume={38},
				number={6},
				pages={951\ndash 977},
				url={https://doi.org/10.1016/j.ansens.2005.10.003},
				review={\MR{2216835}},
			}
			
			\bib{secherre2008representations}{article}{
				author={S\'{e}cherre, V.},
				author={Stevens, S.},
				title={Repr\'{e}sentations lisses de {${\rm GL}_m(D)$}. {IV}.
					{R}epr\'{e}sentations supercuspidales},
				date={2008},
				ISSN={1474-7480},
				journal={J. Inst. Math. Jussieu},
				volume={7},
				number={3},
				pages={527\ndash 574},
				url={https://doi.org/10.1017/S1474748008000078},
				review={\MR{2427423}},
			}
			
			\bib{secherre2012smooth}{article}{
				author={S\'{e}cherre, V.},
				author={Stevens, S.},
				title={Smooth representations of {$\mathrm{GL}_m(D)$} {VI}: semisimple
					types},
				date={2012},
				ISSN={1073-7928},
				journal={Int. Math. Res. Not. IMRN},
				number={13},
				pages={2994\ndash 3039},
				review={\MR{2946230}},
			}
			
			\bib{solleveld2021affine}{article}{
				author={Solleveld, M.},
				title={Affine {H}ecke algebras and their representations},
				date={2021},
				ISSN={0019-3577},
				journal={Indag. Math. (N.S.)},
				volume={32},
				number={5},
				pages={1005\ndash 1082},
				url={https://doi.org/10.1016/j.indag.2021.01.005},
				review={\MR{4310011}},
			}
			
			\bib{solleveld2020endomorphism}{article}{
				author={Solleveld, M.},
				title={Endomorphism algebras and {H}ecke algebras for reductive
					{$p$}-adic groups},
				date={2022},
				ISSN={0021-8693},
				journal={J. Algebra},
				volume={606},
				pages={371\ndash 470},
				url={https://doi.org/10.1016/j.jalgebra.2022.05.017},
				review={\MR{4432237}},
			}
			
			\bib{stevens2001intertwining}{article}{
				author={Stevens, S.},
				title={Intertwining and supercuspidal types for {$p$}-adic classical
					groups},
				date={2001},
				ISSN={0024-6115},
				journal={Proc. London Math. Soc. (3)},
				volume={83},
				number={1},
				pages={120\ndash 140},
				url={https://doi.org/10.1112/plms/83.1.120},
				review={\MR{1829562}},
			}
			
			\bib{stevens2002semisimple}{article}{
				author={Stevens, S.},
				title={Semisimple strata for {$p$}-adic classical groups},
				date={2002},
				ISSN={0012-9593},
				journal={Ann. Sci. \'{E}cole Norm. Sup. (4)},
				volume={35},
				number={3},
				pages={423\ndash 435},
				url={https://doi.org/10.1016/S0012-9593(02)01095-9},
				review={\MR{1914004}},
			}
			
			\bib{stevens2005semisimple}{article}{
				author={Stevens, S.},
				title={Semisimple characters for {$p$}-adic classical groups},
				date={2005},
				ISSN={0012-7094},
				journal={Duke Math. J.},
				volume={127},
				number={1},
				pages={123\ndash 173},
				url={https://doi.org/10.1215/S0012-7094-04-12714-9},
				review={\MR{2126498}},
			}
			
			\bib{stevens2008supercuspidal}{article}{
				author={Stevens, S.},
				title={The supercuspidal representations of {$p$}-adic classical
					groups},
				date={2008},
				ISSN={0020-9910},
				journal={Invent. Math.},
				volume={172},
				number={2},
				pages={289\ndash 352},
				url={https://doi.org/10.1007/s00222-007-0099-1},
				review={\MR{2390287}},
			}
			
			\bib{suzuki2004metaplectic}{article}{
				author={Suzuki, T.},
				title={On supercuspidal representations of a metaplectic group},
				date={2004},
				ISSN={1344-008X},
				journal={Ryukyu Math. J.},
				volume={17},
				pages={75\ndash 87},
			}
			
			\bib{waldspurger1986algebres}{article}{
				author={Waldspurger, J.-L.},
				title={Alg\`ebres de {H}ecke et induites de repr\'{e}sentations
					cuspidales, pour {${\rm GL}(N)$}},
				date={1986},
				ISSN={0075-4102},
				journal={J. Reine Angew. Math.},
				volume={370},
				pages={127\ndash 191},
				url={https://doi.org/10.1515/crll.1986.370.127},
				review={\MR{852514}},
			}
			
			\bib{weil1974basic}{book}{
				author={Weil, A.},
				title={Basic number theory},
				edition={Third},
				series={Die Grundlehren der mathematischen Wissenschaften, Band 144},
				publisher={Springer-Verlag, New York-Berlin},
				date={1974},
				review={\MR{0427267}},
			}
			
			\bib{weissman2016covers}{article}{
				author={Weissman, M.~H.},
				title={Covers of tori over local and global fields},
				date={2016},
				ISSN={0002-9327},
				journal={Amer. J. Math.},
				volume={138},
				number={6},
				pages={1533\ndash 1573},
				url={https://doi.org/10.1353/ajm.2016.0046},
				review={\MR{3595494}},
			}
			
			\bib{yu2001construction}{article}{
				author={Yu, J.-K.},
				title={Construction of tame supercuspidal representations},
				date={2001},
				ISSN={0894-0347},
				journal={J. Amer. Math. Soc.},
				volume={14},
				number={3},
				pages={579\ndash 622},
				url={https://doi.org/10.1090/S0894-0347-01-00363-0},
				review={\MR{1824988}},
			}
			
			\bib{zelevinsky1980induced}{article}{
				author={Zelevinsky, A.~V.},
				title={Induced representations of reductive {${\germ p}$}-adic groups.
					{II}. {O}n irreducible representations of {${\rm GL}(n)$}},
				date={1980},
				ISSN={0012-9593},
				journal={Ann. Sci. \'{E}cole Norm. Sup. (4)},
				volume={13},
				number={2},
				pages={165\ndash 210},
				url={http://www.numdam.org/item?id=ASENS_1980_4_13_2_165_0},
				review={\MR{584084}},
			}
			
			\bib{zou2022metaplectic}{article}{
				author={Zou, J.},
				title={Local metaplectic correspondence and applications},
				date={2023},
				ISSN={0025-5874},
				journal={Math. Z.},
				volume={305},
				number={3},
				pages={33},
			}
			
		\end{biblist}
	\end{bibdiv}

\end{document}